\tikzset{>=latex} \usetikzlibrary{backgrounds}
\definecolor{rouge}{rgb}{0.85,0.1,.4}
\definecolor{bleu}{rgb}{0.1,0.2,0.9}
\definecolor{violet}{rgb}{0.7,0,0.8}
\newtheorem{theorem}{Theorem}[section]
\newtheorem{lemma}[theorem]{Lemma}
\newtheorem{setup}[theorem]{Setup}
\newtheorem{proposition}[theorem]{Proposition}
\newtheorem{corollary}[theorem]{Corollary}
\theoremstyle{definition}
\newtheorem{definition}[theorem]{Definition}
\newtheorem{example}[theorem]{Example}
\newtheorem{remark}[theorem]{Remark}
\def\CC{\mathbb{C}}
\def\KK{\mathbb{K}}
\def\NN{\mathbb{Z}_{\ge 0}}
\def\RR{\mathbb{R}}
\def\ZZ{\mathbb{Z}}
\newcommand\cA{\mathcal{A}}
\newcommand\cB{\mathcal{B}}
\newcommand\cC{\mathcal{C}}
\newcommand\cD{\mathcal{D}}
\newcommand\cE{\mathcal{E}}
\newcommand\cM{\mathcal{M}}
\newcommand\cN{\mathcal{N}}
\newcommand\cP{\mathcal{P}}
\newcommand\cS{\mathcal{S}}
\newcommand\cW{\mathcal{W}}
\newcommand\cY{\mathcal{Y}}
\newcommand\cZ{\mathcal{Z}}
\newcommand{\cEW}{\mathcal{EW}}
\newcommand\tilA{\widetilde{A}}
\newcommand\tilL{\widetilde{L}}
\newcommand\tile{\widetilde{e}}
\newcommand\tilf{\widetilde{f}}
\newcommand{\vac}{\mathbf{1}}
\newcommand{\Gr}{\textup{Gr}}
\newcommand\id{\textup{id}}
\newcommand{\Id}{\mathrm{id}}
\newcommand{\op}{\mathrm{op}}
\newcommand\Rep{\textup{Rep}}
\newcommand{\loc}{\textup{loc}}
\newcommand{\radj}{\textup{ra}}
\newcommand{\ladj}{\textup{la}}
\newcommand\FP{\textup{FPdim}}
\newcommand\Hom{\textup{Hom}}
\newcommand\End{\textup{End}}
\newcommand\coev{\textup{coev}}
\newcommand\ev{\textup{ev}}
\newcommand\Rex{\textup{Rex}}
\newcommand\Fun{\textup{Fun}}
\newcommand\quash[1]{}
\newcommand\one{\mathbf{1}}
\newcommand{\trl}{\triangleleft}
\newcommand{\yes}{\textcolor{blue}{\checkmark}}
\newcommand{\no}{\textcolor{red}{X}}
\newcommand{\open}{\textcolor{violet}{?}}
\def \<{\langle}
\def \>{\rangle}
\newcommand{\bea}{\begin{eqnarray}}
\newcommand{\eea}{\end{eqnarray}}
\newcommand{\be}{\begin {equation}}
\newcommand{\ee}{\end{equation}}
\newcommand{\g}{\mathfrak g}
\begin{document}

\newcommand{\ssl}{\mathfrak{sl}_{2|1}}
\newcommand{\vir}{\mathcal{M}}
\newcommand{\virp}{\vir_{>0}}
\newcommand{\virb}{\vir_{\ge0}}
\newcommand{\virz}{\vir_0}
\newcommand{\virn}{\vir_{<0}}

\newcommand{\uea}[1]{\mathcal{U}(#1)}

\newcommand{\hw}{highest-weight}
\newcommand{\foh}[1]{\textcolor{blue}{#1}}  %F macro for Flor's comments/changes when reviewing the article based on the referee's suggestions
\newcommand{\jy}[1]{\textcolor{red}{#1}}
\newcommand{\oc}{\mathcal{O}_c}
\newcommand{\ocfin}{\oc^{\textup{fin}}}
\newcommand{\ocleft}{\oc^{\textup{L}}}
\newcommand{\ocright}{\oc^{\textup{R}}}
\newcommand{\cofcat}{\mathcal{C}_1}

%\keywords{Affine vertex algebras; Wakimoto modules, Restricted modules, logarithmic modules}
\title[Commutative algebras, rigidity, and VOAs]{
%Rigidity from vertex operator algebra extensions
%\linebreak
%--
%\linebreak
%Rigidity from commutative algebras in braided monoidal categories and applications to vertex operator algebras?
%\linebreak
%--
%\linebreak
Commutative algebras in Grothendieck-Verdier categories, rigidity, and vertex operator algebras
}
\subjclass[2020]{Primary 18M15, 17B69, 18M20, 81R10}
\author{Thomas Creutzig}
\address{Department Mathematik, Friedrich-Alexander Universit\"at Erlangen-N\"urnberg, 
%Cauerstraße 11,
91058 Erlangen, Germany}
\email{thomas.creutzig@fau.de}

\author{Robert McRae}
\address{Yau Mathematical Sciences Center, Tsinghua University, Beijing 100084, China}
\email{rhmcrae@tsinghua.edu.cn}

\author{Kenichi Shimizu}
\address{Department of Mathematical Sciences, Shibaura Institute of Technology, Saitama 337-8570, Japan}
\email{kshimizu@shibaura-it.ac.jp}

\author{Harshit Yadav}
\address{Department of Mathematical and Statistical Sciences, University of Alberta, Edmonton, Alberta, Canada T6G 2G1}
\email{hyadav3@ualberta.ca}

%632 CAB,

%\date{}

%%%%%%%%%%%%%%%%%%%%%%%%%%%%%%%%%%%%%%%%%%%%%%%%%%%%%%%%%%
%%%%%%%%%%%%%%%%%%%%%%%%%%%%%%%%%%%%%%%%%%%%%%%%%%%%%%%%%%

\begin{abstract}
Let $A$ be a commutative algebra in a braided monoidal category $\mathcal{C}$. For example, $A$ could be a vertex operator algebra (VOA) extension of a VOA $V$ in a category $\mathcal{C}$ of $V$-modules.
We first find conditions for the category $\mathcal{C}_A$ of $A$-modules in $\mathcal{C}$ and its subcategory $\mathcal{C}_A^{\loc}$ of local modules to inherit rigidity from $\mathcal{C}$. Second and more importantly, we prove a converse result, finding conditions under which $\cC$ and $\cC_A$  inherit rigidity from $\cC_A^\loc$. 

For our first results, we assume that $\mathcal{C}$ is a braided finite tensor category and identify mild conditions under which $\mathcal{C}_A$ and $\mathcal{C}_A^{\loc}$ are also rigid. These conditions are based on criteria due to Etingof and Ostrik for $A$ to be an exact algebra in $\mathcal{C}$. As an application, we show that if $A$ is a simple $\mathbb{Z}_{\geq 0}$-graded VOA containing a strongly rational vertex operator subalgebra $V$, then $A$ is also strongly rational, without requiring the dimension of $A$ in the modular tensor category of $V$-modules to be non-zero. We also identify conditions under which the category of $A$-modules inherits rigidity from the module category of a $C_2$-cofinite non-rational subalgebra $V$. 

For our converse result, we assume that $\mathcal{C}$ is a Grothendieck-Verdier category, which means that $\mathcal{C}$ admits a weaker duality structure than rigidity. We first show that $\mathcal{C}_A$ is also a Grothendieck-Verdier category. Using this, we then prove that if $\mathcal{C}_A^{\loc}$ is rigid, then so is $\mathcal{C}$ under conditions that include a mild non-degeneracy assumption on $\mathcal{C}$, as well as assumptions that every simple object of $\mathcal{C}_A$ is local and that induction $F_A:\mathcal{C}\rightarrow\mathcal{lC}_A$ commutes with duality. These conditions
are motivated by free field-like VOA extensions $V\hookrightarrow A$ where $A$ is often an indecomposable $V$-module, and thus our result will make it more feasible to prove rigidity for many vertex algebraic braided monoidal categories. In a follow-up work, our results are used to prove rigidity of the category of weight modules for the simple affine VOA of $\mathfrak{sl}_2$ at any admissible level, which embeds by Adamovi\'{c}'s inverse quantum Hamiltonian reduction into a rational Virasoro VOA tensored with a half-lattice VOA.
\end{abstract}

%%%%%%%%%%%%%%%%%%%%%%%%%%%%%%%%%%%%%%%%%%%%%%%%%%%%%%%%%%
%%%%%%%%%%%%%%%%%%%%%%%%%%%%%%%%%%%%%%%%%%%%%%%%%%%%%%%%%%

\maketitle

\numberwithin{equation}{section}

\baselineskip=14pt
\newenvironment{demo}[1]{\vskip-\lastskip\medskip\noindent{\em#1.}\enspace
}{\qed\par\medskip}

\tableofcontents

\section{Introduction}

Braided monoidal categories are by now key structures linking mathematics and mathematical physics, with previous research largely focusing on modular tensor categories. These are finite, semisimple, and non-degenerate ribbon categories that emerged naturally from the basic principles of two-dimensional rational conformal field theory \cite{Moore:1988qv} and found immediate use in providing invariants of knots, links and $3$-manifolds \cite{reshetikhin1991invariants, turaev1992modular}. More recently, braided monoidal categories that may be infinite or non-semisimple are important in both low-dimensional topology and conformal field theory \cite{Lyu, CGPM, de20223, HLZ1, creutzig_gannon}. Many structural results require these categories to be ribbon, or at least rigid. Thus it is crucial to identify when new examples of monoidal categories are rigid.

Given a braided monoidal category $\cC$ and a commutative algebra $A$ in $\cC$, one gets two new monoidal categories: the category $\cC_A$ of left $A$-modules in $\cC$ and its subcategory $\cC_A^{\loc}$ of local $A$-modules \cite{pareigis1995braiding}. While $\cC_A$ is usually not braided, $\cC_A^{\loc}$ is. In this paper, we provide new criteria under which rigidity of $\cC$ implies rigidity of $\cC_A$ and $\cC_A^{\loc}$, and vice versa. If $\cC$ is not rigid but still admits a weaker duality structure called Grothendieck-Verdier duality \cite{boyarchenko2013duality}, we also show that $\cC_A$ is a Grothendieck-Verdier category.

A major source of braided monoidal categories is representations of vertex operator algebras (VOAs); see \cite{HLZ1} and references therein. Such categories also admit Grothendieck-Verdier duality structure under mild conditions \cite{ALSW}. If $\cC$ is a braided monoidal category of modules for a VOA $V$ and $A$ is a VOA which contains $V$ as a vertex operator subalgebra such that $A$ is an object of $\cC$ as a $V$-module, then $A$ is a commutative algebra in $\cC$ \cite{Huang:2014ixa}. Furthermore, the (braided) monoidal category structures on $\cC_A$ and $\cC_A^{\loc}$ are vertex algebraically natural \cite{creutzig2017tensor}. Thus a main goal of our work is to use our general categorical results to transfer rigidity from categories of $V$-modules to categories of $A$-modules, or vice versa (assuming one of these categories is already known to be rigid but the other is not).

Indeed, although it is expected that there should be reasonable general assumptions under which vertex algebraic monoidal categories are rigid, at present most of the general rigidity results that are available assume some degree of semisimplicity. Most notably, Huang showed in \cite{huang2005vertex, Huang:Rig_Mod} that the representation category of a strongly rational VOA (which is finite and semisimple by assumption) is a modular tensor category. 
In another important recent development, after the first version of this paper was completed, Etingof and Penneys \cite{etingof2024rigidity} showed that non-negligible objects of moderate growth in braided monoidal categories are rigid. These non-negligible and moderate growth conditions are immediate in essentially all finite semisimple VOA representation categories of interest, and thus such categories are now rigid. The present paper, however, is motivated by categories that are at least potentially non-semisimple, in which negligible objects in particular cannot be ruled out (and in fact often occur).

%. Although their result does not seem to directly improve the results in the present paper, it is certainly a very notable contribution to the study of rigidity of monoidal categories, which is the main focus of this work.

For non-finite and non-semisimple module categories for VOAs, rigidity is usually checked by tedious case-by-case analysis that requires finding and solving regular singular point differential equations for correlation functions (see \cite{Tsuchiya:2012ru, Creutzig:2020qvs, Creutzig:2020zom, McRae:2023ado}, among other works). It is not realistic to hope that such differential equations can be found or solved explicitly in general. Thus in this paper, we provide methods for establishing rigidity of braided monoidal categories from VOAs in better generality with much less work. Currently the main open rigidity problem is the rigidity of the category of weight modules for the simple affine VOA $L_k(\mathfrak{sl}_2)$ at any admissible level.
In an accompanying paper our methods are used to show that the category of weight modules for the simple affine VOA $L_k(\mathfrak{sl}_2)$ at any admissible level is indeed rigid \cite{CMY24}.

Much of our work is motivated by logarithmic Kazhdan-Lusztig correspondences, which are braided monoidal equivalences between non-semisimple module categories for VOAs and corresponding quasi-Hopf algebras. In \cite{CLR}, such correspondences are proved by showing the VOA module category $\cC$ of interest is equivalent to a relative Drinfeld center of the module category $\cC_A$ for a suitable commutative algebra $A$ in $\cC$. Under certain conditions, this relative Drinfeld center can then be identified with the category of modules for the corresponding quasi-Hopf algebra.
To show that the functor from $\cC$ to the relative Drinfeld center is fully faithful, it is assumed in \cite{CLR} that $\cC$ is rigid; now, by our work here (especially Theorem \ref{thm:faithfulness-of-I} below), this full faithfulness holds under weaker assumptions (compare with \cite[Problem 3.11]{CLR}).

\subsection{Rigidity of \texorpdfstring{$\cC_A$}{CA} and \texorpdfstring{$\cC_A^\loc$}{CA-loc} from \texorpdfstring{$\cC$}{C}}

Assume that $\cC$ is rigid and $A$ is a commutative algebra in $\cC$. Naively, one would expect $\cC_A$ and $\cC_A^\loc$ to be rigid also, perhaps under mild assumptions. Indeed, Kirillov and Ostrik proved this assuming that $\cC$ has a ribbon twist $\theta$ such that $\theta_A = \id_A$, and assuming that $A$ is what they called a rigid $\cC$-algebra (also called an \'{e}tale $\cC$-algebra elsewhere) \cite[Theorem 1.15]{kirillov2002q}. Part of the definition of rigid $\cC$-algebra is that the categorical dimension $\dim_\cC(A)$ must be non-zero. Unfortunately, this condition is difficult to verify outside of special cases (such as when $\cC$ is unitary or pseudo-unitary, in which case the categorical dimensions of non-zero objects are positive real numbers).

 In Section \ref{sec:rig-of-CA}, we identify criteria for rigidity of $\cC_A$ and $\cC_A^{\loc}$ that do not require non-vanishing of $\dim_\cC(A)$, in the case that $\cC$ is finite. The key idea is to replace rigid (or \'{e}tale) algebras in ribbon categories with exact algebras in finite tensor categories. An algebra $A$ in $\cC$ is exact if $\cC_A$ is an exact $\cC$-module category, which means that for any object $M\in\cC_A$ and projective object $P\in\cC$, the left $A$-module $M\otimes P$ is projective in $\cC_A$. It is shown in the recent work \cite{shimizu2024exact} that if $A$ is an exact commutative haploid algebra in a braided finite tensor category $\cC$, then $\cC_A$ and $\cC_A^{\loc}$ are rigid. Moreover, Etingof and Ostrik have provided good criteria for $A$ to be exact in \cite{etingof2021frobenius}. Specifically, a simple algebra $A$ in a finite tensor category is exact if and only if any of the following equivalent conditions hold:
 \begin{enumerate}
     \item[(a)] There is a left $A$-module embedding ${}^*A\hookrightarrow A\otimes X$ for some $X\in\cC$.
     \item[(b)] There is a right $A$-module embedding $A^*\hookrightarrow X\otimes A$ for some $X\in\cC$.
     \item[(c)] $A^*\otimes_A {}^*A\neq 0$.
 \end{enumerate}
 We give a detailed exposition of this important result of Etingof and Ostrik in Section \ref{subsec:exact-algebras}. Thus if $A$ is a simple commutative algebra in a braided finite tensor category $\cC$ such that any of the above three conditions holds, then $\cC_A$ and $\cC_A^{\loc}$ are finite tensor categories.
 
 As special cases of this rigidity result, we show that if $A$ is a simple commutative algebra in a braided finite tensor category $\cC$, then:
\begin{enumerate}
\item  $\cC_A$ and $\cC_A^{\loc}$ are finite tensor categories in any of the following three cases:
\begin{enumerate}
    \item $\cC$ is integral (Corollary \ref{cor:com-exact-integral}).
    \item $A$ is a Frobenius algebra, that is, $A\cong A^*$ as right $A$-modules (Corollary \ref{cor:com-exact-5}).
    \item $\Hom_{\cC}(A,\one)\neq 0$ (Corollary \ref{cor:com-exact-5}).
\end{enumerate}
\item If $\cC$ is a fusion category, then $\cC_A$ and $\cC_A^\loc$ are fusion categories (Theorem \ref{thm:comm-exact-semisimple}).
\end{enumerate}
These results apply in particular when $\cC$ is a category of modules for a VOA $V$ and $A$ is a VOA that contains $V$ as a vertex operator subalgebra and is an object of $\cC$ when viewed as a $V$-module. In this case $A$ is a commutative algebra in $\cC$ \cite{Huang:2014ixa} and $\cC_A$ and $\cC_A^{\loc}$ have natural vertex algebraic monoidal structure \cite{creutzig2017tensor}. In this case, the duals $X^*$ and ${}^*X$ of a $V$-module $X\in\cC$ are given by the contragredient module $X'$ defined in  \cite{FHL}, provided that $V$ is self-contragredient, that is, $V\cong V'$. However, the larger VOA $A$ need not be self-contragredient even if $V$ is, so duals in $\cC_A$ and $\cC_A^{\loc}$ might not be contragredients of $A$-modules (at least not contragredients with respect to the shared conformal vector of $V$ and $A$).

The nicest VOAs are strongly finite and strongly rational VOAs. A VOA $V$ is \textit{strongly finite} (in the terminology of \cite{creutzig_gannon}) if $V$ is $\NN$-graded by conformal weights, simple, self-contragredient, and $C_2$-cofinite. The technical $C_2$-cofiniteness condition implies in particular that the category $\Rep(V)$ of grading-restricted generalized $V$-modules is a finite abelian category and a braided monoidal category \cite{huangC2}, and thus a braided finite tensor category if $\Rep(V)$ is rigid. A strongly finite VOA $V$ is \textit{strongly rational} if $\Rep(V)$ is semisimple. The main theorem of the theory of rational VOAs is that if $V$ is strongly rational, then $\Rep(V)$ is a modular tensor category \cite{huang2005vertex, Huang:Rig_Mod}. A long-standing open problem in the VOA literature is whether a VOA that contains a strongly rational VOA as a vertex operator subalgebra is strongly rational. In Theorem \ref{thm:VOA_ext_rational}, we solve this problem in the maximum possible generality; see also Theorem \ref{thm:VOSA-ext-rational} for a vertex operator superalgebra generalization:
\begin{theorem}\label{thm:intro-VOA-ext-rational}
If $V$ is a strongly rational VOA and $V\subseteq A$ is a VOA extension such that $A$ is simple and $\NN$-graded, then $A$ is strongly rational. Moreover, the category $\Rep(V)_A$ of non-local $A$-modules in $\Rep(V)$ is a fusion category.
\end{theorem}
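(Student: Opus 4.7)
The idea is to apply the fusion-category case of Theorem \ref{thm:comm-exact-semisimple} to $\cC = \Rep(V)$. Since $V$ is strongly rational, Huang's theorem makes $\Rep(V)$ a modular tensor category, hence in particular a braided fusion category. First I would show that $A$ is an object of $\Rep(V)$: the category $\Rep(V)$ has only finitely many simple objects $M_i$, of conformal weights $h_i \in \QQ$, and the $\NN$-grading of $A$ forces $h_i \in \NN$ whenever $M_i$ appears as a summand of $A$; furthermore the multiplicity of each such $M_i$ is bounded by $\dim A_{h_i} < \infty$. Hence $A$ is a finite direct sum of simples in $\Rep(V)$. By \cite{Huang:2014ixa}, $A$ is then a commutative algebra in $\Rep(V)$, and it is simple as such an algebra because sub-$A$-modules of $A$ in $\Rep(V)$ are precisely the VOA ideals of $A$.

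\emph{Applying the categorical theorem.} With these hypotheses, Theorem \ref{thm:comm-exact-semisimple} gives at once that $\Rep(V)_A$ and $\Rep(V)_A^\loc$ are fusion categories, which settles the second assertion of the theorem. Next, I would use rigidity of $\Rep(V)_A^\loc$ to produce an isomorphism $A \cong A^*$ of $A$-modules; since $V$ is self-contragredient, the categorical dual in $\Rep(V)$ coincides with the contragredient $V$-module, so this yields $A \cong A'$ as $A$-modules. Hence $A$ is self-contragredient as a VOA.

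\emph{$C_2$-cofiniteness and rationality.} It remains to show that $A$ is $C_2$-cofinite and that its module category is semisimple. $C_2$-cofiniteness will follow from $V$ being $C_2$-cofinite and $A$ being finitely generated as a $V$-module (since $A$ is a finite direct sum of simple $V$-modules), by a standard extension result of Abe--Buhl--Dong. For semisimplicity, the vertex-algebraic results of \cite{creutzig2017tensor} identify $\Rep(V)_A^\loc$ with the category of grading-restricted generalized $A$-modules whose $V$-restriction lies in $\Rep(V)$. The final step is to show that every grading-restricted generalized $A$-module enjoys this property: $C_2$-cofiniteness of $A$ makes any such $A$-module finitely generated over $A$ and hence over $V$, and combining this with semisimplicity of $\Rep(V)$ and the lower bound on conformal weights forces the $V$-restriction to be a finite direct sum of simple $V$-modules. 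Thus $\Rep(A) \simeq \Rep(V)_A^\loc$ is a fusion category, and $A$ is rational. Combined with the previous paragraph, this gives strong rationality. The main obstacle will be this final identification of $\Rep(A)$ with $\Rep(V)_A^\loc$, which requires carefully combining $C_2$-cofiniteness of $A$ (newly established) with finite generation of $A$ over $V$ and rationality of $V$.
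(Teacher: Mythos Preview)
Your overall architecture matches the paper's: apply Theorem~\ref{thm:comm-exact-semisimple} to the simple commutative algebra $A$ in the braided fusion category $\Rep(V)$, deduce $C_2$-cofiniteness from Lemma~\ref{lem:C2_of_extension}, and identify $\Rep(A)$ with $\Rep(V)_A^{\loc}$ via \cite[Theorem~3.4]{Huang:2014ixa}. Those parts are fine.

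The genuine gap is your argument for self-contragredience. You write that rigidity of $\Rep(V)_A^{\loc}$ ``produces an isomorphism $A\cong A^*$ of $A$-modules,'' and then that the categorical dual in $\Rep(V)$ is the contragredient, so $A\cong A'$. But these two duals live in different categories. In $\cC_A^{\loc}$ the unit $A$ is trivially self-dual in the rigid sense, yet this says nothing about the $V$-contragredient: the forgetful functor $\cC_A^{\loc}\to\Rep(V)$ is only lax monoidal and does not intertwine duals. Indeed the paper warns explicitly (just after Theorem~\ref{thm:intro-gen-VOA-ext}) that ``duals in $\cC_A$ and $\cC_A^{\loc}$ might not be contragredients of $A$-modules,'' and the Grothendieck--Verdier dualizing object of $\cC_A^{\loc}$ is $A'$, not $A$ (Theorem~\ref{thm:CAloc_ribbon_GV}). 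Rigidity only tells you $A'$ is invertible in $\cC_A^{\loc}$, not that $A'\cong A$.

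The paper instead proves $A\cong A'$ by a direct VOA argument: Li's criterion says $A$ is self-contragredient iff $A_{(0)}/L_1 A_{(1)}\neq 0$, and the semisimple decomposition $A=V\oplus\widetilde{A}$ in $\Rep(V)$ reduces this to $V_{(0)}/L_1 V_{(1)}\neq 0$, which holds since $V$ is self-contragredient. Your argument can also be repaired categorically: since $\Rep(V)$ is semisimple and $V$ is a summand of $A$, one has $\Hom_\cC(A,V)\neq 0$, whence by Frobenius reciprocity $\Hom_{\cC_A}(A,A')\cong\Hom_\cC(V,A')\cong\Hom_\cC(A,V')\cong\Hom_\cC(A,V)\neq 0$; as $A$ and $A'$ are both simple $A$-modules, any nonzero map is an isomorphism. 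Either route works, but the rigidity-of-$\cC_A^{\loc}$ step as you stated it does not.
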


One of the first special cases of this result was proved in \cite[Theorem 2.12]{DGH}, where $V$ was taken to be a tensor power of rational Virasoro VOAs at central charge $\frac{1}{2}$.
A more general version appears in \cite[Theorem 3.5]{Huang:2014ixa}, but there it was assumed that all $V$-modules have only non-negative real conformal weights and that $V$ itself is the only simple $V$-module with conformal weight $0$ (with conformal weight $0$ space spanned by the vacuum vector). By  \cite{Huang:Rig_Mod, DJX}, these extra conditions guarantee that $\Rep(V)$ is pseudo-unitary, so that non-zero objects of $\Rep(V)$ have positive real categorical dimensions, and therefore by Kirillov and Ostrik's semisimplicity results for rigid $\Rep(V)$-algebras \cite[Theorem 3.3]{kirillov2002q}, $\Rep(A)$ is semisimple and $A$ is strongly rational. One could just as well replace the pseudo-unitary assumptions of \cite{Huang:2014ixa} with the direct assumption that the dimension of $A$ in $\Rep(V)$ is non-zero, as in \cite[Theorem 1.2(1)]{McRaeSS}. But in practice it is hard to prove that dimensions are non-zero without (pseudo-)unitarity, and it seems that the representation categories of most strongly rational VOAs are not unitary or even pseudo-unitary. This is especially true for the strongly rational exceptional $W$-algebras discovered recently in \cite{Ar15b, AvE, McRae2021}, among other references. 

But now, by replacing rigid algebras with exact algebras, we can prove Theorem \ref{thm:intro-VOA-ext-rational} with no non-zero dimension assumption.
As an application, this theorem has just now been used to establish strong rationality of the coset $\text{Com}(L_{k+1}(\mathfrak{sp}_{2n}),L_{k}(\mathfrak{sp}_{2n}) \otimes F(4n))$ at any admissible level $k$ of $\mathfrak{sp}_{2n}$ \cite[Theorem 9.1]{CKL24}. Here $F(4n)$ is the vertex operator superalgebra of $4n$ free fermions and $L_{k}(\mathfrak{sp}_{2n})$ is the simple affine VOA of $\mathfrak{sp}_{2n}$ at level $k$.  A few more examples are given in Section \ref{sec:ex}.

If $V$ is strongly finite but not strongly rational, then it is conjectured that $\Rep(V)$ is a non-semisimple modular tensor category. So far, however, this is known only if $\Rep(V)$ is rigid \cite{McRae2021}, and rigidity of $\Rep(V)$ in general remains an open problem. In Theorem \ref{thm:exts_of_C2_VOAs}, we show that rigidity is at least preserved under taking extensions, as long as Etingof and Ostrik's exactness criteria from \cite{etingof2021frobenius} hold:
\begin{theorem}\label{thm:intro-simple-VOA}
      Let $V$ be a strongly finite VOA such that $\Rep(V)$ is rigid, and let $V\subseteq A$ be a VOA extension such that $A$ is simple and either of the following equivalent conditions holds:
    \begin{enumerate}
        \item[(a)] There is an $A$-module embedding $A'\hookrightarrow A\boxtimes_V W$ for some $W\in\Rep(V)$,
        \item[(b)] $A'\boxtimes_A A'\neq 0$.
    \end{enumerate}
    Then the category $\Rep(A)$ of grading-restricted generalized $A$-modules is a non-degenerate braided finite tensor category and the category $\Rep(V)_A$ of non-local $A$-modules in $\Rep(V)$ is a finite tensor category. These conclusions hold in particular if $A$ is self-contragredient. Moreover, if $A$ is self-contragredient and $\ZZ$-graded, then $\Rep(A)$ is a (possibly non-semisimple) modular tensor category.
\end{theorem}

We emphasize that our rigidity and semisimplicity results for VOA extensions do not require strong finiteness and rationality. Here we summarize some of the results in Section \ref{subsec:rigidity-for-VOAs} that are immediate corollaries of our general categorical results:
\begin{theorem}\label{thm:intro-gen-VOA-ext}
    Let $V$ be a simple self-contragredient $\ZZ$-graded VOA, $\cC$ a braided finite tensor category of $V$-modules that is closed under contragredients, and $A$ a simple $\ZZ$- or $\frac{1}{2}\ZZ$-graded VOA which contains $V$ as a vertex operator subalgebra such that $A$ is an object of $\cC$.
    \begin{enumerate}
        \item If any of the following conditions hold, then $\cC_A$ and $\cC_A^{\loc}$ are finite tensor categories:
    \begin{enumerate}
        \item There is an $A$-module embedding $A'\hookrightarrow A\boxtimes_V W$ for some $W\in\cC$ (Corollary \ref{thm:com-exact-2-VOA}).
        \item $A'\boxtimes_A A' \neq 0$ (Corollary \ref{thm:com-exact-2-VOA}).
        \item $\cC$ is an integral finite tensor category (Corollary \ref{cor:com-exact-integral-VOA}).
        \item $A\cong A'$ as an $A$-module (Corollary \ref{cor:com-exact-5-VOA}).
        \item $\Hom_\cC(A,V)\neq 0$ (Corollary \ref{cor:com-exact-5-VOA}).
    \end{enumerate}
    \item If $\cC$ is a braided fusion category, then $\cC_A^{\loc}$ and $\cC_A$ are fusion categories (Corollary \ref{thm:comm-exact-semisimple-VOA}).
    \end{enumerate}
\end{theorem}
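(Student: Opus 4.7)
The plan is to verify that each hypothesis in the theorem is a vertex algebraic instance of a previously established categorical criterion, so that the corresponding abstract rigidity result from Section \ref{sec:rig-of-CA} applies directly.

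First I would set up the dictionary between VOA extension data and algebra data in $\cC$. By \cite{Huang:2014ixa} and \cite{creutzig2017tensor}, the embedding $V \hookrightarrow A$ gives $A$ the structure of an associative commutative algebra in $\cC$; the $\ZZ$-grading on $A$ ensures commutativity in the braided sense, while a $\frac{1}{2}\ZZ$-grading leads to the signed commutativity appropriate to vertex operator superalgebras, so one passes to the relevant supercategory setup as in Theorem \ref{thm:VOSA-ext-rational}. Because $V$ is self-contragredient and $\cC$ is closed under contragredients, the contragredient functor realizes the rigidity duality on $\cC$, so $A^* \cong {}^*A \cong A'$ in $\cC$, and the vertex algebraic tensor products $\boxtimes_V$ and $\boxtimes_A$ are identified with $\otimes$ in $\cC$ and $\otimes_A$ in $\cC_A$ respectively. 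Finally, simplicity of $A$ as a VOA together with the $\NN$-grading by conformal weights and simplicity of $V$ forces $\End_{\cC_A}(A) = \CC \cdot \id_A$, so that $A$ is haploid simple as an algebra in $\cC$.

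With the dictionary in place, I would handle part (1) case by case. Conditions (a) and (b) are exactly vertex algebraic statements of Etingof--Ostrik's equivalent exactness conditions recalled in Section \ref{subsec:exact-algebras}: (a) is the existence of a left $A$-module embedding ${}^*A \hookrightarrow A \otimes W$, and (b) is the non-vanishing condition $A^* \otimes_A {}^*A \neq 0$. Exactness of $A$ in $\cC$ then yields rigidity of $\cC_A$ and $\cC_A^{\loc}$ by the exact-algebra rigidity theorem of Section \ref{sec:rig-of-CA}. Condition (c) is Corollary \ref{cor:com-exact-integral} translated; condition (d), namely $A \cong A'$ as an $A$-module, is the Frobenius case of Corollary \ref{cor:com-exact-5}; and (e) is the $\Hom_\cC(A,\one) \neq 0$ case of the same corollary, where $\one = V$. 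Part (2) then follows from Theorem \ref{thm:comm-exact-semisimple}: once $\cC$ is fusion, the same exactness argument applies and the conclusion upgrades from ``finite tensor'' to ``fusion''.

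The main technical point will be the translation step rather than any new analytic input. In particular, the identification of $A^*$ and ${}^*A$ with the contragredient $A'$ uses the full force of the hypotheses that $V$ is self-contragredient and $\cC$ is closed under contragredients; the identification $\Hom_\cC(A,V) = \Hom_\cC(A,\one)$ that makes (e) match Corollary \ref{cor:com-exact-5} uses that $V$ is the tensor unit of $\cC$; and haploidness of $A$ as an algebra uses the interplay of $\NN$-grading, simplicity of $V$, and simplicity of $A$. Once these dictionary statements are verified, no further VOA-theoretic work is needed and the theorem is a direct application of the abstract categorical results already established.
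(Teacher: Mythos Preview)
Your overall approach is correct and matches the paper's: both reduce each VOA-theoretic hypothesis to the corresponding abstract categorical criterion from Section~\ref{sec:rig-of-CA} via the dictionary of \cite{Huang:2014ixa, creutzig2017tensor}, and then invoke Theorem~\ref{thm:com-exact-2}, Corollary~\ref{cor:com-exact-integral}, Corollary~\ref{cor:com-exact-5}, and Theorem~\ref{thm:comm-exact-semisimple} respectively.

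However, your treatment of the $\frac{1}{2}\ZZ$-graded case is wrong. A $\frac{1}{2}\ZZ$-graded VOA is still an ordinary VOA, not a vertex operator superalgebra; it becomes a genuinely commutative algebra in $\cC$ (no signs) by \cite{Huang:2014ixa}, and the supercategory machinery of Section~\ref{subsec:superalgebras} is entirely irrelevant here. In fact the grading hypothesis plays no role whatsoever in this direction of the argument: the results of Section~\ref{sec:rig-of-CA} apply to any commutative algebra in a braided finite tensor category, with no twist condition. (The grading only enters in Section~\ref{sec:rig-of-C}, where $\theta_A^2=\id_A$ is needed for Theorem~\ref{thm:CAloc_ribbon_GV}.) Relatedly, commutativity of $A$ in $\cC$ comes from the VOA skew-symmetry axiom, not from the grading; and haploidness follows from simplicity of $A$ alone via $\Hom_\cC(\vac,A)\cong\Hom_{{}_A\cC_A}(A,A)\cong\CC$ as in the proof of Theorem~\ref{thm:com-exact-2}, without any $\NN$-grading assumption (which is not among the hypotheses). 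These are conceptual misstatements rather than genuine gaps---once corrected, your argument goes through exactly as the paper's does.
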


In the setting of this theorem, $\cC_A^{\loc}$ is precisely the category of (untwisted) $A$-modules which are objects of $\cC$ when considered as $V$-modules \cite[Theorem 3.4]{Huang:2014ixa}, and by \cite[Theorem 3.65]{creutzig2017tensor}, the braided tensor category structure on $\cC_A^{\loc}$ is precisely the vertex algebraic one described in \cite{HLZ8}. For non-strongly finite examples of this theorem, one could consider simple affine VOAs $L_k(\mathfrak{g})$ associated to simple Lie algebras $\mathfrak{g}$ at admissible levels $k$. The category $\cC_k^L(\mathfrak{g})$ of ordinary $L_k(\mathfrak{g})$-modules is a finite semisimple braided monoidal category \cite{Arakawa:2012xrk, Creutzig:2017gpa}, and $\cC_k^L(\mathfrak{g})$ is rigid at least in many cases \cite{Creutzig:2017gpa, Creutzig:2019qje, CVL, Creutzig:2022riy}. If $k\in\NN$, then $L_k(\mathfrak{g})$ is strongly rational and unitary, and hence any simple extension of $L_k(\mathfrak{g})$ is also strongly rational; see \cite{gannon} for recent progress on classifying such extensions. One would also hope for interesting simple extensions $A$ of $L_k(\mathfrak{g})$ for non-integral admissible $k$, and Theorem \ref{thm:intro-gen-VOA-ext}(2) now implies that for any such extension, the category $\cC_k^L(\mathfrak{g})_A^{\loc}$ of ordinary $A$-modules will be a braided fusion category, as long as $\cC_k^L(\mathfrak{g})$ is rigid.

One can also use the above results to study extensions $V\subseteq A$ where $V$ is a $\ZZ$-graded VOA and $A$ is a $\ZZ$- or $\frac{1}{2}\ZZ$-graded vertex operator superalgebra containing $V$ in its even part. See Section \ref{subsec:superalgebras} for details.

As discussed above, Theorems \ref{thm:intro-VOA-ext-rational}, \ref{thm:intro-simple-VOA}, and \ref{thm:intro-gen-VOA-ext} rely on results of Etingof and Ostrik from \cite[\S B1]{etingof2021frobenius}. After the first version of the present paper was completed, Coulembier, Stroi\'nski, and Zorman \cite{coulembier2025simple} proved  \cite[Conjecture B.6]{etingof2021frobenius}, and as a result, certain assumptions in our results have now become unnecessary. Specifically, the assumption that either (a) or (b) holds can be removed from Theorem~\ref{thm:intro-simple-VOA}, and in Theorem~\ref{thm:intro-gen-VOA-ext}(1), $\mathcal{C}_A$ and $\mathcal{C}_A^{\mathrm{loc}}$ are finite tensor categories without assuming conditions (a)--(e). Nevertheless, we have retained the original formulations of our results in the present version of this paper to ensure consistency with works that cite the previous version.

%with earlier references and to avoid confusion with 

\subsection{Rigidity of \texorpdfstring{$\cC$}{C} and \texorpdfstring{$\cC_A$}{CA} from \texorpdfstring{$\cC_A^\loc$}{CA-loc}}

Now assume that $\cC$ is an abelian braided monoidal category and $A$ is a commutative algebra in $\cC$. In Section \ref{sec:rig-of-C},
we show that if the local $A$-module category $\cC_A^{\loc}$ is rigid, then so are $\cC$ and $\cC_A$ under reasonable assumptions.
It may seem at first unlikely that $\cC_A^{\loc}$ could contain enough information about $\cC$ to prove its rigidity in interesting examples, since for example the monoidal induction functor (given on objects of $\cC$ by $X\mapsto A\otimes X$) relates $\cC$ to $\cC_A$, not $\cC_A^{\loc}$ in general. 
To get around this problem, one of our main assumptions is that every simple object of $\cC_A$ is an object of $\cC_A^{\loc}$. This assumption may seem surprising to readers who are mainly used to algebras in non-symmetric braided fusion categories. However, we have in mind free field-like VOA extensions $V\subseteq A$ in which $A$ is not semisimple but rather indecomposable as a $V$-module. For such extensions, it seems normal that every simple object of $\mathcal{C}_A$ is local: see for example \cite[Lemma 8.5]{CLR} for the case that $V$ is a singlet VOA and $A$ is a rank $1$ Heisenberg VOA, and see \cite[Section 4]{CMY24} for the case that $V$ is a simple affine VOA of $\mathfrak{sl}_2$ at admissible level and $A$ is Adamovi\'{c}'s inverse quantum Hamiltonian reduction \cite{Ad-IQHR}.

Free-field like realizations of VOAs for which all simple objects of $\mathcal{C}_A$ are local have several uses besides the rigidity applications that we will discuss further below.
They are crucially used in \cite{CLR} to prove equivalences between non-semisimple VOA and quantum group module categories. In these logarithmic Kazhdan-Lusztig correspondences, the property that all simple objects of $\cC_A$ are local is used for example to show the existence of a tensor functor $\mathcal{C}_A\rightarrow\cC_A^{\loc}$ that splits the inclusion $\cC_A^\loc\hookrightarrow\cC_A$; see \cite[Lemma 4.4]{CLR}. 
Moreover, a Verlinde formula for the fusion rules of VOAs in logarithmic conformal field theory has just now been proved under a very similar setup; see \cite[Assumption 2.3]{C24}.

To describe our rigidity results here specifically, we need to recall Grothendieck-Verdier duality \cite{boyarchenko2013duality}. If $\cC$ is a monoidal category, then an object $K\in\cC$ is \textit{dualizing} if for every $X\in\cC$ the functor $Y\mapsto\Hom_\cC(Y\otimes X,K)$ is representable, that is, there is an object $DX\in\cC$ and a natural isomorphism
\begin{equation*}
    \Hom_{\cC}(Y\otimes X,K)\cong\Hom_\cC(Y,DX)
\end{equation*}
for all objects $X,Y\in\cC$, such that the resulting contravariant functor $X\mapsto DX$ is an anti-equivalence. A \textit{Grothendieck-Verdier category} is a monoidal category equipped with a dualizing object. If a braided monoidal category of modules for a VOA $V$ is closed under contragredient modules, then by \cite{ALSW} it is a Grothendieck-Verdier category whose dualizing object is the contragredient $V'$ of $V$.

A Grothendieck-Verdier category with dualizing object $K\cong\vac$ is called an r-category in \cite{boyarchenko2013duality}. In an r-category $\cC$, the identity morphism $\id_{DX}$ for any object $X\in\cC$ induces a map $e_X: DX\otimes X\rightarrow\vac$ analogous to the evaluation morphism of a left dual, but there might not be a coevaluation $\vac\rightarrow X\otimes DX$. Conversely, if $\cC$ is rigid, then $\cC$ has an r-category structure such that $D$ is given by left duals and its quasi-inverse $D^{-1}$ by right duals. Thus Grothendieck-Verdier duality is weaker than rigidity.

In this paper, we mainly use Grothendieck-Verdier categories as a tool for proving rigidity, but they are also interesting in their own right since, for example, many braided monoidal categories from non-simple VOAs are Grothendieck-Verdier categories but not rigid. Examples include module categories for the $\cW(p,q)$-triplet VOAs \cite{GRW, Wood, Nakano}, the universal Virasoro VOAs at central charge $1-\frac{6(p-q)^2}{pq}$ for relatively prime $p,q\in\ZZ_{\geq 2}$ \cite{McR-Sop}, and the universal affine VOAs of $\mathfrak{sl}_2$ at admissible levels \cite{McRae:2023ado}. Thus we expect the following result (Theorem \ref{thm:GVinRepA} below) extending Grothendieck-Verdier duality structure from $\cC$ to $\cC_A$ will be of independent interest:
\begin{theorem}\label{thm:intro-CA-GV}
Let $(\cC,K)$ be an abelian braided Grothendieck-Verdier category with bilinear tensor product bifunctor and dualizing functor $D$, and let $A$ be a commutative algebra in $\cC$. Then the category $\cC_A$ of $A$-modules in $\cC$ is an abelian Grothendieck-Verdier category whose dualizing object is a natural $A$-module structure on $DA$.
\end{theorem}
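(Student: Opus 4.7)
The plan is to construct a contravariant functor $D_A : \cC_A^{\op} \to \cC_A$ extending the asserted $A$-module structure on $DA$, and then to verify the representability and anti-equivalence axioms of Grothendieck-Verdier duality directly. Abelianness of $\cC_A$ follows from the standard theory of modules over an algebra in an abelian monoidal category with bilinear tensor product; the nontrivial content is the dualizing object.

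First, for each $M \in \cC_A$ with action $\rho_M : A \otimes M \to M$, let $\ev_M : DM \otimes M \to K$ be the image of $\id_{DM}$ under the universal isomorphism $\Hom_\cC(DM, DM) \cong \Hom_\cC(DM \otimes M, K)$. Define $\lambda_{DM} : A \otimes DM \to DM$ to be the unique morphism corresponding under $\Hom_\cC(A \otimes DM, DM) \cong \Hom_\cC(A \otimes DM \otimes M, K)$ to the composite
\[
A \otimes DM \otimes M \xrightarrow{c_{A, DM} \otimes \id_M} DM \otimes A \otimes M \xrightarrow{\id_{DM} \otimes \rho_M} DM \otimes M \xrightarrow{\ev_M} K.
\]
The case $M = A$, $\rho_A = \mu_A$ specializes to the asserted structure on $DA$. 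The unit and associativity axioms for $\lambda_{DM}$ are proved by unwinding both sides through the adjunction and invoking the hexagon axioms for the braiding, the commutativity of $A$, and the module axioms of $M$. Set $D_A M := (DM, \lambda_{DM})$. A morphism $f : M \to M'$ in $\cC_A$ dualizes to $Df : DM' \to DM$ in $\cC$, which is automatically $A$-linear by naturality of $\ev$, giving the functor $D_A$.

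Next, I would prove the natural isomorphism
\[
\Hom_{\cC_A}(N \otimes_A M, DA) \cong \Hom_{\cC_A}(N, D_A M)
\]
by factoring both sides through $\Hom_\cC(N \otimes_A M, K)$. The central calculation is that a morphism $f : N \to DM$ in $\cC$ is $A$-linear for $\rho_N$ and $\lambda_{DM}$ if and only if its adjoint $f^\flat : N \otimes M \to K$ satisfies
\[
f^\flat \circ (\rho_N \otimes \id_M) = f^\flat \circ (\id_N \otimes \rho_M) \circ (c_{A, N} \otimes \id_M)
\]
as morphisms $A \otimes N \otimes M \to K$. Using the commutativity of $A$ to turn the left $A$-action $\rho_N$ into the right $A$-action $\rho_N^{\mathrm{r}} := \rho_N \circ c_{A, N}^{-1}$, this is exactly the balancing condition that makes $f^\flat$ factor through the coequalizer $N \otimes_A M$. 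In the special case $M = A$ with $N \otimes_A A \cong N$, this yields the ``coinduction'' bijection $\Hom_{\cC_A}(X, DA) \cong \Hom_\cC(X, K)$ for any $X \in \cC_A$ (i.e.\ $DA$ represents $\Hom_\cC(-, K) \circ U$); applying this with $X = N \otimes_A M$ closes the loop.

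Finally, I would show $D_A$ is an anti-equivalence. The canonical double-duality isomorphism $\delta_M : M \to DDM$ in $\cC$, which exists since $D$ is an anti-equivalence on $\cC$, is the underlying morphism of an isomorphism $M \to D_A D_A M$ in $\cC_A$; checking $A$-linearity is another adjunction calculation using the definition of $\lambda_{DM}$ iterated twice, together with naturality of the braiding. A quasi-inverse of $D_A$ is constructed analogously using $D^{-1}$ in place of $D$, and naturality of $\delta$ upgrades to naturality in $\cC_A$. The main obstacle I expect is the braided bookkeeping in the $A$-linearity-versus-$A$-balancedness comparison: one must keep careful track of which instance of the braiding appears where (and in which direction), and verify that commutativity of $A$ is precisely the ingredient making all the diagrams close.
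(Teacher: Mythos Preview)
Your construction of the $A$-module structure $\lambda_{DM}$ on $DM$ and your representability argument are correct and essentially match the paper's approach in Section~3.2. The paper builds an explicit $e^A_M : DM \otimes_A M \to DA$ and checks its universal property, whereas you factor through the coinduction isomorphism $\Hom_{\cC_A}(X, DA) \cong \Hom_\cC(X, K)$; either route works.

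The gap is in the anti-equivalence step. Your claim that the double-duality isomorphism $\delta_M : M \to DDM$ is $A$-linear as a map $M \to D_A D_A M$ fails for non-local $M$. In the braided setting the candidates for $\delta_M$ are the maps $\varphi^\pm_M$ characterized by $e_{DM} \circ (\varphi^\pm_M \otimes \id_{DM}) = e_M \circ c^{\pm 1}$, and unwinding $\lambda_{DM}$ twice with the \emph{same} braiding direction leaves an uncancelled factor of the monodromy $c_{M,A} \circ c_{A,M}$. The paper's fix is to introduce a second $A$-action $\mu^-_{DM}$ on $DM$ built from the inverse braiding $c_{DM,A}^{-1}$ in place of $c_{A,DM}$, and to show (Proposition~\ref{prop:phi_pm_A-hom}) that $\varphi^-_M$ is $A$-linear as a map $M \to D_A\bigl((DM,\mu^-_{DM})\bigr)$ rather than $M \to D_A^2 M$. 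This gives essential surjectivity of $D_A$ directly (every $M$ is $D_A$ of $(DM,\mu^-_{DM})$), which together with full faithfulness yields the anti-equivalence. Your closing worry about ``braided bookkeeping'' was exactly on target: the alternation of braiding direction between the two applications of $D$ is the missing ingredient, and ``using $D^{-1}$ in place of $D$'' does not capture it, since the issue is $c$ versus $c^{-1}$ in defining the induced $A$-action, not $D$ versus $D^{-1}$.
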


Note that $DA$ might not be isomorphic to $A$ as an $A$-module, even if $\cC$ is an r-category with $K=\vac$. It is not even clear whether $DA$ is a local $A$-module, and thus it is not clear whether $\cC_A^{\loc}$ is a Grothendieck-Verdier category in general. However, we can prove that $DA$ is local if $\cC$ is a ribbon Grothendieck-Verdier category (that is, $\cC$ has a ribbon twist $\theta:\id_\cC\rightarrow\id_\cC$ satisfying the usual balancing equation and such that $\theta_{DX}=D\theta_X$ for all objects $X\in\cC$) such that $\theta_A^2=\id_A$. The following is Theorem \ref{thm:CAloc_ribbon_GV}:
\begin{theorem}\label{thm:intro-CAloc-GV}
    Let $(\cC,K)$ be an abelian ribbon Grothendieck-Verdier category with twist $\theta$ and dualizing functor $D$, and let $A$ be a commutative algebra in $\cC$. If $\theta_A^2=\id_{A}$, then the category $\cC_A^{\loc}$ of local $A$-modules in $\cC$ is a braided Grothendieck-Verdier category with dualizing object $DA$. Moreover, $\cC_A^\loc$ is a ribbon Grothendieck-Verdier category if $\theta_A=\id_A$.
\end{theorem}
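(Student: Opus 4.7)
The plan is to leverage Theorem \ref{thm:intro-CA-GV}, which establishes that $\cC_A$ is itself a Grothendieck-Verdier category with dualizing object $DA$ carrying a distinguished $A$-module structure; write $D_A$ for the resulting dualizing functor. Since $\cC_A^{\loc}$ is a full braided monoidal subcategory of $\cC_A$ (with tensor product $\otimes_A$ and braiding inherited from $\cC$), to show it is a braided Grothendieck-Verdier category with dualizing object $DA$, it suffices to prove:
\begin{enumerate}
    \item[(i)] $DA$ is a local $A$-module; and
    \item[(ii)] $D_A N \in \cC_A^{\loc}$ for every $N \in \cC_A^{\loc}$.
\end{enumerate}
Together, (i) and (ii) restrict the universal property defining $DA$ in $\cC_A$ to the universal property of a dualizing object in $\cC_A^{\loc}$, yielding the desired Grothendieck-Verdier structure.

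For (i), the action $\rho_{DA}: A \otimes DA \to DA$ is defined in the proof of Theorem \ref{thm:intro-CA-GV} by dualizing the multiplication $\mu: A \otimes A \to A$ via the adjunction $\Hom_\cC(Y \otimes A, K) \cong \Hom_\cC(Y, DA)$. I would translate the locality equation $\rho_{DA} \circ c_{DA, A} \circ c_{A, DA} = \rho_{DA}$ through this adjunction into an equality of morphisms $A \otimes DA \otimes A \to K$. By naturality of the braiding, the double braiding can be transported off of $DA$ onto the two $A$-slots; then the balancing equation $\theta_{A \otimes A} = (\theta_A \otimes \theta_A) \circ c_{A,A} \circ c_{A,A}$, together with naturality of $\theta$, the commutativity identity $\mu \circ c_{A,A} = \mu$, and the ribbon-duality compatibility $\theta_{DA} = D\theta_A$, reduces the condition to an identity involving only $\theta_A^2$, which is $\id_A$ by hypothesis.

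For (ii), an analogous argument applies: the $A$-module structure on $D_A N$ is obtained by dualizing the action $\rho_N: A \otimes N \to N$ under the Grothendieck-Verdier adjunction, and locality of $D_A N$ translates into an identity of morphisms into $K$ that follows from the locality of $N$, commutativity of $A$, and $\theta_A^2 = \id_A$. The hard part in both (i) and (ii) is the careful bookkeeping of braidings and twists through the Grothendieck-Verdier adjunction in the absence of rigidity, where one cannot rely on the usual graphical-calculus shortcuts involving evaluation and coevaluation morphisms; every naturality step must instead be justified directly from the adjunction.

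For the ribbon statement when $\theta_A = \id_A$: a direct computation from the balancing equation and naturality of $\theta$ shows that, for any $M \in \cC_A$, the morphism $\theta_M$ is $A$-linear if and only if $M$ is local. Hence $\theta$ restricts to a natural automorphism of the identity functor on $\cC_A^{\loc}$. The balancing equation for the relative tensor product $\otimes_A$ descends from that of $\otimes$ via the universal property of $\otimes_A$, and the compatibility $\theta_{D_A N} = D_A \theta_N$ for $N \in \cC_A^{\loc}$ follows from $\theta_{DX} = D\theta_X$ in $\cC$ together with the explicit construction of $D_A$ provided by Theorem \ref{thm:intro-CA-GV}.
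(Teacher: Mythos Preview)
Your overall strategy---show that the Grothendieck--Verdier structure on $\cC_A$ restricts to $\cC_A^{\loc}$---is exactly the paper's, and your sketch of the ribbon part is correct and matches the paper. But there is a genuine gap in your reduction step. Conditions (i) and (ii) ensure that $\Hom_{\cC_A^{\loc}}(-\otimes_A M, DA)$ is representable for each $M\in\cC_A^{\loc}$ (by $D_A M$) and that the resulting contravariant functor is fully faithful (inherited from $\cC_A$). What they do \emph{not} give is essential surjectivity of $D_A|_{\cC_A^{\loc}}$, and without that $DA$ is not a dualizing object. An anti-equivalence of $\cC_A$ that carries a full subcategory into itself need not be essentially surjective on that subcategory, so your sentence ``(i) and (ii) restrict the universal property \dots\ yielding the desired Grothendieck--Verdier structure'' overstates what has been established.

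The paper fixes this by noting that $DM$ carries \emph{two} natural $A$-module structures $\mu_{DM}^{\pm}$, obtained by using $c_{A,DM}$ or $c_{DM,A}^{-1}$ in the defining adjunction. The functor $D_A$ is built from the $+$ structure, so your (ii) is the assertion that $DM_+$ is local. Essential surjectivity, however, comes from the natural isomorphism $\varphi_M^-\colon M \xrightarrow{\sim} D_A(DM_-)$ of Proposition~\ref{prop:phi_pm_A-hom}, which lives in $\cC_A^{\loc}$ only once you also know that $DM_-$ is local. The fix is painless: the calculation you outline for (ii)---translate locality through the adjunction and invoke the balancing equation, naturality of $\theta$, the ribbon compatibility $\theta_{DM}=D\theta_M$, locality of $M$, and $\theta_A^2=\id_A$---works verbatim for $DM_-$, and indeed the paper carries out both cases in a single diagrammatic computation. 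Once you add this, your argument is complete and coincides with the paper's. (Incidentally, (i) is then redundant, being the special case $M=A$ of the statement for $DM_+$.)
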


In the setting of the previous theorem, if $\cC$ is a category of modules for a VOA $V$ and $V\subseteq A$ is a VOA extension in $\cC$, then $\cC_A^{\loc}$ has two Grothendieck-Verdier category structures: the categorical one of Theorem \ref{thm:intro-CAloc-GV} and the vertex algebraic one given by $A$-module contragredients. In Theorem \ref{thm:VOA_and_tens_cat_contras}, we prove that these two Grothendieck-Verdier category structures are the same. We also describe the Grothendieck-Verdier category structure on $\cC_A$ of Theorem \ref{thm:intro-CA-GV} in vertex algebraic terms. To our knowledge, explicit non-local $A$-module structure on the $V$-module contragredient of a non-local $A$-module has been obtained previously only in the case that $V$ is the fixed-point subalgebra of some automorphism of $A$ \cite{Xu, Huang:Tw-Intw-Ops}.

With the above preparation, we can now state our main result (Theorem \ref{rigidity-of-C-from-CAloc} below) on rigidity of $\cC$ from rigidity of $\cC_A^{\loc}$. Besides assuming that $\cC$ is a ribbon r-category and that every simple object of $\cC_A$ is local, we need to add a mild non-degeneracy assumption on $\cC$. We also need the induction functor $F_A: \cC\rightarrow\cC_A$, given on objects by $X\mapsto A\otimes X$, to commute with duality:
\begin{theorem}\label{thm:intro-C-rigid}
Let $(\cC,D,\theta)$ be a locally finite abelian ribbon $r$-category, and let $A$ be a commutative algebra in $\cC$ such that $\theta_A^2=\id_A$ and the unit morphism $\iota_A:\vac\rightarrow A$ is injective. Also assume the following:
\begin{enumerate}
\item[(a)] $\cC_A^{\loc}$ is rigid and every simple object of $\cC_A$ is an object of $\cC_A^{\loc}$.

\item[(b)] For any simple object $X\in\cC$, the map $e_X: DX\otimes X\rightarrow\vac$ is surjective and any non-zero $\cC_A$-morphism from $F_A(DX)$ to $F_A(X)^*$ is an isomorphism.

\item[(c)] For any $\cC$-subobject $s: S\hookrightarrow A$ such that $S$ is not contained in $\mathrm{Im}\,\iota_A$, there exists an object $Z\in\cC$ such that $c_{Z,S}\neq c_{S,Z}^{-1}$ and the map $s\otimes\id_Z: S\otimes Z\rightarrow A\otimes Z$ is injective.

\end{enumerate}
Then $\cC$ is rigid.
\end{theorem}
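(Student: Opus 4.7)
The plan is to establish rigidity of $\cC$ in two stages: first deducing rigidity of $\cC_A$ from that of $\cC_A^{\loc}$ using assumption (a), and then pulling rigidity back to $\cC$ along the induction functor $F_A:\cC\to\cC_A$, with assumptions (b) and (c) controlling the obstructions.

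For the first stage, Theorem~\ref{thm:intro-CAloc-GV} endows $\cC_A^{\loc}$ with a Grothendieck-Verdier structure whose dualizing object is $DA$, so in particular $DA$ is a local $A$-module. By Theorem~\ref{thm:intro-CA-GV}, $\cC_A$ is then a Grothendieck-Verdier category with the same dualizing object. Every simple object of $\cC_A$ is local by (a) and hence possesses a two-sided dual in the rigid category $\cC_A^{\loc}$; this dual continues to serve as a dual in $\cC_A$ since the tensor product $\otimes_A$ is shared between the two categories. Local finiteness of $\cC_A$, inherited via the exact forgetful functor from $\cC$, then reduces rigidity of $\cC_A$ to the standard fact that extensions of rigid objects in a Grothendieck-Verdier category are rigid.

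For the second stage, fix a simple $X\in\cC$. Rigidity of $\cC_A$ provides a coevaluation $\coev_{F_A(X)}:A\to F_A(X)\otimes_A F_A(X)^*$ and evaluation $\ev_{F_A(X)}:F_A(X)^*\otimes_A F_A(X)\to A$. Applying $F_A$ to $e_X:DX\otimes X\to\vac$ yields a $\cC_A$-morphism $F_A(DX)\otimes_A F_A(X)\cong F_A(DX\otimes X)\to A$, whose adjoint under the rigidity of $F_A(X)$ is a $\cC_A$-morphism $F_A(DX)\to F_A(X)^*$. This adjoint is non-zero---because $F_A(e_X)\neq 0$, which follows from injectivity of $\iota_A$ and surjectivity of $e_X$ from (b)---and hence an isomorphism by (b). Composing $\coev_{F_A(X)}$ with this isomorphism and the canonical identification $F_A(X)\otimes_A F_A(DX)\cong F_A(X\otimes DX)=A\otimes(X\otimes DX)$, then applying Frobenius reciprocity $\Hom_{\cC_A}(A,M)\cong\Hom_\cC(\vac,M)$, yields a morphism $\varphi_X:\vac\to A\otimes(X\otimes DX)$ in $\cC$. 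The sought coevaluation $\coev_X:\vac\to X\otimes DX$ is then the unique morphism satisfying $\varphi_X=(\iota_A\otimes\id_{X\otimes DX})\circ\coev_X$, provided $\varphi_X$ factors through the subobject $\vac\otimes(X\otimes DX)\hookrightarrow A\otimes(X\otimes DX)$.

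The main obstacle, and the essential role of (c), is proving this factorization. If the image of $\varphi_X$ were supported on $S\otimes(X\otimes DX)$ for some subobject $s:S\hookrightarrow A$ not contained in $\mathrm{Im}(\iota_A)$, then (c) furnishes $Z\in\cC$ with $c_{Z,S}\neq c_{S,Z}^{-1}$ and $s\otimes\id_Z$ injective; the plan is to derive a contradiction by exploiting the braided naturality of $\varphi_X$, which is inherited from the fact that the coevaluation of the induced object $F_A(X)$ is compatible with the braiding against arbitrary objects of $\cC$ through the commutative algebra structure on $A$. Once the factorization is established, the snake identities in $\cC$ follow from those already satisfied in $\cC_A$ together with faithfulness of $F_A$, which is itself a consequence of $\iota_A$ being injective. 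Rigidity of general objects of $\cC$ then extends from simples by the same extension-closure argument used in the first stage.
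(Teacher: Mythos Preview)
Your first stage is close to the paper's argument but glosses over a crucial point: the ``standard fact'' that rigid objects are closed under extensions holds in an \emph{r-category}, not in an arbitrary Grothendieck-Verdier category. Theorem~\ref{thm:intro-CA-GV} gives $\cC_A$ the dualizing object $DA$, not $A$. The paper fixes this by observing that since $\cC_A^{\loc}$ is rigid, it has two dualizing objects $A$ and $DA$, forcing $DA$ to be invertible in $\cC_A^{\loc}$ and hence in $\cC_A$; this upgrades $\cC_A$ to an r-category (Theorem~\ref{thm:GVto rigid}). You should not skip this step.

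The real gap is in your second stage. The paper does \emph{not} try to pull rigidity back along $F_A:\cC\to\cC_A$ directly, because $F_A$ is not fully faithful and there is no obvious way to make your ``braided naturality of $\varphi_X$'' precise in $\cC_A$ alone. Instead, the paper lifts $F_A$ to a braided monoidal functor $I:\cC\to\cZ(\cC)_{(A,\sigma)}^{\loc}\cong\cZ(\cC_A)$ via the Schauenburg equivalence (Lemma~\ref{lem:I-F'-same}). The Drinfeld center is rigid because $\cC_A$ is, and the half-braiding compatibility built into $\cZ(\cC_A)$ is exactly the ``braided naturality'' you are gesturing at. Condition~(c) is then used in Theorem~\ref{thm:faithfulness-of-I} to show $I$ is fully faithful: a morphism $f:DY\otimes X\to A$ lies in $\Hom_{\cZ(\cC)}((DY\otimes X,c_{-,DY\otimes X}),(A,c_{A,-}^{-1}))$ if and only if its image $S\hookrightarrow A$ satisfies $(s\otimes\id_Z)\circ c_{Z,S}=(s\otimes\id_Z)\circ c_{S,Z}^{-1}$ for all $Z$, and~(c) forces $S\subseteq\mathrm{Im}\,\iota_A$. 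Your framing---looking at the image of $\varphi_X:\vac\to A\otimes(X\otimes DX)$ and hoping it has the form $S\otimes(X\otimes DX)$---does not match~(c): the image of a morphism into a tensor product need not decompose this way, and you give no mechanism to extract a subobject of $A$ itself. Once $I$ is fully faithful and commutes with duality (Lemma~\ref{lem:central-functor-duality}), Lemma~\ref{lem:GVto rigid} finishes the proof.
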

The proof of this theorem goes as follows:
\begin{itemize}
    \item First, in the setting of the theorem, $\cC_A^{\loc}$ has two Grothendieck-Verdier duality structures: one given by rigid duals and one from Theorem \ref{thm:intro-CAloc-GV}. By \cite[Proposition 2.3]{boyarchenko2013duality}, the two dualizing objects $A$ and $DA$ must differ by tensoring with an invertible object of $\cC_A^{\loc}$, and thus by \cite[Proposition 2.3]{boyarchenko2013duality} again $A$ is a dualizing object of the larger category $\cC_A$ if and only if $DA$ is. Since $DA$ is a dualizing object of $\cC_A$ by Theorem \ref{thm:intro-CA-GV}, so is $A$. So $\cC_A$ has an r-category structure.
    
    \item Next, we show that if every simple object of a locally finite abelian r-category is rigid, then so is every object by induction on length (Theorem \ref{thm:rigidityfrom simples}). Thus using condition (a) in Theorem \ref{thm:intro-C-rigid}, we conclude $\cC_A$ is rigid (Theorem \ref{thm:GVto rigid}).

    \item Now the Drinfeld center $\mathcal Z(\cC_A)$ is rigid, and by \cite[Corollary 4.5]{schauenburg2001monoidal}, this is equivalent to the local module category $\cZ(\cC)_{(A,\sigma)}^\loc$, where the commutative algebra $(A,\sigma)\in\cZ(\cC)$ is equipped with the half-braiding $\sigma=c_{A,-}^{-1}$. Using induction, we get a braided monoidal functor $I: \cC\rightarrow\cZ(\cC)_{(A,\sigma)}^\loc$.

    \item Next, condition (b) in Theorem \ref{thm:intro-C-rigid}, Theorem \ref{thm:FcommuteswithD}, and Lemma \ref{lem:central-functor-duality} ensure that $I$ commutes with duality, that is, $I(DX)\cong I(X)^*$ for all objects $X\in\cC$. Using this together with the mild non-degeneracy condition (c) and the assumption that $\iota_A:\vac\rightarrow A$ is injective, we show that $I$ is fully faithful (Theorem \ref{thm:faithfulness-of-I}).

    \item Finally, rigidity of $\cC$ follows from Lemma \ref{lem:GVto rigid}, since $I$ is an embedding of $\cC$ into a rigid category and $I$ commutes with duality.
\end{itemize}

Theorem \ref{thm:intro-C-rigid} is tailored for applications to vertex operator algebras such as affine VOAs, affine $W$-algebras, and related VOAs like orbifolds, cosets, and extensions. Affine VOAs and $W$-algebras are non-rational at almost all levels, and when they are non-rational, their categories of weight modules are usually neither finite nor semisimple. Even the existence of braided monoidal structure on suitable module categories for these non-rational VOAs is difficult to establish. 
Once one has a braided monoidal category, rigidity is proved either by relating the category to a known rigid category or by studying analytic properties of explicit correlation functions. 
But these methods can be cumbersome, and except in special situations, they might not generalize much beyond VOAs related to $\mathfrak{sl}_n$.

 In previously studied examples, however, we have found that many affine VOAs and $W$-algebras $V$ admit conformal embeddings $V \hookrightarrow A$ where the (local) representation theory of $A$ is often known, semisimple, and rigid. In these examples, $A$ is often an indecomposable $V$-module, and it seems that this indecomposability is tightly connected to condition (a) in Theorem \ref{thm:intro-C-rigid}.
 For example, \cite[Lemma 8.5]{CLR} proves that if $\cC$ is a suitable module category for the singlet VOA $\cM(p)$, $p\in\ZZ_{\geq 2}$, and $A$ is the rank one Heisenberg VOA, which is an indecomposable $\cM(p)$-module, then every simple object of $\cC_A$ is local. In Example \ref{ex:Wp}, as a proof of concept, we verify the conditions of Theorem \ref{thm:intro-C-rigid} and thus prove rigidity for the strongly finite triplet algebra $\cW(2)$, which embeds into the rational lattice VOA $V_{2\ZZ}$. It was already proved in \cite{Tsuchiya:2012ru} that $\Rep(\cW(2))$ is rigid using analytic properties of explicit solutions of BPZ differential equations; here, we only need a few fusion rules in $\Rep(\cW(2))$ to prove rigidity. 
 
Much more importantly, since the first version of this paper was completed, the follow-up work \cite{CMY24} has verified the assumptions of Theorem \ref{thm:intro-C-rigid} and has thus proved rigidity for the category of weight modules for the simple affine VOA $L_k(\mathfrak{sl}_2)$ at any admissible level $k$. 
The embedding $L_k(\mathfrak{sl}_2)\hookrightarrow A$ is Adamovi\'{c}'s inverse quantum Hamiltonian reduction \cite{Ad-IQHR}, in which $A$ is a rational Virasoro VOA tensored with a half-lattice VOA.
Affine VOAs of $\mathfrak{sl}_2$ at admissible levels have long been regarded as prototypical examples of VOAs for logarithmic conformal field theory, and thus a complete understanding of their weight module categories is a key step towards a comprehensive understanding of non-semisimple vertex algebraic braided monoidal categories. 
Besides rigidity, the remaining open problems for these categories are fusion rules and the Verlinde formula, which are now treated with related ideas in \cite{C24}. The fusion rules and rigidity have also just appeared independently and simultaneously in \cite{Flor24}, via a different approach.

Another prospective application of Theorem \ref{thm:intro-C-rigid} is to prove rigidity and Kazhdan-Lusztig correspondences for module categories of affine Lie superalgebras. For a simple finite-dimensional Lie (super)algebra $\mathfrak{g}$ and a level $k\in\CC$, let $KL^k(\mathfrak{g})$ be the Kazhdan-Lusztig category of finite-length modules for the universal affine vertex operator (super)algebra of $\mathfrak{g}$ at level $k$ that have finite-dimensional conformal weight spaces. For $\mathfrak{g}$ a Lie algebra, and for almost all levels $k$,
 Kazhdan and Lusztig showed that $KL^k(\mathfrak{g})$ is a rigid braided monoidal category which is equivalent to a corresponding category of modules for the quantum group of $\mathfrak{g}$  \cite{kazhdan1993tensor, kazhdan1993tensor2, kazhdan1994tensor, kazhdan1994tensor2}. But for Lie superalgebras, similar results have been achieved so far only for $\mathfrak{gl}_{1\vert 1}$ and a few related solvable Lie superalgebras \cite{Creutzig:2020zom, CLR, Cre-Niu}.

  It might now be within reach to prove that $KL^k(\mathfrak{g})$ is rigid and equivalent to a quantum supergroup category when $\mathfrak{g}$ is a basic classical Lie superalgebra of type I. In this case, the universal affine VOA of $\mathfrak g$ at level $k$ conformally embeds into the affine VOA of the (reductive) even Lie subalgebra $\mathfrak g_0\subseteq\mathfrak g$ (at some levels that are related to $k$) tensored with a fermionic $bc$-system whose rank is the number of odd positive roots of $\mathfrak g$. It should then be possible to use Theorem \ref{thm:intro-C-rigid} to show that $KL^k(\mathfrak{g})$ is rigid for almost all $k$.
  It may then also be possible to generalize the ideas of \cite{CLR} to prove a Kazhdan-Lusztig correspondence for type I Lie superalgebras. As mentioned before, a key step would be to show that a certain functor from $KL^k(\mathfrak{g})$ to a relative Drinfeld center is fully faithful, and by our Theorem \ref{thm:faithfulness-of-I} below, this now requires fewer conditions.

\subsection{State of the art for non-rational VOAs}

Over the last decade there has been a lot of progress on braided monoidal categories from non-rational VOAs. As these results are scattered and also based on many examples, we conclude this introduction with a short summary of recent results.

First, the existence of braided monoidal structure on a given category of modules for a VOA $V$ is in general a difficult question. For categories that may include logarithmic $V$-modules, on which the Virasoro $L_0$ operator acts non-semisimply, this question was addressed by Huang, Lepowsky, and Zhang in the series of papers \cite{HLZ1, HLZ2, HLZ3, HLZ4, HLZ5,HLZ6,HLZ7,HLZ8}. They showed that if a category $\cC$ of $V$-modules satisfies a list of technical conditions, then $V$ is a ``vertex tensor category,'' which in particular means that $\cC$ admits a braided monoidal structure that is particularly natural from the vertex algebraic perspective.
However, it is often difficult to check whether $\cC$ satisfies Huang, Lepowsky, and Zhang's technical conditions, and therefore it is difficult to determine what is the correct category of $V$-modules for braided monoidal structure.
If $V$ is strongly finite, then there is no problem because Huang showed in \cite{huangC2} that the entire category $\Rep(V)$ of grading-restricted generalized $V$-modules is a vertex tensor category.

Beyond the strongly finite case, it is conjectured that for any VOA $V$, the category $\cC_1(V)$ of $C_1$-cofinite $V$-modules should satisfy the conditions for braided monoidal structure. The category of $C_1$-cofinite modules was essentially first introduced in the physics literature by Nahm \cite{Nahm}, who called such modules ``quasi-rational,'' and the vertex algebraic definition is due to Li \cite{Li-fin}. Key results of Huang \cite{Huang:diff-eqns} and Miyamoto \cite{miyamoto:C1-fus-prod} show that $\cC_1(V)$ is closed under vertex algebraic tensor products and that correlation functions built from compositions of vertex algebraic intertwining operators among $C_1$-cofinite $V$-modules satisfy analytic conditions needed for associativity isomorphisms in $\cC_1(V)$.
Building on these results, a recent series of works has specified simpler conditions that guarantee $\cC_1(V)$ satisfies the remaining conditions of Huang, Lepowsky, and Zhang for braided monoidal structure;
see in particular \cite[Theorem~6.6]{Creutzig:2017gpa}, \cite[Theorem~4.2.5]{Creutzig:2020zvv}, \cite[Theorem~3.6]{CJ}, and also \cite[Theorem~2.3]{Mc2}. The most general version so far is the following:
\begin{theorem}\textup{\cite{CORY24}}
 If the category $\cC_1(V)$ of $C_1$-cofinite grading-restricted generalized modules for a vertex operator superalgebra $V$ is closed under contragredients, then it admits the braided monoidal category structure specified in \cite{HLZ8}. In particular, $\cC_1(V)$ is a braided monoidal category if the following two conditions hold:
\begin{enumerate}
    \item[(a)] The contragredient $W'$ of any simple $C_1$-cofinite $V$-module $W$ is $C_1$-cofinite.
    \item[(b)] $\cC_1(V)$ equals the category of finite-length $C_1$-cofinite grading-restricted generalized $V$-modules with $C_1$-cofinite composition factors.
\end{enumerate}
\end{theorem}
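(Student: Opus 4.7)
The plan is to verify the long list of hypotheses required by the Huang-Lepowsky-Zhang (HLZ) construction of braided tensor structure on a category of generalized modules for a VOA (or vertex operator superalgebra), as spelled out across \cite{HLZ1, HLZ2, HLZ3, HLZ4, HLZ5, HLZ6, HLZ7, HLZ8}. Roughly, these axioms split into four groups: (i) closure of the category under kernels, cokernels, finite direct sums, and contragredients; (ii) closure under $P(z)$-tensor products; (iii) the convergence and extension property for products and iterates of intertwining operators, together with the expansion conditions near singular points in moduli of insertion points; (iv) grading-restriction and related finiteness assumptions on objects.

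For $\cC_1(V)$ the lion's share of this is already in the literature and is emphasized in the paragraph above the theorem. Li's work \cite{Li-fin} yields (i) except for the contragredient axiom and also (iv). Huang's theorem \cite{Huang:diff-eqns} guarantees that any composition of intertwining operators among $C_1$-cofinite modules satisfies a system of ordinary differential equations in the insertion variables whose singularities are regular; from this one extracts both the required convergence on the standard product and iterate domains and the expansions in power series in local coordinates with polynomial coefficients in the logarithms. Miyamoto's theorem \cite{miyamoto:C1-fus-prod}, suitably adapted to the vertex operator superalgebra setting, establishes that the $P(z)$-tensor product of two $C_1$-cofinite modules is again $C_1$-cofinite, giving (ii). Thus the only HLZ axiom that remains is closure under contragredients, which is precisely the hypothesis, and this establishes the first assertion.

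For the ``in particular'' statement, I would argue that (a) and (b) jointly imply closure of $\cC_1(V)$ under contragredients as follows. Given $W \in \cC_1(V)$, condition (b) produces a finite composition series whose simple quotients $S_1, \dots, S_n$ are $C_1$-cofinite simple $V$-modules. The contragredient functor is contravariantly exact on the category of grading-restricted generalized $V$-modules and sends simple objects to simple objects, so $W'$ inherits a finite filtration whose successive quotients are $S_n', \dots, S_1'$. By (a), every $S_i'$ is $C_1$-cofinite. Thus $W'$ is a finite-length grading-restricted generalized $V$-module with $C_1$-cofinite composition factors, and a second application of (b) places $W'$ inside $\cC_1(V)$.

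The main technical hurdle is the extension of all the above from the VOA setting, for which \cite{HLZ1}-\cite{HLZ8}, \cite{Huang:diff-eqns}, and \cite{miyamoto:C1-fus-prod} were written, to the vertex operator superalgebra setting. One must reinterpret intertwining operators, $P(z)$-tensor products, and contragredients with the correct parity conventions, check that Huang's differential-equations argument still yields regular singular points with the parity-graded $C_1$-condition in place, and reprove Miyamoto's closure of $C_1$-cofinite modules under fusion in this generality; each of these is essentially bookkeeping but collectively forms the bulk of the work needed to put the theorem on a firm footing.
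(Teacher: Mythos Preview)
This theorem is not proved in the paper; it is quoted from \cite{CORY24} as background in the survey section on the state of the art for non-rational VOAs, so there is no proof in the paper to compare your proposal against. That said, your sketch is consistent with the narrative the paper gives in the paragraph preceding the statement: the paper explicitly attributes (ii) and (iii) to \cite{Huang:diff-eqns} and \cite{miyamoto:C1-fus-prod}, and frames the cited result as the culmination of a sequence of works (\cite{Creutzig:2017gpa, Creutzig:2020zvv, CJ, Mc2}) that reduce the remaining HLZ hypotheses to closure under contragredients, with \cite{CORY24} supplying the superalgebra generality. Your argument for the ``in particular'' clause, deducing closure under contragredients from (a) and (b) via exactness of the contragredient functor on a composition series, is the expected one.
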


If $V$ is not $C_2$-cofinite, then $\cC_1(V)$ may not be the only category of $V$-modules to consider. For example, if $V=L_k(\mathfrak{g})$ is a simple affine VOA at an admissible level, then $\cC_1(L_k(\mathfrak{g}))$ is finite and semisimple \cite{Arakawa:2012xrk}, but the larger category of weight $L_k(\mathfrak{g})$-modules has much richer structure. So far, braided monoidal structure on non-$C_1$-cofinite weight module categories for affine VOAs and $W$-algebras is known in only a few examples, including the $\beta\gamma$-ghost VOA \cite{Allen:2020kkt}, a two-parameter family of extensions of Heisenberg and singlet VOAs \cite{Creutzig:2022ugv}, and $L_k(\mathfrak{sl}_2)$ at admissible levels \cite{Creutzig:2023rlw}. Developing methods for obtaining more braided monoidal categories for VOAs beyond $\cC_1(V)$ is a major problem for future research.

We now present a (probably not completely comprehensive) list of VOA representation categories beyond rational VOAs that are currently known to admit braided monoidal category structure.
For a finite-dimensional Lie (super)algebra $\g$, we let $L_k(\g)$, respectively $V^k(\g)$, denote the simple, respectively universal, affine VOA of $\g$ at level $k \in \mathbb C$.
In the following table, $\yes$ means that the listed property holds, $\open$ means that it is not yet proven, and $\yes/\open$ means that for some specific examples it holds while for other examples it remains open. Similarly, $\yes / \no$ means that the listed property holds in some examples but does not hold in others, and $\no$ means that the property does not hold. The precise details can be found in the references. 
The categories in the table are the categories of generalized modules with some additional properties:
\begin{enumerate}
    \item {\bf $C_1$-cofinite:} This is the category of $C_1$-cofinite grading-restricted generalized modules.
    
    \item {\bf Weight:} If the conformal weight $1$ subspace $V_{(1)}$ of $V$ is non-zero, this is the category of finitely-generated generalized modules on which a Cartan subalgebra $\mathfrak{h}\subseteq V_1$ acts semisimply, such that the intersection of each $\mathfrak{h}$-weight space with each conformal weight space is finite dimensional, and such that the conformal weights of each $\mathfrak{h}$-weight space are bounded below. If $V$ is a Virasoro or singlet VOA, then $V_1=0$, but the category of $C_1$-cofinite $V$-modules has a subcategory with properties similar to the category of weight modules for an affine vertex (super)algebra or $W$-algebra, so here we call this the subcategory of ``weight'' $V$-modules.
    
    \item {\bf Generalized weight:} If the weight $1$ subspace $V_1$ of $V$ is non-zero, this is the same as the category of weight $V$-modules, except that we allow the Cartan subalgebra $\mathfrak{h}\subseteq V_1$ to act by generalized eigenvalues (with finite Jordan blocks). 
\end{enumerate}
In the ``Projectives'' column, we indicate whether the category in question has enough projectives. For Virasoro VOAs, we parameterize the central charge as
$c(t) = 13 - 6(t+t^{-1})$ for $t\in\mathbb{C}^\times$. Finally, we organize the table thematically, roughly listing first affine vertex (super)algebras and then $W$-algebras and their extensions, rather than listing chronologically according to the order in which the indicated properties were proved:
\begin{center}
{\small
\begin{tabular}{ |c|c|c|c|c|c|c| }
\hline
Vertex operator algebra & Category & Rigid & Finite & Semisimple  & Projectives & References \\[1mm]
\hline 
Heisenberg VOAs & $C_1$-cofinite & \yes & \no & \no & \no & Exercise \\[1mm]
 & weight & \yes & \no & \yes & \yes & \cite{Creutzig:2016ehb} \\[1mm]
$L_k(\g)$, $k$ admissible & $C_1$-cofinite & \yes / \open & \yes & \yes & \yes& \cite{Arakawa:2012xrk, Creutzig:2017gpa, Creutzig:2019qje} \\[1mm]
$L_k(\mathfrak{sl}_2)$, $k$ admissible & weight & \yes  & \no & \no & \yes & \cite{Arakawa:2023msa,Creutzig:2023rlw, CMY24} \\[1mm]
$V^k(\mathfrak{sl}_2)$, $k$ admissible & $C_1$-cofinite & \no  & \no & \no & \yes & \cite{McRae:2023ado} \\[1mm]
$L_k(\mathfrak{gl}_{1|1})$, $k\neq 0$ & $C_1$-cofinite  & \yes & \no & \no & \no & \cite{Creutzig:2020zom} \\[1mm]
 & weight & \yes & \no & \no & \yes & \cite{Creutzig:2020zom} \\[1mm]
$L_k(\mathfrak{osp}_{1|2})$, $k$ admissible & weight  & \yes  & \no & \no & \yes & \cite{CR24} \\[1mm]
$L_k(\mathfrak{osp}_{1|2n})$, $k$ admissible & $C_1$-cofinite  & \yes & \yes & \yes & \yes & \cite{Creutzig:2022riy} \\[1mm]
universal Virasoro, $t \not\in \mathbb Q$ & $C_1$-cofinite  & \yes  & \no & \yes  & \yes  & \cite{Creutzig:2020zvv} \\[1mm]
 $t \in \{ \pm 1\} $ & $C_1$-cofinite  & \yes & \no &  \no  & \no  & \cite{MY-c25} \\[1mm]
  & ``weight''  & \yes & \no &  \yes  & \yes  & \cite{MY-c25} \\[1mm]
 $t \in \mathbb Z_{>1}$ & $C_1$-cofinite  & \yes & \no &  \no  & \no & \cite{MY-cp1} \\[1mm]
 & ``weight''  & \yes & \no &  \no  & \yes  & \cite{MY-cp1} \\[1mm]
 $t \in (\mathbb Q\setminus\mathbb{Z})_{>0}$ 
& $C_1$-cofinite &  \no & \no &  \no & \no &  \cite{McR-Sop}\\[1mm]
singlet VOAs $\mathcal{M}(p)$ & $C_1$-cofinite  & \yes & \no  &  \no & \no & \cite{Creutzig:2022lep} \\[1mm]
 & ``weight''  & \yes & \no  &  \no & \yes & \cite{Creutzig:2020qvs,Creutzig:2022lep} \\[1mm]
 triplet VOAs $\mathcal{W}(p)$ & $C_1$-cofinite & \yes & \yes & \no & \yes & \cite{Adamovic:2007er,Tsuchiya:2012ru} \\[1mm]
 symplectic fermions & $C_1$-cofinite & \yes & \yes & \no & \yes & \cite{Abe, mcrae:deligne}\\ [1mm]
$\beta\gamma$-ghost (symplectic bos.) & weight  & \yes & \no & \no & \yes & \cite{Allen:2020kkt} \\[1mm]
 & gen. weight  & \yes & \no & \no & \no & \cite{Ballin:2022rto} \\[1mm]
$\mathcal B_p$ and $\mathcal S_p$ VOAs & weight  & \yes & \no & \no & \yes & \cite{Creutzig:2022ugv} \\[1mm]
$N=1$ super Virasoro & $C_1$-cofinite  & \yes / \open & \no & \yes / \no  & \yes / \open & \cite{CORY24} \\[1mm]
$N=2$  minimal models & $C_1$-cofinite  & \yes  & \no  & \no & \yes & \cite{Creutzig:2023rlw, CMY24} \\[1mm]
\hline
\end{tabular}
}
\end{center}

\smallskip

We emphasize again that proving rigidity in these examples often involved detailed studies of analytic properties of correlation functions, which could be quite cumbersome. However, the rigidity statements for weight modules of $L_k(\mathfrak{sl}_2)$ at admissible levels and for $C_1$-cofinite modules of the $N=2$  minimal models will be proven more easily in \cite{CMY24} using our Theorem \ref{thm:intro-C-rigid}. The rigidity of weight modules for $L_k(\mathfrak{osp}_{1|2n})$ at admissible levels then follows as $L_k(\mathfrak{osp}_{1|2n})$ is an extension of $L_k(\mathfrak{sl}_2)$ tensored with a rational Virasoro VOA; this will appear in \cite{CR24}.

\medskip

\noindent\textbf{Acknowledgments.}
RM is partially supported by a startup grant from Tsinghua University and by a research fellowship from the Alexander von Humboldt Foundation. RM also thanks Universit\"{a}t Hamburg for its hospitality during the visit in which this work was finished.
KS is supported by JSPS KAKENHI Grant Number 24K06676.
HY is partially supported by a start-up grant from the University of Alberta and an NSERC Discovery Grant. Part of the work was finished while HY was in residence at the Mathematical Sciences Research Institute in Berkeley, California, during the Quantum Symmetries Reunion in 2024; this was supported by NSF grant DMS-1928930.

%%%%%%%%%%%%%%%%%%%%%%%%%%%%%%%%%%%%%%%%%%%%%%%%%%%%%%
%%%%%%%%%%%%%%%%%%%%%%%%%%%%%%%%%%%%%%%%%%%%%%%%%%%%%%
%%%%%%%%%%%%%%%%%%%%%%%%%%%%%%%%%%%%%%%%%%%%%%%%%%%%%%
%%%%%%%%%%%%%%%%%%%%%%%%%%%%%%%%%%%%%%%%%%%%%%%%%%%%%%
%%%%%%%%%%%%%%%%%%%%%%%%%%%%%%%%%%%%%%%%%%%%%%%%%%%%%%
%%%%%%%%%%%%%%%%%%%%%%%%%%%%%%%%%%%%%%%%%%%%%%%%%%%%%%

\section{Rigidity of \texorpdfstring{$\cC_A$}{CA} and \texorpdfstring{$\cC_A^{\loc}$}{CAloc} from rigidity of \texorpdfstring{$\cC$}{C}}\label{sec:rig-of-CA}

In this section, we start with background and useful results on (commutative) algebras in (braided) monoidal categories, and then proceed to the main results on rigidity of module categories for algebras in finite tensor categories.

\subsection{Rigid (braided) monoidal categories}\label{subsec:rig-mon-cats}

Given a category $\cC$, we use $\id_{\cC}$ to denote the identity functor from $\cC$ to $\cC$. Given a functor $F:\cC\rightarrow\cD$, we denote by $F(\cC)$ the \textit{essential image} of $F$, that is, the full subcategory of $\cD$ whose objects are all $Y\in\cD$ such that $Y\cong F(X)$ for some $X\in\cC$. If a functor $F$ admits a right (or left) adjoint, we will denote it by $F^{\radj}$ (or $ F^{\ladj}$).

See for example \cite[\S2]{etingof2016tensor} or \cite{walton2024symmetries} for basic definitions related to monoidal categories. Because of Mac Lane's strictness theorem \cite{mac2013categories}, we will generally suppress associativity and unit isomorphisms in calculations with monoidal categories.
Recall that an object $X$ in a monoidal category $(\cC,\otimes,\one)$ is \textit{rigid} if it has left and right duals, that is, there exist objects $X^*, {}^*X \in \cC$ with (co)evaluation maps
\begin{align*}
\ev_X: X^* \otimes X \rightarrow \one, & \hspace{1cm} \coev_X: \one \rightarrow X \otimes X^*,  \\
\ev'_X: X \otimes  {}^*X \rightarrow \one, & \hspace{1cm} \coev'_X: \one \rightarrow {}^*X \otimes X, 
\end{align*} 
such that 
\begin{align*}
    (\id_X \otimes \ev_X)\circ(\coev_X \otimes \id_X) =\id_X, & \hspace{1cm} (\ev_X \otimes \id_{X^*})\circ(\id_{X^*} \otimes \coev_X) =\id_{X^*},\\
    (\ev'_X \otimes \id_X)\circ(\id_X \otimes \coev'_X) =\id_X, & \hspace{1cm} (\id_{{}^*X} \otimes \ev'_X)\circ(\coev'_X \otimes \id_{{}^*X}) =\id_{{}^*X}.
\end{align*}
We say $\cC$ is {\it rigid} if all of its objects are rigid. If $\cC$ is rigid, then left duals define a contravariant functor $\cC\rightarrow\cC$ that maps $X\mapsto X^*$ for any object $X\in\cC$ and $f\mapsto f^*$ for any morphism $:X\rightarrow Y$ in $\cC$, where
\begin{equation*}
f^* =(\ev_Y\otimes\id_{X^*})\circ(\id_{Y^*}\otimes f\otimes\id_{X^*})\circ(\id_{Y^*}\otimes\coev_X).
\end{equation*}
Right duals define a contravariant functor $\cC\rightarrow\cC$ similarly.

An object $X\in\cC$ is called \textit{invertible} if there is an object $X^{-1}\in\cC$ such that $X\otimes X^{-1}\cong \vac\cong X^{-1}\otimes X$. If $X$ is invertible, then $X$ is rigid with $X^*={}^*X=X^{-1}$. To see why $X^{-1}=X^*$ for example, fix any isomorphisms
\begin{equation*}
    \ev_X: X^{-1}\otimes X\rightarrow\vac,\qquad \ev_X': X\otimes X^{-1}\rightarrow\vac.
\end{equation*}
Then $f:=(\id_X\otimes\ev_X)\circ((\ev_X')^{-1}\otimes\id_X)$ is an automorphism of $X$, so we can define $\coev_X=(f^{-1}\otimes\id_{X^{-1}})\circ(\ev_X')^{-1}$. It is then immediate that $(\id_X\otimes\ev_X)\circ(\coev_X\otimes\id_X)=\id_X$. To show that $g:=(\ev_X\otimes\id_{X^{-1}})\circ(\id_{X^{-1}}\otimes\coev_X)$ is also the identity, it is easy to calculate
\begin{equation*}
    \ev_X\circ(g\otimes\id_X)=\ev_X,
\end{equation*}
and thus $g\otimes \id_X=\id_{X^{-1}\otimes X}$. Then
\begin{equation*}
g =(\id_{X^{-1}}\otimes\ev_X)\circ(\id_{X^{-1}}\otimes \ev_X^{-1})\circ g = (\id_{X^{-1}}\otimes\ev_X)\circ(g\otimes\id_X\otimes\id_{X^{-1}})\circ(\id_{X^{-1}}\otimes\ev_X^{-1}) =\id_{X^{-1}}
    \end{equation*}
    as required. Similarly, $X^{-1}$ is a right dual of $X$.

Now let $\cC$ be a braided monoidal category (see for example \cite[\S8]{etingof2016tensor}), with natural braiding isomorphism $c_{X,Y}: X\otimes Y\rightarrow Y\otimes X$ for objects $X,Y\in\cC$. A \textit{twist} on $\cC$ is a natural isomorphism $\theta:\id_{\cC}\rightarrow\id_{\cC}$ satisfying the \textit{balancing equation}
\[ \theta_{X\otimes Y} = c_{Y,X} \circ c_{X,Y} \circ (\theta_X \otimes \theta_Y) . \]
Taking $X=Y=\vac$ in the balancing equation, one can show that $\theta_\vac=\id_\vac$.
We call a twist on $\cC$ a \textit{ribbon structure} if $\cC$ is rigid and $\theta_{X^*}=(\theta_X)^*$ for all $X\in\cC$. In this case, there is a natural isomorphism $X^*\xrightarrow{\sim} {}^*X$ for all $X\in\cC$ given by the composition
\begin{equation*}
(\id_{{}^*X}\otimes\ev_X)\circ(c_{{}^*X,X^*}^{-1}\otimes\theta_X)\circ\id_{X^*}\otimes\coev_X').
\end{equation*}
Using this, if $\theta$ is a ribbon structure, then automatically $\theta_{{}^*X} ={}^*\theta_X$ for all $X\in\cC$.

Let $\cC,\cD$ be monoidal categories.
A monoidal functor $(F,F_2,F_0):\cC\rightarrow\cD$ is a functor $F$ together with a natural isomorphism $F_2(X,Y):F(X) \otimes F(Y) \rightarrow F(X\otimes Y)$ for objects $X,Y\in\cC$ and an isomorphism $F_0:\one_{\cD} \rightarrow F(\one_{\cC})$ that satisfy obvious coherence conditions. If $\cC,\cD$ are braided, then $F$ is a braided monoidal functor if the diagram
\begin{equation*}
\begin{tikzcd}[column sep=4pc]
F(X) \otimes F(Y) \ar[d, "F_2({X,Y})"] \ar[r, "c_{F(X),F(Y)}"] & F(Y)\otimes F(X) \ar[d, "F_2({Y,X})"] \\
    F(X\otimes Y) \ar[r, "F(c_{X,Y})"] & F(Y\otimes X)
    \end{tikzcd}
\end{equation*}
commutes for all objects $X,Y\in\cC$. Given a monoidal functor $F:\cC\rightarrow\cD$, its essential image $F(\cC)$ is a monoidal subcategory of $\cD$. If $F$ is braided, then $F(\cC)$ is a braided monoidal subcategory.

Let $\cC$ be a monoidal category. Its \textit{Drinfeld center} $\cZ(\cC)$ is a braided monoidal category whose objects are pairs $(X, \gamma)$ where $X$ is an object of $\cC$ and $\gamma$ is a half-braiding, that is, a natural isomorphism $\gamma: -\otimes X\rightarrow X\otimes -$ satisfying a hexagon identity \cite[\S7.13]{etingof2016tensor}. Morphisms in $\cZ(\cC)$ are given by
\begin{equation*}
    \Hom_{\cZ(\cC)}((X,\gamma),(Y,\delta)) = \lbrace f\in\Hom_\cC(X,Y)\mid (f\otimes\id_Z)\circ\gamma_Z = \delta_Z\circ(\id_Z\otimes f)\;\text{for all}\;Z\in\cC\rbrace.
\end{equation*}
The tensor product on $\cZ(\cC)$ is given by
\begin{equation*}
    (X,\gamma)\otimes(Y,\delta) = (X\otimes Y, (\id_X\otimes\delta)\circ(\gamma\otimes\id_Y))
\end{equation*}
and the braiding is given by $c_{(X,\gamma),(Y,\delta)} = \delta_X$. If $\cC$ is rigid, then $\cZ(\cC)$ is rigid as well. Indeed, if $(X,\gamma)\in\cZ(\cC)$, then $X^*$ admits a half-braiding $\overline{\gamma}$ such that the evaluation $\ev_X: X^*\otimes X\rightarrow\vac$ and coevaluation $\coev_X:\vac\rightarrow X\otimes X^*$ are morphisms in $\cZ(\cC)$. Specifically, for an object $Y\in\cC$, $\overline{\gamma}_Y$ is the composition
\begin{align*}
    \overline{\gamma}_Y: Y\otimes X^* & \xrightarrow{\id_{Y\otimes X^*}\otimes\coev'_{Y}} Y\otimes X^*\otimes {}^*Y\otimes Y \xrightarrow{\id_{Y\otimes X^*\otimes {}^*Y}\otimes\coev_X\otimes\id_Y} Y\otimes X^*\otimes {}^*Y\otimes X\otimes X^*\otimes Y\nonumber\\
&    \xrightarrow{\id_{Y\otimes X^*}\otimes\gamma_{{}^*Y}\otimes\id_{X^*\otimes Y}} Y\otimes X^*\otimes X\otimes {}^*Y\otimes X^*\otimes Y \xrightarrow{\id_Y\otimes\ev_X\otimes\id_{{}^*Y\otimes X^*\otimes Y}} Y\otimes {}^*Y\otimes X^*\otimes Y\nonumber\\
& \xrightarrow{\ev'_Y\otimes\id_{X^*\otimes Y}} X^*\otimes Y.
\end{align*}
The inverse can be defined more simply as the composition
\begin{align*}
    \overline{\gamma}^{-1}_Y: X^*\otimes Y \xrightarrow{\id_{X^*\otimes Y}\otimes\coev_X} X^*\otimes Y\otimes X\otimes X^* \xrightarrow{\id_{X^*}\otimes\gamma_Y\otimes\id_{X^*}} X^*\otimes X\otimes Y\otimes X^* \xrightarrow{\ev_X\otimes\id_{Y\otimes X^*}} Y\otimes X^*.
\end{align*}
Similarly, $\gamma^{-1}$ induces a half-braiding for ${}^*X$ such that $\ev'_X$ and $\coev'_X$ become morphisms in $\cZ(\cC)$.

The forgetful functor $U:\cZ(\cC)\rightarrow\cC$ given on objects by $U(X, \gamma)=X$ is monoidal, and if $\cC$ is braided, there is a braided monoidal functor $i_+:\cC\rightarrow\cZ(\cC)$ given on objects by $i_+(X) = (X,c_{-,X})$. There is also a braided monoidal functor $i_-:\overline{\cC}\rightarrow\cZ(\cC)$ such that $i_-(X) = (X,c_{X,-}^{-1})$, where $\overline{\cC}$ is the monoidal category $\cC$ equipped with the reversed braiding $\overline{c}_{X,Y} = c_{Y,X}^{-1}$. A monoidal functor $F:\cC\rightarrow\cD$, where $\cC$ is braided but $\cD$ may not be, is called \textit{central} if there exists a braided functor $F':\cC\rightarrow\cZ(\cD)$ such that $F=U\circ F'$.

%%%%%%%%%%%%%%%%%%%%%%%%%%%%%%%%%%%%%%%%%%%%%%%%%%%%%%
%%%%%%%%%%%%%%%%%%%%%%%%%%%%%%%%%%%%%%%%%%%%%%%%%%%%%%
%%%%%%%%%%%%%%%%%%%%%%%%%%%%%%%%%%%%%%%%%%%%%%%%%%%%%%

\subsection{Algebras and their module categories}\label{subsec:algebras-and-modules}
Let  $(A,\mu_A,\iota_A)$ be an algebra in a monoidal category $\cC$ (see for example \cite[\S1]{kirillov2002q} or \cite[\S7.8]{etingof2016tensor}). If $\cC$ is braided, then $A$ is \textit{commutative} if $\mu_A\circ c_{A,A} = \mu_A$. In this section, we will use $\cC_A^l$ and $\cC_A^r$ to distinguish between the categories of left and right $A$-modules in $\cC$. In Sections \ref{sec:rig-of-C} and \ref{sec:VOAs}, where we only consider commutative algebras and thus $\cC_A^l$ and $\cC_A^r$ are isomorphic, we will drop the superscript notation and use $\cC_A$ to denote the category of left $A$-modules (equivalently, right $A$-modules) in $\cC$. For a left (respectively right) $A$-module $M$, we will use the notation $\mu_M^l$ (respectively $\mu^r_M$) to denote the action of $A$ on $M$.
 A left $A$-module $(M,\mu^l_M)$ for a commutative algebra $A$ is called \textit{local} if $\mu^l_M = \mu^l_M\circ c_{M,A} \circ c_{A,M}$. We use $\cC_A^{\loc}$ to denote the full subcategory of $\cC_A^l$ consisting of local $A$-modules. We also use ${}_A\cC_A$ to denote the category of $A$-bimodules in $\cC$. 

If $\cC$ has coequalizers, then the tensor product of a right $A$-module $M$ and a left $A$-module $N$ is:
\begin{equation}\label{eq:tensor-over-A}
M\otimes_A N = \mathrm{coequalizer} \big(
\begin{tikzcd}[column sep = 70pt]
  M\otimes A\otimes N
  \arrow[r, yshift = .3em, "{\mu^r_M\otimes \id_N}"]
  \arrow[r, yshift = -.3em, "\id_M\otimes \mu^l_N"']
  & M\otimes N
\end{tikzcd} \big).
\end{equation}
We use $\pi_{M,N}:M\otimes N\rightarrow M\otimes_A N$ to denote the projection map, which satisfies $\pi_{M,N}\circ (\mu^r_M\otimes \id_N)=\pi_{M,N}\circ (\id_M\otimes \mu^l_N)$. Assuming that the tensor product on $\cC$ preserves coequalizers, one can show that $\otimes_A$ makes ${}_A\cC_A$ a monoidal category,  

A \textit{central algebra} in $\cC$ is a pair $(A,\sigma)$ where $A$ is an algebra in $\cC$ and $\sigma:- \otimes A \rightarrow A \otimes -$ is a half-braiding such that $(A,\sigma)$ is an algebra in $\cZ(\cC)$. Given a central algebra $(A,\sigma)$, we can form a full subcategory of ${}_A\cC_A$ consisting of the following bimodules:
\begin{equation*}
    \cC_A^{\sigma} = \{ (M,\mu^l_M,\mu^r_M) \in {}_A\cC_A \mid \mu^r_M =\mu^l_M \circ \sigma_M \}.
\end{equation*}
When $(A,\sigma)$ is commutative in $\cZ(\cC)$, we call it commutative central. In this case, $\cC_A^{\sigma}$ is closed under $\otimes_A$ and thus is a monoidal subcategory of ${}_A\cC_A$. 

If $\cC$ is braided and $A$ is a commutative algebra in $\cC$, then $i_+(A)=(A,c_{-,A})$ and $i_-(A)=(A,c_{A,-}^{-1})$ are commutative central algebras. We also have two embeddings  $H_{\pm}:\cC_A^l\rightarrow {}_A\cC_A$ given by
\begin{equation*}
    H_+(M,\mu^l_M) = (M,\mu^l_M, \mu^l_M\circ c_{M,A})\quad\text{and}\quad H_-(M,\mu^l_M) = (M,\mu^l_M, \mu^l_M\circ c^{-1}_{A,M}),
\end{equation*}
and so
\begin{equation}\label{eq:CA-CAsigma}
    \cC_A^{c_{-,A}} = H_+(\cC_A^l)  \quad \text{and} \quad \cC_A^{c_{A,-}^{-1}} = H_-(\cC_A^l). 
\end{equation}    
This means that $\cC_A^l$ is a monoidal category, since it embeds as a monoidal subcategory of ${}_A\cC_A$ (in two different ways). Also, $\cC_A^{\loc} =\cC_A^{c_{-,A}}\cap\cC_A^{c_{A,-}^{-1}}$, so $\cC_A^{\loc}$ is also a monoidal category; additionally, $\cC_A^{\loc}$ is braided \cite{pareigis1995braiding}. One can similarly realize the category of right modules $\cC_A^r$ as a monoidal subcategory of ${}_A\cC_A$.

If $(A,\sigma)$ is a commutative central algebra in a monoidal category $\cC$,
then the \textit{induction functor} $F_{A,\sigma}:\cC \rightarrow\cC_A^{\sigma}$ is defined on objects by 
\begin{equation*}
F_{A,\sigma}(X) = \left( A\otimes X, \mu_{A\otimes X}^l = \mu_A \otimes \id_X, \mu_{A\otimes X}^r = (\mu_A\otimes\id_X)\circ(\id_A \otimes \sigma_X) \right),
\end{equation*}
and on morphisms by $F_{A,\sigma}(f)=\id_A\otimes f$.
One can show that $F_{A,\sigma}$ is a monoidal functor with a right adjoint $ F_{A,\sigma}^{\radj}:\cC_A^{\sigma}\rightarrow \cC$ given on objects by
\[F_{A,\sigma}^{\radj} (M,\mu_M^l,\mu_M^r=\mu_M^l\circ\sigma_M) = M . \]
In the special case that $\cC$ is braided and $A$ is a commutative algebra in $\cC$, we get induction functors $F_{A,c_{-,A}}, F_{A,c^{-1}_{A,-}}:\cC\rightarrow\cC_A^l$.

\subsection{Algebras in rigid monoidal categories}

Let $(A,\mu_A,\iota_A)$ be an algebra in a rigid monoidal category $\cC$. Then the duals of $A$-modules admit natural $A$-module structures:
\begin{lemma}\label{lem:dual-action}
If $(M,\mu^l_M)\in\cC_A^l$, then $(M^*,\mu^r_{M^*})\in\cC_A^r$ where 
$$\mu^r_{M^*}=(\ev_M\otimes \id_{M^*})\circ(\id_{M^*}\otimes \mu^l_M\otimes \id_{M^*})\circ(\id_{M^*}\otimes \id_A\otimes \coev_M).$$  
If $(M,\mu^r_M)\in\cC_A^r$, then $({}^*M,\mu^l_{{}^*M})\in\cC{}_A$ where 
$$\mu^l_{{}^*M} = (\id_{{}^*M}\otimes \ev'_{M})\circ(\id_{{}^*M}\otimes \mu^r_M\otimes \id_{{}^*M})\circ(\coev'_M\otimes \id_A\otimes \id_{{}^*M}).$$   %\qed
\end{lemma}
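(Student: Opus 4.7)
The two statements are symmetric under swapping left and right duals (exchanging $\ev$/$\ev'$ and $\coev$/$\coev'$), so my plan is to prove only the first and note that the second is entirely analogous. The formula defining $\mu^r_{M^*}$ is the standard ``mate'' of $\mu^l_M$ under the adjunction $-\otimes M\dashv -\otimes M^*$ coming from rigidity, so the task is to verify the right $A$-module axioms for $(M^*,\mu^r_{M^*})$ by direct graphical/string-diagram calculation, using only the zigzag (snake) identities for $(\ev_M,\coev_M)$ together with the module axioms for $(M,\mu^l_M)$.

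For the unit axiom $\mu^r_{M^*}\circ(\id_{M^*}\otimes\iota_A)=\id_{M^*}$, I would substitute the definition, slide $\iota_A$ into the middle factor to produce $\id_{M^*}\otimes(\mu^l_M\circ(\iota_A\otimes\id_M))\otimes\id_{M^*}$, apply the module unit axiom $\mu^l_M\circ(\iota_A\otimes\id_M)=\id_M$, and close with the zigzag identity $(\ev_M\otimes\id_{M^*})\circ(\id_{M^*}\otimes\coev_M)=\id_{M^*}$. For the associativity axiom
$$\mu^r_{M^*}\circ(\mu^r_{M^*}\otimes\id_A)=\mu^r_{M^*}\circ(\id_{M^*}\otimes\mu_A),$$
I would expand both sides using the definition; each side then contains an inner $(\ev_M\otimes\id_M)\circ(\id_M\otimes(-)\otimes\coev_M)$ pattern that collapses via a single application of the snake identity, leaving both sides equal to the same morphism built from either $\mu^l_M\circ(\id_A\otimes\mu^l_M)$ or $\mu^l_M\circ(\mu_A\otimes\id_M)$. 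Equality then follows at once from the associativity of the left $A$-action on $M$.

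The main ``obstacle'' is nothing more than careful bookkeeping of tensor factors through the snake identities; no conceptual difficulty arises. A cleaner conceptual rephrasing, if desired, is that the mate correspondence turns $\mu^l_M$ into an algebra homomorphism $\widetilde{\mu^l_M}\colon A\to M\otimes M^*$ valued in the internal endomorphism algebra of $M$, and $M^*$ carries a canonical right action of $M\otimes M^*$ (given by applying $\ev_M$ to the middle factors) which pulls back along $\widetilde{\mu^l_M}$ to the right $A$-module structure described in the statement. The second claim, that ${}^*M$ becomes a left $A$-module when $M$ is a right $A$-module, is proved identically after replacing $\ev_M,\coev_M$ by their right-dual counterparts $\ev'_M,\coev'_M$.
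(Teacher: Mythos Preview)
Your proposal is correct and is exactly the standard verification; the paper itself states this lemma without proof (treating it as well known), so there is no alternative approach to compare against. Your outlined string-diagram argument using the snake identities and the module axioms for $M$ is precisely what one would supply if a proof were required.
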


Taking $M=A$ and $\mu_M^l=\mu_M^r=\mu_A$ in this lemma, we get $({}^*A,\mu^l_{{}^*A})\in\cC_A^l$ and $(A^*,\mu^r_{A^*})\in\cC_A^r$ where
\begin{align}\label{eqn:A*-and_*A}
    \mu^l_{{}^*A} &= (\id_{{}^*A}\otimes \ev'_A)\circ(\id_{{}^*A} \otimes \mu_A \otimes \id_{{}^*A})\circ(\coev'_{A}\otimes \id_A\otimes \id_{{}^*A}): A\otimes {{}^*A}\rightarrow {{}^*A} \\
    \mu^r_{A^*} &= (\ev_A\otimes \id_{A^*})\circ(\id_{A^*} \otimes \mu_A \otimes \id_{A^*})\circ(\id_{A^*}\otimes \id_A \otimes\coev_A): A^*\otimes A\rightarrow A^*.\nonumber
\end{align}

\begin{lemma}\textup{\cite[Lemma~2.4.13]{douglas2018dualizable}}\label{lem:CA-dual-antiequivalence}
The functors $(-)^*, \, {}^*(-):\cC\rightarrow\cC$ restrict to anti-equivalences $(-)^*:\cC_A^l\rightarrow \cC_A^r$ and ${}^*(-):\cC_A^r\rightarrow \cC_A^l$. %\qed
\end{lemma}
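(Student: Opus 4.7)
The plan is to verify the lemma in two parts: first, that the assignments $M \mapsto M^*$ and $M \mapsto {}^*M$ on morphisms send left (respectively right) $A$-module morphisms to right (respectively left) $A$-module morphisms, so that the restricted functors are well defined; and second, that the two restricted functors are mutually quasi-inverse, which will follow from rigidity of $\cC$ together with a check that the canonical natural isomorphisms ${}^*(M^*) \cong M$ and $({}^*M)^* \cong M$ in $\cC$ are $A$-module isomorphisms.

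For the first part, I would fix a morphism $f: M \to N$ in $\cC_A^l$, so $f \circ \mu_M^l = \mu_N^l \circ (\id_A \otimes f)$. Using the formula
\[ f^* = (\ev_N \otimes \id_{M^*}) \circ (\id_{N^*} \otimes f \otimes \id_{M^*}) \circ (\id_{N^*} \otimes \coev_M), \]
together with the definitions of $\mu_{M^*}^r$ and $\mu_{N^*}^r$ from Lemma \ref{lem:dual-action} (applied with general $M$ in place of $A$), I would verify
\[ f^* \circ \mu_{N^*}^r = \mu_{M^*}^r \circ (f^* \otimes \id_A) \]
by a direct diagram chase: use the module morphism property of $f$ to move $f$ past the action of $A$ appearing inside the composite, and then collapse the resulting pairs of $(\ev,\coev)$ via the triangle identities for $M$ and $N$. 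The argument for ${}^*(-):\cC_A^r\to\cC_A^l$ is symmetric.

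For the second part, recall that in any rigid monoidal category there are mutually inverse natural isomorphisms $\alpha_M: {}^*(M^*) \xrightarrow{\sim} M$ and $\beta_M: ({}^*M)^* \xrightarrow{\sim} M$, built from the two evaluations and coevaluations, making $(-)^*$ and ${}^*(-)$ quasi-inverse anti-equivalences of $\cC$. It remains only to check that when $M \in \cC_A^l$, $\alpha_M$ intertwines the iterated-transpose action $\mu_{{}^*(M^*)}^l$, obtained by applying Lemma \ref{lem:dual-action} twice starting from $\mu_M^l$, with the original action $\mu_M^l$ on $M$; symmetrically for $\beta$ on right $A$-modules. Unwinding the two transposes, this reduces via the triangle identities for both left and right duals of $M$ to naturality of $\alpha$ applied to $\mu_M^l: A \otimes M \to M$.

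The main obstacle is the bookkeeping in this last verification: six (co)evaluations combine with the transposed action and must be simplified, via the triangle identities and naturality of $\alpha$, into the original action on $M$. This is most cleanly organized via string diagrams but amounts to a short calculation. Alternatively, one may argue more abstractly: $(-)^*$ is a strong anti-monoidal equivalence $\cC \to \cC$ with ${}^*(-)$ as quasi-inverse, and it exchanges the two sides of any algebra action by sending an action morphism $A \otimes M \to M$ to its transpose $M^* \to M^* \otimes A$, i.e.\ to a right $A$-action on $M^*$; the naturality of the quasi-inverse isomorphisms then automatically yields equivalences of the corresponding module categories, which is precisely the content of the lemma.
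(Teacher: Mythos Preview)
Your proposal is correct and follows essentially the same approach as the paper: the paper cites \cite{douglas2018dualizable} for the lemma itself and then sketches exactly your second part, giving the explicit isomorphisms ${}^*(M^*)\to M$ and $({}^*M)^*\to M$ and noting that a straightforward calculation shows they are $A$-module morphisms. Your first part (functoriality on morphisms) is implicit in the cited reference, so your write-up is slightly more complete than the paper's own treatment but otherwise identical in strategy.
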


In fact, the left and right duality functors in the above lemma are quasi-inverses of each other. Indeed, if $M$ is any object of $\cC$, then it is easy to show that
\begin{align*}
    (\id_M\otimes\ev'_{M^*})\circ(\coev_M\otimes\id_{{}^*(M^*)}):  {}^*(M^*)\rightarrow M\\
    (\ev_{{}^*M}\otimes\id_M)\circ(\id_{({}^*M)^*}\otimes\coev'_M):  ({}^*M)^*\rightarrow M
\end{align*}
are isomorphisms in $\cC$. Then straightforward calculations show that the first of these isomorphisms is a morphism in $\cC_A^l$ if $M\in\cC_A^l$ and the second is a morphism in $\cC_A^r$ if $M\in\cC_A^r$. 

The next result will be used later in the proof of Theorem \ref{thm:EO-simple-exact}:
\begin{lemma}\label{lem:right-module}
    If $(M,\mu^l_M)$ is a left $A$-module, then ${}^*(A^*\otimes_A M)$ is a right $A$-module.
\end{lemma}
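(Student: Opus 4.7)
My plan is to produce the right $A$-module structure on ${}^*(A^*\otimes_A M)$ by applying the contravariant functor ${}^*(-)\colon \cC\to \cC^{\op}$ to the coequalizer defining $A^*\otimes_A M$, and then exhibiting a right $A$-action on ${}^*M\otimes A$ that restricts to the resulting equalizer. Since ${}^*(-)$ is an anti-equivalence and hence sends colimits to limits, applying it to the coequalizer
\[
A^*\otimes A\otimes M \rightrightarrows A^*\otimes M \twoheadrightarrow A^*\otimes_A M
\]
(whose two parallel maps are $\mu^r_{A^*}\otimes \id_M$ and $\id_{A^*}\otimes \mu^l_M$) yields an equalizer in $\cC$. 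Using ${}^*(X\otimes Y)\cong {}^*Y\otimes {}^*X$ together with the canonical isomorphism $A\xrightarrow{\sim} {}^*(A^*)$, this equalizer rewrites as
\[
{}^*(A^*\otimes_A M) \hookrightarrow {}^*M\otimes A \rightrightarrows {}^*M\otimes {}^*A\otimes A,
\]
whose two parallel maps I will denote $\phi_1 = \id_{{}^*M}\otimes \eta$ and $\phi_2 = {}^*(\mu^l_M)\otimes \id_A$. Here $\eta\colon A\to {}^*A\otimes A$ is the image of ${}^*(\mu^r_{A^*})$ under the identification $A\cong {}^*(A^*)$; a direct adjunction calculation identifies it explicitly as
\[
\eta = (\id_{{}^*A}\otimes \mu_A)\circ (\coev'_A\otimes \id_A).
\]

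I then equip ${}^*M\otimes A$ with the right $A$-action $\id_{{}^*M}\otimes \mu_A$, and ${}^*M\otimes {}^*A\otimes A$ with the corresponding action on the rightmost $A$-factor. The map $\phi_2$ is trivially right $A$-equivariant, since it does not touch the rightmost $A$. The map $\phi_1$ is right $A$-equivariant precisely when $\eta$ is, i.e., when
\[
\eta\circ \mu_A = (\id_{{}^*A}\otimes \mu_A)\circ (\eta\otimes \id_A)\colon A\otimes A\to {}^*A\otimes A.
\]
With the explicit formula for $\eta$, both sides unfold along $\coev'_A\otimes \id_A\otimes \id_A$ to two different bracketings of $\mu_A$ applied to three consecutive $A$-factors, so the identity reduces directly to the associativity of $\mu_A$. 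Consequently the right $A$-action on ${}^*M\otimes A$ restricts to the equalizer ${}^*(A^*\otimes_A M)$, and the module axioms are inherited from those on ${}^*M\otimes A$.

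The main obstacle I expect is the bookkeeping in identifying the dualized maps, particularly in recognizing ${}^*(\mu^r_{A^*})$ as $(\id_{{}^*A}\otimes \mu_A)\circ (\coev'_A\otimes \id_A)$ under the canonical isomorphisms ${}^*(X\otimes Y)\cong {}^*Y\otimes {}^*X$ and $A\cong {}^*(A^*)$. Once this identification is settled, the required equivariance of $\eta$ is an immediate consequence of associativity of the multiplication on $A$.
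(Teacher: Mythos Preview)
Your proposal is correct and follows essentially the same approach as the paper: both dualize the coequalizer defining $A^*\otimes_A M$ to an equalizer ${}^*(A^*\otimes_A M)\hookrightarrow {}^*M\otimes A\rightrightarrows {}^*M\otimes {}^*A\otimes A$, identify the map you call $\eta$ with the paper's $\rho_2=(\id_{{}^*A}\otimes\mu_A)\circ(\coev'_A\otimes\id_A)$, and use associativity of $\mu_A$ to show that the right $A$-action $\id_{{}^*M}\otimes\mu_A$ on ${}^*M\otimes A$ descends to the equalizer. The only cosmetic difference is that you phrase the key step as ``both parallel maps are right $A$-equivariant,'' while the paper directly verifies that $(\id_{{}^*M}\otimes\mu_A)\circ(\alpha\otimes\id_A)$ equalizes the two maps; these are equivalent observations.
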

\begin{proof}
Recall from \eqref{eq:tensor-over-A} that $A^*\otimes_A M$ is defined as a coequalizer:
\begin{equation*}
    \begin{tikzcd}[column sep = 60pt]
  A^*\otimes A\otimes M
  \arrow[r, yshift = .3em, "{\id_{A^*}\otimes \mu^l_M}"]
  \arrow[r, yshift = -.3em, "\mu_{A^*}^r\otimes\id_M"']
  & A^*\otimes M  \arrow[r, "\pi_{A^*,M}"] & A^*\otimes_A M
\end{tikzcd}.
\end{equation*}
Taking right duals and applying the identities ${}^*(A^*\otimes M)\cong {}^*M\otimes A$ and ${}^*(A^*\otimes A\otimes M)\cong {}^*M\otimes {}^*A\otimes A$  (see for example \cite[Exercise 2.10.7(b)]{etingof2016tensor}) leads to a diagram
\begin{equation*}
    \begin{tikzcd}[column sep = 60pt]
    {}^*(A^*\otimes_A M) \arrow[r, "\alpha"] & {}^*M\otimes A \arrow[r, yshift = .3em, "\rho_1\otimes\id_A"] \arrow[r, yshift = -.3em, "\id_{{}^*M}\otimes\rho_2" '] & {}^*M\otimes {}^*A\otimes A
\end{tikzcd},
\end{equation*}
where one calculates $\rho_1: {}^*M \rightarrow {}^*M \otimes {}^*A$ to be the composition
\begin{align*}
\rho_1: {}^*M  \xrightarrow{\coev'_M\otimes\id_{{}^*M}} & \,{}^*M\otimes M\otimes {}^*M
 \xrightarrow{\id_{{}^*M} \otimes \coev'_A \otimes \id_{M\otimes{}^*M}} {}^*M \otimes {}^*A \otimes A \otimes M \otimes {}^*M \\
& \xrightarrow{\id_{{}^*M\otimes {}^*A}\otimes \mu_M^l \otimes \id_{{}^*M}} {}^*M \otimes {}^*A \otimes M \otimes {}^*M  \xrightarrow{\id_{{}^*M\otimes {}^*M}\otimes \ev'_M} {}^*M \otimes {}^*A,
\end{align*} 
and $\rho_2: A\rightarrow {}^*A \otimes A$ is the composition
\[\rho_2: A \xrightarrow{\coev'_A\otimes\id_A} {}^*A\otimes A\otimes A \xrightarrow{\id_{{}^*A}\otimes \mu_A} {}^*A\otimes A.\]
That is, $X={}^*(A^*\otimes_A M)$ is the equalizer of $\rho_1\otimes\id_A$ and $\id_{{}^*M}\otimes\rho_2$, and in particular  $\alpha: X \rightarrow {}^*M \otimes A$ is a monomorphism such that $(\rho_1\otimes\id_A)\circ \alpha = (\id_{{}^*M}\otimes\rho_2)\circ \alpha$.
Moreover, $\alpha$ is dual to the projection map $\pi_{A^*,M}: A^*\otimes M \rightarrow A^*\otimes_A M \cong X^*$ in the sense that
\begin{equation}\label{eq:right-action-1}
    \ev_A \circ (\id_{A^*} \otimes\ev'_M\otimes\id_A) \circ (\id_{A^*\otimes M}\otimes \alpha) 
    = \ev'_{A^*\otimes_A M}\circ (\pi_{A^*,M}\otimes\id_{X}): 
    A^*\otimes M \otimes X \rightarrow \one.
\end{equation}

Now to define a right action of $A$ on $X$, consider the map
\[ \beta: X\otimes A \xrightarrow{\alpha\otimes\id_A} {}^*M \otimes A \otimes A \xrightarrow{\id_{{}^*M} \otimes \mu_A} {}^*M \otimes A. \]
A calculation using the associativity of $\mu_A$ shows that $(\rho_1\otimes\id_A)\circ \beta = (\id_{{}^*M}\otimes\rho_2)\circ \beta$. Thus, by the universal property of equalizers, we get a map $\mu_X^r: X\otimes A \rightarrow X$ such that
\begin{equation}\label{eq:right-action}
    \alpha \circ \mu^r_X = (\id_{{}^*M} \otimes \mu_A)\circ (\alpha\otimes\id_A).
\end{equation} 
Using this identity, the associativity and unit properties of the algebra $A$, and the injectivity of $\alpha$, one can check that $(X,\mu_X^r)$ is a right $A$-module. 
\end{proof}

Now suppose $\cC$ is a rigid braided monoidal category. Then there are two natural isomorphisms $\varphi^{\pm}: (-)^*\rightarrow {}^*(-)$ defined by
\begin{equation*}
    \varphi^{\pm}_X: X^* \xrightarrow{\id_{X^*}\otimes\coev'_X} X^*\otimes {}^*X\otimes X\xrightarrow{c^{\pm 1}\otimes\id_X} {}^*X\otimes X^*\otimes X\xrightarrow{\id_{{}^*X}\otimes\ev_X} {}^*X
\end{equation*}
for $X\in\cC$ (compare with the Drinfeld morphism $u_X: X\rightarrow X^{**}$ discussed in \cite[\S 8.9]{etingof2016tensor}). The inverse natural isomorphisms are given by
\begin{equation*}
    (\varphi^{\pm}_X)^{-1}: {}^*X\xrightarrow{\coev_X\otimes\id_{{}^*X}} X\otimes X^*\otimes {}^*X\xrightarrow{\id_X\otimes c^{\mp1}} X\otimes {}^*X\otimes X^*\xrightarrow{\ev'_X\otimes\id_{X^*}} X^*.
\end{equation*}
If $A$ is a commutative algebra in $\cC$ and $(M,\mu_M^l)\in\cC_A^l$, then we would like $\varphi_M^\pm$ to become isomorphisms of $A$-modules. In the next lemma, we will show that $\varphi_M^-$ in particular is an isomorphism in $\cC_A^l$ given suitable left $A$-module structures on $M^*$ and ${}^*M$, which we now specify. First, the braiding on $\cC$ and the right $A$-module structure $\mu_{M^*}^r$ of Lemma \ref{lem:dual-action} induce a left $A$-module structure $\mu^l_{M^*} =\mu^r_{M^*}\circ c_{A,M^*}$.
It is straightforward from the definitions that
\begin{equation}\label{eqn:mu-M*-left}
    \ev_M\circ(\mu_{M^*}^l\otimes\id_M) = \ev_M\circ(\id_{M^*}\otimes\mu_M^l)\circ(c_{A,M^*}\otimes\id_M).
\end{equation}
Similarly, $M$ has a right $A$-module structure given by $\mu_{M}^r = \mu^l_M\circ c_{M,A}$,
and this gives ${}^*M$ a left $A$-module structure $\mu^l_{{}^*M}$ as in Lemma \ref{lem:dual-action}. The definitions yield the identity
\begin{equation}\label{eqn:mu-*M-right}
    (\mu_{{}^*M}^l\otimes\id_{M})\circ(\id_A\otimes\coev'_M) = (\id_{{}^*M}\otimes\mu_M^l)\circ(\id_{{}^*M}\otimes c_{M,A})\circ(\coev'_M\otimes\id_A).
\end{equation}
Using these identities, we can prove:

\begin{lemma}\label{lem:ident-M*-and-*M}
    Let $A$ be a commutative algebra in a rigid braided monoidal category. If $(M,\mu_M^l)$ is a left $A$-module, then $\varphi_M^-:(M^*,\mu_{M^*}^l)\rightarrow ({}^*M,\mu_{{}^*M}^l)$ is an isomorphism of left $A$-modules.
\end{lemma}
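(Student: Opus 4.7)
The plan is to show that $\varphi_M^-$ intertwines the two left $A$-module structures, i.e., that
\[
\varphi_M^- \circ \mu_{M^*}^l = \mu_{{}^*M}^l \circ (\id_A \otimes \varphi_M^-)
\]
as morphisms $A \otimes M^* \to {}^*M$. Since $\varphi_M^-$ is already known to be an isomorphism in $\cC$, this $A$-linearity statement is the only thing left to verify.

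First I would unpack both sides using the given formulas. The left side expands via $\mu_{M^*}^l = \mu_{M^*}^r \circ c_{A, M^*}$ together with the formula for $\mu_{M^*}^r$ from Lemma \ref{lem:dual-action}, followed by the definition of $\varphi_M^-$ as $(\id_{{}^*M} \otimes \ev_M) \circ (c^{-1}_{M^*, {}^*M} \otimes \id_M) \circ (\id_{M^*} \otimes \coev'_M)$. The right side expands via the definition of $\mu_{{}^*M}^l$ from Lemma \ref{lem:dual-action}, now applied to $\mu_M^r = \mu_M^l \circ c_{M, A}$, precomposed with $\id_A \otimes \varphi_M^-$. To reconcile the two expanded expressions, I would use the characterizations \eqref{eqn:mu-M*-left} and \eqref{eqn:mu-*M-right}, which rewrite the dual $A$-actions in terms of the original left action $\mu_M^l$ together with a single braiding.

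The key manipulation is to slide the $A$-strand through the cup-cap combination defining $\varphi_M^-$. By naturality of $c^{-1}$, the braiding $c_{A, M^*}$ appearing on the left-hand side can be transported past the pair $(\coev'_M, \ev_M)$ used inside $\varphi_M^-$, converting it into a braiding of the form $c_{M, A}$ acting on the $M$-strand that appears inside $\mu_{{}^*M}^l$ on the right-hand side. Combined with the snake identities for $(\ev_M, \coev_M)$ and $(\ev'_M, \coev'_M)$ used to match the two dualities, this produces the required equality. Commutativity of $A$ enters only implicitly, to ensure $\mu_M^r = \mu_M^l \circ c_{M, A}$ is a genuine right action so that Lemma \ref{lem:dual-action} applies; it does not appear in the braiding manipulation itself. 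The choice of the sign $-$ in $\varphi_M^-$ (rather than $\varphi_M^+$) is exactly what makes this conversion possible: the $c^{-1}$ in $\varphi_M^-$ matches up with the $c_{A, M^*}$ in $\mu_{M^*}^l$ and the $c_{M, A}$ in $\mu_{{}^*M}^l$.

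The hard part will be the bookkeeping. Drawn as a string diagram, both sides reduce to essentially the same picture—an $A$-strand crossing an $M$-strand via $c^{-1}$, feeding into $\mu_M^l$, with a snake transforming the $M^*$-input into an ${}^*M$-output—so the identity is visually evident. Translating the picture into a chain of equational steps requires careful tracking of the order and direction of each crossing and of the tensor factors involved, but no conceptual ingredient beyond naturality of the braiding, the snake identities, and Lemma \ref{lem:dual-action} is required.
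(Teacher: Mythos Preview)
Your proposal is correct and follows essentially the same approach as the paper: the paper also proves $\varphi_M^- \circ \mu_{M^*}^l = \mu_{{}^*M}^l \circ (\id_A \otimes \varphi_M^-)$ by a string-diagram computation, invoking precisely the identities \eqref{eqn:mu-M*-left} and \eqref{eqn:mu-*M-right} together with naturality of the braiding to slide the $A$-strand through the snake defining $\varphi_M^-$.
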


\allowdisplaybreaks

\begin{proof}
    We need to show $\varphi_M^-\circ\mu_{M^*}^l=\mu^l_{{}^*M}\circ(\id_A\otimes\varphi^-_M)$. We prove this with graphical calculus in which
    \begin{equation*}
    \begin{tikzpicture}[scale = 1, baseline = {(current bounding box.center)}, line width=0.75pt]
    \draw (1,0) .. controls (1,.7) and (0,.3) .. (0,1);
\draw[white, double=black, line width = 3pt ] (0,0) .. controls (0,.7) and (1,.3) .. (1,1);
    \end{tikzpicture}\qquad\text{and}\qquad
    \begin{tikzpicture}[scale = 1, baseline = {(current bounding box.center)}, line width=0.75pt]
    \draw (0,0) .. controls (0,.7) and (1,.3) .. (1,1);
    \draw[white, double=black, line width = 3pt ] (1,0) .. controls (1,.7) and (0,.3) .. (0,1);
    \end{tikzpicture}
    \end{equation*}
    represent the braiding $c$ and inverse braiding $c^{-1}$, respectively:
    \begin{align*}
        \varphi_M^-\circ\mu^l_{M^*} & =  \begin{tikzpicture}[scale = .8, baseline = {(current bounding box.center)}, line width=0.75pt]
\draw (0,0) -- (.5,.5) -- (.5,2) .. controls (.5,2.7) and (1.5, 2.3) .. (1.5,3) .. controls (1.5,4) and (2.5,4) .. (2.5,3);
\draw[white, double=black, line width = 3pt ] (2.5,3) -- (2.5,2) .. controls (2.5,1) and (1.5,1) .. (1.5,2) .. controls (1.5,2.7) and (.5,2.3) .. (.5,3) -- (.5,4);
\draw (1,0) -- (.5,.5);
\node at (.5,.5) [draw,thick, fill=white] {\tiny{$\mu_{M^*}^l$}};
\node at (2,4) {\tiny{$\ev_M$}};
\node at (2,1) {\tiny{$\coev_M'$}};
\node at (0,-.25) {$A$};
\node at (1,-.25) {$M^*$};
\node at (.5, 4.25) {${}^*M$};
\end{tikzpicture}
=
\begin{tikzpicture}[scale = .8, baseline = {(current bounding box.center)}, line width=0.75pt]
\draw (0,0) -- (0,1) .. controls (0,1.7) and (1,1.3) .. (1,2) -- (1.5,2.5) -- (2,2) .. controls (2,1.3) and (1,1.7) .. (1,1) -- (1,0);
\draw[white, double=black, line width = 3pt ] (1.5,2.5) -- (1.5,3) .. controls (1.5,4) and (3,4) .. (3,3) -- (3,1) .. controls (3,0) and (2,0) .. (2,1) .. controls (2,1.7) and (0,1.3) .. (0,2) -- (0,4);
\node at (1.5,2.5) [draw,thick, fill=white] {\tiny{$\mu_{M^*}^l$}};
\node at (2.25,4) {\tiny{$\ev_M$}};
\node at (2.5,0) {\tiny{$\coev_M'$}};
\node at (0,-.25) {$A$};
\node at (1,-.25) {$M^*$};
\node at (0, 4.25) {${}^*M$};
\end{tikzpicture}
\stackrel{\eqref{eqn:mu-M*-left}}{=}
\begin{tikzpicture}[scale = .8, baseline = {(current bounding box.center)}, line width=0.75pt]
\draw (1,0) -- (1,1) .. controls (1,1.7) and (2,1.3) .. (2,2) .. controls (2,2.7) and (1,2.3) .. (1,3) -- (1,4) .. controls (1,5) and (2.5,5) .. (2.5,4) -- (2.5,3.5);
\draw[white, double=black, line width = 3pt ] (0,0) -- (0,1) .. controls (0,1.7) and (1,1.3) .. (1,2) .. controls (1,2.7) and (2,2.3) .. (2,3) -- (2.5,3.5) -- (3,3) -- (3,1) .. controls (3,0) and (2,0) .. (2,1);
\draw[white, double=black, line width = 3pt ] (2,1) .. controls (2,1.7) and (0,1.3) .. (0,2) -- (0,5);
\node at (2.5,3.5) [draw,thick, fill=white] {\tiny{$\mu_{M}^l$}};
\node at (1.75,5) {\tiny{$\ev_M$}};
\node at (2.5,0) {\tiny{$\coev_M'$}};
\node at (0,-.25) {$A$};
\node at (1,-.25) {$M^*$};
\node at (0, 5.25) {${}^*M$};
\end{tikzpicture}
=
\begin{tikzpicture}[scale = .8, baseline = {(current bounding box.center)}, line width=0.75pt]
\draw (1,0) .. controls (1,.7) and (0,.3) .. (0,1) -- (0,3) .. controls (0,3.7) and (1,3.3) .. (1,4) .. controls (1,5) and (2.5,5) .. (2.5, 4) -- (2.5,3.5);
\draw[white, double=black, line width = 3pt ] (0,0) .. controls (0,.7) and (1,.3) .. (1,1) -- (1,2) .. controls (1,2.7) and (2,2.3) .. (2,3) -- (2.5,3.5) -- (3,3) -- (3,2) .. controls (3,1) and (2,1) .. (2,2);
\draw[white, double=black, line width = 3pt ] (2,2) .. controls (2,2.7) and (1,2.3) .. (1,3) .. controls (1,3.7) and (0,3.3) .. (0,4) -- (0,5);
\node at (2.5,3.5) [draw,thick, fill=white] {\tiny{$\mu_{M}^l$}};
\node at (1.75,5) {\tiny{$\ev_M$}};
\node at (2.5,1) {\tiny{$\coev_M'$}};
\node at (0,-.25) {$A$};
\node at (1,-.25) {$M^*$};
\node at (0, 5.25) {${}^*M$};
\end{tikzpicture}\nonumber\\
& = 
\begin{tikzpicture}[scale = .8, baseline = {(current bounding box.center)}, line width=0.75pt]
\draw (3,0) .. controls (3,.7) and (0,.3) .. (0,1) -- (0,3) .. controls (0,3.7) and (1,3.3) .. (1,4) .. controls (1,5) and (2.5,5) .. (2.5, 4) -- (2.5,3.5);
\draw[white, double=black, line width = 3pt ] (0,0) .. controls (0,.7) and (3,.3) .. (3,1) -- (3,2) .. controls (3,2.7) and (2,2.3) .. (2,3) -- (2.5,3.5) -- (3,3);
\draw[white, double=black, line width = 3pt ] (3,3) .. controls (3,2.3) and (2,2.7) .. (2,2) .. controls (2,1) and (1,1) .. (1,2) -- (1,3) .. controls (1,3.7) and (0,3.3) .. (0,4) -- (0,5);
\node at (2.5,3.5) [draw,thick, fill=white] {\tiny{$\mu_{M}^l$}};
\node at (1.75,5) {\tiny{$\ev_M$}};
\node at (1.5,1) {\tiny{$\coev_M'$}};
\node at (0,-.25) {$A$};
\node at (3,-.25) {$M^*$};
\node at (0, 5.25) {${}^*M$};
\end{tikzpicture}
\stackrel{\eqref{eqn:mu-*M-right}}{=}
\begin{tikzpicture}[scale = .8, baseline = {(current bounding box.center)}, line width=0.75pt]
\draw (1,0) .. controls (1,.7) and (0,.3) .. (0,1) -- (0,3) .. controls (0,3.7) and (1.5,3.3) .. (1.5,4) .. controls (1.5,5) and (3,5) .. (3, 4) -- (3,3);
\draw[white, double=black, line width = 3pt ] (0,0) .. controls (0,.7) and (1,.3) .. (1,1) -- (1,2) -- (1.5,2.5) -- (2,2) .. controls (2,1) and (3,1) .. (3,2) -- (3,3);
\draw[white, double=black, line width = 3pt ] (1.5,2.5) -- (1.5,3) .. controls (1.5,3.7) and (0,3.3) .. (0,4) -- (0,5);
\node at (1.5,2.5) [draw,thick, fill=white] {\tiny{$\mu_{{}^*M}^l$}};
\node at (2.25,5) {\tiny{$\ev_M$}};
\node at (2.5,1) {\tiny{$\coev_M'$}};
\node at (0,-.25) {$A$};
\node at (1,-.25) {$M^*$};
\node at (0, 5.25) {${}^*M$};
\end{tikzpicture}
=
\begin{tikzpicture}[scale = .8, baseline = {(current bounding box.center)}, line width=0.75pt]
\draw (1,0) -- (1,1) .. controls (1,1.7) and (2,1.3) .. (2,2);
\draw[white, double=black, line width = 3pt ] (0,0) -- (0,2) -- (.5,2.5) -- (1,2) .. controls (1,1.3) and (2,1.7) .. (2,1) .. controls (2,0) and (3,0) .. (3,1) -- (3,2) .. controls (3,3) and (2,3) .. (2,2);
\draw (.5, 2.5) -- (.5,3.25);
\node at (0.5,2.5) [draw,thick, fill=white] {\tiny{$\mu_{{}^*M}^l$}};
\node at (2.5,3) {\tiny{$\ev_M$}};
\node at (2.5,0) {\tiny{$\coev_M'$}};
\node at (0,-.25) {$A$};
\node at (1,-.25) {$M^*$};
\node at (.5, 3.5) {${}^*M$};
\end{tikzpicture}
=\mu_{{}^*M}^l\circ(\id_A\otimes\varphi_M^-),
    \end{align*}
    as required.
\end{proof}

If $M=A$, then the left $A$-action $\mu_{{}^*A}^l$ in Lemma \ref{lem:ident-M*-and-*M} agrees with \eqref{eqn:A*-and_*A} as $A$ is commutative, so we get:
\begin{corollary}\label{cor:ident-A*-and-*A}
    If $A$ is a commutative algebra in a braided monoidal category, then $\varphi_A^-: (A^*,\mu_{A^*}^r\circ c_{A,A^*})\rightarrow ({}^*A,\mu_{{}^*A}^l)$ is an isomorphism of left $A$-modules.
\end{corollary}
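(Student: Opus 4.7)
The plan is to derive this corollary as a direct specialization of Lemma \ref{lem:ident-M*-and-*M} to the case $M = A$ with $\mu_M^l = \mu_A$, using commutativity of $A$ to identify the module structures.

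First, I would observe that Lemma \ref{lem:ident-M*-and-*M} is stated for an arbitrary left $A$-module $(M,\mu_M^l)$, and yields that $\varphi_M^-$ is a left $A$-module isomorphism between $(M^*, \mu_{M^*}^l)$ and $({}^*M, \mu_{{}^*M}^l)$. By construction, $\mu_{M^*}^l = \mu_{M^*}^r \circ c_{A,M^*}$, where $\mu_{M^*}^r$ is obtained from Lemma \ref{lem:dual-action} applied to $(M,\mu_M^l)$. Specializing $M = A$ and $\mu_M^l = \mu_A$, the formula in Lemma \ref{lem:dual-action} for $\mu_{A^*}^r$ agrees verbatim with the formula in \eqref{eqn:A*-and_*A}. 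Hence the source of $\varphi_A^-$ is precisely $(A^*, \mu_{A^*}^r \circ c_{A,A^*})$.

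Next, I would identify the target. In Lemma \ref{lem:ident-M*-and-*M}, the left action on ${}^*M$ is constructed by first forming the right $A$-module structure $\mu_M^r = \mu_M^l \circ c_{M,A}$ and then dualizing via Lemma \ref{lem:dual-action}. For $M = A$, commutativity gives
\[
    \mu_A^r = \mu_A \circ c_{A,A} = \mu_A,
\]
so the right module structure used is just the regular one. Plugging this into the formula of Lemma \ref{lem:dual-action}, we recover exactly the expression for $\mu_{{}^*A}^l$ in \eqref{eqn:A*-and_*A}. Therefore the target $({}^*A, \mu_{{}^*A}^l)$ of $\varphi_A^-$ in Lemma \ref{lem:ident-M*-and-*M} coincides with the left $A$-module $({}^*A, \mu_{{}^*A}^l)$ in the statement of the corollary.

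With both source and target matched, the corollary is an immediate consequence of Lemma \ref{lem:ident-M*-and-*M}. There is no real obstacle here: the only subtlety is the bookkeeping needed to confirm that the two a priori different prescriptions for the $A$-module structures on $A^*$ and ${}^*A$ coincide, and this is precisely where the commutativity hypothesis $\mu_A \circ c_{A,A} = \mu_A$ enters.
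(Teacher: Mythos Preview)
Your proposal is correct and follows essentially the same approach as the paper: both specialize Lemma~\ref{lem:ident-M*-and-*M} to $M=A$ and use commutativity $\mu_A\circ c_{A,A}=\mu_A$ to identify the left $A$-action on ${}^*A$ from the lemma with the one in \eqref{eqn:A*-and_*A}. The paper's proof is a single sentence making this observation, while you spell out the bookkeeping for both source and target.
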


%%%%%%%%%%%%%%%%%%%%%%%%%%%%%%%%%%%%%%%%%%%%%%%%%%%%%%
%%%%%%%%%%%%%%%%%%%%%%%%%%%%%%%%%%%%%%%%%%%%%%%%%%%%%%
%%%%%%%%%%%%%%%%%%%%%%%%%%%%%%%%%%%%%%%%%%%%%%%%%%%%%%

\subsection{Abelian rigid monoidal categories}

Here we collect properties of algebras and their modules in abelian rigid monoidal categories. We always assume the tensor product in such a category is bilinear.

\begin{lemma}\textup{\cite[Exercise~1.6.4]{etingof2016tensor}}\label{lem:adj-imply-exact}
    Any functor between abelian categories that has a left, respectively right, adjoint is left, respectively right, exact.
\end{lemma}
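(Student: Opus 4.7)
The plan is to derive both statements from the basic categorical fact that a functor with a left adjoint preserves all limits that exist in the source, and a functor with a right adjoint preserves all colimits that exist in the source. By duality, it is enough to treat the case $F \colon \cA \to \cB$ with a left adjoint $L \colon \cB \to \cA$, and to show that $F$ is left exact; the right-exactness statement then follows by passing to opposite categories.

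First, I would check that $F$ is additive. Since $F$ admits a left adjoint, it preserves terminal objects and finite products. In an abelian (more generally, additive) category, finite products coincide with finite coproducts via the biproduct, and the addition on $\Hom$-sets can be recovered from this biproduct structure (via the diagonal and codiagonal). Hence $F$ preserves biproducts and is additive; in particular $F(0) = 0$ for the zero morphism between any two objects, and $F$ sends zero objects to zero objects.

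Second, I would establish preservation of kernels by a Yoneda-style computation. For a morphism $f \colon X \to Y$ in $\cA$ with kernel $k \colon \ker f \to X$, and for any object $B \in \cB$, the adjunction $L \dashv F$ together with the fact that $\Hom_{\cA}(L(B),-)$ sends kernels to kernels of abelian groups gives a natural chain of isomorphisms
\[
\Hom_{\cB}\bigl(B, F(\ker f)\bigr) \cong \Hom_{\cA}\bigl(L(B), \ker f\bigr) \cong \ker\bigl(\Hom_{\cA}(L(B),X) \to \Hom_{\cA}(L(B),Y)\bigr),
\]
and applying the adjunction in reverse identifies the right-hand side with $\ker\bigl(\Hom_{\cB}(B, F(X)) \to \Hom_{\cB}(B, F(Y))\bigr) \cong \Hom_{\cB}\bigl(B, \ker F(f)\bigr)$. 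These isomorphisms are natural in $B$, so the Yoneda lemma yields $F(\ker f) \cong \ker F(f)$, compatibly with the canonical maps.

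Finally, given a short exact sequence $0 \to X \xrightarrow{f} Y \xrightarrow{g} Z \to 0$ in $\cA$, identifying $X$ with $\ker g$ and applying preservation of kernels gives $F(X) \cong \ker F(g)$, so $0 \to F(X) \to F(Y) \to F(Z)$ is exact in $\cB$. This is precisely left exactness of $F$. The only subtle point is verifying additivity of $F$ before invoking the Yoneda argument; once additivity is in hand, the remainder is the standard ``right adjoints preserve limits'' principle specialized to kernels.
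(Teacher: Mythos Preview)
Your proof is correct and follows the standard route: additivity from preservation of biproducts, then preservation of kernels via the adjunction isomorphism and Yoneda, then left exactness. The paper itself does not supply a proof of this lemma; it merely cites it as \cite[Exercise~1.6.4]{etingof2016tensor} and moves on. So there is nothing to compare against, and your argument stands on its own as a clean resolution of that exercise.
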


In particular, if $X$ is a rigid object of a monoidal category, then $X\otimes -$ has left adjoint $X^*\otimes -$ and right adjoint ${}^*X\otimes -$; similarly, $-\otimes X$ has left adjoint $-\otimes{}^*X$ and right adjoint $-\otimes X^*$ (see for example \cite[Proposition 2.10.8]{etingof2016tensor}). Thus we get:
\begin{corollary}
The tensor product functor of any abelian rigid monoidal category is biexact.%\qed
\end{corollary}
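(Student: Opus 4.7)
The statement is essentially a direct consequence of the preceding Lemma \ref{lem:adj-imply-exact} combined with the standard duality adjunctions for tensoring with a rigid object. The plan is as follows.

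First, fix an arbitrary object $X$ in an abelian rigid monoidal category $\cC$. Since $X$ is rigid, the evaluation and coevaluation morphisms for the left and right duals yield natural adjunctions
\[
X^* \otimes -\;\dashv\; X \otimes -\;\dashv\;{}^*X \otimes -,
\]
and similarly
\[
-\otimes {}^*X\;\dashv\; -\otimes X\;\dashv\; -\otimes X^*,
\]
as recalled in the paragraph above the statement (see \cite[Proposition~2.10.8]{etingof2016tensor}). In particular, the functor $X \otimes -$ admits both a left and a right adjoint, and likewise for $-\otimes X$.

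Next, I would invoke Lemma \ref{lem:adj-imply-exact}: since $X \otimes -$ has $X^* \otimes -$ as a left adjoint, it is left exact, and since it has ${}^*X \otimes -$ as a right adjoint, it is right exact, so $X \otimes -$ is exact. The same reasoning applied to the second pair of adjunctions shows that $-\otimes X$ is exact. As $X$ was arbitrary, the tensor product bifunctor is exact separately in each variable, i.e., biexact.

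There is no real obstacle in this argument; the only subtle point is to cite the right version of Lemma \ref{lem:adj-imply-exact} on each side (a right adjoint is left exact, a left adjoint is right exact), and to observe that rigidity provides both adjoints \emph{simultaneously}, which is precisely what forces exactness (as opposed to merely left or right exactness) in each slot.
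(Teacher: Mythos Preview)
Your proof is correct and follows exactly the same approach as the paper: both use that rigidity of $X$ gives $X\otimes-$ and $-\otimes X$ left and right adjoints (via \cite[Proposition~2.10.8]{etingof2016tensor}), and then apply Lemma~\ref{lem:adj-imply-exact} to conclude exactness in each variable.
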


Using this corollary, the following results are straightforward \cite{etingof2004finite}: 
\begin{lemma}
Let $P$ be a projective object in an abelian rigid monoidal category $\cC$. Then:
\begin{enumerate}\label{lem:projective-facts}
    \item $P\otimes X$ and $X\otimes P$ are projective for any $X\in\cC$. 
    \item $P^*$ and ${}^*P$ are projective. %\qed
\end{enumerate}
\end{lemma}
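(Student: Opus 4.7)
The plan is to prove part (1) by a direct adjunction argument, then use it together with the snake identities to deduce part (2). For (1), rigidity of $X$ supplies the adjunction $-\otimes X \dashv -\otimes X^*$, with counit and unit built from $\ev_X$ and $\coev_X$, and hence a natural isomorphism
\[ \Hom_\cC(P\otimes X,-)\cong \Hom_\cC(P,-\otimes X^*). \]
The right-hand side is the composition of $-\otimes X^*$ (exact by the preceding corollary on biexactness of the tensor product) with $\Hom_\cC(P,-)$ (exact by the projectivity of $P$), so it is exact in its argument. Therefore $P\otimes X$ is projective. The case of $X\otimes P$ follows symmetrically from the adjunction $X\otimes -\dashv {}^*X\otimes -$ coming from $\ev'_X$ and $\coev'_X$.

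For (2), I would exhibit $P^*$ as a direct summand of a projective built from part (1). The snake identity
\[ (\ev_P\otimes \id_{P^*})\circ(\id_{P^*}\otimes \coev_P) = \id_{P^*} \]
realizes $P^*$ as a retract of $P^*\otimes P\otimes P^*$. Applying part (1) to the projective $P$, the object $P^*\otimes P$ is projective; applying part (1) once more to this new projective, $(P^*\otimes P)\otimes P^*$ is projective. Since direct summands of projective objects are projective in any abelian category, $P^*$ is projective. The argument for ${}^*P$ is symmetric: the snake identity $(\id_{{}^*P}\otimes \ev'_P)\circ(\coev'_P\otimes \id_{{}^*P})=\id_{{}^*P}$ realizes ${}^*P$ as a retract of ${}^*P\otimes P\otimes {}^*P$, and iterated application of (1) again yields projectivity.

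I do not anticipate any serious obstacle. The only point worth flagging is that, in (2), part (1) must be invoked twice so that the projectivity hypothesis is used on $P$ itself at each stage. This sidesteps the tempting but problematic approach via an adjunction such as $\Hom_\cC(P^*,-)\cong\Hom_\cC(\vac,P\otimes -)$, which would require the unit object $\vac$ to be projective, an assumption we do not have.
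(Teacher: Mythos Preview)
Your proof is correct and follows the standard approach that the paper alludes to (the paper itself does not spell out a proof, only citing \cite{etingof2004finite} and noting that the results follow from biexactness of the tensor product). Your adjunction argument for (1) and your retract argument via the snake identity for (2) are exactly the expected arguments.
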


Now we consider projective and injective modules for algebras in abelian rigid monoidal categories:
\begin{lemma}\label{lem:algebra-properties} 
Let $A$ be an algebra in an abelian rigid monoidal category $\cC$.
\begin{enumerate}
\item If $P\in\cC$ is projective, then $A\otimes P\in\cC_A^l$ and $P\otimes A\in\cC_A^r$ are projective.

%\item If $(I,\mu^r_I)\in\cC_A^r$ is injective in $\cC_A^r$, then $I$ is injective in $\cC$.

\item $(M,\mu^l_M)\in\cC_A^l$ is projective if and only if $(M^*,\mu^r_{M^*})\in\cC_A^r$ is injective.

\item $(N,\mu^r_N)\in \cC_A^r$ is projective if and only if $({}^*N,\mu^l_{{}^*N})\in \cC_A^l$ is injective.

\item If $(M,\mu^l_M)\in \cC_A^l$, then $f = (\mu^l_M\otimes\id_{M^*})\circ(\id_A\otimes\coev_M) :A\rightarrow M\otimes M^*$ is a morphism in ${}_A\cC_A$. 
%If $(N,\mu^r_N)\in \cC_A^r$, then 
%\[f=(\id_{{}^*N}\otimes \mu^r_N)(\coev'_N \otimes \id_A):A \rightarrow {}^*N\otimes N\]
%is a morphism in ${}_A\cC_A$.
\end{enumerate}
\end{lemma}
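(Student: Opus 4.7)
The four parts are formal consequences of three tools: the induction--forgetful adjunction (for (1)), the duality anti-equivalence of Lemma~\ref{lem:CA-dual-antiequivalence} (for (2) and (3)), and direct diagrammatic calculation (for (4)).

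\textbf{Part (1).} I would first note that the forgetful functor $U:\cC_A^l\to\cC$ is exact, since kernels and cokernels of $A$-module maps coincide with those in $\cC$. Induction is left adjoint to forgetting, so
\begin{equation*}
\Hom_{\cC_A^l}(A\otimes P, M) \;\cong\; \Hom_\cC(P, U(M))
\end{equation*}
naturally in $M$. The right-hand side is exact in $M$ as the composition of the exact functor $U$ with $\Hom_\cC(P,-)$, which is exact by projectivity of $P$. Hence $A\otimes P$ is projective in $\cC_A^l$. The claim for $P\otimes A\in\cC_A^r$ is symmetric.

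\textbf{Parts (2) and (3).} By Lemma~\ref{lem:CA-dual-antiequivalence}, $(-)^*:\cC_A^l\to\cC_A^r$ is a contravariant equivalence with quasi-inverse ${}^*(-)$. Any contravariant equivalence of abelian categories interchanges projectives and injectives: for $M\in\cC_A^l$,
\begin{equation*}
\Hom_{\cC_A^r}(N, M^*) \;\cong\; \Hom_{\cC_A^l}(M, {}^*N)
\end{equation*}
is natural in $N\in\cC_A^r$, and since ${}^*(-)$ is itself exact (being an equivalence after reversing arrows), the left-hand side is exact in $N$ iff $\Hom_{\cC_A^l}(M,-)$ is exact. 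So $M^*$ is injective in $\cC_A^r$ iff $M$ is projective in $\cC_A^l$, which is (2). Part (3) is identical with $\cC_A^l$ and $\cC_A^r$ swapped.

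\textbf{Part (4).} Equip $M\otimes M^*$ with the bimodule structure whose left and right actions are $\mu_M^l\otimes\id_{M^*}$ and $\id_M\otimes\mu_{M^*}^r$; these automatically commute as they act on disjoint tensor factors. For left $A$-linearity, applying the naturality identity $(\id_A\otimes\coev_M)\circ\mu_A = (\mu_A\otimes\id_{M\otimes M^*})\circ(\id_{A\otimes A}\otimes\coev_M)$ reduces the statement to associativity of $\mu_M^l$. For right $A$-linearity, I would expand $\mu_{M^*}^r$ using its definition in Lemma~\ref{lem:dual-action}; the resulting diagram contains a $\coev_M$/$\ev_M$ pair that straightens via the zigzag identity, and the remaining composition collapses once more by associativity of $\mu_M^l$. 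I do not anticipate any genuine obstacle: the only step requiring care is the right $A$-linearity in (4), where one must unfold $\mu_{M^*}^r$ and invoke the snake identity, but this is routine graphical bookkeeping.
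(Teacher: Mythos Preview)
Your proposal is correct and follows essentially the same approach as the paper: the induction--forgetful adjunction for (1), the anti-equivalence of Lemma~\ref{lem:CA-dual-antiequivalence} and the resulting isomorphism $\Hom_{\cC_A^r}(N,M^*)\cong\Hom_{\cC_A^l}(M,{}^*N)$ for (2) and (3), and direct computation for (4). In fact you give more detail for (4) than the paper does, which simply records it as a straightforward calculation.
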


\begin{proof}
(1) The induction functor $\cC\rightarrow\cC_A^l$ given on objects by $X\mapsto(A\otimes X,\mu_A\otimes\id_X)$ is left adjoint to the forgetful functor $U: \cC_A^l\rightarrow\cC$, that is, $\Hom_{\cC}(X,U(M))\cong \Hom_{\cC_A^l}(A\otimes X,M)$ for any $X\in\cC$ and $M\in\cC_A^l$. Thus if $P\in\cC$ is projective, the functor
\begin{equation*}
    \Hom_{\cC_A^l}(A\otimes P, -)  \cong \Hom_{\cC}(P,-)\circ U
\end{equation*}
is exact since $\Hom_{\cC}(P,-)$ and $U$ are exact. This shows that $A\otimes P$ is projective in $\cC_A^l$, and $P\otimes A$ is projective in $\cC_A^r$ similarly.

%(2) Similar to (1), $\Hom_{\cC}(-,I) \cong \Hom_{\cC_A^r}(-\otimes A,I)$. Now, $\Hom_{\cC_A^r}(-,I)$ is exact since $I$ is injective, and $-\otimes A:\cC\rightarrow\cC_A^r$ is exact because $\cC$ is rigid. Thus $\Hom_{\cC}(-,I)$ is exact, and consequently $I$ is injective.

(2) For $M\in\cC_A^l$ and $N\in \cC_A^r$, the anti-equivalence ${}^*(-):\cC_A^r\rightarrow \cC_A^l$ from Lemma~\ref{lem:CA-dual-antiequivalence} and the $\cC_A^l$-isomorphism ${}^*(M^*)\cong M$ imply
\begin{equation}\label{eqn:iso-for-3-4}
   \Hom_{\cC_A^r}(N,M^*) \cong \Hom_{\cC_A^l}(M,{}^*N).
\end{equation}
Thus $\Hom_{\cC_A^r}(-,M^*)$ is exact if and only if $\Hom_{\cC_A^l}(M,{}^*(-))$ is exact. As $(-)^*$ and ${}^*(-)$ are quasi-inverse anti-equivalences (and thus exact), $\Hom_{\cC_A^r}(-,M^*)$ is exact if and only if $\Hom_{\cC_A^l}(M,-)$ is exact, as required.

(3) This also follows from \eqref{eqn:iso-for-3-4}. 

(4) This is a straightforward calculation.
\end{proof}

\begin{comment}
We first check that $f$ is a morphism of right $A$-modules.
\begin{align*}
    (\id_{{}^*N} \otimes \mu^r_N)(f\otimes \id_A) 
    & = (\id_{{}^*N} \otimes \mu^r_N)(\id_{{}^*N} \otimes \mu^r_N \otimes \id_A)(\coev'_N \otimes \id_A \otimes \id_A) \\
    & = (\id_{{}^*N} \otimes \mu^r_N(\mu^r_N\otimes \id_A)) (\coev'_N\otimes \id_A \otimes \id_A) \\
    & = (\id_{{}^*N} \otimes \mu^r_N(\id_N\otimes \mu_A)) (\coev'_N \otimes \id_A \otimes \id_A) \\
    & = (\id_{{}^*N} \otimes \mu^r_N)(\coev'_N\otimes \id_A) \mu_A \\
    & = f \circ \mu_A
\end{align*}
Next, we check that $f$ is a morphism of left $A$-modules. From Lemma~\ref{lem:dual-action}, recall the action left action $\mu^r_{{}^*N}$ of $A$ on ${}^*N$. Observe that
\begin{align*}
    & (\mu^r_{{}^*N}  \otimes \id_N)  (\id_A \otimes f)\\
    & \;\; = (\mu^r_{{}^*N} \otimes \id_N) (\id_A\otimes \id_{{}^*N} \otimes \mu^r_N)(\id_A\otimes \coev'_N\otimes \id_A) \\
    & \;\; = ((\id_{{}^*N}\otimes \ev'_{N})(\id_{{}^*M}\otimes \mu^r_N\otimes \id_{{}^*N})(\coev'_N\otimes \id_A\otimes \id_{{}^*N}) \otimes \id_N) (\id_A\otimes \id_{{}^*N} \otimes \mu^r_N)(\id_A\otimes \coev'_N\otimes \id_A) \\ 
    & \;\; = (\id_{{}^*N} \otimes \mu^r_N(\mu^r_N \otimes \id_A)) (\coev'_N \otimes \id_A \otimes \id_A) \\
    & \;\; = (\id_{{}^*N} \otimes \mu^r_N)(\coev'_N \otimes \id_A)\mu_A  \;= f\circ \mu_A
\end{align*}
In the third equality, we use the snake relation of $\ev'$ and $\coev'$.
\end{comment}

%\subsubsection{Simple algebras}
An algebra $A$ in an abelian monoidal category is called {\it indecomposable} if it is not isomorphic to a direct sum of two algebras.
A \textit{two-sided ideal} of $A$ is an $A$-bimodule $I$ together with an $A$-bimodule injection $I\hookrightarrow A$.
An algebra $A$ is \textit{simple} if it is non-zero and its only two-sided ideals are $0$ and $A$. The next lemma will be used later in the proof of Theorem \ref{thm:EO-simple-exact}:

\begin{lemma}\textup{\cite[Lemma~B.2]{etingof2021frobenius}}\label{lem:proj-summand}
Let $A$ be an algebra in an abelian rigid monoidal category $\cC$ with a simple unit object, and suppose there exists an $\cC_A^r$-injection $A^*\hookrightarrow X\otimes A$ for some $X\in\cC$. Then:
\begin{enumerate}
    \item For any projective $P\in\cC$, $A\otimes P\in\cC_A^l$ is injective.
    
 %   Suppose that for an algebra $A$ there exists $X\in\cC$ and an injective map  in $\cC_A^r$. Then, for projective $P\in\cC$, $A\otimes P\in \cC_A^l$ is injective. 
    \item If $A$ is simple and $\cC$ contains a non-zero projective object, then for any non-zero $N\in\cC_A^l$, there is a projective $P\in\cC$ such that $N\otimes P$ contains a nonzero projective object of $\cC_A^l$ as a direct summand.  
\end{enumerate}
\end{lemma}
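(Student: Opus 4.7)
The plan is to establish (1) by dualizing the hypothesized $\cC_A^r$-embedding $A^* \hookrightarrow X \otimes A$, and then deduce (2) by combining (1) with a bimodule morphism $A \to N \otimes N^*$ produced from Lemma \ref{lem:algebra-properties}(4) together with simplicity of $A$.

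For (1), given a projective $P \in \cC$, I would first tensor the given embedding on the left by $P^*$ (which is also projective by Lemma \ref{lem:projective-facts}(2)). Using biexactness of the tensor product, this gives a $\cC_A^r$-embedding $P^* \otimes A^* \hookrightarrow P^* \otimes X \otimes A$. Applying the exact anti-equivalence ${}^*(-):\cC_A^r \to \cC_A^l$ from Lemma \ref{lem:CA-dual-antiequivalence}, together with the canonical dual identifications ${}^*(Y_1 \otimes Y_2) \cong {}^*Y_2 \otimes {}^*Y_1$, ${}^*(P^*) \cong P$, and ${}^*(A^*) \cong A$, one obtains a $\cC_A^l$-epimorphism
\[ \pi:\; {}^*A \otimes {}^*X \otimes P \twoheadrightarrow A \otimes P. \]
Here the source identifies as ${}^*(P^* \otimes X \otimes A)$ in $\cC_A^l$, and the left $A$-action from Lemma \ref{lem:dual-action} lives entirely on the leftmost factor ${}^*A$ (since the right $A$-action on $P^* \otimes X \otimes A$ is concentrated on the rightmost factor). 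Since $A \otimes P$ is projective in $\cC_A^l$ by Lemma \ref{lem:algebra-properties}(1), $\pi$ splits and $A \otimes P$ becomes a direct summand of ${}^*A \otimes {}^*X \otimes P$. On the other hand, $P^* \otimes X$ is projective in $\cC$ (tensoring a projective with any object preserves projectivity in a rigid category, since the tensor factor has an exact right adjoint given by its dual), so $P^* \otimes X \otimes A$ is projective in $\cC_A^r$ by Lemma \ref{lem:algebra-properties}(1). Hence Lemma \ref{lem:algebra-properties}(3) shows ${}^*A \otimes {}^*X \otimes P$ is injective in $\cC_A^l$, and so is its direct summand $A \otimes P$.

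For (2), I would apply Lemma \ref{lem:algebra-properties}(4) to $N$ to obtain a bimodule morphism $f:A \to N \otimes N^*$. A direct unit-law computation gives $f \circ \iota_A = \coev_N$, which is nonzero whenever $N \neq 0$; so $f \neq 0$. Since $A$ is a simple algebra, $\ker(f)$ is a proper two-sided ideal and must vanish, so $f$ is a monomorphism in ${}_A\cC_A$ and in particular in $\cC_A^l$. Choose a nonzero projective $Q \in \cC$ (available by hypothesis) and tensor $f$ on the right by $Q$ to obtain a $\cC_A^l$-embedding $A \otimes Q \hookrightarrow N \otimes (N^* \otimes Q)$. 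By part (1), $A \otimes Q$ is injective in $\cC_A^l$, so this embedding splits. Setting $P := N^* \otimes Q$, which is projective in $\cC$ by the same rigid-tensor-preserves-projective principle, $N \otimes P$ contains $A \otimes Q$ as a direct summand. This summand is projective in $\cC_A^l$ by Lemma \ref{lem:algebra-properties}(1), and it is nonzero: since $\vac$ is simple and the unit law prevents $\iota_A = 0$ (as it would then fail to satisfy $\mu_A \circ (\iota_A \otimes \id_A) = \id_A$), $\iota_A$ must be injective, so $Q \hookrightarrow A \otimes Q$ exhibits $A \otimes Q$ as nonzero.

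The conceptual heart of the argument is the splitting in (1), where $A \otimes P$ is caught between being a quotient of an injective object (from the dualized hypothesis) and being projective (from induction-adjunction). The main care needed will be in verifying that the various duality isomorphisms transporting the module structures through ${}^*(P^* \otimes X \otimes A) \cong {}^*A \otimes {}^*X \otimes P$ respect the $\cC_A^l$-structures coming from Lemma \ref{lem:dual-action}; I expect this to be routine bookkeeping rather than a substantive obstacle.
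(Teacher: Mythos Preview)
Your proposal is correct and follows essentially the same approach as the paper: both arguments tensor the hypothesized embedding by $P^*$, exploit the interplay between projectivity of $A\otimes P$ and the duality anti-equivalence to split, and for (2) use the bimodule map $A\to N\otimes N^*$ from Lemma~\ref{lem:algebra-properties}(4). The only notable cosmetic difference is that the paper splits on the right-module side (using injectivity of $P^*\otimes A^*$) before dualizing back, whereas you dualize first and split on the left-module side; your non-vanishing argument for $A\otimes Q$ via the monomorphism $\iota_A\otimes\id_Q$ is also slightly simpler than the paper's evaluation-based argument.
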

\begin{proof}
(1) By Lemma~\ref{lem:algebra-properties}(1), $A\otimes P\in\cC_A^l$ is projective, so by Lemma~\ref{lem:algebra-properties}(2), $(A\otimes P)^*\in\cC_A^r$ is injective. Now, $(A\otimes P)^*\cong P^*\otimes A^*$ as an object of $\cC$ (see for example \cite[Exercise 2.10.7(b)]{etingof2016tensor}), and a calculation shows that the isomorphism is also a morphism in $\cC_A^r$. Thus $P^*\otimes A^*$ is injective in $\cC_A^r$, and from the inclusion $A^* \hookrightarrow X\otimes A$ we get $P^* \otimes A^* \hookrightarrow P^*\otimes X \otimes A$.  Hence $P^*\otimes A^*$ is a direct summand of $P^*\otimes X\otimes A$. Now since $P$ is projective in $\cC$, so is $P^*$ by Lemma~\ref{lem:projective-facts}(2), and then so is $P^*\otimes X$ by Lemma~\ref{lem:projective-facts}(1). Thus Lemma~\ref{lem:algebra-properties}(1) implies that $P^*\otimes X\otimes A$ is projective in $\cC_A^r$, and then so is its direct summand $P^*\otimes A^*$. Now by Lemma \ref{lem:algebra-properties}(3), ${}^*(P^*\otimes A^*)\cong{}^*((A\otimes P)^*)\cong A\otimes P$ is injective in $\cC_A^l$. 

%In conclusion, $P^* \otimes A^*$ is a direct summand of a projective object, namely $A\otimes X \otimes {}^*P$. So it is projective.

%By Lemma~\ref{lem:algebra-properties}(2), it suffices to prove that $P^*\otimes A^* \in \cC_A^r$ is projective. 

(2) The map $f: A\rightarrow N\otimes N^*$ from Lemma \ref{lem:algebra-properties}(4) is a morphism in ${}_A\cC_A$ such that $f\circ\iota_A=\coev_N$. Since $N\neq 0$, this implies $f\neq 0$, and thus $f$ is injective because $A$ is simple. Tensoring on the right with a non-zero projective $Q\in\cC$  yields
\[ A\otimes Q \hookrightarrow N \otimes N^* \otimes Q \; \in \; \cC_A^l. \]
By (1), $A\otimes Q$ is injective in $\cC_A^l$, so it is a direct summand of $N\otimes N^*\otimes Q$.  Now, $P = N^*\otimes Q$ is a projective object in $\cC$ satisfying the claim. Indeed, $A\otimes Q$ is projective in $\cC_A^l$ by Lemma~\ref{lem:algebra-properties}(1). To show that $A\otimes Q\neq 0$, note that $\coev'_Q: Q\otimes{}^*Q\rightarrow\vac$ is non-zero because $Q\neq 0$ and thus is surjective because $\vac$ is simple. Thus $A\otimes Q =0$ would imply a surjection
\begin{equation*}
    0=A\otimes Q\otimes{}^*Q\xrightarrow{\id_A\otimes\coev'_Q} A\otimes\vac\xrightarrow{\sim} A,
\end{equation*}
which is impossible. Consequently, $A\otimes Q$ is a non-zero projective $\cC_A^l$-direct summand of $N\otimes P$.
%Using Lemma~\ref{lem:algebra-properties}(5), we get a map $A\rightarrow {}N\otimes N^*$ in ${}_A\cC_A$. As $A$ is a simple object in ${}_A\cC_A$, this map will be injective. 
% because by , $A \otimes Q \in \cC_A^l$ is projective. 
\end{proof}

%%%%%%%%%%%%%%%%%%%%%%%%%%%%%%%%%%%%%%%%%%%%%%%%%%%%%%
%%%%%%%%%%%%%%%%%%%%%%%%%%%%%%%%%%%%%%%%%%%%%%%%%%%%%%
%%%%%%%%%%%%%%%%%%%%%%%%%%%%%%%%%%%%%%%%%%%%%%%%%%%%%%

\subsection{Tensor categories and right module categories}

Let $\KK$ be a field. A $\KK$-linear abelian category is called \textit{locally finite} if all morphism spaces are finite dimensional and every object has finite length. If a locally finite category has enough projectives and has finitely many isomorphism classes of simple objects, then it is called a finite linear category.
%The following result is well known; see \cite[Proposition~1.7]{douglas2019balanced} for a proof:
%\begin{lemma}\label{lem:adjoint-exact}
%Let $F:\cC\rightarrow\cD$ be an additive functor between finite linear categories. Then $F$ admits a right, respectively left, adjoint if and only if $F$ is right, respectively left, exact.
%\end{lemma}

A \textit{tensor category} in the sense of \cite{etingof2004finite} is a locally finite $\KK$-linear abelian rigid monoidal category with a $\KK$-bilinear tensor product and simple unit object. A \textit{finite tensor category} is a tensor category with enough projectives and finitely many simple objects. A \textit{fusion category} is a semisimple finite tensor category.

%A tensor category is called \textit{pointed} if every simple object is invertible.

\begin{lemma}\textup{\cite[Exercise~7.8.16]{etingof2016tensor}}\label{lem:loc-finite-CA}
If $A$ is an algebra in a tensor category $\cC$, then the category $\cC_A^l$ of left $A$-modules in $\cC$ is a locally finite $\KK$-linear abelian category. If $\cC$ has enough projectives, respectively finitely many isomorphism classes of simple objects, then so does $\cC_A^l$.
\end{lemma}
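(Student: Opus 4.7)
The plan is to verify each assertion using the induction-restriction adjunction $F_A \dashv U$, where $U: \cC_A^l \to \cC$ is the forgetful functor and $F_A: \cC \to \cC_A^l$ sends $X \mapsto (A \otimes X, \mu_A \otimes \id_X)$. That $\cC_A^l$ is $\KK$-linear abelian is essentially formal: morphism spaces are sub-$\KK$-vector spaces of $\Hom_\cC$, while kernels and cokernels are constructed from those in $\cC$, using biexactness of $\otimes$ (which holds in $\cC$ since it is rigid) to transfer the $A$-action.

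For local finiteness, note that $\Hom_{\cC_A^l}(M,N) \subseteq \Hom_\cC(U(M),U(N))$ is finite dimensional because $\cC$ is locally finite. For finite length, any strictly ascending chain of $A$-submodules of $M$ inside $\cC_A^l$ is in particular a strictly ascending chain of $\cC$-subobjects of $U(M)$, and the length of $U(M)$ in $\cC$ gives an absolute bound.

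Next, suppose $\cC$ has enough projectives. Given $M \in \cC_A^l$, pick a surjection $\pi: P \twoheadrightarrow U(M)$ with $P$ projective in $\cC$. Under the adjunction $\Hom_\cC(P,U(M)) \cong \Hom_{\cC_A^l}(F_A(P),M)$, this corresponds to a morphism $\tilde\pi: A \otimes P \to M$ which factors $\pi$ through $\iota_A \otimes \id_P: P \hookrightarrow A \otimes P$, so $\tilde\pi$ is surjective. By Lemma \ref{lem:algebra-properties}(1), $F_A(P) = A \otimes P$ is projective in $\cC_A^l$, giving enough projectives.

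Finally, suppose $\cC$ has finitely many isomorphism classes of simple objects. Let $M$ be a simple object of $\cC_A^l$. Since $U(M)$ has finite length in $\cC$, we may pick a simple subobject $\iota: S \hookrightarrow U(M)$ in $\cC$. The adjunction converts $\iota$ into a non-zero morphism $\tilde\iota: F_A(S) \to M$, which must be surjective by simplicity of $M$. Thus every simple of $\cC_A^l$ appears as a composition factor of $F_A(S) = A \otimes S$ for some simple $S \in \cC$. Since both $A$ and $S$ have finite length in $\cC$ and the tensor product is biexact, $A \otimes S$ has finite length in $\cC$, hence in $\cC_A^l$. There are thus finitely many simples of $\cC_A^l$ arising from each of the finitely many $S$, giving the result. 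The only mildly delicate point is ensuring that $A \otimes S$ actually has finite length, but this follows immediately from biexactness together with local finiteness of $\cC$.
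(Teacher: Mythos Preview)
Your proof is correct. The paper does not actually provide its own proof of this lemma; it simply cites \cite[Exercise~7.8.16]{etingof2016tensor} as the source, so there is nothing to compare against beyond noting that your argument via the induction--restriction adjunction $F_A \dashv U$ and the reference to Lemma~\ref{lem:algebra-properties}(1) is the standard one.
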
 

%%%%%%%%%%%%%%%%%%%%%%%%%%%%%%%%%%%%%%%%%%%%%%%%%%%%%%

%\subsubsection{\texorpdfstring{$\cC$}{C}-module categories}
Let $\cC$ be a tensor category. Following \cite{ostrik2003module},
a right $\cC$-module category is a pair $(\cM,\trl)$ consisting of a locally finite $\KK$-linear abelian category $\cM$ and a functor $\trl:\cM\times \cC\rightarrow \cM$ such that:
\begin{itemize}
\item There are natural isomorphisms
\begin{equation*}
    M\trl (X\otimes Y) \xrightarrow{\sim} (M\trl X)\trl Y, \quad M \trl \one\xrightarrow{\sim} M \qquad (X,Y\in\cC, \; M\in\cM)
\end{equation*}
satisfying obvious coherence conditions, and
\item $\trl$ is bilinear on morphisms and biexact. % is the first variable.
\end{itemize} 
The main source of module categories for us will be algebras in $\cC$. Indeed, given an algebra $A$, there is a functor $\trl: \cC_A^l \times \cC \rightarrow \cC_A^l$ given on objects by $(M,\mu^l_M)\trl X:= (M\otimes X,\mu^l_M \otimes \id_X)$.

\begin{lemma}\label{lem:CA-action-exact}
If $A$ is an algebra in a tensor category $\cC$, then the functor $\trl:\cC_A^l\times \cC \rightarrow \cC_A^l$ is biexact. 
\end{lemma}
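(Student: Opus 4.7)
The plan is to reduce biexactness of $\trl$ to biexactness of the tensor product $\otimes$ on $\cC$ itself, using the faithful exact forgetful functor $U: \cC_A^l \to \cC$.

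First, I would observe that $U$ is exact and reflects exact sequences. More concretely, $U$ creates kernels and cokernels: given a morphism $f: M \to N$ in $\cC_A^l$, its kernel (respectively cokernel) in $\cC$ acquires a unique $A$-module structure making the canonical $\cC$-morphism into (resp. out of) it a $\cC_A^l$-morphism satisfying the universal property. For kernels this is immediate; for cokernels one uses that $A \otimes -$ is exact on $\cC$ (since $\cC$ is rigid, by the Corollary immediately following Lemma~\ref{lem:adj-imply-exact}) in order to identify $\coker(A \otimes f) \cong A \otimes \coker f$, after which the module structure on the cokernel is induced by the universal property. Consequently, a sequence in $\cC_A^l$ is exact if and only if the underlying sequence in $\cC$ is exact.

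Next, fix $X \in \cC$ and consider the functor $- \trl X : \cC_A^l \to \cC_A^l$. Given a short exact sequence $0 \to M' \to M \to M'' \to 0$ in $\cC_A^l$, applying $U$ yields a short exact sequence in $\cC$; tensoring on the right with $X$ preserves exactness because $\otimes$ on $\cC$ is biexact. The result is a sequence in $\cC_A^l$ whose image under $U$ is exact, hence it is exact in $\cC_A^l$ by the previous paragraph. Thus $- \trl X$ is exact. Symmetrically, fix $M \in \cC_A^l$: for any short exact sequence $0 \to X' \to X \to X'' \to 0$ in $\cC$, the sequence $0 \to M \trl X' \to M \trl X \to M \trl X'' \to 0$ has underlying exact sequence in $\cC$ (by biexactness of $\otimes$ on $\cC$), and therefore is exact in $\cC_A^l$.

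There is no genuine obstacle here; the entire argument is a standard transfer of biexactness along the forgetful functor, and the only input beyond category-theoretic generalities is biexactness of $\otimes$ on $\cC$, which is guaranteed by rigidity. The one minor point worth verifying carefully is the creation of cokernels by $U$, which requires the exactness of $A \otimes -$ on $\cC$ rather than only right-exactness.
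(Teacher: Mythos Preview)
Your proof is correct. For the second variable, $M\trl-$, your argument is essentially identical to the paper's: both reduce to exactness of $M\otimes-$ on $\cC$ via the exact, exactness-reflecting forgetful functor $U$. The only genuine difference is in the first variable: for $-\trl X$ you again push through $U$ and invoke biexactness of $\otimes$ on $\cC$, whereas the paper instead observes directly that $-\trl X$ has left and right adjoints $-\trl{}^*X$ and $-\trl X^*$ inside $\cC_A^l$ and applies Lemma~\ref{lem:adj-imply-exact}. Your route is more uniform (one mechanism for both variables) and makes the role of $U$ fully explicit; the paper's route for $-\trl X$ is marginally quicker and has the small bonus of recording the adjoints, which are reused later (e.g.\ in Lemma~\ref{lem:action-preserves-proj}). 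A minor remark: in your discussion of $U$ creating cokernels, only right exactness of $A\otimes-$ is actually needed, not full exactness.
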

\begin{proof}
For $X\in\cC$, the functor $-\trl X: \cC_A^l\rightarrow\cC_A^l$ has left and right adjoints, namely $-\trl {}^*X$ and $-\trl X^*$, respectively, so it is exact by Lemma~\ref{lem:adj-imply-exact}. Now we show that $(M,\mu^l_M)\trl -: \cC \rightarrow \cC_A^l$ is exact for every $(M,\mu^l_M)\in\cC_A^l$ (following \cite[Corollary~2.26]{douglas2019balanced}). The forgetful functor $U:\cC_A^l\rightarrow\cC$ is exact and reflects short exact sequences. Thus, $(M,\mu^l_M)\trl -: \cC \rightarrow\cC_A^l$ is exact if and only if $U\circ(M\trl -) = M \otimes -:\cC \rightarrow \cC$
is exact. Since this functor has left and right adjoints $M^*\otimes -$ and ${}^*M\otimes -$, respectively, it is exact by Lemma~\ref{lem:adj-imply-exact}. 
\end{proof}

\begin{lemma}\label{lem:CA-finite}
If $A$ is an algebra in a finite tensor category $\cC$, then $(\cC_A^l,\trl)$ is a finite right $\cC$-module category.
%Let $\cC$ be a finite tensor category and $A$ an algebra in it. Then %If $\cC$ is finite, then so is $\cC_A^l$.
\end{lemma}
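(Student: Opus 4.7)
The plan is to assemble the lemma from the pieces already built up in the section, since this statement is essentially a packaging of the preceding results once one unpacks the definition of a finite right $\cC$-module category.

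First I would check that $\cC_A^l$ has the required linear-abelian structure. Since $\cC$ is a finite tensor category (in particular locally finite, with enough projectives and finitely many isomorphism classes of simple objects), Lemma~\ref{lem:loc-finite-CA} gives that $\cC_A^l$ is a locally finite $\KK$-linear abelian category that also has enough projectives and only finitely many simple objects, i.e.\ $\cC_A^l$ is a finite $\KK$-linear abelian category. This handles the underlying category requirement for a finite module category.

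Next I would verify that $\trl$ satisfies the axioms of a right $\cC$-module action. The bifunctor $\trl:\cC_A^l\times\cC\to\cC_A^l$ defined by $(M,\mu^l_M)\trl X=(M\otimes X,\mu^l_M\otimes\id_X)$ is $\KK$-bilinear on morphisms because the tensor product of $\cC$ is $\KK$-bilinear. The associativity and unit isomorphisms
\[ M\trl(X\otimes Y)\xrightarrow{\sim}(M\trl X)\trl Y,\qquad M\trl\one\xrightarrow{\sim} M \]
are inherited from the associativity and unit constraints of $\cC$; one checks immediately from the formula for the $A$-action on $M\trl X$ that these $\cC$-isomorphisms are in fact $\cC_A^l$-morphisms, and the pentagon/triangle coherence conditions follow from the corresponding coherence in $\cC$. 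Biexactness of $\trl$ is exactly Lemma~\ref{lem:CA-action-exact}.

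Combining these observations gives all of the defining data and axioms of a finite right $\cC$-module category, completing the proof. There is no real obstacle here: the statement is a routine consequence of Lemma~\ref{lem:loc-finite-CA} (for finiteness of $\cC_A^l$) and Lemma~\ref{lem:CA-action-exact} (for biexactness of the action), together with the trivial inheritance of the coherence isomorphisms from $\cC$.
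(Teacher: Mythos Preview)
Your proposal is correct and follows essentially the same approach as the paper: invoke Lemma~\ref{lem:loc-finite-CA} for finiteness of $\cC_A^l$, observe that coherence and bilinearity of $\trl$ are inherited from $\otimes$ in $\cC$, and apply Lemma~\ref{lem:CA-action-exact} for biexactness. Your version is slightly more explicit about why the associativity and unit isomorphisms are $\cC_A^l$-morphisms, but the argument is the same.
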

\begin{proof}
By Lemma~\ref{lem:loc-finite-CA}, $\cC_A^l$ is a finite linear category. The coherence and bilinearity of the $\cC$-action $\trl$ follow from the corresponding properties of $\otimes$, and $\trl$ is biexact by Lemma~\ref{lem:CA-action-exact}. Thus $\cC_A^l$ is a right $\cC$-module category as well. %The finite case is now clear.
\end{proof}

We will also need the following lemma later:
\begin{lemma}\label{lem:action-preserves-proj}
    If $A$ is an algebra in a tensor category, then for any object $X\in\cC$ and any projective object $P\in\cC_A^l$, $P\trl X$ is projective in $\cC_A^l$.
\end{lemma}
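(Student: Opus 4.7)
The plan is to exhibit the functor $(-)\trl X : \cC_A^l \to \cC_A^l$ as a left adjoint whose right adjoint is exact; then preservation of projectives is automatic.

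More precisely, I would first show that $(-)\trl X$ has right adjoint $(-)\trl {}^*X$. Rigidity of $X$ in $\cC$ gives a natural isomorphism $\Hom_\cC(Y\otimes X, Z)\cong \Hom_\cC(Y, Z\otimes {}^*X)$ via $f\mapsto (f\otimes\id_{{}^*X})\circ(\id_Y\otimes\coev'_X)$. The key point is that this bijection restricts to an isomorphism
\[
  \Hom_{\cC_A^l}(M\trl X, N)\;\cong\;\Hom_{\cC_A^l}(M, N\trl {}^*X)
\]
for $M,N\in\cC_A^l$. Indeed, a direct check using the definitions of $\mu^l_{M\trl X}=\mu_M^l\otimes\id_X$ and $\mu^l_{N\trl {}^*X}=\mu_N^l\otimes\id_{{}^*X}$ shows that the left $A$-linearity of $f:M\otimes X\to N$ corresponds exactly to the left $A$-linearity of $\tilde f=(f\otimes\id_{{}^*X})\circ(\id_M\otimes\coev'_X):M\to N\otimes {}^*X$, since the coevaluation $\coev'_X$ and $\id_A$ commute past each other.

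Next, by Lemma~\ref{lem:CA-action-exact}, the functor $(-)\trl {}^*X:\cC_A^l\to\cC_A^l$ is exact (it is biexact in both variables, and we are only fixing the second). Now let $0\to N_1\to N_2\to N_3\to 0$ be a short exact sequence in $\cC_A^l$. Applying the exact functor $(-)\trl {}^*X$ and then $\Hom_{\cC_A^l}(P,-)$ — which is exact because $P$ is projective — gives an exact sequence. By the adjunction just established, this exact sequence is naturally identified with
\[
  0\to \Hom_{\cC_A^l}(P\trl X, N_1)\to \Hom_{\cC_A^l}(P\trl X, N_2)\to \Hom_{\cC_A^l}(P\trl X, N_3)\to 0.
\]
Hence $\Hom_{\cC_A^l}(P\trl X,-)$ is exact, i.e.\ $P\trl X$ is projective in $\cC_A^l$.

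There is no real obstacle; the only thing that requires any care is verifying that the $\cC$-level adjunction $\Hom_\cC(Y\otimes X,Z)\cong\Hom_\cC(Y,Z\otimes {}^*X)$ descends to $\cC_A^l$, and this is a straightforward graphical-calculus check using that the $A$-action on $N\trl {}^*X$ leaves the ${}^*X$ tensor factor untouched. Alternatively, one can invoke the general principle that a left adjoint between abelian categories preserves projective objects whenever its right adjoint is exact, and simply point to the two facts just stated.
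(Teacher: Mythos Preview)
Your approach is exactly the paper's: show that $(-)\trl X$ has an exact right adjoint (namely $(-)\trl$ applied to a dual of $X$), then use that $\Hom_{\cC_A^l}(P\trl X,-)$ factors as $\Hom_{\cC_A^l}(P,-)$ composed with that exact right adjoint. The only slip is that you have the wrong dual: with $\coev'_X:\one\to {}^*X\otimes X$, the map $(f\otimes\id_{{}^*X})\circ(\id_Y\otimes\coev'_X)$ does not typecheck, and in fact $(-)\trl {}^*X$ is the \emph{left} adjoint of $(-)\trl X$; the right adjoint is $(-)\trl X^*$, using $\coev_X:\one\to X\otimes X^*$ (this is exactly what the paper records in the proof of Lemma~\ref{lem:CA-action-exact}). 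With that correction your argument goes through verbatim.
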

\begin{proof}
    As in the previous lemma, $\Hom_{\cC_A^l}(P\trl X, -)\cong\Hom_{\cC_A^l}(P,-\trl X^*)$. Thus $\Hom_{\cC_A^l}(P\trl X, -)$ is the composition of the exact functors $\Hom_{\cC_A^l}(P,-)$ and $-\trl X^*$, implying $P\trl X$ is projective in $\cC_A^l$.
\end{proof}

%%%%%%%%%%%%%%%%%%%%%%%%%%%%%%%%%%%%%%%%%%%%%%%%%%%%%%

%\subsubsection{\texorpdfstring{$\cC$}{C}-module functors}
Given two right $\cC$-module categories $\cM$ and $\cN$, a right $\cC$-module functor is a pair $(F,s)$ consisting of a functor $F:\cM\rightarrow\cN$ and a natural isomorphism $s_{M,X}: F(M)\trl X \rightarrow F(M\trl X)$ for $X\in\cC, M\in\cM$ which satisfies pentagon and triangle coherence conditions. Given two $\cC$-module functors $(F,s),(G,t):\cM\rightarrow\cN$, a natural transformation $\alpha:F\rightarrow G$ is called a $\cC$-module natural transformation if it satisfies
\begin{equation*}
t_{M,X}\circ (\alpha_{M}\trl \id_X) = \alpha_{M\trl X} \circ s_{M,X} : F(M)\trl X \rightarrow G(M\trl X).
\end{equation*}
Given two right $\cC$-module categories $\cM$ and $\cN$, we introduce two categories of functors:
\begin{itemize}
\item $\Fun_{\cC}(\cM,\cN)$ is the category whose objects are $\KK$-linear right $\cC$-module functors $F:\cM \rightarrow \cN$, and whose morphisms are $\cC$-module natural transformations. We write $\Fun_{\cC}(\cC_A^l):= \Fun_{\cC}(\cC_A^l,\cC_A^l)$.
\item $\Rex_{\cC}(\cM,\cN)$ is the full subcategory of right exact functors in $\Fun_{\cC}(\cM,\cN)$. We write $\Rex_{\cC}(\cC_A^l):= \Rex_{\cC}(\cC_A^l,\cC_A^l)$.
\end{itemize}
The following important result describes the second of these categories:
\begin{proposition}[Eilenberg-Watts equivalence] \textup{\cite[Theorem~4.2]{pareigis1977non}}\label{prop:Eilenberg-Watts}
Let $A$ and $B$ be algebras in a finite tensor category $\cC$. Then there is an equivalence of categories 
\begin{equation*}
    \cEW: \Rex_{\cC}(\cC_A^l,\cC_B^l) \xrightarrow{\sim} {}_B\cC_A, \qquad (F,s) \mapsto (F(A),\mu_{F(A)}^r =F(\mu_A)\circ s_{A,A}),
\end{equation*}
with quasi-inverse given by $M \mapsto (M\otimes_A -, s^M)$, where for $N\in\cC_A^l$ and $X\in\cC$, the left $B$-module isomorphism $s^M_{N,X}: (M\otimes_A N)\trl X\rightarrow M\otimes_A(N\trl X)$ is the unique map such that the diagram
    \begin{equation*}
    %    \xymatrixcolsep{3pc}
    %    \xymatrix{
    \begin{tikzcd}[column sep=3pc]
        (M\otimes N)\otimes X \ar[d, "\pi_{M,N}\otimes\id_X"] \ar[r, "\sim"] & M\otimes(N\otimes X) \ar[d, "\pi_{M,N\trl X}"]\\
        (M\otimes_A N) \trl X \ar[r, "s^M_{N,X}"] & M\otimes_A (N\trl X)
        \end{tikzcd}
    %    }
\end{equation*}
commutes.
\end{proposition}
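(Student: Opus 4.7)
The plan is to construct $\cEW$ as specified, define a candidate quasi-inverse $\Psi: M \mapsto (M \otimes_A -, s^M)$, and then establish natural isomorphisms $\cEW \circ \Psi \cong \id_{{}_B\cC_A}$ and $\Psi \circ \cEW \cong \id$ on the functor category. The main substantive step is the second isomorphism, which relies on right exactness of $F$ together with a bar-type resolution argument.

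For $\cEW$, given $(F,s) \in \Rex_{\cC}(\cC_A^l, \cC_B^l)$, observe that $F(A) \in \cC_B^l$ and define the right $A$-action by $\mu^r_{F(A)} := F(\mu_A) \circ s_{A,A}$, noting that $\mu_A : A \trl A \to A$ is a morphism in $\cC_A^l$ by associativity of $A$. The associativity and unit axioms for $\mu^r_{F(A)}$ reduce, via the pentagon and triangle coherences for $s$, to the corresponding axioms for $A$ after applying $F$. Commutation of the left $B$- and right $A$-actions is automatic because $s_{A,A}$ is a $\cC_B^l$-morphism, and functoriality of $\cEW$ on morphisms comes from evaluating a $\cC$-module natural transformation at $A$. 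For the candidate quasi-inverse $\Psi$, given $M \in {}_B\cC_A$, the structure morphism $s^M_{N,X}: (M \otimes_A N) \trl X \to M \otimes_A (N \trl X)$ exists by the universal property of coequalizers: in the rigid category $\cC$, the functor $- \otimes X$ has both adjoints by Lemma~\ref{lem:adj-imply-exact} and therefore preserves the coequalizer defining $\otimes_A$. The pentagon and triangle coherences for $s^M$ descend from those of $\otimes$ in $\cC$, and right exactness of $M \otimes_A -$ follows from biexactness of $\otimes$ together with functoriality of the defining coequalizer in its second argument.

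The composition $\cEW \circ \Psi$ sends $M$ to $M \otimes_A A$ with right $A$-action $(M \otimes_A \mu_A) \circ s^M_{A,A}$, and the canonical unit isomorphism $M \otimes_A A \cong M$ is straightforwardly a $(B,A)$-bimodule map natural in $M$. The harder direction $\Psi \circ \cEW \cong \id$ requires, for each $(F,s)$, a natural module isomorphism $\phi_N: F(A) \otimes_A N \xrightarrow{\sim} F(N)$. I would define $\phi_N$ as the morphism induced by the composition
\[
F(A) \trl N \xrightarrow{\; s_{A,N} \;} F(A \trl N) \xrightarrow{\; F(\mu^l_N) \;} F(N),
\]
verifying via the pentagon coherence for $s$, naturality of $s$ in its $\cC_A^l$-argument, and associativity of $\mu^l_N$ that this composition coequalizes the pair of maps $F(A) \otimes A \otimes N \rightrightarrows F(A) \otimes N$ defining $F(A) \otimes_A N$. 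The unit axioms identify $\phi_A$ as an isomorphism, and for a ``free'' module $N = A \trl X$ the map $\phi_{A \trl X}$ reduces (under $F(A) \otimes_A A \cong F(A)$) to $s_{A,X}$, hence is an isomorphism. The main obstacle, and the step where right exactness of $F$ is essential, is passing from free modules to arbitrary $N \in \cC_A^l$: the action morphism $\mu^l_N: A \trl N \to N$ is a split epimorphism in the abelian category $\cC_A^l$ of Lemma~\ref{lem:loc-finite-CA} (split by $\iota_A \otimes \id_N$), so iterating on its kernel produces a right exact presentation $A \trl X_1 \to A \trl X_0 \to N \to 0$. Since both $F$ and $F(A) \otimes_A -$ are right exact, applying the five lemma to the commutative ladder built from $\phi$ forces $\phi_N$ to be an isomorphism for all $N$. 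Finally, naturality of $\phi$ in $N$ and compatibility with the right $\cC$-action (i.e.\ that $\phi$ is a $\cC$-module natural transformation) follow from further applications of the pentagon coherences for $s$ and $s^{F(A)}$.
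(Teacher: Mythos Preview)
The paper does not prove this proposition; it is simply cited from Pareigis \cite{pareigis1977non}. Your sketch follows the standard Eilenberg--Watts argument and is essentially correct, but there is one inaccurate claim that you should fix.

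You assert that $\mu_N^l: A \trl N \to N$ is a split epimorphism \emph{in $\cC_A^l$}, split by $\iota_A \otimes \id_N$. This is false: the map $\iota_A \otimes \id_N$ is generally not a left $A$-module homomorphism, since the left $A$-action on $A \trl N$ is $\mu_A \otimes \id_N$, and $(\mu_A \otimes \id_N)\circ(\id_A \otimes \iota_A \otimes \id_N)$ is the identity (up to unit isomorphisms) rather than $(\iota_A \otimes \id_N)\circ \mu_N^l$. The splitting lives only in $\cC$. Fortunately your argument does not actually need the splitting in $\cC_A^l$: since the forgetful functor $\cC_A^l \to \cC$ is faithful and exact, $\mu_N^l$ is still an epimorphism in $\cC_A^l$, which is all the five-lemma step requires. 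A cleaner way to produce the presentation is to note that the parallel pair $\mu_A \otimes \id_N,\ \id_A \otimes \mu_N^l: A \trl (A \otimes N) \rightrightarrows A \trl N$ consists of $\cC_A^l$-morphisms whose coequalizer is $N$ (this is a $U$-split coequalizer, hence created by the forgetful functor), giving directly the right exact sequence $A \trl (A \otimes N) \to A \trl N \to N \to 0$ in $\cC_A^l$ with both left-hand terms free. With this correction the rest of your argument goes through.
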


\begin{remark}\label{rem:Eilenberg-Watts}
When $A=B$ in the preceding proposition, both $\Rex_{\cC}(\cC_A^l)$ and ${}_A\cC_A$ are monoidal categories, where the tensor product on $\Rex_{\cC}(\cC_A^l)$ is functor composition. Since the unit and associativity isomorphisms in ${}_A\cC_A$ imply that
\begin{equation*}
  A\otimes_A -\cong\id_{\cC_A^l},\qquad  (M\otimes_A N)\otimes_A - \cong (M\otimes_A -)\circ(N\otimes_A -),
\end{equation*}
one can show that the quasi-inverse of $\mathcal{EW}$ is a monoidal functor in this case. Thus $\mathcal{EW}: \Rex_{\cC}(\cC_A^l)\rightarrow {}_A\cC_A$ is a monoidal equivalence.
\end{remark}

%%%%%%%%%%%%%%%%%%%%%%%%%%%%%%%%%%%%%%%%%%%%%%%%%%%%%%
%%%%%%%%%%%%%%%%%%%%%%%%%%%%%%%%%%%%%%%%%%%%%%%%%%%%%%
%%%%%%%%%%%%%%%%%%%%%%%%%%%%%%%%%%%%%%%%%%%%%%%%%%%%%%

\subsection{FPdim and Grothendieck rings}
Let $\cC$ be a finite tensor category, and let $\{X_i\}_{i\in I}$ denote the isomorphism classes of simple objects in $\cC$. For $i\in I$, let $P_i$ denote the projective cover of $X_i$. Conversely, given an indecomposable projective object of $\cC$, its socle is a simple object. This correspondence gives a bijection between isomorphism classes of simple and indecomposable projective objects of $\cC$.

Let $\Gr(\cC)$ denote the Grothendieck group of $\cC$ \cite[\S4.5]{etingof2016tensor}. 
For $X\in\cC$, we denote its equivalence class in $\Gr(\cC)$ as $[X]$. 
Then $[X] = \sum_{i\in I} [X:X_i] [X_i]$, where $[X:X_i]$ denotes the multiplicity of $X_i$ in any Jordan-H\"older series of $X$. Thus the classes $\lbrace [X_i]\rbrace_{i\in I}$ form a basis of $\Gr(\cC)$. Also,
 $\Gr(\cC)$ is a ring with product $[X][Y]:=[X\otimes Y]$ and unit $[\one]$. In the following, we will work with the ring $\Gr_{\RR}(\cC):= \Gr(\cC)\otimes_{\ZZ}\RR$.

For any object $X\in\cC$, consider the $|I|\times |I|$ matrix with entries $[X\otimes X_i:X_j]$. The maximal non-negative eigenvalue of this matrix is a real number denoted $\FP(X)$ and called the Frobenius-Perron dimension of $X$. For each simple object $X_i\in\cC$, $\FP(X_i)\geq 1$ \cite[Proposition 3.3.4(2)]{etingof2005fusion}. If $\FP(X)\in\ZZ$ for all objects $X\in\cC$, then $\cC$ is called an \textit{integral} finite tensor category. It is well known that a finite tensor category is integral if and only if there exists a finite-dimensional quasi-Hopf algebra $H$ such that $\cC$ is tensor equivalent to the category of finite-dimensional $H$-modules. The element 
\[ R = \sum_{i\in I} \FP(X_i) [P_i] \in \Gr_{\RR}(\cC) \]
is called the regular object of $\cC$ \cite[\S6.1]{etingof2016tensor}, and it has the following property:

\begin{proposition} \textup{\cite[Proposition~6.1.11]{etingof2016tensor}}\label{prop:FPdim}
For any $Z\in\cC$, $[Z]R = R[Z] = \FP(Z)R$ in $\Gr_\RR(\cC)$. 
\end{proposition}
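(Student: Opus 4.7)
My plan is to prove both identities $[Z]R=\FP(Z)R$ and $R[Z]=\FP(Z)R$ by directly expanding $[Z]R$ (and $R[Z]$) in the basis $\{[P_j]\}_{j\in I}$ and collapsing the coefficients using the multiplicativity of $\FP$. The key preparatory input I would invoke is that $\FP$ extends to a ring homomorphism $\Gr(\cC)\to\RR$ (a property I would cite from \cite{etingof2016tensor}), together with the fact $\FP({}^*Z)=\FP(Z)$. A second crucial ingredient is Lemma~\ref{lem:projective-facts}(1), which guarantees that $Z\otimes P_i$ is again projective in $\cC$, so that it admits a direct sum decomposition into indecomposable projectives $P_j$.

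Concretely, write $Z\otimes P_i\cong\bigoplus_j n_{ij}^Z\,P_j$, so that $[Z\otimes P_i]=\sum_j n_{ij}^Z[P_j]$ in $\Gr(\cC)$. Since $P_j$ is the projective cover of $X_j$, each $n_{ij}^Z$ can be computed as $\dim\Hom_\cC(Z\otimes P_i,X_j)$ (using that the unit object is simple and so $\End_\cC(X_j)=\KK$). Applying the adjunction $\Hom_\cC(Z\otimes P_i,X_j)\cong\Hom_\cC(P_i,{}^*Z\otimes X_j)$ and the defining property of $P_i$, this gives
\begin{equation*}
n_{ij}^Z=[{}^*Z\otimes X_j:X_i].
\end{equation*}

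Plugging back in and swapping the order of summation yields
\begin{equation*}
[Z]R=\sum_{i}\FP(X_i)\,[Z\otimes P_i]=\sum_{j}\left(\sum_{i}[{}^*Z\otimes X_j:X_i]\,\FP(X_i)\right)[P_j].
\end{equation*}
The inner sum is precisely $\FP({}^*Z\otimes X_j)$ by definition of $\FP$ on Grothendieck classes, and multiplicativity of $\FP$ together with $\FP({}^*Z)=\FP(Z)$ collapses this to $\FP(Z)\FP(X_j)$. Hence $[Z]R=\FP(Z)\sum_j\FP(X_j)[P_j]=\FP(Z)R$. The identity $R[Z]=\FP(Z)R$ follows by the symmetric argument, using that $-\otimes P_i$ also preserves projectivity and that the right adjoint of $-\otimes Z$ is $-\otimes Z^*$ with $\FP(Z^*)=\FP(Z)$.

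The only mild obstacle is organizing the multiplicity-counting correctly: one must confirm that for a projective object $Q$ in a finite tensor category the multiplicity of $P_j$ as a direct summand equals $\dim\Hom_\cC(Q,X_j)$, which relies on the simplicity of the unit (so that $\End_\cC(X_j)=\KK$) and on the standard indecomposable decomposition of projectives in a finite linear category. Everything else is a bookkeeping exercise, and no new categorical input beyond what is already stated in Section~2 of the paper is required.
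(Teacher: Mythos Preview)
The paper does not provide its own proof of this proposition; it simply cites \cite[Proposition~6.1.11]{etingof2016tensor} and moves on. Your argument is correct and is essentially the standard proof given in that reference: write $Z\otimes P_i$ as a sum of indecomposable projectives, identify the multiplicities via the adjunction $\Hom_\cC(Z\otimes P_i,X_j)\cong\Hom_\cC(P_i,{}^*Z\otimes X_j)$ and the formula $\dim\Hom_\cC(P_i,M)=[M:X_i]$, and then collapse using multiplicativity of $\FP$ and $\FP({}^*Z)=\FP(Z)$. There is nothing to compare beyond noting that you have reconstructed the cited argument rather than something the authors wrote out themselves.
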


Now consider a right $\cC$-module category $\cM$. We define $\Gr'(\cM)$ to be the split Grothendieck group of $\cM$. This is the quotient of the $\ZZ$-module generated by all isomorphism classes $[M]$ (for $M\in\cM$) by the submodule generated by all relations $[M]=[L]+[N]$ (where $M\cong L\oplus N$). We denote $\Gr'_{\RR}(\cM):= \Gr'(\cM)\otimes_{\ZZ} \RR$.
Using the right $\cC$-action on $\cM$, $\Gr'(\cM)$ becomes a right $\Gr(\cC)$-module. Namely, for $X\in\cC$ and $M\in\cM$, we set $[M][X]=[M\trl X]$. In fact, this lifts to an action of $\Gr_{\RR}(\cC)$ on $\Gr'_{\RR}(\cM)$.

%%%%%%%%%%%%%%%%%%%%%%%%%%%%%%%%%%%%%%%%%%%%%%%%%%%%%%
%%%%%%%%%%%%%%%%%%%%%%%%%%%%%%%%%%%%%%%%%%%%%%%%%%%%%%
%%%%%%%%%%%%%%%%%%%%%%%%%%%%%%%%%%%%%%%%%%%%%%%%%%%%%%

\subsection{Exact algebras in finite tensor categories}\label{subsec:exact-algebras}
Let $\cC$ be a finite tensor category and $\cM$ a finite right $\cC$-module category. We call $\cM$ \textit{exact} if for all projective $P\in\cC$ and all $M\in \cM$, the object $M\trl P\in\cM$ is projective. We call an algebra $A$ exact if the $\cC$-module category $\cC_A^l$ is exact.
The following result is a key property of exact module categories that will then allow us to obtain duals in ${}_A\cC_A$: 
%The proof we provide is an expanded version of the argument provided by Etingof-Ostrik \cite[Proposition~3.11]{etingof2004finite}. %(I used some details from \cite{douglas2018dualizable}.)

\begin{proposition}\label{prop:exact-module-porperty}
\textup{\cite[Proposition~3.11]{etingof2004finite}}
Let $\cC$ be a finite tensor category and $\cM$ an exact right $\cC$-module category. Then any $\KK$-linear $\cC$-module functor from $\cM$ is exact. 
\end{proposition}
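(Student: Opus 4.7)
\medskip

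\noindent\textbf{Proof proposal.}
The plan is to exploit the defining feature of exactness — that $M \trl P$ is projective in $\cM$ whenever $P$ is projective in $\cC$ — to conclude that projective objects both generate and cogenerate $\cM$, and then to push the exactness of the $\trl$-action through any $\cC$-module functor $F:\cM\to\cN$. My guiding principle is that a $\cC$-module functor is rigidly constrained by its values on projectives of the form $X\trl P$, because $F(X\trl P)\cong F(X)\trl P$ via the module-functor structure morphism, and the right-hand side is computed in $\cN$ using the biexact $\cC$-action.

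First I would show that every object of $\cM$ is both a quotient and a subobject of a projective object. Since $\cC$ is a finite tensor category, the unit $\one$ admits a projective cover $\pi:P\twoheadrightarrow\one$ and, dually (using that projectives and injectives coincide in $\cC$), an embedding $\one\hookrightarrow I$ into an injective-projective $I$. For any $M\in\cM$, the biexactness of $M\trl-$ (part of the module category axioms) yields a surjection $M\trl P\twoheadrightarrow M\trl\one\cong M$ and an injection $M\hookrightarrow M\trl I$; both $M\trl P$ and $M\trl I$ are projective in $\cM$ by the exactness hypothesis on $\cM$. The second key input, which I would either cite from Etingof--Ostrik or prove as an intermediate lemma, is that in an exact $\cC$-module category, an object is projective if and only if it is injective. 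Combined with the first step, this provides, for every $M\in\cM$, a resolution $\cdots\to Q_1\to Q_0\to M\to 0$ whose terms $Q_i$ are simultaneously projective and injective and of the form $M_i\trl P$ for some $M_i\in\cM$.

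Now for a short exact sequence $0\to L\to M\to N\to 0$ in $\cM$, I would argue in two stages. On objects of the form $X\trl P$, the natural isomorphism $F(X\trl P)\cong F(X)\trl P$ together with biexactness of the $\cC$-action in $\cN$ ensures that $F$ transports exact sequences built from such objects to exact sequences in $\cN$. Next, using the dual role of the $Q_i$ as both projective and injective, one gets resolutions extending in both directions, and this allows exactness of $F$ to be verified on the short exact sequence by comparing with the resolutions of $L$, $M$, $N$. A cleaner and perhaps more efficient reformulation is to show that $F$ automatically admits both a left and a right adjoint, by invoking the Eilenberg--Watts-type description of $\cC$-module functors out of an exact module category (Proposition \ref{prop:Eilenberg-Watts} in the $\cM=\cC_A^l$ case); from the existence of both adjoints, exactness of $F$ is immediate.

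The main obstacle is the structural fact that projective equals injective in an exact $\cC$-module category, which underpins either route. Once this is available, the $\cC$-module functor axiom $F(-\trl X)\cong F(-)\trl X$ does essentially all of the remaining work, since it reduces questions about $F$ on arbitrary objects to questions about the biexact action functor $-\trl-$, which is exact by definition of a module category. The delicate point in any direct resolution argument is to verify that the chosen resolutions of $L$, $M$, $N$ fit compatibly into a horseshoe, so the functor-of-module structure maps intertwine them before and after applying $F$; the adjoint-functor route sidesteps this by deriving exactness formally from the existence of left and right adjoints.
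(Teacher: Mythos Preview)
The paper does not give its own proof of this proposition; it simply cites \cite[Proposition~3.11]{etingof2004finite}. So there is no paper proof to compare against, only the original Etingof--Ostrik argument.

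Your ingredients are correct --- the surjection $M\trl P\twoheadrightarrow M$, the fact that projective equals injective in an exact module category, and the module-functor isomorphism $F(-\trl P)\cong F(-)\trl P$ --- but you are assembling them in an unnecessarily complicated way, and the two routes you sketch each have a gap. In the resolution route, you assert that ``$F$ transports exact sequences built from objects $X\trl P$ to exact sequences'' via $F(X\trl P)\cong F(X)\trl P$ and biexactness of $\trl$ in $\cN$; but biexactness in $\cN$ only tells you that \emph{if} $0\to F(X_1)\to F(X_2)\to F(X_3)\to 0$ were exact then so would be its $\trl P$, not the converse you need. The missing observation is that once all three terms of $0\to L\trl P\to M\trl P\to N\trl P\to 0$ are projective and injective, the sequence \emph{splits}, and any additive functor preserves split exactness. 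In the adjoint route, invoking the Eilenberg--Watts equivalence is circular: Proposition~\ref{prop:Eilenberg-Watts} describes $\Rex_\cC$, so you would need $F$ right exact before you can place it there.

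The standard argument runs as follows. Given a short exact sequence $0\to L\to M\to N\to 0$, tensor with a projective $P$ surjecting onto $\one$. By exactness of $\cM$ all three terms are projective, hence injective, so the sequence splits. Applying $F$ and using $F(-\trl P)\cong F(-)\trl P$ gives that $0\to F(L)\trl P\to F(M)\trl P\to F(N)\trl P\to 0$ is (split) exact in $\cN$. Finally, the exact functor $-\trl P$ on $\cN$ reflects exactness: it reflects zero objects because the surjection $P\twoheadrightarrow\one$ yields a surjection $X\trl P\twoheadrightarrow X$, and an exact functor reflecting zero objects reflects exactness. Hence $0\to F(L)\to F(M)\to F(N)\to 0$ is exact. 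No resolutions or horseshoe are needed.
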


\begin{theorem}\label{thm:exact-rigid}
If $A$ is an exact indecomposable algebra in a finite tensor category $\cC$, then the category ${}_A\cC_A$ is rigid and hence a finite tensor category.
\end{theorem}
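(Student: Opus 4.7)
The plan is to transport the question to a category of module endofunctors via the Eilenberg--Watts equivalence and exploit the fact that, under the exactness hypothesis, every such functor is biexact and hence admits both adjoints, which then serve as rigid duals for the composition tensor product.

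First, by Remark \ref{rem:Eilenberg-Watts} there is a monoidal equivalence $\cEW: \Rex_{\cC}(\cC_A^l) \xrightarrow{\sim} {}_A\cC_A$, with tensor product given by composition on the left-hand side. Since $A$ is exact, $\cC_A^l$ is an exact right $\cC$-module category by definition, so by Proposition \ref{prop:exact-module-porperty} every $\KK$-linear right $\cC$-module endofunctor of $\cC_A^l$ is exact. Hence $\Rex_\cC(\cC_A^l) = \Fun_\cC(\cC_A^l)$, and it suffices to show that the latter is a finite tensor category with unit $\id_{\cC_A^l}$.

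Next, I would prove that every $F \in \Fun_\cC(\cC_A^l)$ admits both a left and a right adjoint inside $\Fun_\cC(\cC_A^l)$. The underlying $\KK$-linear adjoints $F^{\ladj}$ and $F^{\radj}$ exist because $F$ is biexact on the finite $\KK$-linear abelian category $\cC_A^l$ (Lemma \ref{lem:CA-finite}), which has enough projectives and enough injectives. Using the rigidity of $\cC$, the $\cC$-module coherence of $F$ transfers canonically to its adjoints via the adjunctions $-\trl {}^*X \dashv -\trl X \dashv -\trl X^*$ and uniqueness of adjoints. In the monoidal category $(\Fun_\cC(\cC_A^l),\, \circ,\, \id_{\cC_A^l})$, the counit $\epsilon: F \circ F^{\radj} \to \id_{\cC_A^l}$ and unit $\eta: \id_{\cC_A^l} \to F^{\radj} \circ F$ of the adjunction are precisely the evaluation $\ev'$ and coevaluation $\coev'$ exhibiting $F^{\radj}$ as a left dual of $F$; the two triangle identities for the adjunction become the two axioms for left duality. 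Symmetrically, $F^{\ladj}$ provides a right dual.

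Finally, I would record that ${}_A\cC_A$ satisfies the remaining conditions of a finite tensor category: local finiteness and enough projectives transfer from $\cC_A^l$ through $\cEW$ (using Lemma \ref{lem:loc-finite-CA}), while simplicity of the unit $A \in {}_A\cC_A$ --- equivalently $\End_{{}_A\cC_A}(A) \cong \End(\id_{\cC_A^l}) = \KK$ --- follows from the indecomposability of $A$ combined with exactness, via the standard dictionary between indecomposable exact algebras and indecomposable exact module categories. I expect the main technical obstacle to lie in the second step: namely, constructing the right $\cC$-module structures on $F^{\ladj}$ and $F^{\radj}$ with enough care that the counit and unit of the adjunction are themselves $\cC$-module natural transformations, so that the resulting duality lives in $\Fun_\cC(\cC_A^l)$ and not merely in $\Fun_\KK(\cC_A^l)$.
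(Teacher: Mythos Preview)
Your proposal is correct and follows essentially the same approach as the paper: transfer to $\Rex_\cC(\cC_A^l)$ via the Eilenberg--Watts equivalence, use exactness of $\cC_A^l$ to identify $\Rex_\cC(\cC_A^l)=\Fun_\cC(\cC_A^l)$, obtain both adjoints from biexactness on a finite linear category, and observe that adjoints furnish rigid duals for the composition tensor product. The paper dispatches the technical point you flag (that the adjoints inherit $\cC$-module structure and that the unit/counit are $\cC$-module transformations) by citing \cite[Lemma~2.11]{douglas2019balanced}, and handles finiteness and simplicity of the unit by citing \cite[Proposition~3.23]{etingof2004finite} rather than arguing via $\cC_A^l$; your sketch of the latter is slightly imprecise since $\cEW$ relates ${}_A\cC_A$ to $\Rex_\cC(\cC_A^l)$, not to $\cC_A^l$ itself.
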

\begin{proof}
To begin, ${}_A\cC_A$ is a locally finite $\KK$-linear abelian monoidal category. To prove that ${}_A\cC_A$ is rigid:
%Set $\cM=\cC_A^l$. Then $\cM$ is a finite right $\cC$-module category (by Lemma~\ref{lem:CA-finite}). It is clear that ${}_A\cC_A$ is a locally finite, $\KK$-linear, abelian monoidal category. 
\begin{enumerate}
\item By Proposition \ref{prop:Eilenberg-Watts} and Remark \ref{rem:Eilenberg-Watts}, there is a monoidal equivalence $\cEW:  \Rex_{\cC}(\cC_A^l)\xrightarrow{\sim} {}_A\cC_A$,
%given algebras $A,B\in\cC$, we have an equivalence of categories.
%\begin{equation}\label{eq:Eilenberg-Watts}   
%\end{equation}
%In particular, for the case $A=B$, we get an equivalence of monoidal categories ${}_A\cC_A \cong \Rex_{\cC}(\cM)^{\textup{op}}$. 
so it suffices to prove that $\Rex_{\cC}(\cC_A^l)$ is rigid.

\item As $A$ is exact, $\cC_A^l$ is by definition an exact right $\cC$-module category. Thus, by Proposition~\ref{prop:exact-module-porperty}, any functor $F\in\Fun_{\cC}(\cC_A^l)$ is exact. Therefore, $\Fun_{\cC}(\cC_A^l) = \Rex_{\cC}(\cC_A^l)$.

\item Since $\cC_A^l$ is a finite linear category, left (or right) exactness of $F:\cC_A^l\rightarrow\cC_A^l$ implies that $F$ has a left (or right) adjoint \cite[Proposition~1.7]{douglas2019balanced}. Hence, any $F\in\Rex_{\cC}(\cC_A^l)$ admits both adjoints.

\item The left and right adjoints of $F$ are also $\cC$-module functors (see \cite[Lemma~2.11]{douglas2019balanced} for a proof). Thus, $F^{\radj},F^{\ladj}\in\Fun_{\cC}(\cC_A^l) = \Rex_{\cC}(\cC_A^l)$. As these are dual objects of $F\in\Rex_{\cC}(\cC_A^l)$, we conclude that $\Rex_{\cC}(\cC_A^l)$ is rigid. 
\end{enumerate}
Lastly, ${}_A\cC_A$ is finite by \cite[Proposition~3.23]{etingof2004finite}. 
\end{proof}
\begin{remark}
It seems known to experts that the converse of this result is also true. Namely, if $\cC$ is a finite tensor category and ${}_A\cC_A$ is rigid, then $A$ is an exact algebra. See \cite[\S5.1]{shimizu2024exact} for a proof. %This implies the following corollary.
\end{remark}
%\begin{corollary}
%    Let $A$ be an algebra in a finite tensor category.
%\end{corollary}

%%%%%%%%%%%%%%%%%%%%%%%%%%%%%%%%%%%%%%%%%%%%%%%%%%%%%%
%%%%%%%%%%%%%%%%%%%%%%%%%%%%%%%%%%%%%%%%%%%%%%%%%%%%%%
%%%%%%%%%%%%%%%%%%%%%%%%%%%%%%%%%%%%%%%%%%%%%%%%%%%%%%

\allowdisplaybreaks

%\subsection{Criteria for exactness}
In light of Theorem~\ref{thm:exact-rigid}, it is important to know when an algebra is exact. In \cite[Conjecture B.6]{etingof2021frobenius}, Etingof and Ostrik conjectured that an indecomposable algebra $A$ in a finite tensor category is exact if and only if $A$ is simple. They also proved exactness for simple algebras in finite tensor categories under certain additional technical conditions. We give an exposition of their results here, and in particular we give a proof for criterion (c) in the following theorem:

\begin{theorem}\textup{\cite[\S B1]{etingof2021frobenius}}\label{thm:EO-simple-exact}
If $A$ is an indecomposable algebra in a finite tensor category $\cC$, then the following are equivalent:
\begin{itemize}
    \item[(a)] $A$ is exact.
    \item[(b)] $A$ is simple and there is a $\cC_A^l$-embedding ${}^*A\hookrightarrow A\otimes X$ for some $X\in\cC$.
    \item[(c)] $A$ is simple and $A^*\otimes_A {}^*A\neq 0$.  
    \item[(d)] $A$ is simple and there is an $\cC_A^r$-embedding $A^*\hookrightarrow X\otimes A$ for some $X\in\cC$.
\end{itemize}
\end{theorem}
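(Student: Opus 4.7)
My plan is to establish the equivalences via the cycle $(a)\Rightarrow(b)\Rightarrow(d)\Rightarrow(a)$, together with $(c)\Leftrightarrow(d)$ proved separately. Since the authors indicate they will give a detailed proof specifically for criterion $(c)$, I focus on its structural role while sketching the Etingof-Ostrik style implications for $(a),(b),(d)$.

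For $(a)\Rightarrow(b)$: by Theorem~\ref{thm:exact-rigid}, exactness of $A$ makes $_A\cC_A$ a finite tensor category with unit $A$, forcing $A$ to be simple as a bimodule, hence as an algebra (since two-sided ideals are sub-bimodules). For the embedding ${}^*A\hookrightarrow A\otimes X$, the key auxiliary fact is that exact $\cC$-module categories are \emph{Frobenius}: projective and injective objects in $\cC_A^l$ coincide. Combined with Lemma~\ref{lem:algebra-properties}(1), which supplies a projective generator of $\cC_A^l$ of the form $A\otimes P$ for $P$ a projective generator of $\cC$, the injective hull of ${}^*A$ must be of the form $A\otimes X$ (as a direct summand of $A\otimes P^{\oplus n}$ for some $n$). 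The equivalence $(b)\Leftrightarrow(d)$ follows by a symmetry argument using the anti-equivalences of Lemma~\ref{lem:CA-dual-antiequivalence}, together with the identities ${}^*(A^*)\cong A$ and $({}^*A)^*\cong A$.

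The implication $(d)\Rightarrow(a)$ is the technical heart and is driven by Lemma~\ref{lem:proj-summand}. Part~(1) supplies $A\otimes P$ as simultaneously projective (by Lemma~\ref{lem:algebra-properties}(1)) and injective in $\cC_A^l$. Part~(2) produces, for any non-zero $N\in\cC_A^l$, a projective $P\in\cC$ with $N\otimes P$ containing a non-zero projective direct summand. To upgrade this to the full exactness condition---that $M\otimes P$ is projective for every $M\in\cC_A^l$ and every projective $P\in\cC$---I would induct on the length of $M$: the base case of simple $M$ uses Lemma~\ref{lem:proj-summand}(2) together with simplicity of $A$ to conclude that the non-projective quotient of $M\otimes P$ vanishes; the inductive step exploits biexactness of $\otimes$ in $\cC$ together with the Frobenius property, so that short exact sequences whose outer terms tensor to projectives split after tensoring with $P$.

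For $(c)\Leftrightarrow(d)$: the direction $(d)\Rightarrow(c)$ is by direct computation. Given $A^*\hookrightarrow X\otimes A$ in $\cC_A^r$, apply $-\otimes_A{}^*A$ to obtain a morphism $A^*\otimes_A{}^*A\to(X\otimes A)\otimes_A{}^*A\cong X\otimes{}^*A$; using the coequalizer presentation~\eqref{eq:tensor-over-A}, the fact that $\id_X\otimes\mu_{{}^*A}^l$ is split surjective (via $\iota_A$), and simplicity of $A$, one verifies the induced map is non-zero, forcing $A^*\otimes_A{}^*A\neq 0$. The reverse $(c)\Rightarrow(d)$ is the \textbf{main obstacle}: from the mere non-vanishing of $A^*\otimes_A{}^*A$, one must extract an injective $\cC_A^r$-morphism $A^*\hookrightarrow X\otimes A$. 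My proposed approach is to apply Lemma~\ref{lem:right-module}, which realizes ${}^*(A^*\otimes_A{}^*A)$ as a non-zero right $A$-module embedded in ${}^*({}^*A)\otimes A$, and then combine this with the bimodule morphism $A\to{}^*A\otimes({}^*A)^*\cong{}^*A\otimes A$ from Lemma~\ref{lem:algebra-properties}(4) (applied to the left $A$-module ${}^*A$), using simplicity of $A$ to force the relevant non-zero morphisms to be monic and ultimately to assemble the desired embedding $A^*\hookrightarrow X\otimes A$.
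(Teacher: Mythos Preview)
Your outline for $(a)\Rightarrow(b)$ matches the paper's argument closely. However, there are two substantive departures from the paper that deserve comment, and one of them is a genuine gap.

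\textbf{Structure of the cycle.} The paper proves the implications in the order $(a)\Rightarrow(b)\Rightarrow(c)\Rightarrow(d)\Rightarrow(a)$, not your proposed $(a)\Rightarrow(b)\Rightarrow(d)\Rightarrow(a)$ with $(c)\Leftrightarrow(d)$ on the side. For $(b)\Rightarrow(c)$, the paper uses a clean adjunction $\Hom_\cC(A^*\otimes_A{}^*A,X)\cong\Hom_{\cC_A^l}({}^*A,A\otimes X)$, avoiding the exactness issues you would face applying $-\otimes_A{}^*A$ to a monomorphism (tensor over $A$ is only right exact). For $(c)\Rightarrow(d)$, the paper sets $X=A^*\otimes_A{}^*A$ itself and builds an explicit $\cC_A^r$-morphism $f\colon A^*\to X\otimes A$ via $\id_{A^*}\otimes\coev'_A$ followed by $\pi_{A^*,{}^*A}\otimes\id_A$; then Lemma~\ref{lem:right-module} is used to show that the \emph{dual} ${}^*f$ is an $A$-bimodule map, so simplicity of $A$ forces ${}^*f$ to be epi and hence $f$ to be monic. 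Your sketch for $(c)\Rightarrow(d)$ gestures at the right lemmas but does not assemble them into this argument; in particular, Lemma~\ref{lem:algebra-properties}(4) is not used here.

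\textbf{The gap in $(d)\Rightarrow(a)$.} Your proposed induction on the length of $M$ fails at the base case. Lemma~\ref{lem:proj-summand}(2) only guarantees that for \emph{some} projective $P'\in\cC$, the module $M\otimes P'$ has a non-zero projective summand---not that $M\otimes P$ is projective for \emph{every} projective $P$, nor even that $M\otimes P'$ is entirely projective. Simplicity of $A$ (already consumed in the proof of Lemma~\ref{lem:proj-summand}(2) to embed $A\hookrightarrow M\otimes M^*$) gives no further leverage to kill the non-projective part. The paper's actual argument is global and quantitative: one works in the split Grothendieck group $\Gr'_\RR(\cC_A^l)$, decomposes $[M]R=\cP+\cN$ into projective and non-projective parts using the regular object $R=\sum_i\FP(X_i)[P_i]$, and then exploits Proposition~\ref{prop:FPdim} ($[Z]R=\FP(Z)R$) to compare the projective content of $([M]R)[P']$ computed two ways. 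If $\cN\neq 0$, Lemma~\ref{lem:proj-summand}(2) forces $\cN[P']$ to contribute extra projective FP-dimension beyond $\FP(P')\FP(\cP)$, contradicting the equality. This Frobenius--Perron argument is the essential missing ingredient in your proposal.
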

\begin{proof}
(a)$\implies$(b): By Theorem~\ref{thm:exact-rigid}, ${}_A\cC_A$ is a finite tensor category, so its unit object $A$ is simple \cite[Lemma~3.24]{etingof2004finite} Thus, $A$ is a simple algebra since it is simple as an $A$-bimodule.
%, that is, it admits no non-trivial two sided ideals. Thus, $A$ is simple.
To find a $\cC_A^l$-embedding ${}^*A\hookrightarrow A\otimes X$, we take $X=I({}^*A)$, the injective hull of ${}^*A$ in $\cC_A^l$. Exactness of $A$ implies that projectives and injectives coincide in $\cC_A^l$ \cite[Corollary~3.6]{etingof2004finite}, and thus $X$ is also projective in $\cC_A^l$. Moreover, $\mu^l_X: A\otimes X\rightarrow X$ is a $\cC_A^l$-epimorphism, so $X$ is a direct summand of $A\otimes X$. Thus there are embeddings ${}^*A\hookrightarrow I({}^*A)=X \hookrightarrow A\otimes X$.

(b)$\implies$(c): Suppose there is a $\cC_A^l$-embedding ${}^*A\hookrightarrow A\otimes X$ for some $X\in\cC$. Similar to \cite[Lemma 7.8.24]{etingof2016tensor}, there is an isomorphism
\begin{align*}
 \Hom_\cC(A^*\otimes_A {}^*A, X) & \xrightarrow{\sim} \Hom_{\cC_A^l}({}^*A, A\otimes X)\nonumber\\
f & \mapsto (\id_A\otimes f)\circ(\id_A\circ\pi_{A^*,{}^*A})\circ(\coev_A\otimes\id_{{}^*A}).
\end{align*}
Since the hom space on the right is non-zero, so is the space on the left, and thus $A^*\otimes_A{}^*A\neq 0$.

(c)$\implies$(d): Take $X=A^*\otimes_A{}^*A\neq 0$. A calculation shows that the composition
\begin{equation*}
    f : A^* \xrightarrow{\id_{A^*}\otimes\coev'_A} A^*\otimes {}^*A\otimes A \xrightarrow{\pi_{A^*,{}^*A}\otimes\id_A} X\otimes A
\end{equation*}
is an $\cC_A^r$-morphism, and $f\neq 0$ because $\pi_{A^*,{}^*A}\neq 0$. To prove that $f$ is an embedding, it suffices to show that its dual $\cC_A^l$-morphism ${}^*f: {}^*A\otimes {}^*X \rightarrow A$ is an epimorphism. Explicitly, ${}^*f$ is the composition
\begin{equation*}
    {}^*f : {}^*A \otimes {}^*X \xrightarrow{\coev_A \otimes \id_{{}^*A\otimes {}^*X}} 
    A \otimes A^* \otimes {}^*A \otimes {}^*X \xrightarrow{\id_A \otimes \pi_{A^*,{}^*A} \otimes \id_{{}^*X}} 
    A \otimes X \otimes {}^*X \xrightarrow{\id_A \otimes \ev'_X} A.
\end{equation*}
By Lemma~\ref{lem:right-module}, ${}^*X$ is a right $A$-module with action $\mu^r_{{}^*X}$, so ${}^*A\otimes {}^*X$ is also a right $A$-module. We show that ${}^*f$ is a morphism of right $A$-modules with the following diagrammatic calculation:
\begin{align*} 
\mu_A\circ({}^*f & \otimes\id_A)  =
\begin{tikzpicture}[scale = .8, baseline = {(current bounding box.center)}, line width=0.75pt]
\draw (4,0) -- (4,3) -- (2,3.5) -- (0,3) -- (0,1) .. controls (0,0) and (1,0) .. (1,1) -- (1,1.5);
\draw (2,0) -- (2,1.5);
\draw (2,3.5) -- (2,4);
\draw (1.5,1.5) -- (1.5,2) .. controls (1.5,3) and (3,3) .. (3,2) -- (3,0);
%\draw[dashed] (2,0) .. controls (1,0) .. (.5,.3);
\node at (1.5,1.5) [draw,thick, fill=white] {\tiny{$\pi_{A^*,{}^*A}$}};
\node at (.5,0)  {\tiny{$\coev_A$}};
\node at (2.25,3) {\tiny{$\ev'_{X}$}};
\node at (2,3.5) [draw, thick, fill=white] {\tiny{$\mu_A$}};
\node at (2,-.25) {${}^*A$};
\node at (3,-.25) {${}^*X$};
\node at (4,-.25) {$A$};
\node at (2,4.25) {$A$};
\end{tikzpicture}
\stackrel{\eqref{eq:right-action-1}}{=} 
\begin{tikzpicture}[scale = .8, baseline = {(current bounding box.center)}, line width=0.75pt]
\draw (5,0) -- (5,3) -- (2.5, 3.5) -- (0,3) -- (0,1) .. controls (0,0) and (1,0) .. (1,1) -- (1,2) .. controls (1,3) and (4,3) .. (4,2) -- (4,.5);
\draw (2,0) -- (2,1) .. controls (2,2) and (3,2) .. (3,1) -- (3,.5);
\draw (3.5,0) -- (3.5,.5);
\draw (2.5,3.5) -- (2.5,4);
\node at (3.5,.5) [draw,thick, fill=white] {$\;\; \alpha\;\; $};
\node at (.5,0) {\tiny{$\coev_A$}};
\node at (2.5,3) {\tiny{$\ev_A$}};
\node at (2.5,2) {\tiny{$\ev'_{{}^*A}$}};
\node at (2.5,3.5) [draw, thick, fill=white] {\tiny{$\mu_A$}};
\node at (2,-.25) {${}^*A$};
\node at (3.5,-.25) {${}^*X$};
\node at (5,-.25) {$A$};
\node at (2.5,4.25) {$A$};
\end{tikzpicture}
=
\begin{tikzpicture}[scale = .8, baseline = {(current bounding box.center)}, line width=0.75pt]
\draw (0,0) -- (0,1) .. controls (0,2) and (1,2) .. (1,1) -- (1,.5);
\draw (3,0) -- (3,1) -- (2.5,1.5) -- (2,1) -- (2,.5);
\draw (1.5,0) -- (1.5,.5);
\draw (2.5,1.5) -- (2.5,2);
\node at (1.5,.5) [draw,thick, fill=white] {$\;\; \alpha\;\; $};
\node at (.5,2) {\tiny{$\ev'_{{}^*A}$}};
\node at (2.5,1.5) [draw, thick, fill=white] {\tiny{$\mu_A$}};
\node at (0,-.25) {${}^*A$};
\node at (1.5,-.25) {${}^*X$};
\node at (3,-.25) {$A$};
\node at (2.5,2.25) {$A$};
\end{tikzpicture}\\
&\stackrel{\eqref{eq:right-action}}{=}
\begin{tikzpicture}[scale = .8, baseline = {(current bounding box.center)}, line width=0.75pt]
\draw (0,0) -- (0,2) .. controls (0,3) and (1,3) .. (1,2) -- (1,1.5);
\draw (1,0) -- (1.5,.5) -- (2,0);
\draw (1.5,.5) -- (1.5,1.5);
\draw (2,1.5) -- (2,3);
\node at (1.5,1.5) [draw,thick, fill=white] {$\;\; \alpha\;\; $};
\node at (.5,3) {\tiny{$\ev'_{{}^*A}$}};
\node at (1.5,.5) [draw, thick, fill=white] {\tiny{$\mu^r_{{}^*X}$}};
\node at (0,-.25) {${}^*A$};
\node at (1,-.25) {${}^*X$};
\node at (2,-.25) {$A$};
\node at (2,3.25) {$A$};
\end{tikzpicture}
=
\begin{tikzpicture}[scale = .8, baseline = {(current bounding box.center)}, line width=0.75pt]
\draw (2,0) -- (2,2) .. controls (2,3) and (3,3) .. (3,2) -- (3,1.5);
\draw (3,0) -- (3.5,.5) -- (4,0);
\draw (3.5,.5) -- (3.5,1.5);
\draw (4,1.5) -- (4,3) .. controls (4,4) and (1,4) .. (1,3) -- (1,2) .. controls (1,1) and (0,1) .. (0,2) -- (0,4);
\node at (3.5,1.5) [draw,thick, fill=white] {$\;\; \alpha\;\; $};
\node at (.5,1) {\tiny{$\coev_A$}};
\node at (2.5,4) {\tiny{$\ev_A$}};
\node at (2.5,3) {\tiny{$\ev'_{{}^*A}$}};
\node at (3.5,.5) [draw, thick, fill=white] {\tiny{$\mu^r_{{}^*X}$}};
\node at (2,-.25) {${}^*A$};
\node at (3,-.25) {${}^*X$};
\node at (4,-.25) {$A$};
\node at (0,4.25) {$A$};
\end{tikzpicture}
\stackrel{\eqref{eq:right-action-1}}{=} 
\begin{tikzpicture}[scale = .8, baseline = {(current bounding box.center)}, line width=0.75pt]
\draw (0,3) -- (0,1) .. controls (0,0) and (1,0) .. (1,1) -- (1,1.5);
\draw (2,0) -- (2,1.5);
\draw (1.5,1.5) -- (1.5,2) .. controls (1.5,3) and (3.5,3) .. (3.5,2) -- (3.5,.5) -- (3,0);
\draw (4,0) -- (3.5,.5);
\node at (1.5,1.5) [draw,thick, fill=white] {\tiny{$\pi_{A^*,{}^*A}$}};
\node at (.5,0) {\tiny{$\coev_A$}};
\node at (2.5,3) {\tiny{$\ev'_{X}$}};
\node at (3.5,.5) [draw,thick, fill=white] {\tiny{$\mu^r_{{}^*X}$}};
\node at (2,-.25) {${}^*A$};
\node at (3,-.25) {${}^*X$};
\node at (4,-.25) {$A$};
\node at (0,3.25) {$A$};
\end{tikzpicture}
= {}^*f\circ\mu^r_{{}^*A\otimes {}^*X}.
\end{align*}
Thus ${}^*f$ is a morphism in ${}_A\cC_A$, and ${}^*f$ is non-zero because $f$ is. As $A$ is simple in ${}_A\cC_A$, ${}^*f$ has to be an epimorphism, and then $f$ is an embedding, as desired.

(d)$\implies$(a): Our goal is to prove that for any $M\in\cC_A^l$ and projective $P\in\cC$, the object $ M\otimes P \in \cC_A^l$ is projective. It suffices to prove that $M\otimes P_i$ is projective where $\{P_i\}_{i\in I}$ are the indecomposable projectives in $\cC$. We can decompose $M\otimes P_i$ as a direct sum of indecomposable objects of $\cC_A^l$, and then write $P^{(i)}$ for the sum of all the projective direct summands in this decomposition and $N^{(i)}$ for the sum of all the non-projective direct summands. Thus $[M\otimes P_i]=[P^{(i)}]+ [N^{(i)}]$ in the split Grothendieck group $\Gr_\RR'(\cC_A^l)$. We need to show $N^{(i)}=0$ for all $i$, and to do so recall the regular object $R=\sum_{i\in I} \FP(X_i)P_i\in\Gr_\RR(\cC)$. In $\Gr_\RR'(\cC_A^l)$, we have
\begin{equation*}
    [M]R =\sum_{i\in I} \FP(X_i)[M\otimes P_i] = \sum_{i\in I}\FP(X_i)[P^{(i)}] +\sum_{i\in I}\FP(X_i)[N^{(i)}].
\end{equation*}
Writing $\cP$ for the first sum on the right above and $\cN$ for the second sum, it suffices to show $\cN=0$ because $\FP(X_i)\geq 1$ for all $i\in I$ \cite[Proposition 3.3.4(2)]{etingof2016tensor}. Now observe:
\begin{itemize}
    \item[(1)] If $\cN\neq 0$, then by Lemma~\ref{lem:proj-summand}, there are non-zero projective objects $P'\in\cC$ and $Q\in\cC_A^l$, where we can assume $Q$ is indecomposable, such that the expansion of $\cN[P']$ as a linear combination of basis elements of $\Gr_\RR'(\cC_A^l)$ contains $x[Q]$ for some $x\in\RR_{>0}$.
    
    \item[(2)] Using Proposition~\ref{prop:FPdim},
    \[ [M](R[P']) = \FP(P') [M]R = \FP(P') \cP + \FP(P') \cN .\] 
    Thus the $\FP$ of the projective part of $[M](R[P'])\in\Gr_\RR'(\cC_A^l)$ is $\FP(P')\FP(\cP)$. Here, we take $\FP$ of elements in $\Gr_\RR'(\cC_A^l)$ by applying the forgetful functor $U: \cC_A^l\rightarrow\cC$ to indecomposable objects of $\cC_A^l$ and taking $\FP$ in $\cC$.
   
    \item[(3)] On the other hand, 
    \begin{equation*}
        [M](R [P']) = ([M]R)[P'] = (\cP+\cN)[P'] = \cP[P'] + \cN[P'].
    \end{equation*} 
    By Lemma \ref{lem:action-preserves-proj}, $\cP[P']$ is a (positive real) linear combination of equivalence classes of indecomposable projective objects of $\cC_A^l$. So because $\FP$ is non-negative, the projective part of $[M](R[P'])$ has $\FP$ at least $\FP(P')\FP(\cP)+x\FP(Q)$. As $\FP(Q)\geq 1$, this contradicts (2).
\end{itemize}
The above contradiction implies $\cN=0$, and therefore $A$ is exact.
\end{proof}

If $A$ is a commutative algebra in a braided finite tensor category $\cC$, then conditions (b) and (c) of Theorem \ref{thm:EO-simple-exact} can be expressed entirely in terms of left duals. In this setting, Corollary \ref{cor:ident-A*-and-*A} yields an isomorphism $\varphi_A^-:(A^*,\mu_{A^*}^r\circ c_{A,A^*})\rightarrow ({}^*A,\mu_{{}^*A}^l)$. Moreover, as discussed in Section \ref{subsec:algebras-and-modules} (see also \cite{kirillov2002q, creutzig2017tensor}), the tensor product $\otimes_A$ on ${}_A\cC_A$ together with the embedding $\cC_A^l \xrightarrow{H_-} \cC_A^{c_{A,-}^{-1}}\subseteq {}_A\cC_A$ induces a monoidal category structure on $\cC_A^l$. Specifically, using \eqref{eq:tensor-over-A}, for $(M,\mu_M^l), (N,\mu_N^l)\in\cC_A^l$,
\begin{equation}\label{eqn:tensor-over-A-in-CA}
   M\otimes_A N = \mathrm{coequalizer} \big(
\begin{tikzcd}[column sep = 100pt]
  A\otimes M\otimes N
  \arrow[r, yshift = .3em, "{\mu^l_M\otimes \id_N}"]
  \arrow[r, yshift = -.3em, "(\id_M\otimes \mu^l_N)\circ(c_{A,M}\otimes\id_N)"']
  & M\otimes N
\end{tikzcd} \big)
\end{equation}
as an object of $\cC$. Now we can prove:
\begin{corollary}\label{cor:EO-exact-commutative}
    If $A$ is a commutative indecomposable algebra in a braided finite tensor category $\cC$, then $A$ is exact if and only if either of the following holds:
    \begin{enumerate}
        \item[(a)] $A$ is simple and there is a $\cC_A^l$-embedding $(A^*,\mu_{A^*}^r\circ c_{A,A^*})\hookrightarrow (A\otimes X,\mu_A\otimes\id_X)$ for some $X\in\cC$.

        \item[(b)] $A$ is simple and the object $(A^*,\mu_{A^*}^r\circ c_{A,A^*})\otimes_A (A^*,\mu_{A^*}^r\circ c_{A,A^*})$ of $\cC_A^l$ is non-zero.
    \end{enumerate}
\end{corollary}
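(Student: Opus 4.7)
The plan is to derive the corollary directly from Theorem~\ref{thm:EO-simple-exact} by using the left $A$-module isomorphism
\[\varphi_A^- : (A^*,\mu_{A^*}^r\circ c_{A,A^*}) \xrightarrow{\sim} ({}^*A,\mu_{{}^*A}^l)\]
provided by Corollary~\ref{cor:ident-A*-and-*A} to translate statements about ${}^*A$ (and the bimodule tensor product) into statements living entirely in $\cC_A^l$. By Theorem~\ref{thm:EO-simple-exact}, exactness of $A$ is equivalent to $A$ being simple together with either of the following: (b$'$) there is a $\cC_A^l$-embedding ${}^*A\hookrightarrow A\otimes X$ for some $X\in\cC$; or (c$'$) $A^*\otimes_A {}^*A\neq 0$ (bimodule tensor product as in~\eqref{eq:tensor-over-A}). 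So it suffices to match (b$'$) with the corollary's (a), and (c$'$) with the corollary's (b).

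For (a), the match is immediate: composing with $\varphi_A^-$ (respectively its inverse) converts any $\cC_A^l$-embedding ${}^*A\hookrightarrow A\otimes X$ into a $\cC_A^l$-embedding $(A^*,\mu_{A^*}^r\circ c_{A,A^*})\hookrightarrow (A\otimes X,\mu_A\otimes\id_X)$, and vice versa.

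For (b), denote $M:=(A^*,\mu_{A^*}^r\circ c_{A,A^*})$ and $N':=({}^*A,\mu_{{}^*A}^l)$ in $\cC_A^l$. Functoriality of $\otimes_A$ combined with $\varphi_A^-$ gives an isomorphism $M\otimes_A M\cong M\otimes_A N'$ in $\cC_A^l$, so it remains to identify $M\otimes_A N'$ with the bimodule tensor product $A^*\otimes_A{}^*A$ of Theorem~\ref{thm:EO-simple-exact}(c). Writing out~\eqref{eqn:tensor-over-A-in-CA}, the object $M\otimes_A N'$ is the coequalizer in $\cC$ of the two maps $A\otimes A^*\otimes{}^*A\rightrightarrows A^*\otimes{}^*A$ given by $(\mu_{A^*}^r\circ c_{A,A^*})\otimes\id_{{}^*A}$ and $(\id_{A^*}\otimes\mu_{{}^*A}^l)\circ(c_{A,A^*}\otimes\id_{{}^*A})$. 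Both of these factor through the isomorphism $c_{A,A^*}\otimes\id_{{}^*A}:A\otimes A^*\otimes{}^*A\xrightarrow{\sim}A^*\otimes A\otimes{}^*A$, and what remains after cancelling this common isomorphism is precisely the pair $\mu_{A^*}^r\otimes\id_{{}^*A}$ and $\id_{A^*}\otimes\mu_{{}^*A}^l$ defining the bimodule coequalizer~\eqref{eq:tensor-over-A}. Hence $M\otimes_A N'\cong A^*\otimes_A{}^*A$ in $\cC$, and therefore $M\otimes_A M\neq 0$ iff $A^*\otimes_A{}^*A\neq 0$, as required.

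The only step requiring care is the last coequalizer identification in (b); once one writes the two parallel arrows and checks that both factor through the same braiding isomorphism on the source, the matching with the bimodule definition is automatic. I do not anticipate any serious obstacle beyond this routine verification, since all necessary structural ingredients (the left $A$-module identification $\varphi_A^-$, the functoriality of $\otimes_A$, and the explicit coequalizer formula~\eqref{eqn:tensor-over-A-in-CA}) are already available in the excerpt.
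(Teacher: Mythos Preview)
Your proposal is correct and follows essentially the same route as the paper: both reduce to Theorem~\ref{thm:EO-simple-exact} via the isomorphism $\varphi_A^-$ of Corollary~\ref{cor:ident-A*-and-*A}, and for part~(b) both identify the $\cC_A^l$-tensor product $(A^*,\mu_{A^*}^r\circ c_{A,A^*})\otimes_A({}^*A,\mu_{{}^*A}^l)$ with the bimodule tensor product $A^*\otimes_A{}^*A$. The only stylistic difference is that the paper packages your coequalizer computation into one line using the embedding $H_-:\cC_A^l\hookrightarrow{}_A\cC_A$ (noting that $H_-(A^*,\mu_{A^*}^r\circ c_{A,A^*})$ has right $A$-action exactly $\mu_{A^*}^r$), whereas you spell out the factoring of both parallel arrows through $c_{A,A^*}\otimes\id_{{}^*A}$ by hand.
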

\begin{proof}
 It is immediate from Corollary \ref{cor:ident-A*-and-*A} and Theorem \ref{thm:EO-simple-exact} that condition (a) holds if and only if $A$ is exact. For condition (b), we have
 \begin{align*}
     (A^*,\mu_{A^*}^r\circ c_{A,A^*})\otimes_A (A^*,\mu_{A^*}^r\circ c_{A,A^*}) & \cong H_-(A^*,\mu_{A^*}^r\circ c_{A,A^*})\otimes_A ({}^*A,\mu_{{}^*A}^l)\cong (A^*,\mu_{A^*}^r)\otimes_A ({}^*A,\mu_{{}^*A}^l)
 \end{align*}
 as objects of $\cC$, where we use Corollary \ref{cor:ident-A*-and-*A} in the first isomorphism and the definition of $H_-$ in the second. Now it is immediate from Theorem \ref{thm:EO-simple-exact} that $A$ is exact if and only if condition (b) holds.
\end{proof}

\subsection{Main results on rigidity of \texorpdfstring{$\cC_A$}{CA}}

From now on, we only consider commutative algebras $A$ in braided tensor categories, in which case the categories $\cC_A^l$ and $\cC_A^r$ of left and right $A$-modules in $\cC$ are isomorphic. Thus we now drop superscripts and write $\cC_A$ for the category of $A$-modules in $\cC$. For applications to vertex operator algebras later, we will generally consider objects of $\cC_A$ as left (rather than right) $A$-modules.

In Theorem \ref{thm:exact-rigid}, we saw that the bimodule category of an exact indecomposable algebra $A$ in a finite tensor category $\cC$ is rigid. If $A$ is also commutative, then the category $\cC_A$ of (left) $A$-modules in $\cC$ is a monoidal category, and the category of local $A$-modules $\cC_A^\loc$ is a braided monoidal category. The next result, from \cite{shimizu2024exact}, shows when $\cC_A$ and $\cC_A^\loc$ are also rigid. We call a commutative algebra $A$ haploid if $\Hom_{\cC}(\one,A) = \KK\iota_A$. 
%In recent work \cite{shimizu2024exact}, KS and HY obtained the following result.

\begin{theorem}\textup{\cite[Theorem 5.5, Corollary 5.14(b)]{shimizu2024exact}}\label{thm:com-exact-1}
If $A$ is an exact commutative haploid algebra in a braided finite tensor category $\cC$, then the category $\cC_A$ of left $A$-modules in $\cC$ is a finite tensor category and $\cC_A^{\loc}$ is a braided finite tensor category. If $\cC$ is non-degenerate, then so is $\cC_A^{\loc}$.
\end{theorem}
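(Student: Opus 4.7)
The strategy is to reduce rigidity of $\cC_A$ to rigidity of the bimodule category ${}_A\cC_A$, which is already available from Theorem~\ref{thm:exact-rigid}, and then show that the subcategory $\cC_A$ (embedded via $H_-$) and its further subcategory $\cC_A^\loc$ are closed under duality.

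First, I would check that a haploid algebra $A$ is indecomposable: any algebra decomposition $A \cong A_1 \oplus A_2$ with $A_i \neq 0$ would give two linearly independent morphisms $\one \to A$ via the unit inclusions, contradicting $\dim\Hom_\cC(\one,A)=1$. Combined with exactness of $A$, Theorem~\ref{thm:exact-rigid} then gives that ${}_A\cC_A$ is a finite tensor category. Moreover, since $\cC$ is braided and $A$ is commutative, the embedding $H_-: \cC_A \hookrightarrow \cC_A^{c_{A,-}^{-1}} \subseteq {}_A\cC_A$ of \eqref{eq:CA-CAsigma} realizes $\cC_A$ as a monoidal subcategory.

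Next, the main task is to show that $\cC_A \subseteq {}_A\cC_A$ is closed under left and right duals. For $M \in \cC_A$, the natural candidate for the dual in $\cC_A$ is $M^*$ with the left $A$-module structure $\mu_{M^*}^l$ coming from Lemma~\ref{lem:ident-M*-and-*M} (which identifies it with the action $\mu_{{}^*M}^l$ on ${}^*M$ via $\varphi_M^-$). I would verify that with this action, the bimodule $H_-(M^*)$ is indeed the left dual of $H_-(M)$ in ${}_A\cC_A$: the evaluation $\widetilde{\ev}_M: M^* \otimes_A M \to A$ is induced from $\ev_M$ and $\iota_A$, while the coevaluation $\widetilde{\coev}_M: A \to M \otimes_A M^*$ is induced from $\coev_M$ followed by the $A$-action on $M$. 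Checking the zig-zag identities reduces to computations in $\cC$ using the definitions of $\mu_M^l$ and $\mu_{M^*}^l$. The haploidness of $A$ plays a crucial role in identifying the internal Hom $\uHom(M, A)$ in $\cC_A$ with $M^*$, ensuring the counit is compatible. Once $\cC_A$ is shown to be rigid, finiteness follows from Lemma~\ref{lem:CA-finite}, making $\cC_A$ a finite tensor category.

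For $\cC_A^\loc$, I would verify that $M^*$ is local whenever $M$ is: the local condition $\mu_M^l \circ c_{M,A} \circ c_{A,M} = \mu_M^l$ translates via Lemma~\ref{lem:ident-M*-and-*M} and naturality of the braiding into the corresponding condition for $\mu_{M^*}^l$. Then $\cC_A^\loc$ is rigid, and it is a braided monoidal category by \cite{pareigis1995braiding}; finiteness follows since $\cC_A^\loc$ is closed under subquotients in $\cC_A$. Finally, for non-degeneracy when $\cC$ is non-degenerate, I would compute the Müger center of $\cC_A^\loc$ by showing that any object $M$ in it has the property that its underlying $\cC$-object braids trivially with the essential image of the forgetful functor $\cC_A^\loc \to \cC$; using haploidness to arrange that induction $F_A$ relates this back to the whole of $\cC$, one deduces $M \cong A$ from the non-degeneracy of $\cC$.

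\textbf{Main obstacle.} The hard part is Step~2, namely establishing that the duals existing in ${}_A\cC_A$ (via Theorem~\ref{thm:exact-rigid}) actually restrict to the subcategory $\cC_A^{c_{A,-}^{-1}}$, equivalently that the canonical bimodule structure on the dual agrees with the one coming from Lemma~\ref{lem:ident-M*-and-*M}. Once this structural compatibility is in hand, rigidity of $\cC_A$ and $\cC_A^\loc$ follows essentially formally, and the non-degeneracy statement can be deduced by standard arguments on Müger centers relative to a connected \'etale-like algebra.
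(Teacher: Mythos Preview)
The paper does not prove this statement; it is quoted directly from \cite[Theorem 5.5, Corollary 5.14(b)]{shimizu2024exact} with no argument given, so there is no in-paper proof to compare your proposal against.

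Your overall architecture---use Theorem~\ref{thm:exact-rigid} to get rigidity of ${}_A\cC_A$, then show the monoidal subcategory $H_-(\cC_A)=\cC_A^{c_{A,-}^{-1}}$ is closed under that duality, then restrict further to $\cC_A^\loc$---is the natural route and is in the spirit of what the cited reference does. The observation that haploid implies indecomposable is correct, and your coevaluation candidate (the bimodule map $A\to M\otimes M^*$ of Lemma~\ref{lem:algebra-properties}(4) followed by $\pi$) is the right one.

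However, your evaluation candidate has a genuine gap. The composite $M^*\otimes M\xrightarrow{\ev_M}\one\xrightarrow{\iota_A}A$ does descend to $M^*\otimes_A M$ (by \eqref{eqn:mu-M*-left}), but the resulting map is \emph{not} a left $A$-module morphism: it factors through $\one$, so precomposing with the $A$-action on $M^*\otimes_A M$ and postcomposing with $\mu_A$ give different answers unless $A\cong\one$. The actual left dual of $H_-(M)$ in ${}_A\cC_A$ is an internal Hom object, and showing that its underlying bimodule lies in $\cC_A^{c_{A,-}^{-1}}$ (equivalently, that its right action is determined by its left action via $c^{-1}$) is the real content---this is what \cite{shimizu2024exact} establishes. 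Also, haploidness is not what drives this closure under duals; it is used only to make the unit $A$ of $\cC_A$ simple so that $\cC_A$ is a tensor category rather than a multi-tensor category. Finally, your non-degeneracy sketch is too vague to be a proof: relating the M\"uger center of $\cC_A^\loc$ to that of $\cC$ requires more than Frobenius reciprocity for $F_A$.
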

%\begin{proof}[Proof sketch]
%\textcolor{red}{[add a brief sketch of the argument here]} 
%\end{proof}

\begin{theorem}\label{thm:com-exact-2}
If $A$ is a commutative algebra in a braided finite tensor category $\cC$ such that any of the equivalent conditions in Theorem~\ref{thm:EO-simple-exact} is satisfied, then $\cC_A$ is a finite tensor category and $\cC_A^{\loc}$ is a braided finite tensor category. If $\cC$ is non-degenerate, then so is $\cC_A^{\loc}$. 
%Let $\cC$ be a braided finite tensor category and $A$ a commutative algebra such that . T 
\end{theorem}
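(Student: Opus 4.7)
The plan is to reduce Theorem \ref{thm:com-exact-2} directly to Theorem \ref{thm:com-exact-1} by showing that, under the given hypotheses, $A$ is automatically haploid. The first step is to invoke Theorem \ref{thm:EO-simple-exact}: any of the equivalent conditions stated there forces $A$ to be both simple (as an algebra in $\cC$) and exact. Since $A$ is exact, Theorem \ref{thm:exact-rigid} ensures that the $A$-bimodule category ${}_A\cC_A$ is a finite tensor category. Because the unit object of a finite tensor category has endomorphism algebra equal to the base field $\KK$, we conclude $\End_{{}_A\cC_A}(A) = \KK\cdot\id_A$.

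Next, I would construct an injection
\[
\Phi: \Hom_\cC(\vac, A) \hookrightarrow \End_{{}_A\cC_A}(A),\qquad \Phi(f) = \mu_A \circ (f \otimes \id_A),
\]
(with the unit isomorphism $\vac \otimes A \cong A$ suppressed). The morphism $\Phi(f)$ is a left $A$-module map by associativity of $\mu_A$ and a right $A$-module map by commutativity of $A$, so it does land in $\End_{{}_A\cC_A}(A)$. Injectivity of $\Phi$ follows from the identity $\Phi(f)\circ \iota_A = f$, which is an immediate consequence of the unit axiom. Combined with the previous step, this gives $\dim_\KK \Hom_\cC(\vac, A) \leq 1$, and since $\iota_A$ is a nonzero element of this hom space, we obtain $\Hom_\cC(\vac, A) = \KK\iota_A$, i.e., $A$ is haploid.

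Having now verified that $A$ is an exact commutative haploid algebra in $\cC$, I would conclude by simply applying Theorem \ref{thm:com-exact-1} to obtain all three conclusions of Theorem \ref{thm:com-exact-2}: $\cC_A$ is a finite tensor category, $\cC_A^{\loc}$ is a braided finite tensor category, and non-degeneracy of $\cC$ descends to $\cC_A^{\loc}$. The main conceptual work in the proof has already been done in Theorems \ref{thm:EO-simple-exact} and \ref{thm:exact-rigid}, so the only real obstacle here is the haploid step, and even that reduces to an application of Schur's lemma in ${}_A\cC_A$ combined with the elementary computation $\Phi(f)\circ\iota_A=f$.
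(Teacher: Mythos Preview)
Your proof is correct and follows the same overall strategy as the paper: establish that $A$ is haploid, then invoke Theorem~\ref{thm:com-exact-1}. The paper's haploidness argument is somewhat more direct, however. Instead of routing through exactness and Theorem~\ref{thm:exact-rigid} to obtain $\End_{{}_A\cC_A}(A)=\KK$, the paper simply observes that $A$ is a simple object of ${}_A\cC_A$ (this is precisely what ``simple algebra'' means) and applies Schur's lemma; commutativity gives $\Hom_{\cC_A}(A,A)=\Hom_{{}_A\cC_A}(A,A)$, and Frobenius reciprocity gives $\Hom_\cC(\vac,A)\cong\Hom_{\cC_A}(A,A)$. Your map $\Phi$ is exactly this Frobenius reciprocity map, so the arguments are very close---your appeal to Theorem~\ref{thm:exact-rigid} is logically sound but circuitous, since its conclusion that the unit of ${}_A\cC_A$ is simple is something you already knew from the hypotheses. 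Note also that the paper reverses the order: it first proves haploid (hence indecomposable), and \emph{then} invokes Theorem~\ref{thm:EO-simple-exact} to obtain exactness; you do it the other way around, relying on simple $\Rightarrow$ indecomposable to make Theorem~\ref{thm:EO-simple-exact} applicable first.

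One minor slip: you have the roles of associativity and commutativity swapped. Left multiplication $\Phi(f)=\mu_A\circ(f\otimes\id_A)$ is a \emph{right} $A$-module map by associativity and a \emph{left} $A$-module map by commutativity, not the other way around. This does not affect the conclusion.
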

\begin{proof}
Since $A$ is commutative and simple, $\Hom_{\cC}(\one,A)\cong\Hom_{\cC_A}(A,A)\cong\Hom_{{}_A\cC_A}(A,A)\cong\KK$. Thus $A$ is haploid and hence an indecomposable algebra. Now $A$ is exact  by Theorem~\ref{thm:EO-simple-exact}, and the conclusions follow from Theorem~\ref{thm:com-exact-1}.
\end{proof}

In \cite[Conjecture~B.6]{etingof2021frobenius}, it is conjectured that Theorem~\ref{thm:com-exact-2} holds assuming only that $A$ is simple. Next we record some conditions on $\cC$ that guarantee this conjecture holds:

\begin{corollary}\label{cor:com-exact-integral}
If $A$ is a  simple commutative algebra in an integral braided finite tensor category $\cC$, then $A$ is exact. Consequently, $\cC_A$ is a finite tensor category and $\cC_A^{\loc}$ is a braided finite tensor category.
\end{corollary}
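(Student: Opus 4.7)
The plan is to apply Theorem~\ref{thm:com-exact-2}, which reduces the corollary to showing that $A$ is exact. By Theorem~\ref{thm:EO-simple-exact}, it suffices to verify any one of the equivalent conditions (b), (c), or (d). I will target condition (c), namely that $A^* \otimes_A {}^*A \neq 0$. The integrality of $\cC$ will be exploited via the Frobenius--Perron dimension, which in this setting takes values in $\mathbb{Z}_{\geq 0}$ and vanishes precisely on the zero object.

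Concretely, since $\cC$ is integral, there is an equivalence $\cC \simeq \Rep(H)$ for some finite-dimensional quasi-Hopf algebra $H$. The forgetful fiber functor $U : \cC \to \Vect_{\KK}$ is faithful, exact, and strong monoidal on underlying vector spaces, so it preserves the coequalizer defining $\otimes_A$ and sends left/right duals to vector-space duals. Applying $U$ to $A^* \otimes_A {}^*A$ therefore yields $U(A)^* \otimes_{U(A)} U(A)^*$ in $\Vect_{\KK}$, computed with respect to the (possibly quasi-associative) algebra structure on $U(A)$. By faithfulness of $U$, showing non-vanishing of this vector space tensor product suffices.

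The key step is to argue that $U(A)$ is a finite-dimensional semisimple $\KK$-(quasi-)algebra. The Jacobson radical of $U(A)$ is characterized intrinsically as the largest nilpotent two-sided ideal, so it is preserved by the $H$-action and descends to a two-sided ideal of $A$ in $\cC$. Simplicity of $A$ in $\cC$ forces this ideal to be zero, so $U(A)$ is semisimple. A finite-dimensional semisimple algebra is Frobenius, giving an isomorphism $U(A)^* \cong U(A)$ as $U(A)$-bimodules; therefore
\begin{equation*}
U(A)^* \otimes_{U(A)} U(A)^* \;\cong\; U(A) \otimes_{U(A)} U(A) \;\cong\; U(A) \;\neq\; 0,
\end{equation*}
and pulling back via $U$ gives $A^* \otimes_A {}^*A \neq 0$. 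Theorem~\ref{thm:EO-simple-exact} then yields exactness of $A$, and Theorem~\ref{thm:com-exact-2} delivers the desired conclusions for $\cC_A$ and $\cC_A^{\loc}$.

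The main obstacle is the careful treatment of the quasi-Hopf case, in which the associator $\Phi \in H^{\otimes 3}$ twists the multiplication on $U(A)$ so that $U(A)$ is only a quasi-algebra. One must check that the standard ring-theoretic inputs (the $H$-stability of the Jacobson radical, and the implication ``semisimple $\Rightarrow$ Frobenius'') remain valid in the quasi-algebra setting, and that the resulting Frobenius isomorphism descends to a $\cC$-morphism. A cleaner alternative that avoids this subtlety is to promote the argument to the stronger statement that every simple algebra in an integral finite tensor category is Frobenius in $\cC$, i.e.\ $A \cong A^*$ as right $A$-modules; with that in hand one may invoke Corollary~\ref{cor:com-exact-5} directly instead of condition~(c).
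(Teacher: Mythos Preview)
Your overall strategy matches the paper's: both reduce to showing $A$ is exact, invoke the quasi-Hopf realization of an integral finite tensor category, and then appeal to Theorem~\ref{thm:com-exact-1} or~\ref{thm:com-exact-2}. The paper, however, does not attempt a self-contained argument at the key step; it simply cites \cite[Proposition~B.7, Remark~B.8]{etingof2021frobenius} for the fact that a simple algebra in $\Rep(H)$ (for $H$ a finite-dimensional quasi-Hopf algebra) is exact.

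Your attempt to unpack that citation has a genuine gap, and it occurs already in the Hopf case, before the quasi-Hopf subtleties you flag. The assertion that the Jacobson radical $J(U(A))$ is $H$-stable because it is ``characterized intrinsically as the largest nilpotent two-sided ideal'' is not valid reasoning: the $H$-action on an $H$-module algebra is \emph{not} by algebra automorphisms. For $h\in H$ one has $h\cdot(ab)=\sum (h_{(1)}\cdot a)(h_{(2)}\cdot b)$, so the map $a\mapsto h\cdot a$ need not carry ideals to ideals, let alone nilpotent ideals to nilpotent ideals. Consequently there is no a priori reason that $J(U(A))$ is an $H$-submodule, and simplicity of $A$ in $\cC$ cannot be invoked to kill it. The $H$-stability of $J(U(A))$ \emph{is} a theorem (due in various forms to Linchenko, Skryabin, and others), but it is precisely the nontrivial input that \cite{etingof2021frobenius} packages; you have relocated the difficulty rather than removed it.

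Your proposed ``cleaner alternative'' of proving directly that $A$ is Frobenius in $\cC$ runs into the same wall: getting a Frobenius isomorphism $A\cong A^*$ of right $A$-modules \emph{in $\cC$} (not merely for $U(A)$ in $\Vect$) again requires controlling the interaction between the $H$-action and the algebra structure, which is exactly the content of the cited result. If you want a self-contained proof, you should either reproduce the Etingof--Ostrik argument from \cite[\S B]{etingof2021frobenius} or cite one of the $H$-stability theorems for the Jacobson radical explicitly, rather than treating it as obvious.
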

\begin{proof}
If $\cC$ is integral, then $\cC$ is tensor equivalent to the category of finite-dimensional modules for some quasi-Hopf algebra \cite[Proposition 2.6]{etingof2004finite}. Now $A$ is exact by \cite[Proposition~B.7, Remark~B.8]{etingof2021frobenius}, so the conclusions follow from Theorem~\ref{thm:com-exact-1}.
\end{proof}

\begin{theorem}\label{thm:comm-exact-semisimple}
If $A$ is a simple commutative algebra in a fusion category $\cC$, then $\cC_A$ is a fusion category and $\cC_A^\loc$ is a braided fusion category.
\end{theorem}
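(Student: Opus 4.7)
The plan is to reduce to Theorem~\ref{thm:com-exact-2} by first establishing exactness of $A$, and then to upgrade from finite tensor category to fusion category by exploiting semisimplicity of $\cC$.

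To verify exactness, by Lemma~\ref{lem:CA-finite} the pair $(\cC_A, \trl)$ is a finite right $\cC$-module category, and I would invoke the standard result (due to Etingof--Nikshych--Ostrik) that any finite module category over a fusion category is automatically exact. This immediately gives that $A$ is an exact algebra, so condition (a) of Theorem~\ref{thm:EO-simple-exact} holds. Since $A$ is also commutative and simple by hypothesis, Theorem~\ref{thm:com-exact-2} applies and yields that $\cC_A$ is a finite tensor category and $\cC_A^{\loc}$ is a braided finite tensor category.

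It remains to promote ``finite tensor'' to ``fusion,'' i.e.\ to establish semisimplicity. Here I would use that $\cC$ being semisimple makes every object of $\cC$ projective, in particular $\vac \in \cC$. Exactness of $(\cC_A, \trl)$ as a $\cC$-module category then implies that for every $M \in \cC_A$, the object $M \cong M \trl \vac$ is projective in $\cC_A$. A finite abelian category in which every object is projective is semisimple, so $\cC_A$ is in fact semisimple, and hence a fusion category. Finally, $\cC_A^{\loc}$ is a full braided monoidal subcategory of $\cC_A$ closed under direct summands, so it inherits semisimplicity and is a braided fusion category.

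The main obstacle in this plan is the initial exactness step. Without invoking the Etingof--Nikshych--Ostrik theorem as a black box, one would need a direct proof that $M \otimes X \in \cC_A$ is projective for all $M \in \cC_A$ and $X \in \cC$; this is subtle because one would like to split the action epimorphism $F_A(M \otimes X) \twoheadrightarrow M \otimes X$ in $\cC_A$ (not merely in $\cC$, where semisimplicity of $\cC$ gives it for free), and this seems to require precisely the semisimplicity of $\cC_A$ that we are trying to deduce. Citing the standard fusion-exactness result is thus the cleanest route.
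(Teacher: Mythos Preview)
Your argument is correct, and the second half---deducing semisimplicity from exactness by noting that $\vac$ is projective in $\cC$ and that $M \cong M \trl \vac$ is then projective in $\cC_A$---is exactly what the paper does. The difference is in how exactness of $A$ is obtained.

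Rather than quote the Etingof--Nikshych--Ostrik theorem as a black box, the paper gives a direct construction that stays within its own framework and verifies one of the exactness criteria of Theorem~\ref{thm:EO-simple-exact} by hand. Since $\cC$ is semisimple and $A$ is haploid, the unit $\iota_A : \vac \to A$ admits a retraction $\varepsilon_A : A \to \vac$, and the paper builds
\[
\phi := (\varepsilon_A \otimes \id_{A^*}) \circ (\mu_A \otimes \id_{A^*}) \circ (\id_A \otimes \coev_A) : A \longrightarrow A^*,
\]
which (following Kirillov--Ostrik) is a nonzero morphism of right $A$-modules. Since $A$ is commutative and simple as an algebra, it is simple as a right $A$-module, hence so is $A^*$, and $\phi$ is an isomorphism. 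This shows $A$ is Frobenius and directly verifies condition (d) of Theorem~\ref{thm:EO-simple-exact}. The paper's route is self-contained relative to the exactness criteria it has already developed, and avoids importing the ENO machinery (which in its standard form carries hypotheses such as characteristic zero that the paper, working over an arbitrary field $\KK$, has not assumed). Your route is certainly shorter if one is willing to absorb that external input, and you correctly diagnose that a bare-hands exactness argument would otherwise risk the circularity you describe; the paper's Frobenius construction is precisely the non-circular direct argument that sidesteps this.
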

\begin{proof}
As $A$ is commutative and simple, it is haploid, and since $\cC$ is semisimple, there is a morphism  $\varepsilon_A: A\rightarrow\one$ such that $\varepsilon_A \circ\iota_A=\id_{\one}$. Now consider
\[ \phi := (\varepsilon_A\otimes\id_{A^*})\circ( \mu_A\otimes \id_{A^*})\circ (\id_A\otimes \coev_A) : A\rightarrow A^*.  \]
As in the proof of \cite[Lemma 1.20]{kirillov2002q}, $\phi$ is a non-zero map of right $A$-modules. Also, since $A$ is commutative and simple as an algebra, it is still simple as a right $A$-module; then $A^*$ is a simple right $A$-module as well. Thus $\phi$ is an isomorphism and we get a right $A$-module embedding $A^* \xrightarrow{\phi^{-1}} A\xrightarrow{\sim} \vac\otimes A$. So $A$ is exact by Theorem \ref{thm:EO-simple-exact}, and then $\cC_A$ and $\cC_A^\loc$ are finite tensor categories by Theorem \ref{thm:com-exact-1}.

Now because $\cC$ is semisimple, the tensor unit  $\one$ in particular is projective in $\cC$. Thus since $\cC_A$ is an exact $\cC$-module category, any object $M\cong M\otimes\vac$ in $\cC_A^l$ is projective. Because $\cC_A$ is locally finite, it follows that $\cC_A$ is semisimple, and then its subcategory $\cC_A^\loc$ is semisimple as well.
\end{proof}

\begin{remark}
    Theorem \ref{thm:comm-exact-semisimple} substantially strengthens \cite[Theorem 3.3]{kirillov2002q} in the case that $\cC$ is finite, since it does not require $\cC$ to be a ribbon category in which $A$ has non-zero dimension. However, \cite{kirillov2002q} does not require $\cC$ to be finite.
\end{remark}

\begin{corollary}\label{cor:com-exact-4}
If $A$ is a separable commutative haploid algebra in a fusion category $\cC$, then $\cC_A$ is a fusion category and $\cC_A^\loc$ is a braided fusion category.
\end{corollary}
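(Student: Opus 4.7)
The plan is to deduce the corollary from Theorem \ref{thm:comm-exact-semisimple}, which already gives the desired conclusion once $A$ is known to be simple as an algebra. Thus the task reduces to showing that a separable haploid algebra in a fusion category is automatically simple.

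First I would invoke the standard fact that separability of $A$ in the fusion category $\cC$ implies the bimodule category ${}_A\cC_A$ is semisimple. This follows from the separability splitting $\delta\colon A\rightarrow A\otimes A$ of $\mu_A$ together with semisimplicity of $\cC$, via an averaging argument: any $\cC$-splitting of a short exact sequence in ${}_A\cC_A$ can be modified using $\delta$ and the bimodule actions to yield a bimodule splitting. Consequently, the regular bimodule $A$ decomposes in ${}_A\cC_A$ as a finite direct sum $A\cong A_1\oplus\cdots\oplus A_n$ of simple $A$-bimodules.

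Next I would show that this bimodule decomposition is in fact a decomposition of algebras. For $i\neq j$, the restriction of $\mu_A$ to $A_i\otimes A_j$ factors through $A_i$ (because $A_i$ is a left ideal, i.e. $\mu_A(A\otimes A_i)\subseteq A_i$, so by symmetry $\mu_A(A_i\otimes A)\subseteq A_i$ since $A$ is commutative) and through $A_j$ (because $A_j$ is a right ideal), hence through $A_i\cap A_j=0$. Splitting the unit as $\iota_A=\sum_i\iota_{A_i}$ with $\iota_{A_i}\colon\vac\rightarrow A_i$ and applying the unit axiom $\mu_A\circ(\iota_A\otimes\id_A)=\id_A$ restricted to each summand then forces $\iota_{A_i}$ to serve as a unit for $A_i$ with respect to the induced multiplication $\mu_{A_i}=\mu_A|_{A_i\otimes A_i}$, making each $A_i$ a subalgebra.

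Finally, the haploidness assumption $\Hom_\cC(\vac,A)=\KK\iota_A$ together with the decomposition $\Hom_\cC(\vac,A)=\bigoplus_i\Hom_\cC(\vac,A_i)$ forces at most one of the morphisms $\iota_{A_i}$ to be non-zero; since a non-zero algebra has a non-zero unit (the composition $A_i\cong\vac\otimes A_i\xrightarrow{\iota_{A_i}\otimes\id}A_i\otimes A_i\xrightarrow{\mu_{A_i}}A_i$ is $\id_{A_i}$), exactly one summand, say $A_1$, is non-zero, whence $A=A_1$ is simple as an algebra. Theorem \ref{thm:comm-exact-semisimple} then yields that $\cC_A$ is a fusion category and $\cC_A^\loc$ is a braided fusion category. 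The main technical point requiring care is the first step, but semisimplicity of ${}_A\cC_A$ for separable $A$ in a fusion category is standard, so the argument should be short.
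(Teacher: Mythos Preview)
Your proposal is correct and follows essentially the same approach as the paper: reduce to simplicity of $A$, invoke semisimplicity of ${}_A\cC_A$ (from separability plus semisimplicity of $\cC$), and then apply Theorem~\ref{thm:comm-exact-semisimple}. The paper's proof is slightly slicker in the middle step: rather than decomposing $A$ into simple bimodules and arguing each is an algebra summand, it observes directly that $\End_{{}_A\cC_A}(A)\cong\Hom_{\cC_A}(A,A)\cong\Hom_\cC(\vac,A)\cong\KK$ (via commutativity and haploidness), which in a semisimple category forces $A$ to be simple.
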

\begin{proof}
Since $A$ is commutative and haploid, $\Hom_{{}_A\cC_A}(A,A) \cong\Hom_{\cC_A}(A,A) \cong \Hom_{\cC}(\one,A)\cong\KK$.
As $\cC$ is semisimple and $A$ is separable, ${}_A\cC_A$ is also semisimple \cite[Proposition~2.7]{davydov2013witt}. Thus $\Hom_{{}_A\cC_A}(A,A)\cong \KK$ implies that $A$ is a simple object of ${}_A\cC_A$ and therefore is a simple algebra. Now the conclusions follow from Theorem~\ref{thm:comm-exact-semisimple}.
\end{proof}

In the next result, instead of asking $\cC$ to satisfy certain properties, we extract properties of $A$ from the proof of Theorem \ref{thm:comm-exact-semisimple} that guarantee $A$ is exact. Recall that an algebra $A\in\cC$ is called \textit{Frobenius} if there is an isomorphism $\phi:A\rightarrow A^*$ of right $A$-modules.

\begin{corollary}\label{cor:com-exact-5}
If a simple commutative algebra $A$ in a braided finite tensor category $\cC$ satisfies either of the following conditions, then $A$ is exact:
\begin{enumerate}
\item[(a)] $A$ is a Frobenius algebra.
    \item[(b)] $\Hom_{\cC}(A,\one)\neq 0$.
\end{enumerate}
Consequently, in these cases, $\cC_A$ is a finite tensor category and $\cC_A^{\loc}$ is a braided finite tensor category.
\end{corollary}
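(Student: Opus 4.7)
The plan is to apply Theorem~\ref{thm:EO-simple-exact} to deduce exactness of $A$, and then invoke Theorem~\ref{thm:com-exact-1} (via Theorem~\ref{thm:com-exact-2}) to conclude rigidity of $\cC_A$ and $\cC_A^{\loc}$. More precisely, in each of cases (a) and (b) I will exhibit a right $A$-module embedding $A^* \hookrightarrow X \otimes A$ for some $X \in \cC$, which is criterion (d) of Theorem~\ref{thm:EO-simple-exact}.

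Case (a) is essentially immediate: a Frobenius structure on $A$ is by definition an isomorphism $A \xrightarrow{\sim} A^*$ of right $A$-modules, so we take $X = \one$ and the embedding is the inverse of this isomorphism composed with the canonical identification $A \cong \one \otimes A$.

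For case (b), I will follow the template used in the proof of Theorem~\ref{thm:comm-exact-semisimple}: pick any nonzero $\varepsilon_A \in \Hom_\cC(A,\one)$ and define
\begin{equation*}
   \phi = (\varepsilon_A \otimes \id_{A^*}) \circ (\mu_A \otimes \id_{A^*}) \circ (\id_A \otimes \coev_A) \colon A \longrightarrow A^*.
\end{equation*}
The same calculation as in the proof of \cite[Lemma~1.20]{kirillov2002q} shows $\phi$ is a morphism of right $A$-modules. To see $\phi \neq 0$, I will compute $\phi \circ \iota_A = (\varepsilon_A \otimes \id_{A^*}) \circ \coev_A$ using the unit axiom for $\mu_A$; under the adjunction $\Hom_\cC(\one,A^*) \cong \Hom_\cC(A,\one)$ this morphism corresponds to $\varepsilon_A$, which is nonzero by hypothesis.

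It then remains to promote $\phi$ to an isomorphism. Since $A$ is simple as an algebra, it is simple as an $A$-bimodule; because $A$ is commutative, any right $A$-submodule of $A$ is automatically a sub-bimodule (via the braiding, as $\mu_A = \mu_A \circ c_{A,A}$), so $A$ is simple as a right $A$-module, and hence so is $A^*$ by the anti-equivalence of Lemma~\ref{lem:CA-dual-antiequivalence}. A nonzero map between simple objects is an isomorphism, so $\phi$ is invertible and $\phi^{-1} \colon A^* \to A \cong \one \otimes A$ provides the required embedding, yielding (d) of Theorem~\ref{thm:EO-simple-exact}. The only real subtlety is the verification that $A$ remains simple as a one-sided module under the commutativity hypothesis; everything else is formal manipulation of (co)evaluations and adjunctions, and the conclusions about $\cC_A$ and $\cC_A^{\loc}$ follow directly from Theorem~\ref{thm:com-exact-2}.
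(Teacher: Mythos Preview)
Your proof is correct and follows essentially the same approach as the paper: both cases reduce to Theorem~\ref{thm:EO-simple-exact}(d) via the map $\phi$ from the proof of Theorem~\ref{thm:comm-exact-semisimple}, using simplicity of $A$ (hence of $A^*$) as a right $A$-module to upgrade $\phi\neq 0$ to an isomorphism. Your verification that $\phi\circ\iota_A$ corresponds to $\varepsilon_A$ under the duality adjunction is a welcome addition, as the paper only asserts (without justification) that $\varepsilon_A\circ\iota_A=\id_\one$ is not needed for $\phi\neq 0$.
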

\begin{proof}
(a) When $A$ is Frobenius, it is a simple algebra with an isomorphism $\phi:A\rightarrow A^*$ of right $A$-modules. Thus it is exact as in the proof of Theorem \ref{thm:comm-exact-semisimple}. 

(b) We just need to show $A$ is Frobenius. We can define $\phi: A\rightarrow A^*$ as in the proof of Theorem \ref{thm:comm-exact-semisimple}, using any non-zero $\cC$-morphism $\varepsilon_A: A\rightarrow\vac$. As before, $\phi$ is an isomorphism of right $A$-modules, so $A$ is Frobenius (in particular, we do not need $\varepsilon_A\circ\iota_A=\id_\vac$ to guarantee that $\phi$ is non-zero).
\end{proof}

%%%%%%%%%%%%%%%%%%%%%%%%%%%%%%%%%%%%%%%%%%%%%%%%%%%%%%
%%%%%%%%%%%%%%%%%%%%%%%%%%%%%%%%%%%%%%%%%%%%%%%%%%%%%%
%%%%%%%%%%%%%%%%%%%%%%%%%%%%%%%%%%%%%%%%%%%%%%%%%%%%%%
%%%%%%%%%%%%%%%%%%%%%%%%%%%%%%%%%%%%%%%%%%%%%%%%%%%%%%
%%%%%%%%%%%%%%%%%%%%%%%%%%%%%%%%%%%%%%%%%%%%%%%%%%%%%%
%%%%%%%%%%%%%%%%%%%%%%%%%%%%%%%%%%%%%%%%%%%%%%%%%%%%%%

\section{Rigidity of \texorpdfstring{$\cC$}{C} and \texorpdfstring{$\cC_A$}{CA} from rigidity of \texorpdfstring{$\cC_A^{\loc}$}{CA-loc}}\label{sec:rig-of-C}

In the previous section, in particular in Theorem \ref{thm:com-exact-2}, we gave conditions under which rigidity of a braided tensor category $\cC$ induces rigidity of the left $A$-module categories $\cC_A$ and $\cC_A^\loc$ for a commutative algebra $A$ in $\cC$. In this section, we study the converse problem, where we assume $\cC_A^{\loc}$ (but not $\cC_A$!) is rigid and find conditions under which $\cC$ and $\cC_A$ are rigid. To do this, we will need to assume that $\cC$ at least admits a weaker duality structure, called Grothendieck-Verdier duality in \cite{boyarchenko2013duality}.

\subsection{Grothendieck-Verdier categories}\label{subsec:GV-intro}
Let $\cC$ be a monoidal category, and fix an object $K\in\cC$. For an object $X\in\cC$, the contravariant functor $\Hom_\cC( - \otimes X,K)$ is representable if there is a natural isomorphism
\begin{equation*}
\Lambda_{ - ,X}: \Hom_\cC( - ,DX)\longrightarrow\Hom_\cC( - \otimes X,K)
\end{equation*}
for some object $DX\in\cC$. If this is the case, we can define
\begin{equation*}
    e_X =\Lambda_{DX,X}(\id_{DX})\in\Hom_\cC(DX\otimes X, K).
\end{equation*}
Then for any object $Y\in\cC$ and morphism $\varphi: Y\rightarrow DX$, we can apply the commutative diagram
\begin{equation*}
%\xymatrixcolsep{3pc}
 %\xymatrix{
\begin{tikzcd}[column sep=3pc]
\Hom_\cC(DX,DX) \ar[r, "\Lambda_{DX,X}"] \ar[d, "g\mapsto g\circ\varphi"] & \Hom_\cC(DX\otimes X,K) \ar[d, "f\mapsto f\circ(\varphi\otimes\id_X)"]\\
\Hom_\cC(Y,DX) \ar[r, "\Lambda_{Y,X}"] & \Hom_\cC(Y\otimes X,K)
  \end{tikzcd}
  %  }
\end{equation*}
to $\id_{DX}$ to obtain $\Lambda_{Y,X}(\varphi)=e_X\circ(\varphi\otimes\id_X)$ for all $\varphi\in\Hom_\cC(Y,DX)$. In other words, if $\Hom_\cC( - \otimes X, K)$ is representable, then there is an object $DX\in\cC$ and a morphism $e_X: DX\otimes X\rightarrow K$ which satisfies the following universal property: For any object $Y\in\cC$ and morphism $f:Y\otimes X\rightarrow K$, there is a unique morphism $\varphi: Y\rightarrow DX$ such that the diagram
\begin{equation*}
%\xymatrixcolsep{3pc}
%\xymatrix{
\begin{tikzcd}[column sep=3pc]
Y\otimes X \ar[d, "\varphi\otimes\id_X"'] \ar[rd, "f"] & \\
DX\otimes X \ar[r, "e_X"'] & K
\end{tikzcd}
\end{equation*}
commutes (namely, $\varphi=\Lambda_{Y,X}^{-1}(f)$). Conversely, if a pair $(DX, e_X)$ satisfying this universal property exists, then $\Hom_\cC( - \otimes X,K)$ is representable with representing object $DX$ and 
\begin{align*}
    \Lambda_{Y,X}: \Hom_\cC(Y,DX) & \rightarrow\Hom_{\cC}(Y\otimes X,K)\\
    \varphi &\mapsto e_X\circ(\varphi\otimes\id_X)
\end{align*}
for all objects $Y\in\cC$.

Now assume that $\Hom_\cC( - \otimes X,K)$ is representable for all $X\in\cC$, that is, a pair $(DX,e_X)$ satisfying the above universal property exists. Then given a morphism $f:X\rightarrow Y$ in $\cC$, there is a unique morphism $Df: DY\rightarrow DX$ such that the diagram
\begin{equation*}
%\xymatrixcolsep{3pc}
%\xymatrix{
\begin{tikzcd}[column sep=3pc]
  DY\otimes X \ar[d, "Df\otimes\Id_{X}"] \ar[r, "\Id_{DY}\otimes f"] & DY\otimes Y \ar[d, "e_{Y}"]\\
DX\otimes X \ar[r, "e_{X}"'] & K  
\end{tikzcd}
%}
\end{equation*}
commutes. Clearly $D(\id_X)=\id_{DX}$ for all $X\in\cC$, and one readily checks that if $f: X\rightarrow Y$ and $g: Y\rightarrow Z$ are two morphisms in $\cC$, then 
\begin{equation*}
    e_{X}\circ((Df\circ Dg)\otimes\id_{X}) = e_{Z}\circ(\id_{DZ}\otimes(g\circ f)) = e_{X}\circ(D(g\circ f)\otimes\id_{X}),
\end{equation*}
and thus $D(g\circ f) = Df\circ Dg$ by the universal property of $(DX,e_{X})$. So we get a contravariant functor $D: \cC\rightarrow\cC$.

Following \cite{boyarchenko2013duality}, we can now define the notions of dualizing object and Grothendieck-Verdier category:
\begin{definition}
    An object $K$ in a monoidal category $\cC$ is \textit{dualizing} if $\Hom_\cC( - \otimes X,K)$ is representable for all objects $X\in\cC$ and the corresponding contravariant functor $D$ is an anti-equivalence. A \textit{(braided) Grothendieck-Verdier category} is a (braided) monoidal category $\cC$ equipped with a dualizing object $K$. 
\end{definition}

%%%%%%%%%%%%%%%%%%%%%%%%%%%%%%%%%%%%%%%%%%%%%%%%%%%%%%
%%%%%%%%%%%%%%%%%%%%%%%%%%%%%%%%%%%%%%%%%%%%%%%%%%%%%%
%%%%%%%%%%%%%%%%%%%%%%%%%%%%%%%%%%%%%%%%%%%%%%%%%%%%%%

\subsection{Grothendieck-Verdier categories from \texorpdfstring{$A$}{A}-modules}
Let $(\cC,K)$ be a $\mathbb{K}$-linear abelian braided Grothendieck-Verdier category, where $\mathbb{K}$ is a field. As always, we assume the tensor product $\otimes$ is $\mathbb{K}$-bilinear. Moreover, equations (2.7) and (2.8) in \cite{boyarchenko2013duality} show that the functors $X\otimes - $ and $ - \otimes X$ have right adjoints, and thus they are right exact by Lemma \ref{lem:adj-imply-exact} (but not necessarily left exact).

 Let $(A,\mu_A,\iota_A)$ be a commutative algebra in $\cC$. Since we work only with left modules in this section, we drop superscripts from the notation, writing $\cC_A$ for the category of left $A$-modules in $\cC$ (instead of $\cC_A^l$) and $\mu_M$ for the action of $A$ on a left $A$-module (instead of $\mu_M^l$). As in Section \ref{subsec:algebras-and-modules}, because $A$ is commutative, $\cC_A$ is an abelian monoidal category with $\mathbb{K}$-bilinear right exact tensor product $\otimes_A$; see also the discussions in \cite{kirillov2002q, creutzig2017tensor}. Namely, as in \eqref{eqn:tensor-over-A-in-CA}, we identify $\cC_A$ with the subcategory $\cC_A^{c_{A,-}^{-1}}\subseteq {}_A\cC_A$, so that for $(M_1,\mu_{M_1}), (M_2,\mu_{M_2})\in\cC_A$, the tensor product $M_1\otimes_A M_2$ is the coequalizer of the two morphisms
\begin{align*}
& \mu^{(1)}: A\otimes M_1\otimes M_2 \xrightarrow{\mu_{M_1}\otimes\Id_{M_2}} M_1\otimes M_2,\nonumber\\
& \mu^{(2)}: A\otimes M_1\otimes M_2 \xrightarrow{ c_{A,M_1}\otimes\Id_{M_2}} M_1\otimes A\otimes M_2\xrightarrow{\Id_{M_1}\otimes\mu_{M_2}} M_1\otimes M_2.
\end{align*}
We write $\pi_{M_1,M_2}: M_1\otimes M_2\rightarrow M_1\otimes_A M_2$ for the canonical epimorphism such that
\begin{equation}\label{eqn:IYX_coequalizer}
 \pi_{M_1,M_2}\circ\mu^{(1)}= \pi_{M_1,M_2}\circ\mu^{(2)},
\end{equation}
and then $\mu_{M_1\otimes_A M_2}$ is defined so that
\begin{equation}\label{eqn:RepA_tens_action}
\mu_{M_1\otimes_A M_2}\circ (\id_A\otimes  \pi_{M_1,M_2}) =  \pi_{M_1,M_2}\circ\mu^{(i)}
\end{equation}
for $i=1,2$. The tensor product of morphisms in $\cC_A$ is defined as follows: If $f_1: M_1\rightarrow N_1$ and $f_2: M_2\rightarrow N_2$ are morphisms in $\cC_A$, then $f_1\otimes_A f_2: M_1\otimes_A M_2\rightarrow N_1\otimes_A N_2$ is the unique morphism such that
\begin{equation*}
(f_1\otimes_A f_2)\circ  \pi_{M_1,M_2}= \pi_{N_1,N_2}\circ(f_1\otimes f_2).
\end{equation*}
We also recall the full subcategory $\cC_A^{\loc}\subseteq\cC_A$ consisting of objects $(M,\mu_M)$ such that $\mu_M\circ c_{M,A}\circ c_{A,M}=\mu_M$.
The subcategory $\cC_A^{\loc}$ is an abelian braided monoidal category with tensor product $\otimes_A$.

The goal of this subsection is to show that $\cC_A$ is a Grothendieck-Verdier category (we will consider $\cC_A^\loc$ in the next subsection). Thus let $D$ be the dualizing functor of $(\cC,K)$. We want to show that if $(M,\mu_M)\in\cC_A$, then $DM$ also admits a left $A$-module structure. Indeed, we define $\mu_{DM}: A\otimes DM\rightarrow DM$ to be the unique $\cC$-morphism such that the diagram
\begin{equation}\label{eqn:mu-DM-def}
\begin{matrix}
%\xymatrixcolsep{4pc}
%\xymatrix{
\begin{tikzcd}[column sep=4pc, row sep=2pc]
A\otimes DM\otimes M \ar[d, "\mu_{DM}\otimes\id_M"] \ar[r, "c_{A,DM}\otimes\id_M"] & DM\otimes A\otimes M\ar[r, "\id_{DM}\otimes\mu_M"] & DM\otimes M \ar[d, "e_M"]\\
DM\otimes M \ar[rr, "e_M"] && K
\end{tikzcd}
%}
\end{matrix}
\end{equation}
commutes. We represent the definition of $\mu_{DM}$ diagrammatically as follows:
\begin{equation*}
\begin{tikzpicture}[scale = .8, baseline = {(current bounding box.center)}, line width=0.75pt]
\draw (0,0) -- (.5,.5) -- (.5,1) .. controls (.5, 2) and (2,2) .. (2,1) -- (2,0);
\draw (1,0) -- (.5,.5);
\draw[dashed] (1.25,1.75) -- (1.25, 2.5);
\node at (.5,.5) [draw,thick, fill=white] {$\mu_{DM}$};
\node at (1.25,1.75) [draw,thick, fill=white] {$e_M$};
\node at (0,-.25) {$A$};
\node at (1,-.25) {$DM$};
\node at (2,-.25) {$M$};
\node at (1.25, 2.75) {$K$};
\end{tikzpicture}
=
\begin{tikzpicture}[scale = 1, baseline = {(current bounding box.center)}, line width=0.75pt]
\draw (1,0) .. controls (1,.7) and (0,.3) .. (0,1) -- (0,2) .. controls (0,3) and (1.5,3) .. (1.5,2) -- (1.5,1.5) -- (2,1) -- (2,0);
\draw[white, double=black, line width = 3pt ] (0,0) .. controls (0,.7) and (1,.3) .. (1,1) -- (1.5,1.5);
\draw[dashed] (.75,2.75) -- (.75, 3.5);
\node at (1.5,1.5) [draw,thick, fill=white] {$\mu_{M}$};
\node at (.75,2.75) [draw,thick, fill=white] {$e_M$};
\node at (0,-.25) {$A$};
\node at (1,-.25) {$DM$};
\node at (2,-.25) {$M$};
\node at (.75, 3.75) {$K$};
\end{tikzpicture}
\end{equation*}
\begin{proposition}\label{prop:X'inCA}
    $(DM,\mu_{DM})$ is an object of $\cC_A$.
\end{proposition}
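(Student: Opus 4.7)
The first observation is that $\mu_{DM}$ is well-defined: the universal property of the representing pair $(DM, e_M)$ applied with test object $Y = A \otimes DM$ produces a unique morphism $\mu_{DM}\colon A \otimes DM \to DM$ satisfying \eqref{eqn:mu-DM-def}, since $e_M \circ (\id_{DM} \otimes \mu_M) \circ (c_{A,DM} \otimes \id_M)$ is a morphism $(A \otimes DM) \otimes M \to K$. It remains to verify the unit and associativity axioms for $\mu_{DM}$. Both verifications follow the same template: one composes each axiom with $e_M \circ (- \otimes \id_M)$, reduces to equality of morphisms into $K$, and then invokes the uniqueness clause of the same universal property.

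For the unit axiom $\mu_{DM} \circ (\iota_A \otimes \id_{DM}) = \id_{DM}$, I would expand $\mu_{DM}$ via \eqref{eqn:mu-DM-def} and use naturality of $c_{-, DM}$ applied to $\iota_A\colon \one \to A$ to move the unit past the braiding (together with the convention that $c_{\one, DM}$ is the canonical unit isomorphism). The unit axiom $\mu_M \circ (\iota_A \otimes \id_M) = \id_M$ for $M$ then collapses the resulting expression to $e_M$, which is what composing the right-hand side $\id_{DM}$ with $e_M$ produces.

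For associativity $\mu_{DM} \circ (\mu_A \otimes \id_{DM}) = \mu_{DM} \circ (\id_A \otimes \mu_{DM})$, I would expand the left-hand side by one application of \eqref{eqn:mu-DM-def}, then apply naturality of $c_{-, DM}$ to $\mu_A$ and the hexagon identity to rewrite $c_{A \otimes A, DM}$. I would expand the right-hand side by two applications of \eqref{eqn:mu-DM-def}, apply naturality of $c_{A, -}$ to $\mu_{DM}$ and the hexagon identity to rewrite $c_{A, A \otimes DM}$, and use the associativity $\mu_M \circ (\mu_A \otimes \id_M) = \mu_M \circ (\id_A \otimes \mu_M)$ of the left $A$-module $M$. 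After these manipulations, the two sides agree except that one side carries an extra braiding $c_{A,A}$ permuting the two copies of $A$ that are fed into $\mu_A$. Commutativity of $A$, in the form $\mu_A \circ c_{A,A} = \mu_A$, absorbs this discrepancy, so the two composites into $K$ coincide and the universal property yields associativity of $\mu_{DM}$.

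The argument is essentially a routine diagram chase with one genuine subtlety. The main obstacle — which is really the whole point of the calculation — is the appearance of $c_{A,A}$ in the associativity step: the two hexagon axioms apply asymmetrically to the two sides of the associativity equation, producing a braiding of the two $A$-factors on one side only, and commutativity of $A$ is precisely what is needed to cancel it. This is consistent with the fact that the construction would fail for a noncommutative algebra, since in that case $DM$ would only inherit a one-sided, mixed structure rather than a genuine left $A$-module structure.
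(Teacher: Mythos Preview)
Your proposal is correct and follows essentially the same approach as the paper: both verify the unit and associativity axioms by composing with $e_M\circ(-\otimes\id_M)$ and appealing to the universal property of $(DM,e_M)$, using naturality of the braiding, the hexagon identities, the module axioms for $M$, and commutativity of $A$. The paper presents the associativity check as a single diagrammatic chain of equalities from $\mu_{DM}\circ(\id_A\otimes\mu_{DM})$ to $\mu_{DM}\circ(\mu_A\otimes\id_{DM})$ rather than expanding both sides separately, but this is a cosmetic difference; your identification of $\mu_A\circ c_{A,A}=\mu_A$ as the crucial step is exactly what happens in the paper's transition between the fourth and fifth diagrams.
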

\begin{proof}
We need to check the unit and associativity properties for $(DM,\mu_{DM})$. The unit property 
$\mu_{DM}\circ(\iota_A\otimes\id_{DM}) = l_{DM}$, where $l_{DM}$ is the left unit isomorphism,
follows from the diagrammatic calculation
\begin{align*}
\begin{tikzpicture}[scale = .8 , baseline = {(current bounding box.center)}, line width=0.75pt]
\draw (0,-.5) -- (0,0) -- (.5,.5) -- (.5,1) .. controls (.5, 2) and (2,2) .. (2,1) -- (2,-1);
\draw (1,-1) -- (1,0) -- (.5,.5);
\draw[dashed] (0,-1) -- (0,-.5);
\draw[dashed] (1.25,1.75) -- (1.25, 2.5);
\node at (.5,.5) [draw,thick, fill=white] {$\mu_{DM}$};
\node at (1.25,1.75) [draw,thick, fill=white] {$e_M$};
\node at (0,-.5) [draw,thick, fill=white] {$\iota_A$};
\node at (0,-1.25) {$\vac$};
\node at (1,-1.25) {$DM$};
\node at (2,-1.25) {$M$};
\node at (1.25, 2.75) {$K$};
\end{tikzpicture}
& = \begin{tikzpicture}[scale = .8, baseline = {(current bounding box.center)}, line width=0.75pt]
\draw (1,-1) -- (1,0) .. controls (1,.7) and (0,.3) .. (0,1) -- (0,2) .. controls (0,3) and (1.5,3) .. (1.5,2) -- (1.5,1.5) -- (2,1) -- (2,-1);
\draw[white, double=black, line width = 3pt ] (0,-.5) -- (0,0) .. controls (0,.7) and (1,.3) .. (1,1) -- (1.5,1.5);
\draw[dashed] (.75,2.75) -- (.75, 3.5);
\draw[dashed] (0,-1) -- (0,-.5);
\node at (1.5,1.5) [draw,thick, fill=white] {$\mu_{M}$};
\node at (.75,2.75) [draw,thick, fill=white] {$e_M$};
\node at (0,-.5) [draw,thick, fill=white] {$\iota_A$};
\node at (0,-1.25) {$\vac$};
\node at (1,-1.25) {$DM$};
\node at (2,-1.25) {$M$};
\node at (.75, 3.75) {$K$};
\end{tikzpicture}
= \begin{tikzpicture}[scale = .8, baseline = {(current bounding box.center)}, line width=0.75pt]
\draw (1,-1) .. controls (1,-.3) and (0,-.7) .. (0,0) -- (0,2) .. controls (0,3) and (1.5,3) .. (1.5,2) -- (1.5,1.5) -- (2,1) -- (2,-1);
\draw[white, double=black, line width = 3pt ] (1,.5) -- (1,1) -- (1.5,1.5);
\draw[dashed] (.75,2.75) -- (.75, 3.5);
\draw[dashed] (0,-1) .. controls (0,-.3) and (1,-.7) .. (1,0) -- (1,.5);
\node at (1.5,1.5) [draw,thick, fill=white] {$\mu_{M}$};
\node at (.75,2.75) [draw,thick, fill=white] {$e_M$};
\node at (1,.5) [draw,thick, fill=white] {$\iota_A$};
\node at (0,-1.25) {$\vac$};
\node at (1,-1.25) {$DM$};
\node at (2,-1.25) {$M$};
\node at (.75, 3.75) {$K$};
\end{tikzpicture}\nonumber%\\
 = \begin{tikzpicture}[scale = .8, baseline = {(current bounding box.center)}, line width=0.75pt]
\draw (1,-1) .. controls (1,-.3) and (0,-.7) .. (0,0) -- (0,1) .. controls (0,2) and (2,2) .. (2,1) -- (2,-1);
\draw[dashed] (1,1.75) -- (1, 2.5);
\draw[dashed] (0,-1) .. controls (0,-.3) and (1,-.7) .. (1,0) .. controls (1,.7) .. (2,1);
\node at (1,1.75) [draw,thick, fill=white] {$e_M$};
\node at (0,-1.25) {$\vac$};
\node at (1,-1.25) {$DM$};
\node at (2,-1.25) {$M$};
\node at (1, 2.75) {$K$};
\end{tikzpicture}
= \begin{tikzpicture}[scale = .8, baseline = {(current bounding box.center)}, line width=0.75pt]
\draw (1,-1) .. controls (1,-.3) and (0,-.7) .. (0,0) -- (0,1) .. controls (0,2) and (2,2) .. (2,1) -- (2,-1);
\draw[dashed] (1,1.75) -- (1, 2.5);
\draw[dashed] (0,-1) .. controls (0,-.3) and (1,-.7) .. (1,0) .. controls (1,.7) .. (0,1);
\node at (1,1.75) [draw,thick, fill=white] {$e_M$};
\node at (0,-1.25) {$\vac$};
\node at (1,-1.25) {$DM$};
\node at (2,-1.25) {$M$};
\node at (1, 2.75) {$K$};
\end{tikzpicture}
= \begin{tikzpicture}[scale = .8, baseline = {(current bounding box.center)}, line width=0.75pt]
\draw (1,-1) -- (1,0) .. controls (1,1) and (2,1) .. (2,0) -- (2,-1);
\draw[dashed] (1.5,0.75) -- (1.5, 1.5);
\draw[dashed] (0,-1) .. controls (0,-.3) .. (1,0);
\node at (1.5,.75) [draw,thick, fill=white] {$e_M$};
\node at (0,-1.25) {$\vac$};
\node at (1,-1.25) {$DM$};
\node at (2,-1.25) {$M$};
\node at (1.5, 1.75) {$K$};
\end{tikzpicture}
\end{align*}
together with the universal property of $(M,e_M)$. The associativity property
$\mu_{DM}\circ(\Id_A\otimes\mu_{DM}) =\mu_{DM}\circ(\mu_A\otimes\Id_{DM})\circ\cA_{A,A,DM}$, where $\cA_{A,A,DM}$ is the associativity isomorphism,
follows from
\begin{align*}
\begin{tikzpicture}[scale = .8, baseline = {(current bounding box.center)}, line width=0.75pt]
\draw (0,0) -- (0,1) -- (.75,1.5) -- (.75,2) .. controls (.75,3) and (3,3) .. (3,2) -- (3,0);
\draw (1,0) -- (1.5,.5) -- (1.5,1) -- (.75,1.5);
\draw (2,0) -- (1.5,.5);
\draw[dashed] (1.875,2.75) -- (1.875, 3.5);
\node at (1.5,.5) [draw,thick, fill=white] {$\mu_{DM}$};
\node at (.75,1.5) [draw,thick, fill=white] {$\mu_{DM}$};
\node at (1.875,2.75) [draw,thick, fill=white] {$e_M$};
\node at (0,-.25) {$A$};
\node at (1,-.25) {$A$};
\node at (2,-.25) {$DM$};
\node at (3,-.25) {$M$};
\node at (1.875, 3.75) {$K$};
\end{tikzpicture}
& = \begin{tikzpicture}[scale = .8, baseline = {(current bounding box.center)}, line width=0.75pt]
\draw (1,0) -- (1.5,.5) -- (1.5,1) .. controls (1.5,1.7) and (0,1.3) .. (0,2) -- (0,3);
\draw (2,0) -- (1.5,.5);
\draw (3,0) -- (3,2) -- (2.25,2.5);
\draw[white, double=black, line width = 3pt ] (0,0) -- (0,1) .. controls (0,1.7) and (1.5,1.3) .. (1.5,2) -- (2.25,2.5) -- (2.25,3) .. controls (2.25,4) and (0,4) .. (0,3);
\draw[dashed] (1.125,3.75) -- (1.125, 4.5);
\node at (1.5,.5) [draw,thick, fill=white] {$\mu_{DM}$};
\node at (2.25,2.5) [draw,thick, fill=white] {$\mu_{M}$};
\node at (1.125,3.75) [draw,thick, fill=white] {$e_M$};
\node at (0,-.25) {$A$};
\node at (1,-.25) {$A$};
\node at (2,-.25) {$DM$};
\node at (3,-.25) {$M$};
\node at (1.125, 4.75) {$K$};
\end{tikzpicture}
= \begin{tikzpicture}[scale = .8, baseline = {(current bounding box.center)}, line width=0.75pt]
\draw (1,0) .. controls (1,.7) and (0,.3) .. (0,1) -- (0,2) -- (.5,2.5) -- (.5,3);
\draw (2,0) .. controls (2,.7) and (1,.3) .. (1,1) -- (1,2) -- (.5,2.5);
\draw (3,0) -- (3,1) -- (2.5,1.5);
\draw[white, double=black, line width = 3pt ] (0,0) .. controls (0,.7) and (2,.3) .. (2,1) -- (2.5,1.5) -- (2.5,3) .. controls (2.5,4) and (0.5,4) .. (0.5,3);
\draw[dashed] (1.5,3.75) -- (1.5, 4.5);
\node at (.5,2.5) [draw,thick, fill=white] {$\mu_{DM}$};
\node at (2.5,1.5) [draw,thick, fill=white] {$\mu_{M}$};
\node at (1.5,3.75) [draw,thick, fill=white] {$e_M$};
\node at (0,-.25) {$A$};
\node at (1,-.25) {$A$};
\node at (2,-.25) {$DM$};
\node at (3,-.25) {$M$};
\node at (1.5, 4.75) {$K$};
\end{tikzpicture}
= \begin{tikzpicture}[scale = .8, baseline = {(current bounding box.center)}, line width=0.75pt]
\draw (2,0) .. controls (2,.7) and (1,.3) .. (1,1) .. controls (1,1.7) and (0,1.3) .. (0,2) -- (0,3);
\draw[white, double=black, line width = 3pt ] (1,0) .. controls (1,.7) and (0,.3) .. (0,1) .. controls (0,1.7) and (1,1.3) .. (1,2) -- (1.75,2.5);
\draw (3,0) -- (3,1) -- (2.5,1.5);
\draw[white, double=black, line width = 3pt ] (0,0) .. controls (0,.7) and (2,.3) .. (2,1) -- (2.5,1.5) -- (2.5,2) -- (1.75,2.5) -- (1.75,3)
 .. controls (1.75,4) and (0,4) .. (0,3);
\draw[dashed] (.875,3.75) -- (.875, 4.5);
\node at (1.75,2.5) [draw,thick, fill=white] {$\mu_{M}$};
\node at (2.5,1.5) [draw,thick, fill=white] {$\mu_{M}$};
\node at (.875,3.75) [draw,thick, fill=white] {$e_M$};
\node at (0,-.25) {$A$};
\node at (1,-.25) {$A$};
\node at (2,-.25) {$DM$};
\node at (3,-.25) {$M$};
\node at (.875, 4.75) {$K$};
\end{tikzpicture}\nonumber\\
& = \begin{tikzpicture}[scale = .8, baseline = {(current bounding box.center)}, line width=0.75pt]
\draw (2,0) .. controls (2,.7) and (0,.3) .. (0,1) -- (0,4);
\draw[white, double=black, line width = 3pt ] (1,0) .. controls (1,.7) and (2,.3) .. (2,1) .. controls (2,1.7) and (1,1.3) .. (1,2) -- (1.5,2.5);
\draw (3,0) -- (3,3) -- (2.25,3.5);
\draw[white, double=black, line width = 3pt ] (0,0) .. controls (0,.7) and (1,.3) .. (1,1) .. controls (1,1.7) and (2,1.3) .. (2,2) -- (1.5,2.5) -- (1.5,3) -- (2.25, 3.5) -- (2.25,4)
 .. controls (2.25,5) and (0,5) .. (0,4);
\draw[dashed] (1.125,4.75) -- (1.125, 5.5);
\node at (1.5,2.5) [draw,thick, fill=white] {$\mu_{A}$};
\node at (2.25,3.5) [draw,thick, fill=white] {$\mu_{M}$};
\node at (1.125,4.75) [draw,thick, fill=white] {$e_M$};
\node at (0,-.25) {$A$};
\node at (1,-.25) {$A$};
\node at (2,-.25) {$DM$};
\node at (3,-.25) {$M$};
\node at (1.125, 5.75) {$K$};
\end{tikzpicture}
= \begin{tikzpicture}[scale = .8, baseline = {(current bounding box.center)}, line width=0.75pt]
\draw (2,0) -- (2,1) .. controls (2,1.7) and (0.5,1.3) .. (.5,2) -- (.5,3);
\draw[white, double=black, line width = 3pt ] (1,0) -- (.5,.5);
\draw (3,0) -- (3,2) -- (2.5,2.5);
\draw[white, double=black, line width = 3pt ] (0,0) -- (.5,.5) -- (.5,1) .. controls (.5,1.7) and (2,1.3) .. (2,2) -- (2.5,2.5) -- (2.5, 3) .. controls (2.5,4) and (.5,4) .. (.5,3);
\draw[dashed] (1.5,3.75) -- (1.5, 4.5);
\node at (.5,.5) [draw,thick, fill=white] {$\mu_{A}$};
\node at (2.5,2.5) [draw,thick, fill=white] {$\mu_{M}$};
\node at (1.5,3.75) [draw,thick, fill=white] {$e_M$};
\node at (0,-.25) {$A$};
\node at (1,-.25) {$A$};
\node at (2,-.25) {$DM$};
\node at (3,-.25) {$M$};
\node at (1.5, 4.75) {$K$};
\end{tikzpicture}
= \begin{tikzpicture}[scale = .8, baseline = {(current bounding box.center)}, line width=0.75pt]
\draw (2,0) -- (2,1) -- (1.25,1.5) -- (1.25,2) .. controls (1.25,3) and (3,3) .. (3,2) -- (3,0);
\draw[white, double=black, line width = 3pt ] (1,0) -- (.5,.5);
\draw[white, double=black, line width = 3pt ] (0,0) -- (.5,.5) -- (.5,1) -- (1.25,1.5);
\draw[dashed] (2.125,2.75) -- (2.125, 3.5);
\node at (.5,.5) [draw,thick, fill=white] {$\mu_{A}$};
\node at (1.25,1.5) [draw,thick, fill=white] {$\mu_{DM}$};
\node at (2.125,2.75) [draw,thick, fill=white] {$e_M$};
\node at (0,-.25) {$A$};
\node at (1,-.25) {$A$};
\node at (2,-.25) {$DM$};
\node at (3,-.25) {$M$};
\node at (2.125, 3.75) {$K$};
\end{tikzpicture}
\end{align*}
together with the universal property of $(M,e_M)$. This shows that $(DM,\mu_{DM})$ is an object of $\cC_A$.
\end{proof}

We will show that $\cC_A$ is a Grothendieck-Verdier category with dualizing object $(DA,\mu_{DA})$. The first step is to show that $(DM,\mu_{DM})$ is a representing object for the functor $\Hom_{\cC_A}( - \otimes_A M, DA)$:
\begin{proposition}
    For any $(M,\mu_M)\in\cC_A$, there is a $\cC_A$-morphism $e^A_M: DM\otimes_A M\rightarrow DA$ such that for any $\cC_A$-morphism $f: N\otimes_A M\rightarrow DA$, there is a unique $\cC_A$-morphism $\varphi: N\rightarrow DM$ making the diagram
    \begin{equation}\label{diag:RepA_contra_univ_prop}
   % \xymatrixcolsep{3pc}
   % \xymatrix{
   \begin{tikzcd}[column sep=3pc]
        N\otimes_A M \ar[rd, "f"] \ar[d, "\varphi\otimes_A\id_M"'] & \\
        DM\otimes_A M \ar[r, "e^A_M"'] & DA
\end{tikzcd}
        %}
    \end{equation}
    commute.
\end{proposition}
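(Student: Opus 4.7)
\emph{Plan.} I will build $e^A_M$ in two stages: first produce an auxiliary $\cC$-morphism $\widetilde{e}_M : DM \otimes M \to DA$ and then show it factors through $\pi_{DM,M}$. Since $DA$ represents $\Hom_\cC(- \otimes A, K)$, define $\widetilde{e}_M$ to be the unique $\cC$-morphism satisfying
\[
e_A \circ (\widetilde{e}_M \otimes \id_A) \; = \; e_M \circ (\id_{DM} \otimes \mu_M) \circ (\id_{DM} \otimes c_{A,M}^{-1}) : DM \otimes M \otimes A \longrightarrow K,
\]
i.e.\ the morphism obtained by braiding the trailing $A$ past $M$, acting on $M$, and evaluating via $e_M$.

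To see that $\widetilde{e}_M$ coequalizes the two structure maps $\mu^{(1)}, \mu^{(2)} : A \otimes DM \otimes M \to DM \otimes M$ defining $DM \otimes_A M$, apply the universal property of $DA$ once more: the equality $\widetilde{e}_M \circ \mu^{(1)} = \widetilde{e}_M \circ \mu^{(2)}$ reduces to an equality of two morphisms $A \otimes DM \otimes M \otimes A \to K$. This identity follows by a diagram chase combining the defining property \eqref{eqn:mu-DM-def} of $\mu_{DM}$, associativity of $\mu_M$, commutativity of $\mu_A$ (specifically $\mu_A \circ c_{A,A} = \mu_A$), and naturality of the braiding. By the universal property of the coequalizer, $\widetilde{e}_M$ factors as $e^A_M \circ \pi_{DM,M}$ for a unique $\cC$-morphism $e^A_M : DM \otimes_A M \to DA$. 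A parallel reduction, now using the $A$-module structure on $DA$ from Proposition \ref{prop:X'inCA} (applied with $M = A$), verifies that $e^A_M$ is in fact a $\cC_A$-morphism.

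For the universal property, given $f : N \otimes_A M \to DA$ in $\cC_A$, set $\bar{f} := f \circ \pi_{N,M}$ and let $\hat{f} : N \otimes M \otimes A \to K$ be the $\cC$-morphism representing $\bar{f}$ under the universal property of $DA$. Define
\[
g \; := \; \hat{f} \circ (\id_{N \otimes M} \otimes \iota_A) : N \otimes M \longrightarrow K
\]
(suppressing the right unit isomorphism). By the universal property of $DM$, $g$ factors uniquely as $g = e_M \circ (\varphi \otimes \id_M)$ for some $\cC$-morphism $\varphi : N \to DM$. To check $\varphi$ is an $A$-module morphism, apply the universal property of $DM$ to reduce the required identity $\mu_{DM} \circ (\id_A \otimes \varphi) = \varphi \circ \mu_N$ (via \eqref{eqn:mu-DM-def} and naturality of $c$) to $g \circ \mu^{(1)}_N = g \circ \mu^{(2)}_N$; this holds because $f \in \cC_A$ forces $\bar{f}$ to coequalize $\mu^{(1)}_N$ and $\mu^{(2)}_N$, and the resulting equality of maps $A \otimes N \otimes M \otimes A \to K$ descends through the unit $\iota_A$. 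Commutativity of the triangle \eqref{diag:RepA_contra_univ_prop} is verified by composing with $\pi_{N,M}$ and applying the universal property of $DA$, reducing it to an identity relating $\hat{f}$ to $g$ via the action of $A$ on $M$, which again uses $f \in \cC_A$ and the unit axiom for $\iota_A$. Uniqueness of $\varphi$ is immediate from uniqueness in the universal property of $DM$.

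The main obstacle is the diagrammatic coequalization step for $\widetilde{e}_M$: the interplay between $\mu_{DM}$ (defined via the forward braiding $c_{A,DM}$) and the coequalizer defining $\otimes_A$ on $\cC_A$ (set up through $H_-$, which uses $c^{-1}$) is mediated precisely by commutativity of $A$, and carrying out the identity cleanly requires careful braid bookkeeping.
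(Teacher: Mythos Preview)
Your approach is essentially identical to the paper's: the auxiliary morphism $\widetilde{e}_M$, its descent through the coequalizer, the definition of $\varphi$ via $g = \hat{f}\circ(\id\otimes\iota_A)$, and the diagram chases you outline all match the paper's construction step for step. The only organizational difference is that the paper establishes uniqueness first by showing that any solution $\varphi$ of the triangle necessarily satisfies $e_M\circ(\varphi\otimes\id_M) = g$, whereas you define $\varphi$ directly and defer uniqueness to the end; your claim that uniqueness is ``immediate'' hides exactly this calculation, so be sure to carry it out rather than assert it.
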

\begin{proof}
To begin, define $\tile^A_M: DM\otimes M\rightarrow DA$ to be the unique $\cC$-morphism such that the diagram
\begin{equation*}
%\xymatrixcolsep{4pc}
%\xymatrixrowsep{3pc}
%\xymatrix{
\begin{tikzcd}[column sep=4pc, row sep=2.5pc]
DM\otimes M\otimes A \ar[d, "\tile^A_M\otimes\Id_A"] \ar[r, "\Id_{DM}\otimes c_{A,M}^{-1}"] & DM\otimes A\otimes M \ar[r, "\Id_{DM}\otimes\mu_M"] & DM\otimes M \ar[d, "e_M"]\\
 DA\otimes A \ar[rr, "e_A"] && K
\end{tikzcd}
%}
\end{equation*}
commutes. We want to show that $\tile^A_M$ descends to a unique $\cC$-morphism $e^A_M: DM\otimes_A M\rightarrow DA$ such that
\begin{equation*}
e^A_M\circ  \pi_{DM,M}=\tile^A_M.
\end{equation*}
To do so, we need to show that $\tile^A_M\circ\mu^{(1)}=\tile^A_M\circ\mu^{(2)}$, or equivalently,
\begin{equation*}
e_A\circ ((\tile^A_M\circ\mu^{(1)})\otimes\Id_A)= e_A\circ((\tile^A_M\circ\mu^{(2)})\otimes\Id_A).
\end{equation*}
We prove this relation diagrammatically beginning with $e_A\circ ((\tile^A_M\circ\mu^{(2)})\otimes\Id_A)$:
\begin{align*}
 \begin{tikzpicture}[scale = .75, baseline = {(current bounding box.center)}, line width=0.75pt]
\draw (1,0) .. controls (1,.7) and (0,.3) .. (0,1) -- (0,4) .. controls (0,5) and (2.25,5) .. (2.25,4) -- (2.25,3.5) -- (3,3) .. controls (3,2.3) and (1.5, 2.7) .. (1.5,2) -- (1.5,1.5) -- (2,1) -- (2,0);
\draw[white, double=black, line width = 3pt ] (0,0) .. controls (0,.7) and (1,.3) .. (1,1) -- (1.5,1.5);
\draw[white, double=black, line width = 3pt ] (3,0) -- (3,2) .. controls (3,2.7) and (1.5, 2.3) .. (1.5,3) -- (2.25,3.5);
\draw[dashed] (1.125,4.75) -- (1.125, 5.5);
\node at (1.5,1.5) [draw,thick, fill=white] {$\mu_{M}$};
\node at (2.25,3.5) [draw,thick, fill=white] {$\mu_{M}$};
\node at (1.125,4.75) [draw,thick, fill=white] {$e_M$};
\node at (0,-.25) {$A$};
\node at (1,-.25) {$DM$};
\node at (2,-.25) {$M$};
\node at (3,-.25) {$A$};
\node at (1.125, 5.75) {$K$};
\end{tikzpicture}
& =\begin{tikzpicture}[scale = .75, baseline = {(current bounding box.center)}, line width=0.75pt]
\draw (1,0) .. controls (1,.7) and (0,.3) .. (0,1) -- (0,4) .. controls (0,5) and (1.75,5) .. (1.75,4) -- (1.75,3.5) -- (2.5,3) -- (2.5,2.5) -- (3,2) .. controls (3,1.3) and (2, 1.7) .. (2,1) -- (2,0);
\draw[white, double=black, line width = 3pt ] (0,0) .. controls (0,.7) and (1,.3) .. (1,1) .. controls (1,1.7) and (2,1.3) .. (2,2) -- (2.5,2.5);
\draw[white, double=black, line width = 3pt ] (3,0) -- (3,1) .. controls (3,1.7) and (1, 1.3) .. (1,2) -- (1,3) -- (1.5,3.5);
\draw[dashed] (.875,4.75) -- (.875, 5.5);
\node at (1.75,3.5) [draw,thick, fill=white] {$\mu_{M}$};
\node at (2.5,2.5) [draw,thick, fill=white] {$\mu_{M}$};
\node at (.875,4.75) [draw,thick, fill=white] {$e_M$};
\node at (0,-.25) {$A$};
\node at (1,-.25) {$DM$};
\node at (2,-.25) {$M$};
\node at (3,-.25) {$A$};
\node at (.875, 5.75) {$K$};
\end{tikzpicture}
=\begin{tikzpicture}[scale = .75, baseline = {(current bounding box.center)}, line width=0.75pt]
\draw (1,0) .. controls (1,.7) and (0,.3) .. (0,1) -- (0,3) .. controls (0,4) and (2.25,4) .. (2.25,3) -- (2.25,2.5) -- (3,2) -- (3,1) .. controls (3,.3) and (2,.7) .. (2,0);
\draw[white, double=black, line width = 3pt ] (0,0) .. controls (0,.7) and (1,.3) .. (1,1) -- (1.5,1.5) -- (1.5,2) -- (2.25,2.5);
\draw[white, double=black, line width = 3pt ] (3,0) .. controls (3,.7) and (2,.3) .. (2,1) -- (1.5,1.5);
\draw[dashed] (1.125,3.75) -- (1.125, 4.5);
\node at (2.25,2.5) [draw,thick, fill=white] {$\mu_{M}$};
\node at (1.5,1.5) [draw,thick, fill=white] {$\mu_{A}$};
\node at (1.125,3.75) [draw,thick, fill=white] {$e_M$};
\node at (0,-.25) {$A$};
\node at (1,-.25) {$DM$};
\node at (2,-.25) {$M$};
\node at (3,-.25) {$A$};
\node at (1.125, 4.75) {$K$};
\end{tikzpicture}\nonumber
 =\begin{tikzpicture}[scale = .75, baseline = {(current bounding box.center)}, line width=0.75pt]
\draw (1,0) -- (1,1) .. controls (1,1.7) and (0,1.3) .. (0,2) -- (0,3) .. controls (0,4) and (1.75,4) .. (1.75,3) -- (1.75,2.5) -- (2.5,2) -- (2.5,1.5) -- (3,1) .. controls (3,.3) and (2,.7) .. (2,0);
\draw[white, double=black, line width = 3pt ] (0,0) -- (0,1) .. controls (0,1.7) and (1,1.3) .. (1,2) -- (1.75,2.5);
\draw[white, double=black, line width = 3pt ] (3,0) .. controls (3,.7) and (2,.3) .. (2,1) -- (2.5,1.5);
\draw[dashed] (.875,3.75) -- (.875, 4.5);
\node at (1.75,2.5) [draw,thick, fill=white] {$\mu_{M}$};
\node at (2.5,1.5) [draw,thick, fill=white] {$\mu_{M}$};
\node at (.875,3.75) [draw,thick, fill=white] {$e_M$};
\node at (0,-.25) {$A$};
\node at (1,-.25) {$DM$};
\node at (2,-.25) {$M$};
\node at (3,-.25) {$A$};
\node at (.875, 4.75) {$K$};
\end{tikzpicture}
=\begin{tikzpicture}[scale = .75, baseline = {(current bounding box.center)}, line width=0.75pt]
\draw (1,0) -- (1,1) -- (0.5,1.5) -- (.5,2) .. controls (.5,3) and (2.5,3) .. (2.5,2) -- (2.5,1.5) -- (3,1) .. controls (3,.3) and (2,.7) .. (2,0);
\draw[white, double=black, line width = 3pt ] (0,0) -- (0,1) -- (.5,1.5);
\draw[white, double=black, line width = 3pt ] (3,0) .. controls (3,.7) and (2,.3) .. (2,1) -- (2.5,1.5);
\draw[dashed] (1.5,2.75) -- (1.5, 3.5);
\node at (.5,1.5) [draw,thick, fill=white] {$\mu_{DM}$};
\node at (2.5,1.5) [draw,thick, fill=white] {$\mu_{M}$};
\node at (1.5,2.75) [draw,thick, fill=white] {$e_M$};
\node at (0,-.25) {$A$};
\node at (1,-.25) {$DM$};
\node at (2,-.25) {$M$};
\node at (3,-.25) {$A$};
\node at (1.5, 3.75) {$K$};
\end{tikzpicture}
\end{align*}
which is $e_A\circ((\tile^A_M\circ\mu^{(1)})\otimes\Id_A)$. This proves that $e^A_M: DM\otimes_A M\rightarrow DA$ exists.

To show that $e^A_M$ is a morphism in $\cC_A$, we note that $\Id_A\otimes  \pi_{DM,M}$ is surjective because $ \pi_{DM,M}$ is surjective and the tensor product on $\cC$ is right exact. Thus it is enough to show that
\begin{equation*}
\mu_{DA}\circ(\Id_A\otimes e^A_M)\circ(\Id_A\otimes  \pi_{DM,M}) = e^A_M\circ\mu_{DM\otimes_A M}\circ(\Id_A\otimes  \pi_{DM,M}).
\end{equation*}
By \eqref{eqn:RepA_tens_action}, the definition of $e^A_M$, and the universal property of $(DA,e_A)$, this is equivalent to proving
\begin{equation*}
e_A\circ\left((\mu_{DA}\circ(\Id_A\otimes \tile^A_M))\otimes\Id_A\right) = e_A\circ((\tile^A_M\circ\mu^{(i)})\otimes\Id_A)
\end{equation*}
for $i=1,2$. We analyze the left side diagrammatically, beginning with the definition of $\mu_{DA}$:
\begin{align*}
\begin{tikzpicture}[scale = .75, baseline = {(current bounding box.center)}, line width=0.75pt]
\draw (1,0) -- (1.5,.5);
\draw (1.5,1) .. controls (1.5,1.7) and (0,1.3) .. (0,2) -- (0,3) .. controls (0,4) and (2.25, 4) .. (2.25,3) -- (2.25,2.5) -- (3,2) -- (3,0); 
\draw[white, double=black, line width = 3pt ] (2,0) -- (1.5,.5);
\draw[white, double=black, line width = 3pt ] (0,0) -- (0,1) .. controls (0,1.7) and (1.5,1.3) .. (1.5,2) -- (2.25,2.5);
\draw[dashed] (1.125,3.75) -- (1.125, 4.5);
\node at (1.5,.6) [draw,thick, fill=white] {$\tile^A_M$};
\node at (2.25,2.5) [draw,thick, fill=white] {$\mu_{A}$};
\node at (1.125,3.75) [draw,thick, fill=white] {$e_M$};
\node at (0,-.25) {$A$};
\node at (1,-.25) {$DM$};
\node at (2,-.25) {$M$};
\node at (3,-.25) {$A$};
\node at (1.125, 4.75) {$K$};
\end{tikzpicture}
= \begin{tikzpicture}[scale = .75, baseline = {(current bounding box.center)}, line width=0.75pt]
\draw (1,0) .. controls (1,.7) and (0,.3) .. (0,1) -- (0,2) -- (.5,2.5);
\draw (.5,2.5) -- (.5,3) .. controls (.5,4) and (2.5, 4) .. (2.5,3) -- (2.5,1.5) -- (3,1) -- (3,0); 
\draw[white, double=black, line width = 3pt ] (2,0) .. controls (2,.7) and (1,.3) .. (1,1) -- (1,2) -- (.5,2.5);
\draw[white, double=black, line width = 3pt ] (0,0) .. controls (0,.7) and (2,.3) .. (2,1) -- (2.5,1.5);
\draw[dashed] (1.5,3.75) -- (1.5, 4.5);
\node at (.5,2.5) [draw,thick, fill=white] {$\tile^A_M$};
\node at (2.5,1.5) [draw,thick, fill=white] {$\mu_{A}$};
\node at (1.5,3.75) [draw,thick, fill=white] {$e_M$};
\node at (0,-.25) {$A$};
\node at (1,-.25) {$DM$};
\node at (2,-.25) {$M$};
\node at (3,-.25) {$A$};
\node at (1.5, 4.75) {$K$};
\end{tikzpicture}
= \begin{tikzpicture}[scale = .75, baseline = {(current bounding box.center)}, line width=0.75pt]
\draw (1,0) .. controls (1,.7) and (0,.3) .. (0,1) -- (0,4) .. controls (0,5) and (1.75,5) .. (1.75, 4) -- (1.75,3.5);
\draw (2.5,1.5) -- (3,1) -- (3,0); 
\draw (2,0) .. controls (2,.7) and (1,.3) .. (1,1) -- (1,2) .. controls (1,2.7) and (2.5,2.3) .. (2.5,3) -- (1.75,3.5);
\draw[white, double=black, line width = 3pt ] (0,0) .. controls (0,.7) and (2,.3) .. (2,1) -- (2.5,1.5) -- (2.5,2) .. controls (2.5,2.7) and (1,2.3) .. (1,3) -- (1.75,3.5);
\draw[dashed] (.875,4.75) -- (.875, 5.5);
\node at (1.75,3.5) [draw,thick, fill=white] {$\mu_M$};
\node at (2.5,1.5) [draw,thick, fill=white] {$\mu_{A}$};
\node at (.875,4.75) [draw,thick, fill=white] {$e_M$};
\node at (0,-.25) {$A$};
\node at (1,-.25) {$DM$};
\node at (2,-.25) {$M$};
\node at (3,-.25) {$A$};
\node at (.875, 5.75) {$K$};
\end{tikzpicture}
=\begin{tikzpicture}[scale = .75, baseline = {(current bounding box.center)}, line width=0.75pt]
\draw (1,0) .. controls (1,.7) and (0,.3) .. (0,1) -- (0,3) .. controls (0,4) and (2.25,4) .. (2.25,3) -- (2.25,2.5) -- (3,2) -- (3,1) .. controls (3,.3) and (2,.7) .. (2,0);
\draw[white, double=black, line width = 3pt ] (0,0) .. controls (0,.7) and (1,.3) .. (1,1) -- (1.5,1.5) -- (1.5,2) -- (2.25,2.5);
\draw[white, double=black, line width = 3pt ] (3,0) .. controls (3,.7) and (2,.3) .. (2,1) -- (1.5,1.5);
\draw[dashed] (1.125,3.75) -- (1.125, 4.5);
\node at (2.25,2.5) [draw,thick, fill=white] {$\mu_{M}$};
\node at (1.5,1.5) [draw,thick, fill=white] {$\mu_{A}$};
\node at (1.125,3.75) [draw,thick, fill=white] {$e_M$};
\node at (0,-.25) {$A$};
\node at (1,-.25) {$DM$};
\node at (2,-.25) {$M$};
\node at (3,-.25) {$A$};
\node at (1.125, 4.75) {$K$};
\end{tikzpicture}
\end{align*}
The earlier diagrammatic calculation now shows that this equals $e_A\circ ((\tile^A_X\circ\mu^{(i)})\otimes\Id_A)$, as required.

We now prove the universal property of $((DM,\mu_{DM}), e^A_M)$ in $\cC_A$. Thus consider $f\in\Hom_{\cC_A}(N\otimes_A M, DA)$. We need to show there is a unique $\varphi\in\Hom_{\cC_A}(N,DM)$ such that \eqref{diag:RepA_contra_univ_prop}
commutes. To prove the uniqueness, suppose such a $\varphi$ exists. We define a $\cC$-morphism $\tilf: N\otimes M\rightarrow K$ as the following composition:
\begin{align*}
\tilf: N\otimes M \xrightarrow{ \pi_{N,M}} N\otimes_A M\xrightarrow{f} DA\xrightarrow{r_{DA}^{-1}} DA\otimes\vac\xrightarrow{\Id_{DA}\otimes\iota_A} DA\otimes A\xrightarrow{e_A} K,
\end{align*} 
and then we analyze $\tilf$ using the assumed existence of $\varphi$:
\begin{align*}
\tilf 
& = e_A\circ(\Id_{DA}\otimes\iota_A)\circ r_{DA}^{-1}\circ e^A_M\circ(\varphi\otimes_A\Id_M)\circ  \pi_{N,M} \nonumber\\
& = e_A\circ(\Id_{DA}\otimes\iota_A)\circ r_{DA}^{-1}\circ e^A_M\circ  \pi_{N,M}\circ(\varphi\otimes\Id_M) \nonumber\\
& =e_A\circ(\Id_{DA}\otimes\iota_A)\circ r_{DA}^{-1}\circ\tile^A_M\circ(\varphi\otimes\Id_M) \nonumber\\
& = e_A\circ(\tile^A_M\otimes\Id_A)\circ(\Id_{DM\otimes M}\otimes\iota_A)\circ r_{DM\otimes M}^{-1}\circ(\varphi\otimes\Id_M)\nonumber\\
& = e_M\circ(\Id_{DM}\otimes\mu_M)\circ(\Id_{DM}\otimes c_{A,M}^{-1})\circ(\Id_{DM\otimes M}\otimes\iota_A)\circ (\Id_{DM}\otimes r_M^{-1})\circ(\varphi\otimes\Id_M)\nonumber\\
& = e_M\circ(\Id_{DM}\otimes\mu_M)\circ(\Id_{DM}\otimes\iota_A\otimes\Id_M)\circ(\Id_{DM}\otimes l_M^{-1})\circ(\varphi\otimes\Id_M)\nonumber\\
& = e_M\circ(\varphi\otimes\Id_M).
\end{align*}
Thus $\varphi$ must be the unique $\cC$-morphism (guaranteed by the universal property of $(DM,e_M)$) such that 
\begin{equation*}
e_M\circ(\varphi\otimes\Id_M)=\tilf.
\end{equation*}
To complete the proof, we must show that this unique $\cC$-morphism $\varphi$ is also a $\cC_A$-morphism and satisfies
\begin{equation}\label{eqn:RepA_contra_univ_prop}
e^A_M\circ(\varphi\otimes_A\Id_M)=f.
\end{equation}

To show that $\varphi$ is a morphism in $\cC_A$, the universal property of $(DM,e_M)$ implies it is enough to show
\begin{equation*}
e_M\circ\left((\mu_{DM}\circ(\Id_A\otimes \varphi))\otimes\Id_M\right) = e_M\circ\left( (\varphi\circ\mu_N)\otimes\Id_M\right).
\end{equation*}
 Then using the definitions as well as \eqref{eqn:IYX_coequalizer}, we get
\begin{align*}
e_M\circ\left((\mu_{DM}\circ(\Id_A\otimes \varphi))\otimes\Id_M\right) & = e_M\circ(\Id_{DM}\otimes\mu_M)\circ( c_{A,DM}\otimes\Id_M)\circ(\Id_A\otimes \varphi\otimes\Id_M)\nonumber\\
& =e_M\circ(\varphi\otimes\Id_M)\circ(\Id_N\otimes\mu_M)\circ( c_{A,N}\otimes\Id_M)\nonumber\\
& = e_A\circ(\Id_{DA}\otimes\iota_A)\circ r_{DA}^{-1}\circ f\circ  \pi_{N,M}\circ\mu^{(2)}\nonumber\\
& =e_A\circ(\Id_{DA}\otimes\iota_A)\circ r_{DA}^{-1}\circ f\circ  \pi_{N,M}\circ\mu^{(1)}\nonumber\\
&  = e_M\circ(\varphi\otimes\Id_M)\circ(\mu_N\otimes\Id_M),
\end{align*}
as required. Finally, to prove \eqref{eqn:RepA_contra_univ_prop}, the surjectivity of $ \pi_{N,M}$ and the university property of $(DA, e_A)$ imply that it is enough to prove
\begin{equation*}
e_A\circ((e^A_M\circ(\varphi\otimes_A\Id_M)\circ  \pi_{N,M})\otimes\Id_A) = e_A\circ((f\circ  \pi_{N,M})\otimes\Id_A).
\end{equation*}
From the definitions, the left side is
\begin{align*}
e_A & \circ(e^A_M\otimes\Id_A)\circ( \pi_{DM,M}\otimes\Id_A)\circ(\varphi\otimes\Id_{M\otimes A})\nonumber\\
& =e_A\circ(\tile^A_M\otimes\Id_A)\circ(\varphi\otimes\Id_{M\otimes A})\nonumber\\
& = e_M\circ(\Id_{DM}\otimes\mu_M)\circ(\Id_{DM}\otimes c_{M,A}^{-1})\circ(\varphi\otimes\Id_{M\otimes A})\nonumber\\
& = e_M\circ(\varphi\otimes\Id_M)\circ(\Id_{N}\otimes\mu_M)\circ(\Id_{N}\otimes c_{M,A}^{-1})\nonumber\\
& =\tilf\circ(\Id_{N}\otimes\mu_M)\circ(\Id_{N}\otimes c_{M,A}^{-1}).
\end{align*}
We continue analyzing this composition diagrammatically, using the definition of $\tilf$; the second equality uses \eqref{eqn:IYX_coequalizer}, \eqref{eqn:RepA_tens_action}, and the fact that $f$ is a morphism in $\cC_A$:
\begin{align*}
\begin{tikzpicture}[scale = .725, baseline = {(current bounding box.center)}, line width=0.75pt]
\draw (0,0) -- (0,2) -- (.75,2.5) -- (.75,5) .. controls (.75,6) and (2,6) .. (2,5);
\draw (1,0) .. controls (1,.7) and (2,.3) .. (2,1) -- (1.5,1.5) -- (1.5,2) -- (.75,2.5);
\draw[white, double=black, line width = 3pt ] (2,0) .. controls (2,.7) and (1,.3) .. (1,1) -- (1.5,1.5);
\draw[dashed] (.75,4) .. controls (2,4) .. (2,4.5);
\draw[dashed] (1.375, 5.75) -- (1.375, 6.5); 
\node at (.75,3.5) [draw,thick, fill=white] {$f$};
\node at (1.5,1.5) [draw,thick, fill=white] {$\mu_{M}$};
\node at (2,4.75) [draw,thick, fill=white] {$\iota_{A}$};
\node at (1.375,5.75) [draw,thick, fill=white] {$e_A$};
\node at (.75,2.5) [draw,thick, fill=white] {$ \pi_{N,M}$};
\node at (0,-.25) {$N$};
\node at (1,-.25) {$M$};
\node at (2,-.25) {$A$};
\node at (1.375, 6.75) {$K$};
\end{tikzpicture}
& = \begin{tikzpicture}[scale = .725, baseline = {(current bounding box.center)}, line width=0.75pt]
\draw (0,0) .. controls (0,.3) and (1,.2) .. (1,.5) .. controls (1,.8) and (0,.7) .. (0,1) -- (0,2) -- (.75,2.5) -- (.75,5) .. controls (.75,6) and (2,6) .. (2,5);
\draw (1,0) .. controls (1,.3) and (2,.2) .. (2,.5) -- (2,1) -- (1.5,1.5) -- (1.5,2) -- (.75,2.5);
\draw[white, double=black, line width = 2pt ] (2,0) .. controls (2,.3) and (0,.2) .. (0,.5) .. controls (0,.8) and (1,.7) .. (1,1) -- (1.5,1.5);
\draw[dashed] (.75,4) .. controls (2,4) .. (2,4.5);
\draw[dashed] (1.375, 5.75) -- (1.375, 6.5); 
\node at (.75,3.5) [draw,thick, fill=white] {$f$};
\node at (1.5,1.5) [draw,thick, fill=white] {$\mu_{M}$};
\node at (2,4.75) [draw,thick, fill=white] {$\iota_{A}$};
\node at (1.375,5.75) [draw,thick, fill=white] {$e_A$};
\node at (.75,2.5) [draw,thick, fill=white] {$ \pi_{N,M}$};
\node at (0,-.25) {$N$};
\node at (1,-.25) {$M$};
\node at (2,-.25) {$A$};
\node at (1.375, 6.75) {$K$};
\end{tikzpicture}
= \begin{tikzpicture}[scale = .725, baseline = {(current bounding box.center)}, line width=0.75pt]
\draw (0,0) .. controls (0,.7) and (1,.3) .. (1,1) -- (1.5,1.5) -- (1.5,3) -- (.75,3.5) -- (.75,5) .. controls (.75,6) and (2,6) .. (2,5);
\draw (1,0) .. controls (1,.7) and (2,.3) .. (2,1) -- (1.5,1.5);
\draw[white, double=black, line width = 3pt ] (2,0) .. controls (2,.7) and (0,.3) .. (0,1) -- (0,3) -- (.75,3.5);
\draw[dashed] (.75,4) .. controls (2,4) .. (2,4.5);
\draw[dashed] (1.375, 5.75) -- (1.375, 6.5); 
\node at (1.5,2.5) [draw,thick, fill=white] {$f$};
\node at (.75,3.5) [draw,thick, fill=white] {$\mu_{DA}$};
\node at (2,4.75) [draw,thick, fill=white] {$\iota_{A}$};
\node at (1.375,5.75) [draw,thick, fill=white] {$e_A$};
\node at (1.5,1.5) [draw,thick, fill=white] {$ \pi_{N,M}$};
\node at (0,-.25) {$N$};
\node at (1,-.25) {$M$};
\node at (2,-.25) {$A$};
\node at (1.375, 6.75) {$K$};
\end{tikzpicture}
= \begin{tikzpicture}[scale = .725, baseline = {(current bounding box.center)}, line width=0.75pt]
\draw (0,0) -- (.5,.5) -- (.5,2) .. controls (.5,2.7) and (2,2.3) .. (2,3) -- (1.25,3.5);
\draw (1,0) -- (.5,.5);
\draw[white, double=black, line width = 3pt ] (2,0) -- (2,2) .. controls (2,2.7) and (.5,2.3) .. (.5,3) -- (1.25,3.5) -- (1.25,4) .. controls (1.25,5) and (3,5) .. (3,4) -- (3,2.5);
\draw[dashed] (2,2) .. controls (3,2) .. (3,2.5);
\draw[dashed] (2.125, 4.75) -- (2.125, 5.5); 
\node at (.5,1.5) [draw,thick, fill=white] {$f$};
\node at (1.25,3.5) [draw,thick, fill=white] {$\mu_{DA}$};
\node at (3,2.75) [draw,thick, fill=white] {$\iota_{A}$};
\node at (2.125,4.75) [draw,thick, fill=white] {$e_A$};
\node at (.5,.5) [draw,thick, fill=white] {$ \pi_{N,M}$};
\node at (0,-.25) {$N$};
\node at (1,-.25) {$M$};
\node at (2,-.25) {$A$};
\node at (2.125, 5.75) {$K$};
\end{tikzpicture}\nonumber
 = \begin{tikzpicture}[scale = .725, baseline = {(current bounding box.center)}, line width=0.75pt]
\draw (0,0) -- (.5,.5) -- (.5,3) .. controls (.5,4) and (2.5,4) .. (2.5,3) -- (2.5,2.5) -- (3,2) -- (3,1.5);
\draw (1,0) -- (.5,.5);
\draw[white, double=black, line width = 3pt ] (2,0) -- (2,2) -- (2.5,2.5);
\draw[dashed] (2,.75) .. controls (3,.75) .. (3,1.25);
\draw[dashed] (1.5, 3.75) -- (1.5, 4.5); 
\node at (.5,1.5) [draw,thick, fill=white] {$f$};
\node at (2.5,2.5) [draw,thick, fill=white] {$\mu_{A}$};
\node at (3,1.5) [draw,thick, fill=white] {$\iota_{A}$};
\node at (1.5,3.75) [draw,thick, fill=white] {$e_A$};
\node at (.5,.5) [draw,thick, fill=white] {$ \pi_{N,M}$};
\node at (0,-.25) {$N$};
\node at (1,-.25) {$M$};
\node at (2,-.25) {$A$};
\node at (1.5, 4.75) {$K$};
\end{tikzpicture}
= \begin{tikzpicture}[scale = .725, baseline = {(current bounding box.center)}, line width=0.75pt]
\draw (0,0) -- (.5,.5) -- (.5,2) .. controls (.5,3) and (2,3) .. (2,2) -- (2,0);
\draw (1,0) -- (.5,.5);
\draw[dashed] (1.25, 2.75) -- (1.25, 3.5); 
\node at (.5,1.5) [draw,thick, fill=white] {$f$};
\node at (1.25,2.75) [draw,thick, fill=white] {$e_A$};
\node at (.5,.5) [draw,thick, fill=white] {$ \pi_{N,M}$};
\node at (0,-.25) {$N$};
\node at (1,-.25) {$M$};
\node at (2,-.25) {$A$};
\node at (1.25, 3.75) {$K$};
\end{tikzpicture}
\end{align*}
which is $e_A\circ((f\circ  \pi_{N,M})\otimes\Id_A)$, as required. This completes the proof of the proposition.
\end{proof}

The preceding proposition shows that for any $(M,\mu_M)\in\cC_A$, the functor $\Hom_{\cC_A}( - \otimes_A M, DA)$ is representable. Thus we get a contravariant functor $D_A: \cC_A\rightarrow\cC_A$ such that $D_A(M,\mu_M)=(DM,\mu_{DM})$, and for a morphism $f:M\rightarrow N$ in $\cC_A$, $D_A(f):DN\rightarrow DM$ is the unique map making the diagram
\begin{equation}\label{diag:DA(f)_def}
\begin{matrix}
%    \xymatrixcolsep{4pc}
%    \xymatrix{
\begin{tikzcd}[column sep=4pc]
    DN\otimes_A M \ar[r, "\id_{DN}\otimes_A f"] \ar[d, "D_A(f)\otimes_A\id_M"] & DN\otimes_A N \ar[d, "e^A_N"]\\
    DM\otimes_A M \ar[r, "e^A_M"] & DA
\end{tikzcd}
    %}
    \end{matrix}
\end{equation}
commute. In fact, $D_A(f)$ is the same as the $\cC$-morphism $Df$ defined earlier:
\begin{proposition}\label{prop:DA(f)=f'}
    For any morphism $f:M\rightarrow N$ in $\cC_A$, the $\cC_A$-morphism $D_A(f): DN\rightarrow DM$ is the same as the unique $\cC$-morphism $Df$ such that the diagram
    \begin{equation*}
    %     \xymatrixcolsep{4pc}
    %\xymatrix{
    \begin{tikzcd}[column sep=4pc]
    DN\otimes M \ar[r, "\id_{DN}\otimes f"] \ar[d, "Df\otimes\id_M"] & DN\otimes N \ar[d, "e_N"]\\
    DM\otimes M \ar[r, "e_M"] & K
\end{tikzcd}
    %}
    \end{equation*}
    commutes.
\end{proposition}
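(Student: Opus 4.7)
The plan is to use the universal property of $(DM, e_M)$ in $\cC$: namely, $Df$ is characterized uniquely by the equation $e_M \circ (Df \otimes \id_M) = e_N \circ (\id_{DN} \otimes f)$. Thus it suffices to show that $D_A(f)$, viewed as a $\cC$-morphism via the forgetful functor, satisfies this same equation.

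I would start from the defining diagram \eqref{diag:DA(f)_def} of $D_A(f)$, which gives
\begin{equation*}
e^A_M \circ (D_A(f) \otimes_A \id_M) = e^A_N \circ (\id_{DN} \otimes_A f)
\end{equation*}
in $\cC_A$. Pre-composing both sides with the canonical epimorphism $\pi_{DN,M}: DN \otimes M \to DN \otimes_A M$ and using the functoriality identities $(D_A(f) \otimes_A \id_M) \circ \pi_{DN,M} = \pi_{DM,M} \circ (D_A(f) \otimes \id_M)$ and $(\id_{DN} \otimes_A f) \circ \pi_{DN,M} = \pi_{DN,N} \circ (\id_{DN} \otimes f)$, together with the identity $e^A_M \circ \pi_{DM,M} = \tilde e^A_M$ from the construction of $e^A_M$, I obtain the equation
\begin{equation*}
\tilde e^A_M \circ (D_A(f) \otimes \id_M) = \tilde e^A_N \circ (\id_{DN} \otimes f)
\end{equation*}
as $\cC$-morphisms $DN \otimes M \to DA$.

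Next, I compose both sides on the left with the $\cC$-morphism $e_A \circ (\id_{DA} \otimes \iota_A) \circ r_{DA}^{-1}: DA \to K$. The key calculation, already carried out in the proof of the preceding proposition, uses the defining relation of $\tilde e^A_M$ together with the unit property $\mu_M \circ (\iota_A \otimes \id_M) = l_M$ of the $A$-module $M$ to show that
\begin{equation*}
e_A \circ (\id_{DA} \otimes \iota_A) \circ r_{DA}^{-1} \circ \tilde e^A_M = e_M,
\end{equation*}
and similarly for $\tilde e^A_N$ in place of $\tilde e^A_M$. Substituting these identities yields $e_M \circ (D_A(f) \otimes \id_M) = e_N \circ (\id_{DN} \otimes f)$, so by the universal property of $(DM, e_M)$ we conclude $D_A(f) = Df$. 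No step here is genuinely hard; the main subtlety is bookkeeping, particularly recognizing that the diagram chase reduces precisely to the auxiliary identity relating $\tilde e^A_M$ to $e_M$ through the unit of $A$, which is exactly the computation already done for proving the universal property in $\cC_A$.
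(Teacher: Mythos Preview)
Your proof is correct and follows essentially the same approach as the paper's: both arguments pass from the defining relation of $D_A(f)$ in $\cC_A$ through $\tilde e^A_M$ and then use the unit $\iota_A$ together with the defining relation of $\tilde e^A_M$ to recover $e_M$, arriving at $e_M\circ(D_A(f)\otimes\id_M)=e_N\circ(\id_{DN}\otimes f)$. The paper simply runs this as a single chain of equalities starting from $e_M\circ(D_A(f)\otimes\id_M)$, whereas you isolate the identity $e_A\circ(\id_{DA}\otimes\iota_A)\circ r_{DA}^{-1}\circ\tilde e^A_M=e_M$ as a separate step, but the content is the same.
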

\begin{proof}
    Using the unit properties of $\mu_M$ and $\mu_N$ and the definitions, including \eqref{diag:DA(f)_def}, we calculate:
    \begin{align*}
        e_M\circ(D_A(f)\otimes\id_M) & = e_M\circ(\id_{DM}\otimes\mu_M)\circ(\id_{DM}\otimes c_{A,M}^{-1})\circ(\id_{DM\otimes M}\otimes\iota_A)\circ(D_A(f)\otimes r_M^{-1})\nonumber\\
        & = e_A\circ(\tile^A_M\otimes\id_A)\circ(\id_{DM\otimes M}\otimes\iota_A)\circ(D_A(f)\otimes r_M^{-1})\nonumber\\
        & = e_A\circ(e^A_M\otimes\id_A)\circ( \pi_{DM,M}\otimes\id_A)\circ(D_A(f)\otimes\id_M\otimes\iota_A)\circ r_{DN\otimes M}^{-1}\nonumber\\
        & = e_A\circ(e^A_M\otimes\id_A)\circ((D_A(f)\otimes_A\id_M)\otimes\id_A)\circ( \pi_{DN,M}\otimes\iota_A)\circ r_{DN\otimes M}^{-1}\nonumber\\
        & = e_A\circ(e^A_N\otimes\id_A)\circ((\id_{DN}\otimes_A f)\otimes\id_A)\circ( \pi_{DN,M}\otimes\iota_A)\circ r_{DN\otimes M}^{-1}\nonumber\\
        & = e_A\circ(\tile^A_N\otimes\id_A)\circ(\id_{DN}\otimes f\otimes\iota_A)\circ r_{DN\otimes M}^{-1}\nonumber\\
        & =e_N\circ(\id_{DN}\otimes\mu_N)\circ(\id_{DN}\otimes c_{A,N}^{-1})\circ(\id_{DN\otimes N}\otimes\iota_{A})\circ r_{DN\otimes N}^{-1}\circ(\id_{DN}\otimes f)\nonumber\\
        & = e_N\circ(\id_{DN}\otimes f),
    \end{align*}
    as required.
\end{proof}

The previous proposition says that if $f: M\rightarrow N$ is a $\cC_A$-morphism, then the $\cC$-morphism $Df:DN\rightarrow DM$ is automatically a $\cC_A$-morphism as well. We also have the converse result:
\begin{lemma}\label{lem:f'hom-->fhom}
    Let $M$ and $N$ be objects of $\cC_A$, and let $f:M\rightarrow N$ be a $\cC$-morphism. If $Df:DN\rightarrow DM$ is a $\cC_A$-morphism, then so is $f$.
\end{lemma}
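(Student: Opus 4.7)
The strategy will be to exploit that $D$, being an anti-equivalence of $\cC$, is faithful on morphism spaces. Combining this with the representability bijection associated to $(D(A\otimes M), e_{A\otimes M})$, for any $g_1, g_2 \in \Hom_\cC(A\otimes M, N)$ we have $g_1 = g_2$ iff $Dg_1 = Dg_2$ iff $e_N\circ(\id_{DN}\otimes g_1) = e_N\circ(\id_{DN}\otimes g_2)$, where the last equivalence uses the defining property of $D$ on morphisms, namely $e_{A\otimes M}\circ(Dg_i\otimes\id_{A\otimes M}) = e_N\circ(\id_{DN}\otimes g_i)$. Thus to prove that $f$ is a $\cC_A$-morphism, it suffices to verify
\[ e_N\circ(\id_{DN}\otimes(f\circ\mu_M)) = e_N\circ(\id_{DN}\otimes(\mu_N\circ(\id_A\otimes f))) \]
as morphisms $DN\otimes A\otimes M\to K$.

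To establish this, I would start from the hypothesis $Df\circ\mu_{DN} = \mu_{DM}\circ(\id_A\otimes Df)$, tensor on the right with $\id_M$, and post-compose with $e_M$. Expanding the left-hand side via the $DN$-analogue of \eqref{eqn:mu-DM-def}, naturality of the braiding, and the relation $e_M\circ(Df\otimes\id_M) = e_N\circ(\id_{DN}\otimes f)$ (from the defining diagram for $Df$) should transform it into $e_N\circ(\id_{DN}\otimes(\mu_N\circ(\id_A\otimes f)))\circ(c_{A,DN}\otimes\id_M)$. Expanding the right-hand side via \eqref{eqn:mu-DM-def} for $\mu_{DM}$, followed by naturality of the braiding to slide $Df$ past $c_{A,DM}$, and then the same defining relation for $Df$, should yield $e_N\circ(\id_{DN}\otimes(f\circ\mu_M))\circ(c_{A,DN}\otimes\id_M)$. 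Since $c_{A,DN}\otimes\id_M$ is invertible, cancelling it will produce the required equality.

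The principal obstacle is the diagram chase outlined above: one must carefully track how the braidings $c_{A,DM}$ and $c_{A,DN}$ appearing in the definitions of $\mu_{DM}$ and $\mu_{DN}$ interact with $Df$ via naturality, and confirm that both sides reduce to expressions differing only by the common cancellable factor $c_{A,DN}\otimes\id_M$. Once this chase is carried out, the rest is formal: applying the universal property of $(D(A\otimes M), e_{A\otimes M})$ and the faithfulness of $D$ delivers $f\circ\mu_M = \mu_N\circ(\id_A\otimes f)$.
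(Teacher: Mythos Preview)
Your proposal is correct and follows essentially the same approach as the paper: reduce to showing $e_N\circ(\id_{DN}\otimes(f\circ\mu_M)) = e_N\circ(\id_{DN}\otimes(\mu_N\circ(\id_A\otimes f)))$ via faithfulness of $D$, then verify this using the defining diagrams \eqref{eqn:mu-DM-def} for $\mu_{DM}$ and $\mu_{DN}$, the defining relation $e_M\circ(Df\otimes\id_M)=e_N\circ(\id_{DN}\otimes f)$, and naturality of the braiding. The only organizational difference is that the paper writes the chain of equalities directly from one side of the goal to the other (carrying $c_{A,DN}^{-1}$ through), whereas you start from the hypothesis $Df\circ\mu_{DN}=\mu_{DM}\circ(\id_A\otimes Df)$ and cancel the common factor $c_{A,DN}\otimes\id_M$ at the end; the content is identical.
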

\begin{proof}
    We need to show that $\mu_N\circ(\id_A\otimes f) =f\circ\mu_M$. Since the contravariant functor $D:\cC\rightarrow\cC$ is an anti-equivalence, it is sufficient to prove $D(\mu_N\circ(\id_A\otimes f)) =D(f\circ\mu_M)$, and this is equivalent to showing
    \begin{equation*}
        e_N\circ(\id_{DN}\otimes(\mu_N\circ(\id_A\otimes f))) = e_N\circ(\id_{DN}\otimes(f\circ\mu_M)).
    \end{equation*}
    In fact, the definitions and the assumption that $Df$ is a morphism in $\cC_A$ imply that
    \begin{align*}
      e_N\circ(\id_{DN}\otimes(\mu_N\circ(\id_A\otimes f))) & =  e_N\circ(\mu_{DN}\otimes\id_N)\circ( c_{A,DN}^{-1}\otimes f)\nonumber\\
      & = e_M\circ(Df\otimes\id_M)\circ(\mu_{DN}\otimes\id_M)\circ( c_{A,DN}^{-1}\otimes\id_M)\nonumber\\
      & = e_M\circ(\mu_{DM}\otimes\id_M)\circ( c_{A,DM}^{-1}\otimes\id_M)\circ(Df\otimes\id_{A\otimes M})\nonumber\\
      & = e_M\circ(\id_{DM}\otimes\mu_M)\circ(Df\otimes\id_{A\otimes M})\nonumber\\
      & = e_N\circ(\id_{DN}\otimes f)\circ(\id_{DN}\otimes\mu_M),
    \end{align*}
    as required.
\end{proof}

Using the preceding lemma, we can prove:
\begin{corollary}
    The contravariant functor $D_A$ is fully faithful.
\end{corollary}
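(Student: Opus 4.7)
The plan is to reduce fully faithfulness of $D_A$ to the fully faithfulness of $D$ on $\cC$ (which comes for free since $D$ is an anti-equivalence by the standing assumption that $K$ is a dualizing object), combined with the two key facts already established: Proposition~\ref{prop:DA(f)=f'} identifies $D_A(f)$ with $Df$ as a $\cC$-morphism, and Lemma~\ref{lem:f'hom-->fhom} says that $Df$ being a $\cC_A$-morphism forces $f$ to be a $\cC_A$-morphism.

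Concretely, I would argue as follows. Fix $(M,\mu_M),(N,\mu_N)\in\cC_A$ and consider the map
\[
    D_A: \Hom_{\cC_A}(M,N) \longrightarrow \Hom_{\cC_A}(D_AN,D_AM), \qquad f\mapsto D_A(f).
\]
For faithfulness, suppose $D_A(f)=D_A(g)$ for $\cC_A$-morphisms $f,g: M\to N$. By Proposition~\ref{prop:DA(f)=f'}, this means $Df=Dg$ as $\cC$-morphisms. Since $D:\cC\to\cC$ is an anti-equivalence, it is in particular faithful, so $f=g$ as $\cC$-morphisms, and hence as $\cC_A$-morphisms.

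For fullness, let $\varphi: DN\to DM$ be a $\cC_A$-morphism. Forgetting the $A$-module structure, $\varphi$ is a $\cC$-morphism, and since $D$ is fully faithful on $\cC$, there exists a unique $\cC$-morphism $f: M\to N$ with $Df=\varphi$. It remains to upgrade $f$ to a $\cC_A$-morphism: but this is exactly Lemma~\ref{lem:f'hom-->fhom}, applied to $f$. Once $f$ is known to be a morphism in $\cC_A$, Proposition~\ref{prop:DA(f)=f'} identifies $D_A(f)$ with $Df=\varphi$, so $\varphi$ lies in the image of $D_A$.

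There is essentially no obstacle, because the nontrivial content has already been absorbed into Proposition~\ref{prop:DA(f)=f'} and Lemma~\ref{lem:f'hom-->fhom}; the only thing to be mildly careful about is that the fullness step requires first producing a $\cC$-morphism $f$ from $\varphi$ (using the anti-equivalence structure of $D$ on $\cC$) and only then checking that $f$ is $A$-linear. No further diagrammatic calculation is needed.
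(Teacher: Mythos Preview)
Your proposal is correct and follows essentially the same approach as the paper: use Proposition~\ref{prop:DA(f)=f'} to view $D_A$ as the restriction of $D$ to $\cC_A$-morphisms (giving faithfulness from faithfulness of $D$), and for fullness lift a $\cC_A$-morphism $\varphi$ to a $\cC$-morphism $f$ via the anti-equivalence $D$, then apply Lemma~\ref{lem:f'hom-->fhom} to see that $f$ is a $\cC_A$-morphism. No additional ingredients are needed.
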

\begin{proof}
    For objects $M,N\in\cC_A$, we need to show that the map
    \begin{equation*}
        D_A: \Hom_{\cC_A}(M,N)\longrightarrow\Hom_{\cC_A}(DN,DM)
    \end{equation*}
    is an isomorphism. By Proposition \ref{prop:DA(f)=f'}, this map $D_A$ is the restriction of
    \begin{equation*}
        D: \Hom_\cC(M,N)\longrightarrow\Hom_\cC(DN,DM)
    \end{equation*}
    to the subspace $\Hom_{\cC_A}(M,N)$, and the map $D$ is an isomorphism because by assumption $(\cC,K)$ is a Grothendieck-Verdier category. Thus $D_A$ is injective, and to show that $D_A$ is surjective, any $g\in\Hom_{\cC_A}(DN,DM)$ is equal to $Df$ for some $f\in\Hom_\cC(M,N)$. By Lemma \ref{lem:f'hom-->fhom}, $f$ is actually a morphism in $\cC_A$, and thus $g=Df=D_A(f)$. This shows that $D_A$ is an isomorphism.
\end{proof}

To show that $\cC_A$ is a Grothendieck-Verdier category, it remains to show that the contravariant functor $D_A$ is essentially surjective. To this end, note that replacing braiding with inverse braiding isomorphisms in Proposition \ref{prop:X'inCA} and its proof shows that if $(M,\mu_M)$ is an object of $\cC_A$, then so is $DM_-=(DM,\mu_{DM}^-)$, where $\mu_{DM}^-:A\otimes DM\rightarrow DM$ is the unique morphism such that the diagram
\begin{equation}\label{eqn:muDM-_def}
\begin{matrix}
%\xymatrixcolsep{4pc}
%\xymatrix{
\begin{tikzcd}[column sep = 4pc]
A\otimes DM\otimes M \ar[d, "\mu_{DM}^-\otimes\id_M"] \ar[r, "c_{DM,A}^{-1}\otimes\id_M"] & DM\otimes A\otimes M\ar[r, "\id_{DM}\otimes\mu_M"] & DM\otimes M \ar[d, "e_M"]\\
DM\otimes M \ar[rr, "e_M"] && K 
\end{tikzcd}
%}
\end{matrix}
\end{equation}
commutes. Thus $DM$ admits two left $A$-module structures, and we now use $DM_+$ to denote the left $A$-module structure of Proposition \ref{prop:X'inCA}.

For any object $X\in\cC$, we now define two $\cC$-morphisms $\varphi^{\pm}_X: X\rightarrow D^2 X$ to be the unique morphisms such that the diagrams
\begin{equation*}
%\xymatrixcolsep{4pc}
%\xymatrix{
\begin{tikzcd}[column sep=4pc]
X\otimes DX \ar[r, "c^{\pm 1}"] \ar[d, "\varphi^\pm_X\otimes\id_{DX}"] & DX\otimes X \ar[d, "e_X"]\\
D^2 X\otimes DX \ar[r, "e_{DX}"] & K
\end{tikzcd}
%}
\end{equation*}
commute. The morphisms $\varphi^\pm_X$ define natural transformations $\varphi^\pm:\id_\cC\rightarrow D^2$, since if $f: X\rightarrow Y$ is a morphism in $\cC$, then
\begin{align*}
e_{DY}\circ((\varphi^{\pm}_Y\circ f)\otimes\Id_{DY}) & = e_Y\circ c^{\pm 1}\circ(f\otimes\Id_{DY}) = e_Y\circ(\Id_{DY}\otimes f)\circ c^{\pm1}\nonumber\\
& = e_X\circ(Df\otimes\Id_X)\circ c^{\pm1} = e_X\circ c^{\pm1}\circ(\Id_X\otimes Df)\nonumber\\
& = e_{DX}\circ(\varphi^\pm_X\otimes\Id_{DX})\circ(\Id_X\otimes Df) =e_{DX}\circ(\Id_{D^2 X}\otimes Df)\circ(\varphi^\pm_X\otimes\Id_{DY})\nonumber\\
& =e_{DY}\circ((D^2 f\circ\varphi^\pm_X)\otimes\Id_{DY}).
\end{align*}
The universal property of $(D^2 Y, e_{DY})$ then implies $\varphi^\pm_Y\circ f=D^2 f\circ\varphi^\pm_X$, as required.

If $(M,\mu_M)$ is an object of $\cC_A$, then we can ask whether $\varphi^{\pm}_M$ are morphisms in $\cC_A$. In fact:
\begin{proposition}\label{prop:phi_pm_A-hom}
    For $(M,\mu_M)\in\cC_A$, $\varphi^-_M\in\Hom_{\cC_A}(M,D(DM_-)_+)$ and $\varphi^+_M\in\Hom_{\cC_A}(X,D(DM_+)_-)$.
\end{proposition}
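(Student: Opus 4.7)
The plan is to use the universal property of $(D^2M, e_{DM})$ to reduce each of the two claims to a braided-categorical identity of morphisms into the dualizing object $K$, which then follows by unwinding definitions and applying naturality plus the hexagon axiom.

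For the statement about $\varphi^-_M$, I need to show
\[ \varphi^-_M \circ \mu_M \;=\; \mu_{D(DM_-)_+} \circ (\id_A \otimes \varphi^-_M): A \otimes M \to D^2M. \]
Since $D^2M = D(DM)$ represents $\Hom_{\cC}(-\otimes DM, K)$, two such morphisms agree iff the corresponding morphisms $A \otimes M \otimes DM \to K$ obtained by tensoring with $\id_{DM}$ on the right and post-composing with $e_{DM}$ are equal. This reduces the claim to checking a single equation into $K$. The argument for $\varphi^+_M$ is completely parallel, with $c$ and $c^{-1}$ swapped throughout — equivalently, it is the $\varphi^-$ statement applied to the braid-reversed category $\overline{\cC}$, under which $DM_+$ and $DM_-$ exchange roles.

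To unwind the two sides, I will use three defining relations: the relation $e_{DM} \circ (\varphi^-_M \otimes \id_{DM}) = e_M \circ c_{DM,M}^{-1}$ that characterizes $\varphi^-_M$; the analogue of diagram \eqref{eqn:mu-DM-def} with the $A$-module $M$ replaced by $DM_-$, which specifies $\mu_{D(DM_-)_+}$ in terms of $\mu_{DM}^-$; and the defining diagram \eqref{eqn:muDM-_def} of $\mu_{DM}^-$. After substituting these, what remains is pure braid bookkeeping. Using naturality of $c^{-1}$ applied to $\mu_M \otimes \id_{DM}$ (on the left-hand side) and to $\id_M \otimes \mu_{DM}^-$ (on the right-hand side), together with the inverse-hexagon expansions of $c_{DM, A\otimes M}^{-1}$ and $c_{A\otimes DM, M}^{-1}$, both sides can be brought into the common form
\[ e_M \circ (\id_{DM} \otimes \mu_M) \circ (c_{DM,A}^{-1} \otimes \id_M) \circ (\id_A \otimes c_{DM,M}^{-1}): A \otimes M \otimes DM \to K, \]
where on the right-hand side an additional pair $(c_{A,M}^{-1} \otimes \id_{DM}) \circ (c_{A,M} \otimes \id_{DM})$ cancels. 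The computation is cleanest when rendered in the graphical calculus already employed for Proposition \ref{prop:X'inCA}: the equality amounts to an isotopy of a simple tangle diagram involving one undercrossing of the $DM$-strand past $A$, one undercrossing of $DM$ past $M$, and one application of $\mu_M$ followed by a cap $e_M$.

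The only real obstacle is careful accounting of the various braiding maps. In particular, one must not conflate $c_{DM,M}^{-1}$ with $c_{M,DM}$, both of which are isomorphisms $M \otimes DM \to DM \otimes M$ but agree only in the symmetric case; likewise one must apply the correct variant of the hexagon on each side. Once the string diagrams are drawn, these distinctions are visible at a glance and the two compositions are manifestly the same.
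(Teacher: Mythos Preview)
Your proposal is correct and follows essentially the same approach as the paper: reduce via the universal property of $(D^2M,e_{DM})$ to an identity of morphisms $A\otimes M\otimes DM\to K$, then unwind using the defining relations for $\varphi^-_M$, $\mu_{D(DM_-)_+}$, and $\mu_{DM}^-$, plus naturality and the hexagon. The paper renders the computation entirely in string diagrams, but the sequence of substitutions and the cancellation $(c_{A,M}^{-1}\otimes\id_{DM})\circ(c_{A,M}\otimes\id_{DM})=\id$ you identify are exactly what those diagrams encode, and your observation that the $\varphi^+$ case is the braid-reversed version matches the paper's remark that the second claim is proved similarly.
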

\begin{proof}
    For $\varphi^-_M$, we need to show that $\mu_{D(DM_-)_+}\circ(\id_A\otimes \varphi_M^-) =\varphi_M^-\circ\mu_M$. We prove this diagrammatically:
    \begin{align*}
     \begin{tikzpicture}[scale = .8, baseline = {(current bounding box.center)}, line width=0.75pt]
\draw (1,0) -- (1,1) -- (.5,1.5) -- (.5, 2) .. controls (.5,3) and (2,3) .. (2,2) -- (2,0);
\draw[white, double=black, line width = 3pt ] (0,0) -- (0,1) -- (.5,1.5);
\draw[dashed] (1.25,2.75) -- (1.25, 3.5);
\node at (.5,1.5) [draw,thick, fill=white] {$\mu_{D(DM_-)_+}$};
\node at (1,.5) [draw,thick, fill=white] {$\varphi_M^-$};
\node at (1.25,2.75) [draw,thick, fill=white] {$e_{DM}$};
\node at (0,-.25) {$A$};
\node at (1,-.25) {$M$};
\node at (2,-.25) {$DM$};
\node at (1.25, 3.75) {$K$};
\end{tikzpicture} 
& =\begin{tikzpicture}[scale = .8, baseline = {(current bounding box.center)}, line width=0.75pt]
\draw (1,0) .. controls (1,.7) and (0,.3) .. (0,1) -- (0,2) .. controls (0,3) and (1.5,3) .. (1.5,2) -- (1.5,1.5) -- (2,1) -- (2,0);
\draw[white, double=black, line width = 3pt ] (0,0) .. controls (0,.7) and (1,.3) .. (1,1) -- (1.5,1.5);
\draw[dashed] (.75,2.75) -- (.75, 3.5);
\node at (1.5,1.5) [draw,thick, fill=white] {$\mu_{DM_-}$};
\node at (0,1.5) [draw,thick, fill=white] {$\varphi_{M}^-$};
\node at (.75,2.75) [draw,thick, fill=white] {$e_{DM}$};
\node at (0,-.25) {$A$};
\node at (1,-.25) {$M$};
\node at (2,-.25) {$DM$};
\node at (.75, 3.75) {$K$};
\end{tikzpicture}
 =\begin{tikzpicture}[scale = .8, baseline = {(current bounding box.center)}, line width=0.75pt]
\draw (1,0) .. controls (1,.7) and (0,.3) .. (0,1) -- (0,2) .. controls (0,2.7) and (1.5,2.3) .. (1.5,3);
\draw[white, double=black, line width = 3pt ] (0,0) .. controls (0,.7) and (1,.3) .. (1,1) -- (1.5,1.5) -- (1.5,2) .. controls (1.5,2.7) and (0,2.3) .. (0,3) .. controls (0,4) and (1.5,4) .. (1.5,3);
\draw (2,0) -- (2,1) -- (1.5,1.5);
\draw[dashed] (.75,3.75) -- (.75, 4.5);
\node at (1.5,1.5) [draw,thick, fill=white] {$\mu_{DM_-}$};
\node at (.75,3.75) [draw,thick, fill=white] {$e_{M}$};
\node at (0,-.25) {$A$};
\node at (1,-.25) {$M$};
\node at (2,-.25) {$DM$};
\node at (.75, 4.75) {$K$};
\end{tikzpicture}
=\begin{tikzpicture}[scale = .8, baseline = {(current bounding box.center)}, line width=0.75pt]
\draw (0,0) -- (0,1) -- (.5,1.5);
\draw (1,0) .. controls (1,.7) and (2,.3) .. (2,1);
\draw[white, double=black, line width = 3pt ] (2,0) .. controls (2,.7) and (1,.3) .. (1,1) -- (.5,1.5) -- (.5,2) .. controls (.5,3) and (2,3) .. (2,2) -- (2,1);
\draw[dashed] (1.25,2.75) -- (1.25, 3.5);
\node at (.5,1.5) [draw,thick, fill=white] {$\mu_{DM_-}$};
\node at (1.25,2.75) [draw,thick, fill=white] {$e_{M}$};
\node at (0,-.25) {$A$};
\node at (1,-.25) {$M$};
\node at (2,-.25) {$DM$};
\node at (1.25, 3.75) {$K$};
\end{tikzpicture}
=\begin{tikzpicture}[scale = .8, baseline = {(current bounding box.center)}, line width=0.75pt]
\draw (0,0) -- (0,1) .. controls (0,1.7) and (1,1.3) .. (1,2) -- (1.5,2.5);
\draw (1,0) .. controls (1,.7) and (2,.3) .. (2,1) -- (2,2) -- (1.5,2.5);
\draw[white, double=black, line width = 3pt ] (2,0) .. controls (2,.7) and (1,.3) .. (1,1) .. controls (1,1.7) and (0,1.3) .. (0,2) -- (0,3) .. controls (0,4) and (1.5,4) .. (1.5,3) -- (1.5,2.5);
\draw[dashed] (.75,3.75) -- (.75, 4.5);
\node at (1.5,2.5) [draw,thick, fill=white] {$\mu_{M}$};
\node at (.75,3.75) [draw,thick, fill=white] {$e_{M}$};
\node at (0,-.25) {$A$};
\node at (1,-.25) {$M$};
\node at (2,-.25) {$DM$};
\node at (.75, 4.75) {$K$};
\end{tikzpicture}\nonumber\\
 &=\begin{tikzpicture}[scale = .8, baseline = {(current bounding box.center)}, line width=0.75pt]
\draw (0,0) -- (.5,.5);
\draw (1,0) -- (.5,.5) -- (.5,1) .. controls (.5,1.7) and (2,1.3) .. (2,2);
\draw[white, double=black, line width = 3pt ] (2,0) -- (2,1) .. controls (2,1.7) and (.5,1.3) .. (.5,2) .. controls (.5,3) and (2,3) .. (2,2);
\draw[dashed] (1.25,2.75) -- (1.25, 3.5);
\node at (.5,.5) [draw,thick, fill=white] {$\mu_{M}$};
\node at (1.25,2.75) [draw,thick, fill=white] {$e_{M}$};
\node at (0,-.25) {$A$};
\node at (1,-.25) {$M$};
\node at (2,-.25) {$DM$};
\node at (1.25, 3.75) {$K$};
\end{tikzpicture}
=\begin{tikzpicture}[scale = .8, baseline = {(current bounding box.center)}, line width=0.75pt]
\draw (0,0) -- (.5,.5);
\draw (1,0) -- (.5,.5) -- (.5,2);
\draw[white, double=black, line width = 3pt ] (2,0) -- (2,2) .. controls (2,3) and (.5,3) .. (.5,2);
\draw[dashed] (1.25,2.75) -- (1.25, 3.5);
\node at (.5,.5) [draw,thick, fill=white] {$\mu_{M}$};
\node at (.5,1.5) [draw,thick, fill=white] {$\varphi_{M}^-$};
\node at (1.25,2.75) [draw,thick, fill=white] {$e_{M}$};
\node at (0,-.25) {$A$};
\node at (1,-.25) {$M$};
\node at (2,-.25) {$DM$};
\node at (1.25, 3.75) {$K$};
\end{tikzpicture} .
    \end{align*}
    The proof that $\mu_{D(DM_+)_-}\circ(\id_A\otimes\varphi_M^+) =\varphi_M^+\circ\mu_M$ is similar.
\end{proof}

If $\varphi^-$ is a natural isomorphism, then it will follow that $D_A$ is essentially surjective, since we will then have $(M,\mu_M)\cong D_A(DM_-)$ for all $(M,\mu_M)\in\cC_A$. In fact, it is shown in \cite[\S 7.2]{boyarchenko2013duality} that $\varphi^\pm$ are natural isomorphisms; we include a proof for completeness:
\begin{proposition}
    The natural transformations $\varphi^\pm:\id_\cC\rightarrow D^2$ are natural isomorphisms.
\end{proposition}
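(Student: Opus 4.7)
The plan is to verify, via the Yoneda lemma, that for every object $X \in \cC$ the morphism $\varphi^{\pm}_X: X \to D^2 X$ is an isomorphism. Specifically, I would show that for every $Y \in \cC$ the induced map on hom-sets
$$(\varphi^{\pm}_X)_*: \Hom_\cC(Y, X) \longrightarrow \Hom_\cC(Y, D^2 X), \qquad f \longmapsto \varphi^{\pm}_X \circ f$$
is a bijection. Since $\varphi^{\pm}$ has already been shown to be natural, this pointwise statement will suffice.

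The key idea is to exhibit $(\varphi^{\pm}_X)_*$ as a composition of three bijections, matching it up via the defining universal property. First, apply the universal property of $(D^2 X, e_{DX})$ to identify
$$\Hom_\cC(Y, D^2 X) \xrightarrow{\;\sim\;} \Hom_\cC(Y \otimes DX, K), \qquad g \longmapsto e_{DX} \circ (g \otimes \id_{DX}).$$
Under this identification, the definition of $\varphi^{\pm}_X$ together with naturality of the braiding shows that $(\varphi^{\pm}_X)_*$ sends $f: Y \to X$ to
$$e_X \circ c^{\pm 1}_{X, DX} \circ (f \otimes \id_{DX}) \;=\; e_X \circ (\id_{DX} \otimes f) \circ c^{\pm 1}_{Y, DX}.$$

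Next, factor this map through $\Hom_\cC(DX, DY)$ and $\Hom_\cC(DX \otimes Y, K)$. The assignment $f \mapsto Df$ is a bijection $\Hom_\cC(Y, X) \xrightarrow{\sim} \Hom_\cC(DX, DY)$ since $D$ is an anti-equivalence by hypothesis. The universal property of $(DY, e_Y)$ gives a bijection $\Hom_\cC(DX, DY) \xrightarrow{\sim} \Hom_\cC(DX \otimes Y, K)$ via $g \mapsto e_Y \circ (g \otimes \id_Y)$, and the defining relation $e_Y \circ (Df \otimes \id_Y) = e_X \circ (\id_{DX} \otimes f)$ shows that the composition of these two bijections sends $f$ precisely to $e_X \circ (\id_{DX} \otimes f)$. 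Finally, precomposition with the braiding isomorphism $c^{\pm 1}_{Y, DX}$ is a bijection $\Hom_\cC(DX \otimes Y, K) \xrightarrow{\sim} \Hom_\cC(Y \otimes DX, K)$. Composing the three bijections produces exactly the map computed in the previous paragraph, so $(\varphi^{\pm}_X)_*$ is a bijection for every $Y$, and Yoneda then yields that $\varphi^{\pm}_X$ is an isomorphism.

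The argument is mostly bookkeeping; the only substantive input is that $D$ is an anti-equivalence of $\cC$, which is part of the definition of a Grothendieck-Verdier category. The main obstacle, such as it is, lies in keeping the two separate braiding directions straight (so that $\varphi^+$ matches $c$ and $\varphi^-$ matches $c^{-1}$ consistently through the chain of universal properties), and in verifying that the identifications above are natural in $Y$ so that Yoneda actually applies; both of these follow immediately from the relevant universal properties and the naturality hexagons for the braiding.
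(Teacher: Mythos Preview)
Your proof is correct and takes a genuinely different route from the paper. The paper argues separately that $\varphi^{\pm}_X$ is a monomorphism (if $\varphi^{\pm}_X\circ f=0$ then $f=0$) and an epimorphism (if $g\circ\varphi^{\pm}_X=0$ then $g=0$), then implicitly uses that $\cC$ is abelian to conclude that a morphism which is both mono and epi is an isomorphism. Your Yoneda argument, by contrast, exhibits $(\varphi^{\pm}_X)_*$ directly as a composite of three explicit bijections, and so works in any braided Grothendieck-Verdier category without any abelian or even additive hypothesis on $\cC$; it also produces the inverse of $\varphi^{\pm}_X$ constructively. The paper's approach is slightly more hands-on and avoids invoking Yoneda, but at the cost of relying on the ambient abelian structure. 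One small remark: your final comment about checking naturality in $Y$ is unnecessary, since $(\varphi^{\pm}_X)_*$ is post-composition with a fixed morphism and hence automatically natural; what you actually need (and do) verify is that your composite of three bijections agrees with $(\varphi^{\pm}_X)_*$ under the universal-property identification, which is the computation you carry out.
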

\begin{proof}
  To show $\varphi^{\pm}_X$ is injective for $X\in\cC$, suppose $f: Y\rightarrow X$ is a morphism in $\cC$ such that $\varphi^{\pm}_X\circ f=0$.  We need to show that $f=0$. In fact, we have
  \begin{align*}
      0 & = e_{DX}\circ(\varphi^{\pm}_X\otimes\id_{DX})\circ(f\otimes\id_{DX}) = e_X\circ c^{\pm1}\circ(f\otimes\id_{DX}) = e_X\circ(\id_{DX}\otimes f)\circ c^{\pm1},
  \end{align*}
  and thus $e_X\circ(\id_{DX}\otimes f)=0$ since $ c^{\pm1}$ is an isomorphism. The definition of $D$ on morphisms then forces $Df=0$. Since $D$ is an anti-equivalence and thus injective on morphisms, this means $f=0$ as required.

  To show $\varphi^{\pm}$ is surjective, suppose $f: D^2 X\rightarrow Y$ is a morphism in $\cC$ such that $f\circ\varphi^{\pm}_X=0$. We need to show that $f=0$. Since $D$ is an anti-equivalence, there exists an isomorphism $\psi: Y\rightarrow DZ$ for some object $Z\in\cC$, and also a morphism $g: Z\rightarrow DX$ such that $\psi\circ f=Dg$. In particular, $Dg\circ\varphi^{\pm}_X=0$, and thus
  \begin{align*}
      0  & = e_Z\circ(Dg\otimes\id_Z)\circ(\varphi^{\pm}_X\otimes\id_Z) = e_{DX}\circ(\id_{D^2 X}\otimes g)\circ(\varphi_X^{\pm}\otimes\id_Z)\nonumber\\
      &= e_{DX}\circ(\varphi_X^\pm\otimes\id_{DX})\circ(\id_X\otimes g)
       =e_X\circ c^{\pm1}\circ(\id_X\otimes g) =e_X\circ(g\otimes\id_X)\circ c^{\pm1}.
  \end{align*}
Since $ c^{\pm1}$ is an isomorphism, $e_X\circ(g\otimes\id_X)=0$, that is, $g$ is sent to $0$ under the natural isomorphism
\begin{equation*}
    \Hom_\cC(Z,DX) \longrightarrow\Hom_\cC(Z\otimes X,K).
\end{equation*}
Thus $g=0$, and then so is $Dg=\psi\circ f$, and then $f=0$ as required since $\psi$ is an isomorphism.
\end{proof}

In view of the results in this section, we have proved:
\begin{theorem}\label{thm:GVinRepA}
    Let $(\cC,K)$ be an abelian braided Grothendieck-Verdier category with bilinear tensor product bifunctor and dualizing functor $D$, and let $A$ be a commutative algebra in $\cC$. Then the category $\cC_A$ of left $A$-modules in $\cC$ is an abelian Grothendieck-Verdier category with dualizing object $(DA,\mu_{DA})$.
\end{theorem}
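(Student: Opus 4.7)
The plan is to build the Grothendieck-Verdier structure on $\cC_A$ in four stages: (i) construct a left $A$-module structure on $DM$ for every $M \in \cC_A$, (ii) construct an evaluation $e^A_M: DM \otimes_A M \to DA$ in $\cC_A$ and verify its universal property, (iii) show that the resulting contravariant functor $D_A: \cC_A \to \cC_A$ is fully faithful, and (iv) show that $D_A$ is essentially surjective.

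For step (i), given $(M,\mu_M) \in \cC_A$, I would define $\mu_{DM}: A \otimes DM \to DM$ as the unique $\cC$-morphism determined by the universal property of $(DM, e_M)$ through the condition that $e_M \circ (\mu_{DM} \otimes \id_M)$ equals $e_M \circ (\id_{DM} \otimes \mu_M) \circ (c_{A,DM} \otimes \id_M)$. Verifying unitality and associativity is a direct string-diagram calculation using naturality of the braiding, commutativity of $A$, and the universal property of $(DM, e_M)$. This is the content of Proposition \ref{prop:X'inCA}.

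For step (ii), the evaluation $e^A_M: DM \otimes_A M \to DA$ should be constructed by first defining a $\cC$-morphism $\tilde{e}^A_M: DM \otimes M \to DA$ via the universal property of $(DA, e_A)$, using $e_M \circ (\id_{DM} \otimes \mu_M) \circ (\id_{DM} \otimes c_{A,M}^{-1})$. One then checks that $\tilde{e}^A_M$ coequalizes the two maps defining $DM \otimes_A M$ (again a graphical calculation using commutativity of $A$) so that it descends to the desired $e^A_M$, and verifies this descent is $A$-linear. For the universal property, given any $f: N \otimes_A M \to DA$ in $\cC_A$, I would extract the underlying $\cC$-morphism $\tilde{f}: N \otimes M \to K$ by composing with $\iota_A$ and $e_A$, apply the universal property of $(DM, e_M)$ in $\cC$ to get $\varphi: N \to DM$, and verify both that $\varphi$ is $A$-linear and satisfies $e^A_M \circ (\varphi \otimes_A \id_M) = f$ through surjectivity of $\pi_{N,M}$ and the universal property of $(DA, e_A)$.

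For step (iii), the main input is Proposition \ref{prop:DA(f)=f'}, which identifies $D_A(f)$ with the underlying $D f$ in $\cC$: this forces $D_A$ to be injective on Hom-spaces, and surjectivity follows from Lemma \ref{lem:f'hom-->fhom} showing that any $g = Df$ that happens to be $A$-linear comes from an $A$-linear $f$. For step (iv), I would exploit the second $A$-module structure $DM_-$ on $DM$, defined with inverse braiding as in \eqref{eqn:muDM-_def}, together with the natural isomorphism $\varphi_M^-: M \to D^2 M$ of $\cC$. By Proposition \ref{prop:phi_pm_A-hom}, this $\varphi^-_M$ becomes a morphism $M \to D_A(DM_-)$ in $\cC_A$, and since $\varphi^-$ is already a natural isomorphism in $\cC$, it follows that $(M, \mu_M) \cong D_A(DM_-)$ in $\cC_A$, giving essential surjectivity. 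The main obstacle I anticipate is bookkeeping: the coequalizer nature of $\otimes_A$ means most verifications must be reduced to identities on the pre-quotient level via the universal properties, and correctly identifying which braiding ($c$ versus $c^{-1}$) appears where in passing between $\mu_{DM}$, $\tilde{e}^A_M$, and the $A$-bimodule conventions is the subtle part.
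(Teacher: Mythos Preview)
Your proposal is correct and follows essentially the same four-stage approach as the paper: Proposition~\ref{prop:X'inCA} for the $A$-module structure on $DM$, the unnamed proposition constructing $e^A_M$ and verifying its universal property exactly as you outline, Proposition~\ref{prop:DA(f)=f'} together with Lemma~\ref{lem:f'hom-->fhom} for full faithfulness, and Proposition~\ref{prop:phi_pm_A-hom} combined with the separate verification that $\varphi^-$ is a natural isomorphism for essential surjectivity. The only thing you leave implicit is that the paper actually proves $\varphi^\pm$ are natural isomorphisms rather than taking it as known, but your outline otherwise matches the paper's argument point by point.
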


%%%%%%%%%%%%%%%%%%%%%%%%%%%%%%%%%%%%%%%%%%%%%%%%%%%%%%
%%%%%%%%%%%%%%%%%%%%%%%%%%%%%%%%%%%%%%%%%%%%%%%%%%%%%%
%%%%%%%%%%%%%%%%%%%%%%%%%%%%%%%%%%%%%%%%%%%%%%%%%%%%%%

\subsection{Ribbon Grothendieck-Verdier categories from local \texorpdfstring{$A$}{A}-modules}

We now consider whether the category $\cC_A^{\loc}$ of local modules for a commutative algebra $A$ in a braided Grothendieck-Verdier category $\cC$ is a braided Grothendieck-Verdier category. The problem is that it is not clear whether the dualizing functor $D_A$ in the previous subsection preserves $\cC_A^{\loc}$ in general.

Recall from Section \ref{subsec:rig-mon-cats} that a twist on a braided monoidal category $\cC$ is a natural isomorphism $\theta: \cC\rightarrow\cC$ such that the balancing equation
\begin{equation*}
    \theta_{X\otimes Y}= c_{Y,X}\circ c_{X,Y}\circ(\theta_X\otimes\theta_Y)
\end{equation*}
holds for all $X,Y\in\cC$, and that $\theta_\vac=\id_\vac$ for any twist on $\cC$. The following is \cite[Definition 9.1]{boyarchenko2013duality}:
\begin{definition}
    A \textit{ribbon Grothendieck-Verdier category} is a braided Grothendieck-Verdier category $(\cC,K)$ with a twist $\theta$ such that $D\theta_X=\theta_{DX}$ for all $X\in\cC$, where $D$ is the dualizing functor of $(\cC,K)$.
\end{definition}

%As previously, we assume that the tensor product on any abelian monoidal category $\cC$ is bilinear and right exact, so that the category of left modules for any commutative algebra in $\cC$ is an abelian monoidal category. 

Our goal for this subsection is to prove the following theorem:
\begin{theorem}\label{thm:CAloc_ribbon_GV}
    Let $(\cC,K)$ be an abelian ribbon Grothendieck-Verdier category with twist $\theta$ and dualizing functor $D$, and let $A$ be a commutative algebra in $\cC$. If $\theta_A^2=\id_{A}$, then the category $\cC_A^{\loc}$ of local left $A$-modules in $\cC$ is a braided Grothendieck-Verdier category with dualizing object $(DA,\mu_{DA})$. Moreover, $\cC_A^\loc$ is a ribbon Grothendieck-Verdier category if $\theta_A=\id_A$.
\end{theorem}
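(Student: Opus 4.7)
The plan is to build on Theorem \ref{thm:GVinRepA}, which already equips $\cC_A$ with the structure of a Grothendieck-Verdier category with dualizing object $(DA, \mu_{DA})$. What remains under the hypothesis $\theta_A^2 = \id_A$ is to show that both the dualizing object and the dualizing functor of $\cC_A$ preserve the full subcategory $\cC_A^{\loc}$, and then under the stronger hypothesis $\theta_A = \id_A$ to upgrade the resulting Grothendieck-Verdier structure to a ribbon one.

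The key input will be a reformulation of locality in terms of the twist. For any $(M, \mu_M) \in \cC_A$, naturality of $\theta$ and the balancing equation give
\[\theta_M \circ \mu_M = \mu_M \circ \theta_{A \otimes M} = \mu_M \circ c_{M, A} \circ c_{A, M} \circ (\theta_A \otimes \theta_M),\]
so $M$ is local if and only if $\theta_M \circ \mu_M = \mu_M \circ (\theta_A \otimes \theta_M)$. With this observation in hand, I plan to verify the identity $\theta_{DM} \circ \mu_{DM} = \mu_{DM} \circ (\theta_A \otimes \theta_{DM})$ whenever $M$ is local. By applying $e_M \circ ((-) \otimes \id_M)$ and appealing to the universal property of $(M, e_M)$, this reduces to a diagrammatic calculation using the ribbon axiom $\theta_{DM} = D\theta_M$ (more precisely, $e_M \circ (D\theta_M \otimes \id_M) = e_M \circ (\id_{DM} \otimes \theta_M)$), the defining identity \eqref{eqn:mu-DM-def} for $\mu_{DM}$, naturality of the braiding, and the locality reformulation applied to $M$ itself. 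A residual factor involving $\theta_A$ and $\theta_A^{-1}$ survives on the last line, and it is cancelled precisely by the hypothesis $\theta_A^2 = \id_A$. Specializing to $M = A$, which is always local since $A$ is commutative, shows that $(DA, \mu_{DA})$ actually belongs to $\cC_A^{\loc}$.

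Since $\cC_A^{\loc}$ is a full subcategory of $\cC_A$, the restricted functor $D_A \colon \cC_A^{\loc} \to \cC_A^{\loc}$ is automatically fully faithful. For essential surjectivity, I would run the same argument with $c$ replaced by $c^{-1}$ throughout to show that the companion contravariant functor $(M, \mu_M) \mapsto (DM, \mu_{DM}^-)$ defined via \eqref{eqn:muDM-_def} likewise preserves $\cC_A^{\loc}$; Proposition \ref{prop:phi_pm_A-hom} then supplies a natural isomorphism $M \to D_A(D_A^-(M))$ in $\cC_A^{\loc}$ via $\varphi^-_M$. Combined with the braiding on $\cC_A^{\loc}$ from Pareigis, this produces the braided Grothendieck-Verdier structure with dualizing object $(DA, \mu_{DA})$.

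Finally, under the stronger assumption $\theta_A = \id_A$, the locality reformulation simplifies to $\theta_M \circ \mu_M = \mu_M \circ (\id_A \otimes \theta_M)$ for every $M \in \cC_A^{\loc}$, which is exactly the statement that $\theta_M$ is a morphism in $\cC_A$. Thus $\theta$ restricts to a natural automorphism of $\id_{\cC_A^{\loc}}$, the balancing equation in $\cC_A^{\loc}$ is inherited from the one in $\cC$ via the universal property of the coequalizer defining $\otimes_A$, and the ribbon compatibility $D_A\theta_M = \theta_{D_A M}$ follows from $D\theta_M = \theta_{DM}$ together with Proposition \ref{prop:DA(f)=f'} identifying $D_A(f)$ with $Df$. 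I expect the main obstacle to be the middle diagrammatic calculation, where one must carefully track braidings, naturality, and the twist simultaneously, and recognize at the very end that it is exactly $\theta_A^2 = \id_A$ (rather than any weaker or stronger condition) which closes the argument.
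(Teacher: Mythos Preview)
Your proposal is correct and follows essentially the same route as the paper's proof. The paper directly verifies the double-braiding condition $e_M\circ((\mu_{DM}^\pm\circ c_{DM,A}\circ c_{A,DM})\otimes\id_M)=e_M\circ(\mu_{DM}^\pm\otimes\id_M)$ via a graphical calculation that introduces twists through the balancing equation, uses $\theta_{DM}=D\theta_M$ and locality of $M$, and closes with $\theta_A=\theta_A^{-1}$; your reformulation of locality as $\theta_M\circ\mu_M=\mu_M\circ(\theta_A\otimes\theta_M)$ is simply a clean repackaging of those same steps, and your treatment of essential surjectivity via $DM_-$ and $\varphi_M^-$, and of the ribbon structure under $\theta_A=\id_A$, matches the paper exactly.
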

\begin{proof}
    We need the dualizing functor $D_A:\cC_A\rightarrow\cC_A$ from the previous subsection to restrict to a dualizing functor on $\cC_A^{\loc}$. From the proof of Theorem \ref{thm:GVinRepA}, especially Proposition \ref{prop:phi_pm_A-hom}, it is enough to show that if $(M,\mu_M)\in\cC_A^\loc$, then both of $DM_\pm=(DM,\mu_{DM}^\pm)$ are objects of $\cC_A^\loc$. For this, it is enough to show that
    \begin{equation*}
e_M\circ\left((\mu_{DM}^\pm\circ c_{DM,A}\circ c_{A,DM})\otimes\id_M\right) = e_M\circ(\mu_{DM}^\pm\otimes\id_M),
    \end{equation*}
 which can be proved diagrammatically using the balancing equation, naturality of $\theta$, $\theta_{DM}=D\theta_M$, the assumption $\theta_A=\theta_A^{-1}$, and the assumption that $(M,\mu_M)\in\cC_A^\loc$:
    \begin{align*}
\begin{tikzpicture}[scale = 1, baseline = {(current bounding box.center)}, line width=0.75pt]
\draw (1,1) .. controls (1,1.7) and (0,1.3) .. (0,2) -- (.5,2.5);
\draw[white, double=black, line width = 3pt ] (1,0) .. controls (1,.7) and (0,.3) .. (0,1) .. controls (0,1.7) and (1,1.3) .. (1,2) -- (.5,2.5) -- (.5,3) .. controls (.5,4) and (2,4) .. (2,3) -- (2,0);
\draw[white, double=black, line width = 3pt ] (0,0) .. controls (0,.7) and (1,.3) .. (1,1);
\draw[dashed] (1.25,3.75) -- (1.25, 4.25);
\node at (.5,2.5) [draw,thick, fill=white] {$\mu_{DM}^\pm$};
\node at (1.25,3.75) [draw,thick, fill=white] {$e_M$};
\node at (0,-.25) {$A$};
\node at (1,-.25) {$DM$};
\node at (2,-.25) {$M$};
\node at (1.25, 4.5) {$K$};
\end{tikzpicture}
& = \begin{tikzpicture}[scale = 1, baseline = {(current bounding box.center)}, line width=0.75pt]
\draw (0,0) -- (0,2) -- (.5,2.5);
\draw[white, double=black, line width = 3pt ] (1,0) -- (1,2) -- (.5,2.5) -- (.5,3) .. controls (.5,4) and (2,4) .. (2,3) -- (2,0);
\draw[dashed] (1.25,3.75) -- (1.25, 4.25);
\node at (.5,2.5) [draw,thick, fill=white] {$\mu_{DM}^\pm$};
\node at (0,.5) [draw,thick, fill=white] {$\theta_A^{-1}$};
\node at (1,.5) [draw,thick, fill=white] {$\theta_{DM}^{-1}$};
\node at (.5,1.5) [draw,thick, fill=white] {$\theta_{A\otimes DM}$};
\node at (1.25,3.75) [draw,thick, fill=white] {$e_M$};
\node at (0,-.25) {$A$};
\node at (1,-.25) {$DM$};
\node at (2,-.25) {$M$};
\node at (1.25, 4.5) {$K$};
\end{tikzpicture}
 = \begin{tikzpicture}[scale = 1, baseline = {(current bounding box.center)}, line width=0.75pt]
\draw (0,0) -- (0,1) -- (.5,1.5);
\draw[white, double=black, line width = 3pt ] (1,0) -- (1,1) -- (.5,1.5) -- (.5,3) .. controls (.5,4) and (2,4) .. (2,3) -- (2,0);
\draw[dashed] (1.25,3.75) -- (1.25, 4.25);
\node at (.5,1.5) [draw,thick, fill=white] {$\mu_{DM}^\pm$};
\node at (0,.5) [draw,thick, fill=white] {$\theta_A$};
\node at (1,.5) [draw,thick, fill=white] {$\theta_{DM}^{-1}$};
\node at (.5,2.5) [draw,thick, fill=white] {$\theta_{DM}$};
\node at (1.25,3.75) [draw,thick, fill=white] {$e_M$};
\node at (0,-.25) {$A$};
\node at (1,-.25) {$DM$};
\node at (2,-.25) {$M$};
\node at (1.25, 4.5) {$K$};
\end{tikzpicture}
 = \begin{tikzpicture}[scale = 1, baseline = {(current bounding box.center)}, line width=0.75pt]
\draw (0,0) -- (0,1) -- (1.5,2.5);
\draw[white, double=black, line width = 3pt ] (1,0) -- (1,1) -- (0,2) -- (0,3) .. controls (0,4) and (1.5,4) .. (1.5,3) -- (1.5,2.5) -- (2,2) -- (2,0);
\draw[dashed] (.75,3.75) -- (.75, 4.25);
\node at (1.5,2.5) [draw,thick, fill=white] {$\mu_{M}$};
\node at (0,.5) [draw,thick, fill=white] {$\theta_A$};
\node at (1,.5) [draw,thick, fill=white] {$\theta_{DM}^{-1}$};
\node at (2,.5) [draw,thick, fill=white] {$\theta_{M}$};
\node at (.5,1.5) [draw,thick, fill=white] {$ c^{\pm1}$};
\node at (.75,3.75) [draw,thick, fill=white] {$e_M$};
\node at (0,-.25) {$A$};
\node at (1,-.25) {$DM$};
\node at (2,-.25) {$M$};
\node at (.75, 4.5) {$K$};
\end{tikzpicture}
 = \begin{tikzpicture}[scale = 1, baseline = {(current bounding box.center)}, line width=0.75pt]
\draw (0,0) -- (1,1) -- (1,2) -- (1.5,2.5);
\draw[white, double=black, line width = 3pt ] (1,0) -- (0,1) -- (0,2) -- (0,3) .. controls (0,4) and (1.5,4) .. (1.5,3) -- (1.5,2.5) -- (2,2) -- (2,0);
\draw[dashed] (.75,3.75) -- (.75, 4.25);
\node at (1.5,2.5) [draw,thick, fill=white] {$\mu_{M}$};
\node at (1,1.5) [draw,thick, fill=white] {$\theta_A$};
\node at (0,2.5) [draw,thick, fill=white] {$D(\theta_{M}^{-1})$};
\node at (2,1.5) [draw,thick, fill=white] {$\theta_{M}$};
\node at (.5,.5) [draw,thick, fill=white] {$ c^{\pm1}$};
\node at (.75,3.75) [draw,thick, fill=white] {$e_M$};
\node at (0,-.25) {$A$};
\node at (1,-.25) {$DM$};
\node at (2,-.25) {$M$};
\node at (.75, 4.5) {$K$};
\end{tikzpicture}\nonumber\\
& = \begin{tikzpicture}[scale = 1, baseline = {(current bounding box.center)}, line width=0.75pt]
\draw (0,0) -- (1,1) -- (1,3) -- (1.5,3.5);
\draw[white, double=black, line width = 3pt ] (1,0) -- (0,1) -- (0,2) -- (0,4) .. controls (0,5) and (1.5,5) .. (1.5,4) -- (1.5,3.5) -- (2,3) -- (2,0);
\draw[dashed] (.75,4.75) -- (.75, 5.25);
\node at (1.5,3.5) [draw,thick, fill=white] {$\mu_{M}$};
\node at (1,1.5) [draw,thick, fill=white] {$\theta_A$};
\node at (1.5,2.5) [draw,thick, fill=white] {$\theta_{A\otimes M}^{-1}$};
\node at (2,1.5) [draw,thick, fill=white] {$\theta_{M}$};
\node at (.5,.5) [draw,thick, fill=white] {$ c^{\pm1}$};
\node at (.75,4.75) [draw,thick, fill=white] {$e_M$};
\node at (0,-.25) {$A$};
\node at (1,-.25) {$DM$};
\node at (2,-.25) {$M$};
\node at (.75, 5.5) {$K$};
\end{tikzpicture}
= \begin{tikzpicture}[scale = 1, baseline = {(current bounding box.center)}, line width=0.75pt]
\draw[white, double=black, line width = 3pt ] (1,0) -- (0,1) -- (0,2) -- (0,4) .. controls (0,5) and (1.5,5) .. (1.5,4) -- (1.5,3.5) -- (2,3) .. controls (2,2.3) and (1,2.7) .. (1,2);
\draw[white, double=black, line width = 3pt ] (0,0) -- (1,1) .. controls (1,1.7) and (2,1.3) .. (2,2) .. controls (2,2.7) and (1,2.3) .. (1,3) -- (1.5,3.5);
\draw[white, double=black, line width = 3pt ] (2,0) -- (2,1) .. controls (2,1.7) and (1,1.3) .. (1,2);
\draw[dashed] (.75,4.75) -- (.75, 5.25);
\node at (1.5,3.5) [draw,thick, fill=white] {$\mu_{M}$};
\node at (.5,.5) [draw,thick, fill=white] {$ c^{\pm1}$};
\node at (.75,4.75) [draw,thick, fill=white] {$e_M$};
\node at (0,-.25) {$A$};
\node at (1,-.25) {$DM$};
\node at (2,-.25) {$M$};
\node at (.75, 5.5) {$K$};
\end{tikzpicture}
= \begin{tikzpicture}[scale = 1, baseline = {(current bounding box.center)}, line width=0.75pt]
\draw[white, double=black, line width = 3pt ] (1,0) -- (0,1) -- (0,2) .. controls (0,3) and (1.5,3) .. (1.5,2) -- (1.5,1.5) -- (2,1) -- (2,0);
\draw (0,0) -- (1.5,1.5);
\draw[dashed] (.75,2.75) -- (.75, 3.25);
\node at (1.5,1.5) [draw,thick, fill=white] {$\mu_{M}$};
\node at (.5,.5) [draw,thick, fill=white] {$ c^{\pm1}$};
\node at (.75,2.75) [draw,thick, fill=white] {$e_M$};
\node at (0,-.25) {$A$};
\node at (1,-.25) {$DM$};
\node at (2,-.25) {$M$};
\node at (.75, 3.5) {$K$};
\end{tikzpicture}
= \begin{tikzpicture}[scale = 1, baseline = {(current bounding box.center)}, line width=0.75pt]
\draw (0,0) -- (.5,.5) -- (.5,1) .. controls (.5, 2) and (2,2) .. (2,1) -- (2,0);
\draw (1,0) -- (.5,.5);
\draw[dashed] (1.25,1.75) -- (1.25, 2.25);
\node at (.5,.5) [draw,thick, fill=white] {$\mu_{DM}^\pm$};
\node at (1.25,1.75) [draw,thick, fill=white] {$e_M$};
\node at (0,-.25) {$A$};
\node at (1,-.25) {$DM$};
\node at (2,-.25) {$M$};
\node at (1.25, 2.5) {$K$};
\end{tikzpicture} .
    \end{align*}
    This proves that $(\cC_A^\loc, (DA,\mu_{DA}^+))$ is a Grothendieck-Verdier category if $\theta_A^2=\id_A$. The dualizing functor is $D_A: (M,\mu_M)\mapsto (DM,\mu_M^+)), f\mapsto Df$.

    Now suppose that $\theta_A =\id_A$; we want to show that $\theta$ defines a twist on $\cC_A^\loc$ such that $D_A(\theta_M) =\theta_{D_A(M)}$ for all $(M,\mu_M)\in\cC_A^\loc$. First, if $(M,\mu_M)\in\cC_A^\loc$, then $\theta_M\in\Hom_{\cC_A^\loc}(M,M)$ because
    \begin{align*}
        \theta_M\circ\mu_M =\mu_M\circ\theta_{A\otimes M}=\mu_M\circ c_{M,A}\circ c_{A,M}\circ(\theta_A\otimes\theta_M) =\mu_M\circ(\id_A\otimes\theta_M),
    \end{align*}
    using the assumptions that $\theta_A=\id_A$ and $M$ is local. Thus $\theta$ restricts to a natural isomorphism $\id_{\cC_A^\loc}\rightarrow\id_{\cC_A^\loc}$. To show that $\theta$ satisfies the balancing equation for $\cC_A^\loc$, suppose $(M,\mu_M),(N,\mu_N)\in\cC_A^\loc$ and recall the coequalizer map $ \pi_{M,N}: M\otimes N\rightarrow M\otimes_A N$ which is used to define $\mu_{M\otimes_A N}$. Then
    \begin{align*}
        \theta_{M\otimes_A N}\circ  \pi_{M,N} =  \pi_{M,N}\circ\theta_{M\otimes N} = \pi_{M,N} \circ c_{N,M}\circ c_{M,N}\circ(\theta_M\otimes\theta_N) = c_{N,M}^A\circ c_{M,N}^A\circ(\theta_M\otimes_A\theta_N)\circ  \pi_{M,N},
    \end{align*}
    where $ c^A$ is the braiding on $\cC_A^\loc$. Since $ \pi_{M,N}$ is surjective, it follows that
    \begin{equation*}
        \theta_{M\otimes_A N}= c_{N,M}^A\circ c_{M,N}^A\circ(\theta_M\otimes_A\theta_N),
    \end{equation*}
    as required. Finally, $D_A(\theta_M)=\theta_{D_A(M)}$ because $D_A(\theta_M)=D\theta_M$ by Proposition \ref{prop:DA(f)=f'} and because $D\theta_M=\theta_{DM}$. Thus $\cC_A^\loc$ is a ribbon Grothendieck-Verdier category in this case.
\end{proof}

%%%%%%%%%%%%%%%%%%%%%%%%%%%%%%%%%%%%%%%%%%%%%%%%%%%%%%%%%%%%
%%%%%%%%%%%%%%%%%%%%%%%%%%%%%%%%%%%%%%%%%%%%%%%%%%%%%%%%%%%%

\subsection{Rigidity of r-categories}

Following \cite{boyarchenko2013duality}, we call a Grothendieck-Verdier category $(\cC,K)$ an \textit{r-category} if the dualizing object $K$ is isomorphic to the unit object $\one$. For example, if $\cC$ is a rigid monoidal category, then $\cC$ is an r-category with dualizing functor $D$ given by taking left duals, and quasi-inverse $D^{-1}$ given by taking right duals.

In particular, if $\cC$ is an abelian rigid braided monoidal category and $A$ is a commutative algebra in $\cC$, then by Theorem \ref{thm:GVinRepA}, the category $\cC_A$ of left $A$-modules in $\cC$ is a Grothendieck-Verdier category with dualizing object $(A^*,\mu_{A^*})$. Here, $\mu_{A^*}$ is defined by \eqref{eqn:mu-DM-def} with $e_A=\ev_A$; but $A^*$ also has the left $A$-action $\mu_{A^*}^r\circ c_{A,A^*}$ where $\mu_{A^*}^r$ is defined in Lemma \ref{lem:dual-action}. In fact, these two left $A$-actions are the same:
\begin{proposition}\label{prop:rcat-dual-and-rigid-dual-same}
    Let $A$ be a commutative algebra in an abelian rigid braided monoidal category $\cC$. Then for any $(M,\mu_M)\in\cC_A$, the maps $\mu_{M^*}: A\otimes M^*\rightarrow M^*$ of \eqref{eqn:mu-DM-def} (with $e_M=\ev_M$) and $\mu_{M^*}^r\circ c_{A,M^*}: A\otimes M^*\rightarrow M^*$ of Lemma \ref{lem:dual-action} are equal.
\end{proposition}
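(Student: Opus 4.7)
The plan is to invoke the universal property of $(M^*, \ev_M)$ as a left dual: two morphisms $f, g: A \otimes M^* \to M^*$ are equal if and only if $\ev_M \circ (f \otimes \id_M) = \ev_M \circ (g \otimes \id_M)$. By equation \eqref{eqn:mu-DM-def} (with $e_M = \ev_M$), the map $\mu_{M^*}$ is characterized by
\begin{equation*}
\ev_M \circ (\mu_{M^*} \otimes \id_M) = \ev_M \circ (\id_{M^*} \otimes \mu_M) \circ (c_{A,M^*} \otimes \id_M).
\end{equation*}
So it suffices to verify that $\mu_{M^*}^r \circ c_{A,M^*}$ satisfies the same identity, that is,
\begin{equation*}
\ev_M \circ ((\mu_{M^*}^r \circ c_{A,M^*}) \otimes \id_M) = \ev_M \circ (\id_{M^*} \otimes \mu_M) \circ (c_{A,M^*} \otimes \id_M).
\end{equation*}

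Since the right side of the desired identity equals $\ev_M \circ (\id_{M^*} \otimes \mu_M) \circ (c_{A,M^*} \otimes \id_M)$ and the left side equals $\ev_M \circ (\mu_{M^*}^r \otimes \id_M) \circ (c_{A,M^*} \otimes \id_M)$, the only thing to establish is the key identity
\begin{equation*}
\ev_M \circ (\mu_{M^*}^r \otimes \id_M) = \ev_M \circ (\id_{M^*} \otimes \mu_M) : M^* \otimes A \otimes M \to \vac.
\end{equation*}
Unfolding the definition of $\mu_{M^*}^r$ from Lemma \ref{lem:dual-action}, the left side can be drawn diagrammatically as a composition that inserts a $\coev_M$-cup next to the input $M$-strand, applies $\mu_M$ to $A$ and the left leg of the cup, contracts the resulting $M$ against the input $M^*$ via $\ev_M$, and finally contracts the right leg of the cup against the input $M$ via another $\ev_M$. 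Applying the snake identity $(\id_M \otimes \ev_M) \circ (\coev_M \otimes \id_M) = \id_M$ to the $\coev_M$-cup paired with the right-most $\ev_M$ straightens out the cup, producing exactly $\ev_M \circ (\id_{M^*} \otimes \mu_M)$.

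The only technical step is the diagrammatic manipulation implementing the snake identity; this is routine and uses neither the commutativity of $A$ nor the braiding (those enter only through the composition with $c_{A,M^*}$ on both sides). With that identity in hand, the universal property of $(M^*, \ev_M)$ immediately yields $\mu_{M^*} = \mu_{M^*}^r \circ c_{A,M^*}$, completing the proof.
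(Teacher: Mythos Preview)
Your proof is correct and follows essentially the same approach as the paper: invoke the universal property of $(M^*,\ev_M)$ to reduce to an identity after composing with $\ev_M$, then unfold the definition of $\mu_{M^*}^r$ and apply the snake identity. The only cosmetic difference is that you factor out the common $c_{A,M^*}$ early to isolate the key identity $\ev_M\circ(\mu_{M^*}^r\otimes\id_M)=\ev_M\circ(\id_{M^*}\otimes\mu_M)$, whereas the paper carries the braiding through the computation, but the substance is identical.
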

\begin{proof}
    By the universal property of $(M^*,\ev_M)$ in the r-category $\cC$, equivalently the isomorphism
    \begin{equation*}
        \Hom_\cC(A\otimes M^*,M^*)\cong\Hom_\cC(A\otimes M^*,\vac\otimes M^*)\cong\Hom_\cC(A\otimes M^*\otimes M,\vac)
    \end{equation*}
    in the rigid category $\cC$, it is enough to show
    \begin{equation*}
        \ev_M\circ((\mu_{M^*}^r\circ c_{A,M^*})\otimes\id_M) =\ev_M\circ(\mu_{M^*}\otimes\id_M).
    \end{equation*}
    Indeed, using the definitions and the rigidity of $M$,
    \begin{align*}
        \ev_M & \circ((\mu_{M^*}^r\circ c_{A,M^*})\otimes\id_M)\nonumber\\
        & = \ev_M\circ(\ev_M\otimes\id_{M^*\otimes M})\circ(\id_{M^*}\otimes\mu_M\otimes\id_{M^*\otimes M})\circ(\id_{M^*\otimes A}\otimes\coev_M\otimes\id_M)\circ(c_{A,M^*}\otimes\id_M)\nonumber\\
        & =\ev_M\circ(\id_{M^*}\otimes\mu_M)\circ(\id_{M^*\otimes A\otimes M}\otimes\ev_M)\circ(\id_{M^*\otimes A}\otimes\coev_M\otimes\id_M)\circ(c_{A,M^*}\otimes\id_M)\nonumber\\
        & =\ev_M\circ(\id_{M^*}\otimes\mu_M)\circ(c_{A,M^*}\otimes\id_M)\nonumber\\
        & =\ev_M\circ(\mu_{M^*}\otimes\id_M),
    \end{align*}
    as required.
\end{proof}

In the rest of this subsection, we study when a general r-category is rigid. First we have:
\begin{theorem}\label{thm:rigidityfrom simples}
If $\cC$ is a locally finite abelian r-category whose simple objects are rigid, then $\cC$ is rigid.
\end{theorem}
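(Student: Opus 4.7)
The plan is to proceed by induction on the length of $X$. The base case $\ell(X)\le 1$ is immediate: the zero object is trivially rigid, and simple objects are rigid by hypothesis. For the inductive step with $\ell(X)=n\ge 2$, I choose a simple subobject $X'\hookrightarrow X$ and set $X'':=X/X'$, so that $X'$ is rigid by hypothesis and $X''$ is rigid by induction. Applying the exact contravariant anti-equivalence $D$ to the short exact sequence $0\to X'\to X\to X''\to 0$ yields $0\to DX''\to DX\to DX'\to 0$.

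The central tool is the following characterization: an object $W\in\cC$ is rigid with left dual $DW$ if and only if, for every $Z\in\cC$, the canonical natural morphism
\begin{equation*}
    \xi_{W,Z}\colon DZ\otimes DW \longrightarrow D(W\otimes Z),
\end{equation*}
defined via the universal property of $D$ applied to $e_Z\circ(\id_{DZ}\otimes e_W\otimes\id_Z)$, is an isomorphism. The ``only if'' direction is the familiar $(W\otimes Z)^*\cong Z^*\otimes W^*$ valid whenever $W$ is rigid. For the converse, the chain of natural identifications
\begin{equation*}
\Hom(A, DZ\otimes DW) \;\cong\; \Hom(A, D(W\otimes Z)) \;\cong\; \Hom(A\otimes W\otimes Z, \vac) \;\cong\; \Hom(A\otimes W, DZ)
\end{equation*}
together with the essential surjectivity of $D$ (letting $DZ$ range over all of $\cC$) produces a natural bijection $\Hom(A\otimes W, B)\cong \Hom(A, B\otimes DW)$, i.e.\ an adjunction $-\otimes W\dashv -\otimes DW$. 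Its unit at $\vac$ supplies the coevaluation $\coev_W\colon \vac\to W\otimes DW$, its counit at $\vac$ recovers $e_W$, and the triangle identities are the zigzag equations.

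To finish the induction it suffices to show that $\xi_{X,Z}$ is an isomorphism for each $Z\in\cC$. For this I form the commutative diagram
\begin{equation*}
\begin{tikzcd}[column sep=small]
 & DZ\otimes DX'' \ar[r] \ar[d, "\xi_{X'',Z}"'] & DZ\otimes DX \ar[r] \ar[d, "\xi_{X,Z}"'] & DZ\otimes DX' \ar[r] \ar[d, "\xi_{X',Z}"'] & 0 \\
0 \ar[r] & D(X''\otimes Z) \ar[r] & D(X\otimes Z) \ar[r] & D(X'\otimes Z) &
\end{tikzcd}
\end{equation*}
Its top row is right-exact --- being the image under $DZ\otimes-$ (right-exact in any r-category, since tensor product admits internal-hom right adjoints on both sides) of the SES $0\to DX''\to DX\to DX'\to 0$. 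Its bottom row is left-exact --- being the image under the exact contravariant functor $D$ of the right-exact sequence $X'\otimes Z\to X\otimes Z\to X''\otimes Z\to 0$ obtained from right exactness of $-\otimes Z$. The outer vertical maps are isomorphisms by the rigidity of $X'$ and $X''$ via the characterization above. Since each row is exact at its middle term, a straightforward diagram chase --- a weakened form of the five-lemma requiring only this amount of exactness (mono at the bottom-left and epi at the top-right are not needed) --- forces $\xi_{X,Z}$ to be an isomorphism, completing the induction.

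The main technical hurdle I expect is verifying the characterization of rigidity via $\xi$ in the abstract r-category setting, where the left dual and its coevaluation must be recovered from nothing but the isomorphisms $\xi_{W,Z}$ and the Grothendieck--Verdier structure; once this is in place, the partial five-lemma argument that completes the induction is formal.
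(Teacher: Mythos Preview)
Your proof is correct and follows essentially the same strategy as the paper's: the paper introduces the same canonical map $\Phi_{X,Y}\colon DY\otimes DX\to D(X\otimes Y)$ (your $\xi_{X,Y}$), invokes \cite[Corollaries~4.5 and~4.6]{boyarchenko2013duality} for the characterization of rigidity via $\Phi$ being an isomorphism, and then cites \cite[Theorem~4.4.1]{Creutzig:2020qvs} for the induction on length. You have unpacked and reproved precisely these cited ingredients, including the partial five-lemma step, rather than treating them as black boxes.
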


\begin{proof}
For $X,Y\in\cC$, let $\Phi_{X,Y}:DY\otimes DX\rightarrow D(X\otimes Y)$ be the unique morphism such that the diagram
\[\begin{tikzcd}[column sep=5pc]
	{DY\otimes DX\otimes X\otimes Y} & {DY\otimes Y} \\
	{D(X\otimes Y)\otimes X\otimes Y} & \one
	\arrow["{\id_{DY}\otimes e_X\otimes \id_Y}", from=1-1, to=1-2]
	\arrow["{e_Y}", from=1-2, to=2-2]
	\arrow["{\Phi_{X,Y}\otimes \id_{X\otimes Y}}"', from=1-1, to=2-1]
	\arrow["{e_{X\otimes Y}}"', from=2-1, to=2-2]
\end{tikzcd}\]
commutes. By \cite[Corollary 4.5]{boyarchenko2013duality}, $\Phi_{X,Y}$ is an isomorphism if $X$ is rigid, and by \cite[Corollary~4.6]{boyarchenko2013duality}, $\cC$ is rigid if and only if $\Phi_{X,Y}$ is an isomorphism for all $X,Y \in\cC$. Now by exactly the same argument as in \cite[Theorem~4.4.1]{Creutzig:2020qvs}, $\Phi_{X,Y}$ is an isomorphism for all finite-length objects of $\cC$ since it is an isomorphism when $X$ and $Y$ are simple.
\end{proof}

We want to apply this theorem to the category of modules for a commutative algebra in a Grothendieck-Verdier category. More precisely, we prove:
\begin{theorem}\label{thm:GVto rigid}
    Let $(\cC,K,\theta)$ be a locally finite abelian ribbon Grothendieck-Verdier category and let $A$ be a commutative algebra in $\cC$ such that $\theta_A^2=\id_A$. If the abelian braided monoidal category $\cC_A^\loc$ is rigid and every simple object of $\cC_A$ is an object of $\cC_A^\loc$, then $\cC_A$ is rigid.
\end{theorem}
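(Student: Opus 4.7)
The strategy is to first promote $\cC_A$ from a Grothendieck-Verdier category (which it is by Theorem~\ref{thm:GVinRepA}) to an r-category, and then to invoke Theorem~\ref{thm:rigidityfrom simples} to conclude rigidity from rigidity of the simple objects. The only substantive work lies in the first step, which exploits the fact that $\cC_A^\loc$ simultaneously carries two Grothendieck-Verdier duality structures.

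Since $\theta_A^2=\id_A$, Theorem~\ref{thm:CAloc_ribbon_GV} equips $\cC_A^\loc$ with a braided Grothendieck-Verdier structure whose dualizing object is $DA$. On the other hand, by hypothesis $\cC_A^\loc$ is rigid, so the unit object $A$ is also dualizing in $\cC_A^\loc$. Two dualizing objects of a monoidal category differ by tensoring with an invertible object by \cite[Proposition~2.3]{boyarchenko2013duality}, so there is an invertible object $L\in\cC_A^\loc$ with $DA\cong L$ in $\cC_A^\loc$. Since $\cC_A^\loc$ is a full monoidal subcategory of $\cC_A$ sharing the same unit $A$ and tensor product $\otimes_A$, the object $L$ remains invertible in $\cC_A$. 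Consequently $DA$ is invertible in $\cC_A$, and because $DA$ is dualizing in $\cC_A$ by Theorem~\ref{thm:GVinRepA}, a second application of \cite[Proposition~2.3]{boyarchenko2013duality} shows that $A\cong (DA)^{-1}\otimes_A DA$ is also a dualizing object of $\cC_A$. Hence $\cC_A$ is an r-category.

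It remains to verify the hypotheses of Theorem~\ref{thm:rigidityfrom simples} for $\cC_A$. Local finiteness is immediate: any strictly ascending chain of $A$-submodules of an object $M\in\cC_A$ yields a strictly ascending chain of subobjects of $M$ in $\cC$, which terminates because $\cC$ is locally finite; and morphism spaces in $\cC_A$ are subspaces of the finite-dimensional morphism spaces in $\cC$. By assumption, every simple object of $\cC_A$ lies in $\cC_A^\loc$ and is therefore rigid in $\cC_A^\loc$; since $\cC_A^\loc\subseteq\cC_A$ is a full monoidal subcategory with matching unit and tensor product, these duals and (co)evaluation morphisms still serve as duals in $\cC_A$. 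Thus every simple of $\cC_A$ is rigid, and Theorem~\ref{thm:rigidityfrom simples} yields the rigidity of $\cC_A$. The main conceptual obstacle is the first paragraph: recognizing that the coexistence of two Grothendieck-Verdier structures on $\cC_A^\loc$ forces the dualizing object $DA$ of $\cC_A$ to be invertible, thereby upgrading $\cC_A$ to an r-category; after this, rigidity is essentially a formal consequence of the results already proved.
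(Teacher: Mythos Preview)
Your proof is correct and follows essentially the same approach as the paper's: both arguments exploit the coexistence of the rigid and the Grothendieck--Verdier dualizing structures on $\cC_A^\loc$ together with \cite[Proposition~2.3]{boyarchenko2013duality} to deduce that $DA$ is invertible in $\cC_A$, hence that $\cC_A$ is an r-category, and then invoke Theorem~\ref{thm:rigidityfrom simples}. The only cosmetic difference is that the paper establishes finite length in $\cC_A$ by an explicit induction on the $\cC$-length rather than via ascending chains, but the two arguments are equivalent.
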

\begin{proof}
First note that any $(M,\mu_M)\in\cC_A$ has finite length, by induction on the length of $M$ considered as an object of $\cC$. Indeed, if $M\neq 0$, then let $N\subseteq M$ be a proper $\cC_A$-submodule whose length (considered as an object of $\cC$) is maximal. Then $M/N$ is a simple object of $\cC_A$, and as an object of $\cC$, $N$ has length strictly less than $M$. Thus by induction $N$ has a finite composition series, and combining this series with $M$ yields a finite composition series for $M$. Thus $\cC_A$ is locally finite, and every simple object of $\cC_A$ is rigid by assumption. Now Theorem \ref{thm:rigidityfrom simples} will imply $\cC_A$ is rigid if we can show that $\cC_A$ has the structure of an r-category. By Theorem \ref{thm:GVinRepA}, $\cC_A$ is a Grothendieck-Verdier category with dualizing object $(DA,\mu_{DA})$, where $D$ is the dualizing functor of $(\cC,K)$. But $(DA,\mu_{DA})$ might not be isomorphic to $(A,\mu_A)$.

By Theorem \ref{thm:CAloc_ribbon_GV}, $(DA,\mu_{DA})$ is a dualizing object of $\cC_A^\loc$, and $(A,\mu_A)$ is also a dualizing object of $\cC_A^\loc$ since $\cC_A^\loc$ is rigid. Thus by \cite[Proposition 2.3(i)]{boyarchenko2013duality}, there is an invertible object $B\in\cC_A^\loc$ such that $DA\otimes_A B^{-1}\cong A$. Tensoring with $B$ on the right shows $DA\cong B$, that is,
$DA$ is invertible in $\cC_A^{\loc}$. As $DA$ is still invertible as an object of $\cC_A$, \cite[Proposition 2.3(i)]{boyarchenko2013duality} again shows $DA\otimes_A (DA)^{-1}\cong A$ is another dualizing object of $\cC_A$. So $\cC_A$ admits an r-category structure and is thus rigid by Theorem \ref{thm:rigidityfrom simples}. 
\end{proof}

In the setting of Theorem \ref{thm:GVto rigid}, we get rigidity of $\cC_A$ from rigidity of $\cC_A^{\loc}$. In the next subsection, under additional conditions, we will get rigidity of $\cC$ from rigidity of $\cC_A$, by embedding $\cC$ into a suitable rigid category related to $\cC_A$. This will require the following lemma:

\begin{lemma}\label{lem:GVto rigid}
Let $\cC$ be a Grothendieck-Verdier category with dualizing functor $D$, and let $F:\cC\rightarrow\cD$ be a fully faithful monoidal functor to a rigid category $\cD$. If $F(DX) \cong F(X)^*$ for all objects $X\in\cC$, then $\cC$ is rigid.
\end{lemma}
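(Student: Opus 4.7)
The plan is to equip each $X \in \cC$ with a left dual, from which the existence of right duals will follow since $D$ is an anti-equivalence. First, I would check that $\cC$ is an r-category, i.e.\ $K \cong \one_\cC$. In any Grothendieck-Verdier category, the chain of natural isomorphisms $\Hom_\cC(Y, D\one_\cC) \cong \Hom_\cC(Y \otimes \one_\cC, K) \cong \Hom_\cC(Y, K)$ together with Yoneda yields $D\one_\cC \cong K$. On the other hand, $F(D\one_\cC) \cong F(\one_\cC)^* \cong \one_\cD^{\,*} \cong \one_\cD \cong F(\one_\cC)$ in $\cD$, and full faithfulness of $F$ forces $D\one_\cC \cong \one_\cC$. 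Hence $K \cong \one_\cC$.

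Next, for each $X \in \cC$ I would fix an isomorphism $\phi_X \colon F(DX) \xrightarrow{\sim} F(X)^*$. Combining $\phi_X$ with the monoidal constraints $F_2 \colon F(DX) \otimes F(X) \xrightarrow{\sim} F(DX \otimes X)$ and $F_0 \colon \one_\cD \xrightarrow{\sim} F(\one_\cC)$ transports the evaluation and coevaluation of $F(X)$ in $\cD$ to morphisms
\[ F(DX \otimes X) \to F(\one_\cC), \qquad F(\one_\cC) \to F(X \otimes DX) \]
in $\cD$. Since $F$ is fully faithful, these lift uniquely to morphisms $\widetilde{\ev}_X \colon DX \otimes X \to \one_\cC$ and $\widetilde{\coev}_X \colon \one_\cC \to X \otimes DX$ in $\cC$.

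The snake identities
\[ (\id_X \otimes \widetilde{\ev}_X) \circ (\widetilde{\coev}_X \otimes \id_X) = \id_X, \qquad (\widetilde{\ev}_X \otimes \id_{DX}) \circ (\id_{DX} \otimes \widetilde{\coev}_X) = \id_{DX} \]
can then be verified by applying $F$. After inserting the monoidal coherence isomorphisms and the $\phi_X$'s (which cancel in pairs between $\widetilde{\ev}_X$ and $\widetilde{\coev}_X$), each composition becomes the corresponding snake identity for $F(X)$ and $F(X)^*$ in $\cD$, which holds by rigidity of $\cD$. Faithfulness of $F$ then transfers the equalities back to $\cC$, so $(DX, \widetilde{\ev}_X, \widetilde{\coev}_X)$ is a left-dual datum for $X$.

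Finally, since $D$ is an anti-equivalence, in particular essentially surjective, every object $Y \in \cC$ admits an isomorphism $Y \cong DX$ for some $X$, and the left-dual structure on $X$ then exhibits $X$ as a right dual of $Y$ (transporting along the isomorphism). Therefore $\cC$ is rigid. The calculations are routine, but the main technical point requiring care is the bookkeeping of monoidal coherence (the $F_2$'s and $F_0$) together with the non-canonical isomorphisms $\phi_X$, which must be inserted compatibly so that the transported snake composition in $\cD$ really is the rigid snake identity for $F(X)$ and $F(X)^*$.
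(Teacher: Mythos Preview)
Your proof is correct and follows essentially the same route as the paper: transport the rigid duality data from $\cD$ back to $\cC$ via full faithfulness of $F$, then verify the snake identities by applying $F$ and using rigidity of $\cD$. Two minor remarks: your opening step showing $K \cong \one_\cC$ is a correct observation but is never used in the rest of your argument (nor in the paper's), so it can be dropped; and for right duals the paper phrases things slightly differently, using a quasi-inverse $D^{-1}$ and the computation $F(D^{-1}X) \cong {}^*(F(D^{-1}X)^*) \cong {}^*F(DD^{-1}X) \cong {}^*F(X)$, but this is equivalent to your essential-surjectivity argument.
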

\begin{proof}
    Given $X\in\cC$, fix an isomorphism $\varphi_X: F(DX)\rightarrow F(X)^*$. Since $F$ is fully faithful, we can define $\ev_X: DX\otimes X\rightarrow\vac_\cC$ and $\coev_X: \vac_\cC\rightarrow X\otimes DX$ to be the unique morphisms such that $F(\ev_X)$ and $F(\coev_X)$ are given by the following compositions, respectively:
    \begin{align*}
        & F(DX\otimes X)\xrightarrow{F_2(DX,X)^{-1}} F(DX)\otimes F(X)\xrightarrow{\varphi_X\otimes\id_{F(X)}} F(X)^*\otimes F(X)\xrightarrow{\ev_{F(X)}} \vac_\cD\xrightarrow{F_0} F(\vac_\cC),\\
        & F(\vac_\cC)\xrightarrow{F_0^{-1}} \vac_\cD\xrightarrow{\coev_{F(X)}} F(X)\otimes F(X)^*\xrightarrow{\id_{F(X)}\otimes\varphi_X^{-1}} F(X)\otimes F(DX)\xrightarrow{F_2(X,DX)} F(X\otimes DX).
    \end{align*}
Then the commutative diagram
%\begin{equation*}
%\xymatrixcolsep{5pc}
%\xymatrix{
%F(X) \ar[r]^{l_{F(X)}^{-1})} \ar[d]^{F(l_X^{-1})} & \vac_\cD\otimes F(X) \ar[d]^{F_0\otimes\id_{F(X)}} \ar[rd]^{\quad\coev_{F(X)}\otimes\id_{F(X)}} & \\
%F(\vac_\cC\otimes X)  \ar[d]^{F(\coev_X\otimes\id_X)} & \ar[l]_{F_2(\vac_\cC,X)} F(\vac_\cC)\otimes F(X) \ar[d]^{F(\coev_X)\otimes\id_{F(X)}} & (F(X)\otimes F(X)^*)\otimes F(X) \ar[d]^{(\id_{F(X)}\otimes\varphi_X^{-1})\otimes\id_{F(X)}}\\
%F((X\otimes DX)\otimes X) \ar[d]^{F(\cA_{X,DX,X}^{-1})} &  \ar[l]_{F_2(X\otimes DX,X)} F(X\otimes DX)\otimes F(X)  & \ar[l]_(.53){F_2(X,DX)\otimes\id_{F(X)}} (F(X)\otimes F(DX))\otimes F(X) \ar[d]^{\cA_{F(X),F(DX),F(X)}^{-1}} \\
%F(X\otimes(DX\otimes X)) \ar[d]^{F(\id_X\otimes\ev_X)} & \ar[l]_{F_2(X,DX\otimes X)}  F(X)\otimes F(DX\otimes X) \ar[d]^{\id_{F(X)}\otimes F(\ev_X)} & \ar[l]_(.53){\id_{F(X)}\otimes F_2(DX,X)}  F(X)\otimes(F(DX)\otimes F(X)) \ar[d]^{\id_{F(X)}\otimes(\varphi_X\otimes\id_{F(X)})} \\
%F(X\otimes\vac_\cC) \ar[d]^{F(r_X)} & \ar[l]_{F_2(X,\vac_\cC)}  F(X)\otimes F(\vac_\cC) \ar[d]^{\id_{F(X)}\otimes F_0^{-1}} & %F(X)\otimes(F(X)^*\otimes F(X)) \ar[ld]^{\quad\id_{F(X)}\otimes\ev_{F(X)}} \\
%F(X) & F(X)\otimes \vac_\cD \ar[l]_{r_{F(X)}} & \\
%}
%\end{equation*}
\begin{equation*}
\begin{tikzcd}[column sep = 5pc]
F(X) \ar[r, "l_{F(X)}^{-1})"] \ar[d, "F(l_X^{-1})"] & \vac_\cD\otimes F(X) \ar[d, "F_0\otimes\id_{F(X)}"] \ar[rd, "\coev_{F(X)}\otimes\id_{F(X)}"] & \\
F(\vac_\cC\otimes X)  \ar[d, "F(\coev_X\otimes\id_X)"] & \ar[l, "F_2({\vac_\cC,X})"'] F(\vac_\cC)\otimes F(X) \ar[d, "F(\coev_X)\otimes\id_{F(X)}"] & (F(X)\otimes F(X)^*)\otimes F(X) \ar[d, "(\id_{F(X)}\otimes\varphi_X^{-1})\otimes\id_{F(X)}"]\\
F((X\otimes DX)\otimes X) \ar[d, "F(\cA_{X,DX,X}^{-1})"] &  \ar[l, "F_2({X\otimes DX,X})"'] F(X\otimes DX)\otimes F(X)  & \ar[l, "F_2({X,DX})\otimes\id_{F(X)}"'] (F(X)\otimes F(DX))\otimes F(X) \ar[d, "\cA_{F(X),F(DX),F(X)}^{-1}"] \\
F(X\otimes(DX\otimes X)) \ar[d, "F(\id_X\otimes\ev_X)"] & \ar[l, "F_2({X,DX\otimes X})"']  F(X)\otimes F(DX\otimes X) \ar[d, "\id_{F(X)}\otimes F(\ev_X)"] & \ar[l, "\id_{F(X)}\otimes F_2({DX,X})"']  F(X)\otimes(F(DX)\otimes F(X)) \ar[d, "\id_{F(X)}\otimes(\varphi_X\otimes\id_{F(X)})"] \\
F(X\otimes\vac_\cC) \ar[d, "F(r_X)"] & \ar[l, "F_2({X,\vac_\cC)})"']  F(X)\otimes F(\vac_\cC) \ar[d, "\id_{F(X)}\otimes F_0^{-1}"] & F(X)\otimes(F(X)^*\otimes F(X)) \ar[ld, "\id_{F(X)}\otimes\ev_{F(X)}"] \\
F(X) & F(X)\otimes \vac_\cD \ar[l, "r_{F(X)}"'] & 
\end{tikzcd}
\end{equation*}
together with the left rigidity of $F(X)$ implies that
\begin{equation*}
    F((\id_X\otimes\ev_X)\circ(\coev_X\otimes\id_X)) =\id_{F(X)}=F(\id_X),
\end{equation*}
and thus $(\id_X\otimes\ev_X)\circ(\coev_X\otimes\id_X)=\id_X$ as $F$ is faithful. A similar commutative diagram shows that
\begin{align*}
    F((\ev_X\otimes\id_{DX})\circ(\id_{DX}\otimes\coev_X)) & = (\ev_{F(X)}\otimes\id_{F(DX)})\circ(\varphi_X\otimes\id_{F(X)}\otimes\varphi_X^{-1})\circ(\id_{F(DX)}\otimes\coev_{F(X)})\nonumber\\
    & = \varphi_X\circ(\ev_{F(X)}\otimes\id_{F(X)^*})\circ(\id_{F(X)^*}\otimes\coev_{F(X)})\circ\varphi_X^{-1} =\id_{F(DX)},
\end{align*}
so also $(\ev_X\otimes\id_{DX})\circ(\id_{DX}\otimes\coev_X)=\id_{DX}$ as $F$ is faithful. Thus $DX$ is a left dual of $X$ for any $X\in\cC$.

To show that $\cC$ also has right duals, let $D^{-1}$ be a quasi-inverse of $D$. Then for any $X\in\cC$,
\begin{equation*}
    F(D^{-1}X) \cong {}^*(F(D^{-1}X)^*)\cong{}^*F(DD^{-1}X)\cong {}^*F(X).
\end{equation*}
Now a similar argument to the above shows that $D^{-1}X$ is a right dual of $X$ in $\cC$. Thus $\cC$ is rigid.
\end{proof}

To help check the condition $F(DX)\cong F(X)^*$ in the previous lemma, we have the following theorem, which is a variant of \cite[Theorem 6.11]{McRae:2023ado} and has the same proof:
\begin{theorem}\label{thm:FcommuteswithD}
    Let $(\cC,K)$ and $(\cD,L)$ be abelian Grothendieck-Verdier categories and let $F:\cC\rightarrow\cD$ be a right exact monoidal functor such that $F(K)\cong L$. Assume also that for any simple object $X\in\cC$, the map $e_X: DX\otimes X\rightarrow K$ is surjective and any non-zero morphism from $F(DX)$ to $DF(X)$ is an isomorphism. Then $F(DX)\cong DF(X)$ for all finite-length objects $X\in\cC$.
\end{theorem}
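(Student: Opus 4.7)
The plan is to construct a natural transformation $\eta \colon F \circ D \to D \circ F$ of contravariant functors $\cC \to \cD$, and then prove that $\eta_X$ is an isomorphism for every finite-length object $X \in \cC$, by induction on length.

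For each $X \in \cC$, define $\eta_X \colon F(DX) \to DF(X)$ as the unique $\cD$-morphism such that $e_{F(X)} \circ (\eta_X \otimes \id_{F(X)})$ equals the composition
\begin{equation*}
F(DX) \otimes F(X) \xrightarrow{F_2(DX,X)} F(DX \otimes X) \xrightarrow{F(e_X)} F(K) \xrightarrow{\sim} L,
\end{equation*}
whose existence and uniqueness are guaranteed by the universal property of $(DF(X), e_{F(X)})$; naturality of $\eta$ in $X$ is a routine diagram chase. For the base case, when $X$ is simple, $e_X$ is surjective by hypothesis, $F_2(DX,X)$ is an isomorphism, and $F$ is right exact, so the displayed composition is a surjection onto $L \neq 0$. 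Hence $\eta_X$ is non-zero, and assumption (2) forces $\eta_X$ to be an isomorphism.

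For the inductive step, given $X$ of length $n \geq 2$, choose a short exact sequence $0 \to Y \to X \to Z \to 0$ in $\cC$ with $Y$ simple, so $Z$ has length $n-1$. Since $D$ is an anti-equivalence (hence exact) and $F$ is right exact, naturality of $\eta$ yields a commutative diagram
\begin{equation*}
\begin{tikzcd}[column sep=1.5pc, row sep=2pc]
& F(DZ) \ar[r, "f"] \ar[d, "\eta_Z"'] & F(DX) \ar[r, "h"] \ar[d, "\eta_X"] & F(DY) \ar[r] \ar[d, "\eta_Y"] & 0 \\
0 \ar[r] & DF(Z) \ar[r, "g"] & DF(X) \ar[r, "k"] & DF(Y) &
\end{tikzcd}
\end{equation*}
with the top row exact and $g$ injective. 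By the inductive hypothesis, $\eta_Y$ and $\eta_Z$ are isomorphisms, and we promote both rows to short exact sequences as follows. For any $w \in DF(Y)$, write $w = \eta_Y(v)$ and $v = h(u)$ by surjectivity of $h$; then naturality gives $k(\eta_X(u)) = w$, showing $k$ is surjective. Conversely, any element of $\ker f$ is mapped by $\eta_Z$ into $\ker g = 0$, and since $\eta_Z$ is an isomorphism, $\ker f = 0$. With both rows now short exact and $\eta_Y, \eta_Z$ isomorphisms, the five-lemma yields that $\eta_X$ is an isomorphism.

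The main subtlety is the asymmetry produced by assuming only right exactness of $F$: a priori, applying $F$ to a short exact sequence and then $D$ produces only a kernel sequence on the bottom row, which is insufficient to run a five-lemma. The key observation is that the inductive hypothesis on $Y$ (resp.\ on $Z$) itself supplies the surjectivity on the bottom row (resp.\ the injectivity on the top row), so the induction closes without any additional exactness hypothesis on $F$.
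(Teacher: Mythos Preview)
Your proof is correct and follows essentially the same approach as the paper: construct the natural transformation via the universal property of $(DF(X), e_{F(X)})$, verify the base case for simples, and induct on length via a five-lemma diagram chase. The only cosmetic differences are that you make explicit how the inductive hypothesis itself promotes both rows to short exact sequences (the paper just invokes a ``short five lemma diagram chase''), and you tacitly assume $L \neq 0$, which the paper notes is harmless since otherwise $\cD$ is the zero category and the statement is trivial.
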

\begin{proof}
Fix an isomorphism $\psi: F(K)\rightarrow L$. Then for any object $X\in\cC$, there is a unique morphism $\varphi_X: F(DX)\rightarrow DF(X)$ such that the diagram
\begin{equation*}
%\xymatrixcolsep{4pc}
%\xymatrix{
\begin{tikzcd}[column sep=4pc]
F(DX)\otimes F(X) \ar[r, "F_2({DX,X})"] \ar[d, "\varphi_X\otimes\Id_{F(X)}"] & F(DX\otimes X) \ar[d, "\psi\circ F(e_X)"] \\
DF(X)\otimes F(X) \ar[r, "e_{F(X)}"] & L
\end{tikzcd}
%}
\end{equation*}
commutes. If $X$ is simple, then by hypothesis $\varphi_X$ is either $0$ or an isomorphism. If $\varphi_X=0$, then
\begin{equation*}
    \psi\circ F(e_X)\circ F_2(DX,X)=0
\end{equation*}
as well, and thus $F(e_X)=0$. But since $e_X$ is surjective by assumption and $F$ is right exact, $F(e_X)$ is also surjective and thus $L=0$. However, $L=0$ only if every object of $\cD$ is $0$, in which case the theorem is trivial. Thus we may assume $\varphi_X$ is an isomorphism for all simple objects $X\in\cC$.

 Next, we show that the morphisms $\varphi_X$ determine a natural transformation $\varphi: F\circ D\rightarrow D\circ F$, that is,
\[
\varphi_{X}\circ F(Df)=DF(f)\circ\varphi_{Y}
\]
for all morphisms $f: X\rightarrow Y$ in $\cC$. The universal property of $DF(X)$ implies it is enough to show
\begin{equation*}
e_{F(X)}\circ\left((\varphi_{X}\circ F(Df))\otimes\Id_{F(X)}\right)=e_{F(X)}\circ\left((DF(f)\circ\varphi_{Y})\otimes \Id_{F(X)}\right),
\end{equation*}
and indeed the definitions imply
\begin{align*}
e_{F(X)} & \circ\left((\varphi_{X}\circ F(Df))\otimes\Id_{F(X)}\right)  =\psi\circ F(e_{X})\circ F_2(DX,X)\circ(F(Df)\otimes\Id_{F(X)})\nonumber\\
& =\psi\circ F(e_{X}\circ(Df\otimes\Id_{X}))\circ F_2(DY,X) =\psi\circ F(e_{Y}\circ(\Id_{DY}\otimes f))\circ F_2(DY,X)\nonumber\\
& = \psi\circ F(e_{Y})\circ F_2(DY,Y)\circ(\Id_{FD(Y)}\otimes F(f)) =e_{F(Y)}\circ(\varphi_{Y}\otimes F(f))\nonumber\\
& =e_{F(X)}\circ \left((DF(f)\circ\varphi_{Y})\otimes\Id_{F(X)}\right),
\end{align*}
as desired.

So far, $\varphi_X$ is an isomorphism when $X$ has length $0$ or $1$. If the length of $X$ is greater than $1$, there is some exact sequence
\begin{equation*}
0\longrightarrow Y_1\xrightarrow{f_1} X\xrightarrow{f_2} Y_2\longrightarrow 0
\end{equation*}
such that the lengths of $Y_1$ and $Y_2$ are strictly less than the length of $X$. Thus we assume $\varphi_{Y_1}$ and $\varphi_{Y_2}$ are isomorphisms by induction on length. Using the naturality of $\varphi$, the right exactness of $F$, and the exactness of $D$ for $\cC$ and $\cD$, we then get a commutative diagram
\begin{equation*}
%\xymatrixcolsep{2pc}
%\xymatrix{
\begin{tikzcd}[column sep=2pc]
& F(DY_2) \ar[rr, "F(Df_2)"] \ar[d, "\varphi_{Y_2}"] && F(DX) \ar[rr, "F(Df_1)"] \ar[d, "\varphi_X"] && F(DY_1) \ar[r] \ar[d, "\varphi_{Y_1}"] & 0\\
0 \ar[r] & DF(Y_2) \ar[rr, "DF(f_2)"] && DF(W) \ar[rr, "DF(f_1)"] && DF(Y_1) & 
\end{tikzcd}
%}
\end{equation*}
with exact rows. The short five lemma diagram chase shows that $\varphi_X$ is an isomorphism, completing the induction to show that $\varphi_X$ is an isomorphism for all finite-length $X$.
\end{proof}

\subsection{Main results on rigidity of \texorpdfstring{$\cC$}{C}}
Let $\cC$ be a braided r-category. Our goal in this subsection is to show that $\cC$ is rigid under certain conditions, by finding a fully faithful monoidal functor from $\cC$ to a rigid category and applying Lemma \ref{lem:GVto rigid}. If $A$ is a commutative algebra in $\cC$, then as previously we identify the category $\cC_A$ of left $A$-modules in $\cC$ with the monoidal subcategory $\cC_A^\sigma\subseteq {}_A\cC_A$, where $\sigma = c_{A,-}^{-1}$. If $\cC_A$ with this monoidal structure is rigid, then we may consider the induction functor $F_{A,\sigma}:\cC\rightarrow\cC_A$. But $F_{A,\sigma}$ is unlikely to be fully faithful, so we need to eliminate some morphisms in $\cC_A$ by imposing extra structure.

There are two ways we can impose extra structure. First, recall from Section \ref{subsec:algebras-and-modules} that $(A,\sigma)$ is a commutative algebra in the Drinfeld center $\cZ(\cC)$. Then we construct a braided monoidal functor $I: \cC\rightarrow\cZ(\cC)_{(A,\sigma)}^{\loc}$
as follows. First, we have the braided monoidal embedding $i_+:\cC \rightarrow \cZ(\cC)$ given by $i_+(X) = (X,c_{-,X})$ on objects and given by the identity on morphisms. Then, we have the monoidal induction functor $\widetilde{F}_{A,\sigma}:\cZ(\cC)\rightarrow \cZ(\cC)_{(A,\sigma)}$, which restricts to a braided monoidal functor $\cZ(\cC)^0\rightarrow\cZ(\cC)_{(A,\sigma)}^{\loc}$, where $\cZ(\cC)^0\subseteq\cZ(\cC)$ is the full subcategory of objects that double braid trivially with $(A,\sigma)$. From the definitions, the image of $i_+$ is contained in $\cZ(\cC)^0$, so we get a braided monoidal functor
\[ I : \cC \xrightarrow{i_+} \cZ(\cC) \xrightarrow{\widetilde{F}_{A,\sigma}} \cZ(\cC)_{(A,\sigma)}^{\loc}. \]
Specifically, this functor is defined by
\begin{equation}\label{eqn:I-on-objects}
    I(X) = ((A\otimes X, (\id_A\otimes c_{-,X})\circ(c_{A,-}^{-1}\otimes\id_X)), \mu_A\otimes\id_X)
\end{equation}
for objects $X\in\cC$, and $I(f)=\id_A\otimes f$ for morphisms $f\in\cC$.

Alternatively, we can consider the Drinfeld center $\cZ(\cC_A)$, where again we identify $\cC_A = \cC_A^\sigma\subseteq {}_A\cC_A$ as a monoidal category. Then induction $F_{A,\sigma}:\cC\rightarrow\cC_A$ is a central functor, that is, there is a braided monoidal functor $F_{A,\sigma}':\cC\rightarrow\cZ(\cC_A)$ such that $F_{A,\sigma}$ is the composition of $F_{A,\sigma}'$ with the forgetful functor $U: \cZ(\cC_A)\rightarrow\cC_A$; see \cite[\S 3.2]{davydov2013witt} for a brief discussion. Specifically, for any $X\in\cC$, we equip $F_{A,\sigma}(X)=(A\otimes X,\mu_A\otimes\id_X)$ with the half-braiding
\begin{equation*}
    \overline{\gamma}^X: -\otimes_A F_{A,\sigma}(X) \longrightarrow F_{A,\sigma}(X)\otimes_A -,
\end{equation*}
where for $(M,\mu_M)\in\cC_A$, $\overline{\gamma}^X_M$ is characterized by the commutative diagram
\begin{equation}\label{eqn:half-braiding-for-F'}
\begin{matrix}
 %   \xymatrixcolsep{3pc}
 %   \xymatrix{
\begin{tikzcd}[column sep=3pc]
M\otimes A\otimes X \ar[d, "\pi_{M,F_{A,\sigma(X)}}"] \ar[r, "c_{A,M}^{-1}\otimes\id_X"] & A\otimes M\otimes X \ar[r, "\id_A\otimes c_{M,X}"] & A\otimes X\otimes M \ar[d, "\pi_{F_{A,\sigma}(X),M}"] \\
    M\otimes_A F_{A,\sigma}(X) \ar[rr, "\overline{\gamma}_M^X"] & & F_{A,\sigma}(X)\otimes_A M
   \end{tikzcd}
    %}
    \end{matrix}
\end{equation}
Tedious but standard calculations show that $\overline{\gamma}^X$ is a well-defined half-braiding and induces a functor $F_{A,\sigma}': \cC\rightarrow\cZ(\cC_A)$ such that 
\begin{equation*}
    F_{A,\sigma}'(X) =((A\otimes X), \mu_A\otimes\id_X),\overline{\gamma}^X)
\end{equation*}
for objects $X\in\cC$, and $F_{A,\sigma}'(f) =\id_A\otimes f$ for morphisms $f\in\cC$. Further calculations show that the isomorphisms $F_0: F_{A,\sigma}(\vac)\rightarrow (A,\mu_A)$ and $F_2(X,Y): F_{A,\sigma}(X)\otimes_A F_{A,\sigma}(Y)\rightarrow F_{A,\sigma}(X\otimes Y)$, which are part of the data of the monoidal functor $F_{A,\sigma}$, are morphisms in $\cZ(\cC_A)$, and that
\begin{equation*}
    F_{A,\sigma}'(c_{X,Y})\circ F_2(X,Y) = F_2(Y,X)\circ\gamma^Y_{F_{A,\sigma}(X)},
\end{equation*}
and thus $F_{A,\sigma}'$ is a braided monoidal functor. So $F_{A,\sigma}$ is indeed a central functor since $F_{A,\sigma}=U\circ F_{A,\sigma}'$.

The functors $I:\cC\rightarrow\cZ(\cC)_{(A,\sigma)}^{\loc}$ and $F_{A,\sigma}':\cC\rightarrow\cZ(\cC_A)$ are related by the Schauenburg equivalence \cite[Corollary 4.5]{schauenburg2001monoidal}; see also a relative version in \cite[Theorem 5.15]{laugwitz2023constructing} and the discussion in \cite[\S 3]{CLR}. This is an isomorphism of categories $\cS:  \cZ(\cC)_{(A,\sigma)}^{\loc} \xrightarrow{\sim} \cZ(\cC_A)$ given on objects by
\begin{equation}\label{eqn:S-on-objects}
    \cS((M,\gamma),\mu_M) = ((M,\mu_M),\overline{\gamma}),
\end{equation}
where $\overline{\gamma}_N$ for $N\in\cC_A$ is characterized by the commutative diagram
\begin{equation*}
 %   \xymatrixcolsep{3pc}
 %   \xymatrix{
\begin{tikzcd}[column sep=3pc]
N\otimes M \ar[d, "\pi_{N,M}"] \ar[r, "\gamma_N"] & M\otimes N \ar[d, "\pi_{M,N}"]\\
N\otimes_A M \ar[r, "\overline{\gamma}_N"] & M\otimes_A N
\end{tikzcd}
    %}
\end{equation*}
The assumption that $M$ is a local $(A,\sigma)$-module is critical for showing that $\overline{\gamma}$ is well defined. The inverse functor is given on objects by
\begin{equation*}
    \cS^{-1}((M,\mu_M),\gamma) = ((M,\widetilde{\gamma}),\mu_M),
\end{equation*}
where the half-braiding $\widetilde{\gamma}$ is characterized by the commutative diagram
\begin{equation*}
  %  \xymatrixcolsep{3pc}
  %  \xymatrix{
  \begin{tikzcd}[column sep=3pc]
    X\otimes M \ar[r, "\widetilde{\gamma}_X"] \ar[d, "\Phi_{X,M}^l"] & M\otimes X\ar[d, "\Phi_{X,M}^r"]\\
    F_{A,\sigma}(X)\otimes_A M \ar[r, "\gamma_{F_{A,\sigma}(X)}"] & M\otimes_A F_{A,\sigma}(X) 
 \end{tikzcd}
 %   }
\end{equation*}
Here $\Phi_{X,M}^l$ and $\Phi_{X,M}^r$ are isomorphisms given by the compositions
\begin{align*}
    & \Phi_{X,M}^l: X\otimes M \xrightarrow{\iota_A\otimes\id_{X\otimes M}} A\otimes X\otimes M\xrightarrow{\pi_{F_{A,\sigma}(X),M}} F_{A,\sigma}(X)\otimes_A M,\nonumber\\
    & \Phi_{X,M}^r: M\otimes X \xrightarrow{\id_{M}\otimes\iota_A\otimes\id_X} M\otimes A\otimes X\xrightarrow{\pi_{M,F_{A,\sigma}(X)}} M\otimes_A F_{A,\sigma}(X).
\end{align*}
On morphisms, both $\cS$ and $\cS^{-1}$ are given by the identity. One can check that the isomorphism $\cS$ identifies the braided monoidal structure on $\cZ(\cC)_{(A,\sigma)}^{\loc}$ with that on $\cZ(\cC_A)$, so $\cS$ and $\cS^{-1}$ are also isomorphisms of braided monoidal categories. We can now show:
\begin{lemma}\label{lem:I-F'-same}
    The functors $\cS\circ I$ and $F_{A,\sigma}'$ from $\cC$ to $\cZ(\cC_A)$ are equal.
\end{lemma}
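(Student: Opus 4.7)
The plan is to verify the equality $\mathcal{S}\circ I=F_{A,\sigma}'$ separately on objects and morphisms. On morphisms there is essentially nothing to do: by construction both $I$ and $F_{A,\sigma}'$ send a morphism $f\colon X\to Y$ in $\cC$ to $\id_A\otimes f$, and $\cS$ acts as the identity on morphisms, so both composite functors agree on morphisms.

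On objects, the first observation is that the underlying left $A$-modules of $(\cS\circ I)(X)$ and $F_{A,\sigma}'(X)$ coincide. Indeed, $I(X)$ has underlying pair $((A\otimes X,\gamma^X),\mu_A\otimes\id_X)$ with
\begin{equation*}
\gamma^X_{-}=(\id_A\otimes c_{-,X})\circ (c_{A,-}^{-1}\otimes\id_X),
\end{equation*}
and applying $\cS$, the underlying $A$-module of $\cS(I(X))$ is just $(A\otimes X,\mu_A\otimes\id_X)=F_{A,\sigma}(X)$, which is the underlying $A$-module of $F_{A,\sigma}'(X)$. So the only thing remaining is to show the two half-braidings on this common $A$-module agree.

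Both half-braidings are defined by coequalizer universal properties, so we only need to check they come from the same morphism before passing to $\otimes_A$. Writing $\bar\gamma^X$ for the half-braiding of $\cS(I(X))$, the defining diagram of $\cS$ gives, for any $N\in\cC_A$,
\begin{equation*}
\bar\gamma^X_N\circ\pi_{N,A\otimes X}=\pi_{A\otimes X,N}\circ\gamma^X_N=\pi_{A\otimes X,N}\circ (\id_A\otimes c_{N,X})\circ (c_{A,N}^{-1}\otimes\id_X).
\end{equation*}
On the other hand, by \eqref{eqn:half-braiding-for-F'}, the half-braiding of $F_{A,\sigma}'(X)$, which I will temporarily denote $\tilde\gamma^X$, satisfies
\begin{equation*}
\tilde\gamma^X_N\circ\pi_{N,F_{A,\sigma}(X)}=\pi_{F_{A,\sigma}(X),N}\circ (\id_A\otimes c_{N,X})\circ(c_{A,N}^{-1}\otimes\id_X).
\end{equation*}
Since $\pi_{N,A\otimes X}=\pi_{N,F_{A,\sigma}(X)}$ is an epimorphism, and the right-hand sides of both equations are literally the same morphism $N\otimes A\otimes X\to (A\otimes X)\otimes_A N$, the universal property of the coequalizer forces $\bar\gamma^X_N=\tilde\gamma^X_N$ for all $N\in\cC_A$. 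Hence $\cS(I(X))=F_{A,\sigma}'(X)$ as objects of $\cZ(\cC_A)$, and the two functors coincide.

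There is no real obstacle here; the only subtlety worth double-checking is that the two versions of $\pi$ really are the same arrow (which is automatic since $F_{A,\sigma}(X)=A\otimes X$ as objects of $\cC$ once the $A$-module structure is ignored, and the coequalizer $\otimes_A$ is computed at the level of $\cC$). Thus the verification is purely formal once the definitions of $I$, $\cS$, and $F_{A,\sigma}'$ are unwound.
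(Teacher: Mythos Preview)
Your proof is correct and follows essentially the same approach as the paper's own proof: both verify equality on morphisms trivially and on objects by checking that the half-braidings induced by $\cS$ and by $F_{A,\sigma}'$ satisfy the same defining equation after precomposing with the epimorphism $\pi$. The paper compresses this into a single line using the overline notation from the definition of $\cS$, while you spell out the coequalizer argument explicitly, but the content is identical.
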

\begin{proof}
    By \eqref{eqn:I-on-objects}, \eqref{eqn:S-on-objects}, and \eqref{eqn:half-braiding-for-F'},
    \begin{equation*}
        (\cS\circ I)(X) = \left((A\otimes X,\mu_A\otimes\id_X),\overline{(\id_A\otimes c_{-,X})\circ(c_{A,-}^{-1}\otimes\id_X)}\right) = (F_{A,\sigma}(X),\overline{\gamma}^X) = F_{A,\sigma}'(X)
    \end{equation*}
    for objects $X\in\cC$. Also, $\cS\circ I$ and $F_{A,\sigma}'$ are both given by $\id_A\otimes f$ on morphisms, so $\cS\circ I=F_{A,\sigma}'$.
\end{proof}

Now suppose $\cC_A$ is rigid, so that as discussed in Section \ref{subsec:rig-mon-cats}, $\cZ(\cC_A)$ is rigid as well.  Thus $\cZ(\cC)_{(A,\sigma)}^{\loc}$ is also rigid by the Schauenburg monoidal equivalence. Let $D$ be the dualizing functor of the r-category $\cC$. In view of Lemma \ref{lem:GVto rigid}, we would like $I(DX)\cong I(X)^*$ for all $X\in\cC$. This is probably too much to ask in general, but we will show that there is such an isomorphism if there is a suitable isomorphism $F_{A,\sigma}(DX)\cong F_{A,\sigma}(X)^*$ in $\cC_A$. Namely, given $X\in\cC$, recall from the proof of Theorem \ref{thm:FcommuteswithD} that there is a unique $\cC_A$-morphism $\varphi_X: F_{A,\sigma}(DX)\rightarrow F_{A,\sigma}(X)^*$ such that the diagram
\begin{equation}\label{eqn:phiX-def}
\begin{matrix}
  %  \xymatrixcolsep{4pc}
  %  \xymatrix{
  \begin{tikzcd}[column sep=4pc]
    F_{A,\sigma}(DX)\otimes_{A} F_{A,\sigma}(X) \ar[d, "\varphi_X\otimes_{A}\id_{F_{A,\sigma}(X)}"] \ar[r, "F_2({DX,X})"] & F_{A,\sigma}(DX\otimes X) \ar[d, "F_0\circ F_{A,\sigma}(e_X)"] \\
    F_{A,\sigma}(X)^*\otimes_{A} F_{A,\sigma}(X) \ar[r, "\ev_{F_{A,\sigma}(X)}"] & A
    \end{tikzcd}
    %}
    \end{matrix}
\end{equation}
commutes. 

\begin{lemma}\label{lem:central-functor-duality}
Let $A$ be a commutative algebra in a braided r-category $\cC$ with duality functor $D$ such that $\cC_A$ is rigid. If the $\cC_A$-morphism $\varphi_X: F_{A,\sigma}(DX)\rightarrow F_{A,\sigma}(X)^*$ defined in \eqref{eqn:phiX-def} is an isomorphism for some object $X\in\cC$, then  $I(DX)\cong I(X)^*$ in $\cZ(\cC)_{(A,\sigma)}^{\loc}$.
%and $A$ is a commutative algebra such that, with $\sigma=c_{A,-}^{-1}$, $\cC_A^{\sigma}$ is rigid. Suppose that $F_{A,\sigma}(D(X)) \cong F_{A,\sigma}(X)^*$ for all $X\in\cC$. Then the functor $I: \cC \rightarrow \cZ(\cC)_{(A, \sigma)}^{\loc}$ has the same property, i.e. $I(D(X)) \cong I(X)^*$ for the respective duality structures. 
\end{lemma}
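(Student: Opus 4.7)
The plan is to reduce the problem from the non-rigid category $\cZ(\cC)_{(A,\sigma)}^{\loc}$ to the rigid category $\cZ(\cC_A)$ via the Schauenburg equivalence, and then use the universal property of the dual there to lift $\varphi_X$ to the Drinfeld center.

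First I would use Lemma \ref{lem:I-F'-same}, which says $\cS\circ I = F_{A,\sigma}'$ for the braided monoidal isomorphism $\cS: \cZ(\cC)_{(A,\sigma)}^{\loc} \xrightarrow{\sim} \cZ(\cC_A)$. Since $\cS$ preserves duals, producing the required isomorphism $I(DX) \cong I(X)^*$ is equivalent to producing an isomorphism $F_{A,\sigma}'(DX) \cong F_{A,\sigma}'(X)^*$ inside the rigid category $\cZ(\cC_A)$ (which is rigid because $\cC_A$ is rigid by hypothesis).

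Next, since $F_{A,\sigma}'$ is a braided monoidal functor, the composition
\begin{equation*}
F_{A,\sigma}'(DX) \otimes F_{A,\sigma}'(X) \xrightarrow{F_2(DX,X)} F_{A,\sigma}'(DX \otimes X) \xrightarrow{F_{A,\sigma}'(e_X)} F_{A,\sigma}'(\vac_\cC) \xrightarrow{F_0^{-1}} \vac_{\cZ(\cC_A)}
\end{equation*}
is a morphism in $\cZ(\cC_A)$. The universal property of the left dual in the rigid category $\cZ(\cC_A)$ then produces a unique $\widetilde{\varphi}_X: F_{A,\sigma}'(DX) \to F_{A,\sigma}'(X)^*$ in $\cZ(\cC_A)$ whose composition with the evaluation recovers this morphism. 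I would then apply the forgetful functor $U: \cZ(\cC_A) \to \cC_A$, which is strict monoidal and compatible with left duals in the standard way recalled in Section \ref{subsec:rig-mon-cats}: one has $U(F_{A,\sigma}'(X)^*) = F_{A,\sigma}(X)^*$, and $U$ carries the evaluation in $\cZ(\cC_A)$ to the evaluation in $\cC_A$. Hence $U(\widetilde{\varphi}_X)$ satisfies the defining property \eqref{eqn:phiX-def} of $\varphi_X$, and by the uniqueness in that property one concludes $U(\widetilde{\varphi}_X) = \varphi_X$.

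Finally, since $\varphi_X$ is an isomorphism in $\cC_A$ by hypothesis, and since any inverse in $\cC_A$ of a morphism in $\cZ(\cC_A)$ automatically respects the half-braidings (so $U$ reflects isomorphisms), $\widetilde{\varphi}_X$ is an isomorphism in $\cZ(\cC_A)$. Transporting back along $\cS^{-1}$ then yields the desired isomorphism $I(DX) \cong I(X)^*$ in $\cZ(\cC)_{(A,\sigma)}^{\loc}$. The only delicate point in the full proof is the compatibility of the forgetful functor $U$ with the duality construction in a Drinfeld center; this is routine but should be spelled out explicitly.
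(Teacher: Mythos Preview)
Your proposal is correct and follows essentially the same route as the paper's proof: reduce via the Schauenburg equivalence and Lemma~\ref{lem:I-F'-same} to constructing an isomorphism $F_{A,\sigma}'(DX)\cong F_{A,\sigma}'(X)^*$ in $\cZ(\cC_A)$, build the map $\varphi_X'$ (your $\widetilde{\varphi}_X$) there from the universal property of the dual applied to the monoidal structure maps, check $U(\varphi_X')=\varphi_X$ by uniqueness in \eqref{eqn:phiX-def}, and conclude since $U$ reflects isomorphisms. The paper states the same steps, with the compatibility of $U$ with duals left as a remark rather than spelled out in detail.
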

\begin{proof}
    Since the Schauenburg equivalence $\cS$ is monoidal and thus preserves duals \cite[Exercise 2.10.6]{etingof2016tensor}, Lemma \ref{lem:I-F'-same} implies that it is enough to show that $F_{A,\sigma}'(DX)\cong F_{A,\sigma}'(X)^*$. Indeed, since the morphisms $F_2(DX,X)$, $F_0$, and $F_{A,\sigma}(e_X)=F_{A,\sigma}'(e_X)$ in \eqref{eqn:phiX-def} are morphisms in $\cZ(\cC_A)$, there is a  unique $\cZ(\cC_A)$-morphism $\varphi_X': F_{A,\sigma}'(DX)\rightarrow F_{A,\sigma}'(X)^*$ such that the diagram
\begin{equation}\label{eqn:phiX'-def}
\begin{matrix}
 \begin{tikzcd}[column sep=4pc, row sep=2pc]
    F_{A,\sigma}'(DX)\otimes_{\cZ(\cC_A)} F_{A,\sigma}'(X) \ar[d,"\varphi_X'\otimes_{\cZ(\cC_A)}\id_{F_{A,\sigma}'(X)}"] \ar[r, "F_2({DX,X})"] & F_{A,\sigma}'(DX\otimes X) \ar[d, "F_0\circ F_{A,\sigma}'(e_X)"] \\
    F_{A,\sigma}'(X)^*\otimes_{\cZ(\cC_A)} F_{A,\sigma}'(X) \ar[r, "\ev_{F_{A,\sigma}'(X)}"] & A
  \end{tikzcd}
    \end{matrix}
\end{equation}
commutes. From the definitions of $F_{A,\sigma}'$, $\otimes_{\cZ(\cC_A)}$, and duals in $\cZ(\cC_A)$, as well as from the uniqueness of $\varphi_X$, applying the forgetful functor $U:\cZ(\cC_A)\rightarrow\cC_A$ to the diagram in \eqref{eqn:phiX'-def} yields the diagram in \eqref{eqn:phiX-def}. In particular, $U(\varphi_X')=\varphi_X$, and since $U$ reflects isomorphisms, this means that $\varphi_X'$ is an isomorphism in $\cZ(\cC_A)$ whenever $\varphi_X$ is an isomorphism in $\cC_A$.
\end{proof}

To apply Lemma \ref{lem:GVto rigid} to the functor $I: \cC\rightarrow\cZ(\cC)_{(A,\sigma)}^{\loc}$, it remains to show that $I$ is fully faithful under suitable conditions. We first show that $I$ is faithful under very mild conditions:
\begin{proposition}\label{prop:I-faithful}
    Let $\cC$ be a braided r-category with duality functor $D$, and let $A$ be a commutative algebra in $\cC$ such that the unit morphism $\iota_A:\vac\rightarrow A$ is injective. Then $I: \cC\rightarrow\cZ(\cC)_{(A,\sigma)}^{\loc}$ is faithful.
\end{proposition}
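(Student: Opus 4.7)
The plan is to reduce faithfulness of $I$ to a statement about the tensor functor $A\otimes-$ at the level of hom-sets. Observe that $I$ factors as
\begin{equation*}
\cC \xrightarrow{\ i_+\ } \cZ(\cC) \xrightarrow{\ \tilde{F}_{A,\sigma}\ } \cZ(\cC)_{(A,\sigma)}^{\loc},
\end{equation*}
where $i_+$ is the identity on morphisms, and the forgetful functors from $\cZ(\cC)_{(A,\sigma)}^{\loc}$ down to $\cZ(\cC)_{(A,\sigma)}$, then to $\cZ(\cC)$, and finally to $\cC$ are each faithful (they merely impose extra compatibilities -- with the $A$-action, with a half-braiding -- on an underlying $\cC$-morphism). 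Under this composed forgetful functor, $I(f)$ maps to the $\cC$-morphism $\id_A \otimes f: A \otimes X \to A \otimes Y$. Thus faithfulness of $I$ is equivalent to the injectivity of the assignment $f \mapsto \id_A \otimes f$ on $\Hom_\cC(X,Y)$ for all $X,Y \in \cC$.

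Now suppose $\id_A \otimes f = 0$ for some $f \in \Hom_\cC(X,Y)$. Precomposing with $\iota_A \otimes \id_X: \vac \otimes X \to A \otimes X$ and using bifunctoriality of $\otimes$ yields
\begin{equation*}
0 = (\id_A \otimes f)\circ(\iota_A \otimes \id_X) = \iota_A \otimes f = (\iota_A \otimes \id_Y)\circ(\id_\vac \otimes f).
\end{equation*}
Using the unit constraint $l_Y:\vac\otimes Y\xrightarrow{\sim} Y$ together with its naturality, this rewrites as $\eta_Y \circ f = 0$, where $\eta_Y := (\iota_A \otimes \id_Y)\circ l_Y^{-1}: Y \to A \otimes Y$ is the unit of the adjunction $F_{A,\sigma}\dashv U$ between induction and the forgetful functor $U:\cC_A \to \cC$.

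The final step -- and the principal technical point -- is to verify that $\eta_Y$, equivalently the morphism $\iota_A \otimes \id_Y$, is a monomorphism in $\cC$ for every object $Y$; once this is established, post-composition with the mono $\eta_Y$ is injective on hom-sets, so $\eta_Y \circ f = 0$ forces $f = 0$, completing the proof. The obstacle is that in a general r-category the tensor functor is only right exact (it has a right adjoint but in general no left adjoint), so tensoring a mono with $\id_Y$ is not automatically a mono. My approach is to exploit the r-category structure directly: the right adjoint $[Y,-]$ of $-\otimes Y$ (explicitly $[Y,W] \cong D(Y \otimes D^{-1}W)$) gives a natural bijection
\begin{equation*}
    \Hom_\cC(Z \otimes Y, W) \cong \Hom_\cC(Z, [Y,W]),
\end{equation*}
under which the map $\Hom_\cC(Z\otimes Y, \iota_A\otimes\id_Y)$ becomes the map $\Hom_\cC(Z,[Y,\iota_A\otimes\id_Y])$; chasing naturality squares for the unit of this adjunction and invoking the hypothesis that $\iota_A$ is a mono (i.e., $\Hom_\cC(-,\iota_A)$ is injective) then yields the required injectivity of hom-set maps and hence that $\iota_A \otimes \id_Y$ is monic. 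This completes the faithfulness of $I$.
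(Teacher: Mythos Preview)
Your reduction to showing that $f\mapsto\id_A\otimes f$ is injective on hom-sets is correct and matches the paper. The gap is in your final step, where you assert that $\iota_A\otimes\id_Y$ is a monomorphism. The adjunction $-\otimes Y\dashv[Y,-]$ does not yield this: the bijection $\Hom_\cC(Z\otimes Y,W)\cong\Hom_\cC(Z,[Y,W])$ is natural in $Z$ and $W$, so post-composition with $\iota_A\otimes\id_Y$ on $\Hom_\cC(Z\otimes Y,-)$ corresponds to post-composition with $[Y,\iota_A\otimes\id_Y]$ on $\Hom_\cC(Z,-)$. But this only tests $\iota_A\otimes\id_Y$ against objects of the form $Z\otimes Y$, not against arbitrary objects. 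Worse, unwinding $[Y,-]=D(Y\otimes D^{-1}(-))$ shows that $[Y,\iota_A\otimes\id_Y]$ is monic precisely when $\iota_A\otimes\id_Y$ is, so the argument is circular. The underlying point is that $-\otimes Y$ is a \emph{left} adjoint in an r-category and hence need not preserve monomorphisms; you correctly flag this obstacle but do not actually overcome it.

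The paper sidesteps the issue entirely by exploiting the dualizing functor rather than trying to show $\iota_A\otimes\id_Y$ is monic. From $\id_A\otimes f_1=\id_A\otimes f_2$ one gets $f_1\otimes\id_A=f_2\otimes\id_A$ via the braiding; tensoring on the left with $DY$, precomposing with $\id_{DY\otimes X}\otimes\iota_A$, and postcomposing with $e_Y\otimes\id_A$ collapses everything (using unit coherence) to the equality $\iota_A\circ e_Y\circ(\id_{DY}\otimes f_1)=\iota_A\circ e_Y\circ(\id_{DY}\otimes f_2)$ of morphisms $DY\otimes X\to A$. Now injectivity of $\iota_A$ applies directly---no tensoring needed---giving $e_Y\circ(\id_{DY}\otimes f_1)=e_Y\circ(\id_{DY}\otimes f_2)$, i.e., $Df_1=Df_2$, whence $f_1=f_2$ since $D$ is an anti-equivalence. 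The trick is to use $e_Y:DY\otimes Y\to\vac$ to absorb the $Y$-factor so that $\iota_A$ can be cancelled without first tensoring it.
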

\begin{proof}
    Let $f_1, f_2: X\rightarrow Y$ be two morphisms such that $I(f_1)=I(f_2)$; we need to show that $f_1=f_2$. From the definition of $I$ on morphisms, $I(f_1)=I(f_2)$ is equivalent to $\id_A\otimes f_1=\id_A\otimes f_2$ as morphisms in $\cC$, and thus also $f_1\otimes\id_A=f_2\otimes\id_A$ since $\cC$ is braided. This implies that the compositions
    \begin{equation*}
        DY\otimes X \xrightarrow{\id_{DY\otimes X}\otimes\iota_A} DY\otimes X\otimes A\xrightarrow{\id_{DY}\otimes f_i\otimes\id_A} DY\otimes Y\otimes A \xrightarrow{e_Y\otimes\id_A} A
    \end{equation*}
    are equal for $i=1,2$. By properties of unit isomorphisms, this implies
    \begin{equation*}
        \iota_A\circ e_Y\circ(\id_{DY}\otimes f_1) =\iota_A\circ e_Y\circ(\id_{DY}\otimes f_2),
    \end{equation*}
    and then 
    \begin{equation*}
        e_Y\circ(\id_{DY}\otimes f_1) = e_Y\circ(\id_{DY}\otimes f_2)
    \end{equation*}
    since $\iota_A$ is injective. By the definition of $D$ on morphisms, this implies $Df_1=Df_2$, and then $f_1=f_2$ since $D$ is an anti-equivalence.
\end{proof}

Now under further conditions, we can prove that $I$ is fully faithful:
\begin{theorem}\label{thm:faithfulness-of-I}
Let $\cC$ be a locally finite $\mathbb{K}$-linear abelian braided r-category with duality functor $D$, and let $A$ be a commutative algebra in $\cC$ such that the unit morphism $\iota_A:\vac\rightarrow A$ is injective. Also assume that the following three conditions hold:
\begin{enumerate}
    \item[(a)] The monoidal category $\cC_A$ is rigid (where we identify $\cC_A=\cC_A^\sigma\subseteq {}_A\cC_A$ with $\sigma = c_{A,-}^{-1}$).
    
    \item[(b)] The map $\varphi_X: F_{A,\sigma}(DX) \rightarrow F_{A,\sigma}(X)^*$ in \eqref{eqn:phiX-def} is an isomorphism in $\cC_A$ for any object $X\in\cC$.
    \item[(c)] For any $\cC$-subobject $s: S\hookrightarrow A$ such that $S$ is not contained in $\mathrm{Im}\,\iota_A$ (that is, $s\neq \iota_A\circ f$ for any $f\in\mathrm{Hom}_\cC(S,\vac)$), there exists an object $Z\in\cC$ such that $c_{Z,S}\neq c_{S,Z}^{-1}$ and
        the map $s\otimes\id_Z: S\otimes Z\rightarrow A\otimes Z$ is injective.
   % \begin{enumerate}
    %    \item 
       % The centralizer of $\cS$ with $Y$ is trivial, i.e. $c_{Y, Z} \circ c_{Z, Y} = \id_{Z \otimes Y}$ for all $Z \in \cS$ implies that $Y$ is a direct sum of tensor identities $Y \cong \one \oplus \dots \oplus \one$. 
        %\item The sequence
    %$0 \rightarrow  Z \otimes Y \xrightarrow{\id_Z \otimes f} Z \otimes A$ is also exact.
    %\end{enumerate}
    %There exists a set $\cS$ of objects in $\cC$ such that for any $Z \in \cS$ and any exact sequence of the form $0 \rightarrow Y \xrightarrow{f} A$, the following two properties hold:
\end{enumerate}
Then the functor $I:\cC \rightarrow \cZ(\cC)_{(A,\sigma)}^{\loc}$ is fully faithful. 
\end{theorem}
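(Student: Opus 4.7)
The plan is to invoke Proposition \ref{prop:I-faithful} for faithfulness (only injectivity of $\iota_{A}$ is needed) and to reduce fullness, via the rigidity of the target, to the special case $Y=\vac$. First, condition (a) makes $\cC_{A}$ rigid, so $\cZ(\cC_{A})$ is rigid, which by the Schauenburg equivalence is identified with $\cZ(\cC)_{(A,\sigma)}^{\loc}$; hence the latter category is rigid. Next, by condition (b) together with Lemma \ref{lem:central-functor-duality}, I get natural isomorphisms $I(DX)\cong I(X)^{*}$ in $\cZ(\cC)_{(A,\sigma)}^{\loc}$, so $I$ commutes with duality. Combining the standard rigidity adjunction in $\cZ(\cC)_{(A,\sigma)}^{\loc}$, the monoidal structure of $I$, the r-category adjunction $\Hom_{\cC}(X\otimes DY,\vac)\cong\Hom_{\cC}(X,D^{2}Y)$, and the natural isomorphism $\varphi^{-}_{Y}:Y\to D^{2}Y$ from Section 3.4, I obtain a commutative diagram of natural bijections identifying $I:\Hom_{\cC}(X,Y)\to\Hom_{\cZ(\cC)_{(A,\sigma)}^{\loc}}(I(X),I(Y))$ with $I:\Hom_{\cC}(X\otimes DY,\vac)\to\Hom_{\cZ(\cC)_{(A,\sigma)}^{\loc}}(I(X\otimes DY),I(\vac))$. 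So it suffices to prove the latter is a bijection for every object of $\cC$, i.e., that $I$ is full on Hom-sets into $I(\vac)$.

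For this reduced problem, let $g:I(W)\to I(\vac)$ be an arbitrary morphism, so $g:A\otimes W\to A$ is $A$-linear and respects the half-braidings of $i_{+}(W)$ and $(A,\sigma)$. Since $g$ is $A$-linear, it is determined by $\tilde{g}:=g\circ(\iota_{A}\otimes\id_{W}):W\to A$ via $g=\mu_{A}\circ(\id_{A}\otimes\tilde{g})$, and $g=I(f)$ for some $f:W\to\vac$ precisely when $\tilde{g}$ factors through $\iota_{A}$. Unpacking the half-braiding condition using $\gamma^{W}_{Z}=(\id_{A}\otimes c_{Z,W})\circ(c_{A,Z}^{-1}\otimes\id_{W})$ and $\sigma_{Z}=c_{A,Z}^{-1}$, precomposing with $\id_{Z}\otimes\iota_{A}\otimes\id_{W}$ to absorb the $A$-coordinate, and applying the naturality of the braiding, I will derive the clean identity
\[
(\tilde{g}\otimes\id_{Z})\circ c_{Z,W}=(\tilde{g}\otimes\id_{Z})\circ c_{W,Z}^{-1}
\]
in $\Hom_{\cC}(Z\otimes W,A\otimes Z)$ for every $Z\in\cC$.

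I then factor $\tilde{g}=s\circ p$ through its image, with $p:W\twoheadrightarrow S$ epic and $s:S\hookrightarrow A$ the image inclusion. Applying the naturality of the braiding to move $p$ across $c_{Z,W}$ and $c_{W,Z}^{-1}$, the identity above becomes
\[
(s\otimes\id_{Z})\circ c_{Z,S}\circ(\id_{Z}\otimes p)=(s\otimes\id_{Z})\circ c_{S,Z}^{-1}\circ(\id_{Z}\otimes p)
\]
for every $Z$. Suppose for contradiction that $S\not\subseteq\mathrm{Im}(\iota_{A})$. Condition (c) then supplies $Z\in\cC$ with $c_{Z,S}\neq c_{S,Z}^{-1}$ such that $s\otimes\id_{Z}$ is monic; moreover, $\id_{Z}\otimes p$ is epic because $p$ is epic and $Z\otimes -$ is right exact (it has a right adjoint in the r-category $\cC$). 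Canceling the monic on the left and the epic on the right forces $c_{Z,S}=c_{S,Z}^{-1}$, contradicting the choice of $Z$. Hence $s=\iota_{A}\circ h$ for some $h:S\to\vac$ (using injectivity of $\iota_{A}$), and setting $f:=h\circ p:W\to\vac$ we get $\tilde{g}=\iota_{A}\circ f$ and thus $g=I(f)$, completing the proof.

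The main obstacle is the analysis of the $Y=\vac$ case: one must carefully unpack the half-braiding equation into the identity on $\tilde{g}$ (taking care with the hexagon, naturality, and unit maps), pass to the image factorization $\tilde{g}=s\circ p$, and then match condition (c) against the cancellation of a monic and an epic to produce the contradiction. The reduction to $Y=\vac$ is then a formal consequence of rigidity, the duality commutation $I(DX)\cong I(X)^{*}$, and naturality; the cleanest technical worry in that reduction is checking that the identifications $\psi:I(DY)\to I(Y)^{*}$ and $\varphi^{-}_{Y}:Y\to D^{2}Y$ are natural enough to make the chain of Hom-isomorphisms commute with $I$.
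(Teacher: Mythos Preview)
Your proof is correct and follows essentially the same route as the paper: both use Proposition~\ref{prop:I-faithful} for faithfulness, both reduce to analyzing morphisms into $I(\vac)=(A,\sigma)$ via rigidity of the target and the identification $I(DX)\cong I(X)^*$, and both derive the key identity $(s\otimes\id_Z)\circ c_{Z,S}=(s\otimes\id_Z)\circ c_{S,Z}^{-1}$ on the image $S$ of the morphism $W\to A$, then invoke condition (c) for the contradiction.

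The one genuine difference is in the reduction step. You set up a commutative square identifying $I$ on $\Hom_\cC(X,Y)$ with $I$ on $\Hom_\cC(X\otimes DY,\vac)$, and you correctly flag the commutativity of this square (compatibility of the duality isomorphisms $\psi$ and $\varphi^-_Y$ with $I$) as a technical worry. The paper sidesteps this entirely: rather than establishing commutativity, it embeds both $\Hom_{\cZ(\cC)_{(A,\sigma)}^{\loc}}(I(X),I(Y))$ and $\Hom_\cC(X,Y)$ into $\Hom_\cC(DY\otimes X,A)$ via separate chains of isomorphisms (the first using Frobenius reciprocity for $\widetilde{F}_{A,\sigma}$, the second using $\iota_A\circ-$), shows the two images coincide (namely, the morphisms factoring through $\iota_A$), and then uses local finiteness to conclude the Hom spaces have equal finite dimension. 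Faithfulness then forces bijectivity. This dimension-count argument is cleaner precisely because it never asks whether the two chains are compatible with $I$; it only needs that each chain consists of isomorphisms. Your worry is legitimate but unnecessary: even in your framework, once you know the vertical maps are isomorphisms and the bottom $I$ is a bijection, equality of dimensions plus faithfulness gives the result without commutativity.
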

\begin{proof}
Since Proposition \ref{prop:I-faithful} implies that $I$ is faithful, it suffices to show that $\Hom_{\cZ(\cC)_{(A,\sigma)}^{\loc}}(I(X),I(Y))$ and $\Hom_\cC(X,Y)$ are isomorphic (finite-dimensional) $\mathbb{K}$-vector spaces for all objects $X,Y\in\cC$. Indeed,
%Using assumptions (1), (2) and Lemma~\ref{lem:central-functor-duality}, we have that $I$ preserves duality. Let $D^{-1}$ be a quasi-inverse of $D$. Then for any $X\in\cC$,
%\begin{equation}\label{eq:faithfulness-I}
%    I(D^{-1}X) \cong {}^*(I(D^{-1}X)^*)\cong{}^*I(DD^{-1}X)\cong {}^*I(X).
%\end{equation}
%We get the following isomorphims of $\Hom$ spaces:
\begin{equation}\label{eq:homs}
    \begin{split}
        \Hom_{\cZ(\cC)_{(A, \sigma)}^{\loc}}\left(I(X), I(Y)\right) 
        &\cong  \Hom_{\cZ(\cC)_{(A, \sigma)}^{\loc}}\left(I(Y)^*\otimes I(X), (A, \sigma)\right) \\
        %{}^{\text{Lemma~\ref{lem:central-functor-duality}}} 
        &\cong \Hom_{\cZ(\cC)_{(A, \sigma)}^{\loc}}\left(I(DY)\otimes I(X), (A, \sigma)\right) \\
        %{}^{(I\;\text{monoidal})}
        & \cong  \Hom_{\cZ(\cC)_{(A, \sigma)}^{\loc}}\left(I(DY\otimes X), (A, \sigma)\right) \\
        %{}^{(\text{Lemma~\ref{lem:I-F'-same}})}
        & \cong \Hom_{\cZ(\cC)_{(A, \sigma)}^{\text{loc}}}(\widetilde{F}_{A,\sigma}(i_+(DY\otimes X)), (A, \sigma))\\
        &\cong \Hom_{\cZ(\cC)}\left(i_+(DY\otimes X), (A, \sigma)\right)\\
        &\cong \Hom_{\cZ(\cC)}((DY\otimes X, c_{ - ,DY\otimes X)}, (A, c_{A,  - }^{-1}))\\
        &\hookrightarrow \Hom_\cC(DY\otimes X,A),
    \end{split}
\end{equation}
where the second isomorphism uses Lemma \ref{lem:central-functor-duality}, the third uses the fact that $I$ is monoidal, and the fifth uses the adjunction between the induction functor $\widetilde{F}_{A,\sigma}$ and the forgetful functor.
%The last line is due to the fact that $\cZ(\cC)^0$ is a full subcategory of $\cZ(\cC)$ and the definition of $i_+$.
On the other hand,
\begin{align}\label{eq:homs2}
    \Hom_\cC(X,Y) \cong \Hom_\cC(DY,DX)\cong \Hom_\cC(DY\otimes X,\vac)\hookrightarrow\Hom_\cC(DY\otimes X,A),
\end{align}
where the last map, given by $f\mapsto\iota_A\circ f$ for $f\in\Hom_\cC(DY\otimes X,\vac)$, is injective because $\iota_A$ is injective. Thus it suffices to show that a $\cC$-morphism $f: DY\otimes X\rightarrow A$ commutes with the half-braidings $c_{-,DY\otimes X}$ and $c_{A,-}^{-1}$ if and only if $\mathrm{Im}\,f\subseteq\mathrm{Im}\,\iota_A$.

Writing $W$ for $DY\otimes X$, a $\cC$-morphism $f: W\rightarrow A$ is a morphism in $\Hom_{\cZ(\cC)}((W, c_{ - , W}), (A, c_{A,  - }^{-1}))$ if and only if the diagram
\begin{equation*}
%\xymatrixcolsep{3pc}
%    \xymatrix{
\begin{tikzcd}[column sep=3pc]
Z \otimes W  \ar[d, "c_{Z, W}"] \ar[r, "\id_Z \otimes f"] &  Z \otimes A \ar[d, "c_{A, Z}^{-1}"] \\
W \otimes Z \ar[r, "f \otimes \id_Z"]  & A \otimes Z
\end{tikzcd}
%    }
\end{equation*}
commutes for any object $Z\in\cC$. As $\cC$ is abelian, we can factor $f$ as $f: W \stackrel{\pi}{\twoheadrightarrow} S \xhookrightarrow{s} A$ for some surjection $\pi$ and some injection $s$, where $S$ is the image of $f$. By naturality of the braiding we get
\begin{align*}
    (f\otimes\id_Z)\circ c_{Z,W} & = (s\otimes\id_Z)\circ c_{Z,S}\circ(\id_Z\otimes\pi),\nonumber\\
    c_{A,Z}^{-1}\circ(\id_Z\otimes f) & = (s\otimes\id_Z)\circ c_{S,Z}^{-1}\circ(\id_Z\otimes\pi).
\end{align*}
Since the tensor product of an abelian Grothendieck-Verdier category is right exact, $\id_Z \otimes \pi$ is still a surjection. Hence $f$ is a morphism in $\cZ(\cC)$ if and only if
\begin{equation}\label{eqn:morph-in-Z(C)-cond}
    (s\otimes\id_Z)\circ c_{Z,S} = (s\otimes\id_Z)\circ c_{S,Z}^{-1}
\end{equation}
for all objects $Z\in\cC$.

If $S$ is not contained in $\mathrm{Im}\,\iota_A$, then by condition (c) there is an object $Z\in\cC$ such that $c_{S,Z}\neq c_{Z,S}^{-1}$ and $s\otimes\id_Z$ is injective. Thus \eqref{eqn:morph-in-Z(C)-cond} does not hold and $f\notin\Hom_{\cZ(\cC)}((W, c_{ - , W}), (A, c_{A,  - }^{-1}))$ in this case. If $S$ is contained in $\mathrm{Im}\,\iota_A$ on the other hand, then $s=\iota_A\circ g$ for some $g\in\Hom_\cC(S,\vac)$ and \eqref{eqn:morph-in-Z(C)-cond} holds due to the commutative diagram
\begin{equation*}
 %   \xymatrixcolsep{3pc}
 %   \xymatrixrowsep{2.5pc}
 %   \xymatrix{
 \begin{tikzcd}[column sep=3pc, row sep=2pc]
      Z\otimes S \ar[d, "c_{Z,S}"] \ar[r, "\id_Z\otimes g"] & Z\otimes\vac \ar[d, "c_{Z,\vac}=c_{\vac,Z}^{-1}"] \ar[r, "\id_Z\otimes\iota_A"] & Z\otimes A \ar[d, "c_{A,Z}^{-1}"] \\
S\otimes Z \ar[r, "g\otimes\id_Z"] & \vac\otimes Z \ar[r, "\iota_A\otimes\id_Z"] & A\otimes Z
 \end{tikzcd}
 %   }
\end{equation*}
Thus $f\in\Hom_{\cZ(\cC)}((W, c_{ - , W}), (A, c_{A,  - }^{-1}))$ in this case, showing that the compositions \eqref{eq:homs} and \eqref{eq:homs2} have the same image in $\Hom_\cC(DY\otimes X,A)$. This completes the proof that $I$ is fully faithful.
\end{proof}

Now we prove the main result of this section. As previously, $\cC_A$ denotes the monoidal category $\cC_A^{\sigma}$ with $\sigma = c_{-,A}^{-1}$. Since $\sigma$ is understood, we write simply $F_A$ for the induction functor $F_{A,\sigma}$.
\begin{theorem}\label{rigidity-of-C-from-CAloc}
Let $(\cC,D,\theta)$ be a locally finite $\KK$-linear abelian ribbon $r$-category, and let $A$ be a commutative algebra in $\cC$ such that $\theta_A^2=\id_A$ and the unit morphism $\iota_A:\vac\rightarrow A$ is injective. Also assume that the following conditions hold:
\begin{enumerate}
\item[(a)] $\cC_A^{\loc}$ is rigid and every simple object of $\cC_A$ is an object of $\cC_A^{\loc}$.
%\item Every object of $\cC_A$ is of finite length.
\item[(b)] For any simple object $X\in\cC$, the map $e_X: DX\otimes X\rightarrow\vac$ is surjective and any non-zero $\cC_A$-morphism from $F_A(DX)$ to $F_A(X)^*$ is an isomorphism.
%$F_A:\cC\rightarrow\cC_A$ preserves duality: $F_A(D(X)) \cong F_A(X)^*$. %\textcolor{red}{[it this needed?]It is enough if this holds for simple objects}
\item[(c)] For any $\cC$-subobject $s: S\hookrightarrow A$ such that $S$ is not contained in $\mathrm{Im}\,\iota_A$, there exists an object $Z\in\cC$ such that $c_{Z,S}\neq c_{S,Z}^{-1}$ and the map $s\otimes\id_Z: S\otimes Z\rightarrow A\otimes Z$ is injective.
%There exists a set $\cS$ of objects in $\cC$ satisfying the following properties for every subobject $Y\xhookrightarrow{f} A$:%such that for any $Z \in \cS$ and any exact sequence of the form $0 \rightarrow Y \xrightarrow{f} A$, the following two properties hold:
%\begin{enumerate}
%    \item The centralizer of $\cS$ with $Y$ is trivial, i.e. $c_{Y, Z} \circ c_{Z, Y} = \id_{Z \otimes Y}$ for all $Z \in \cS$ implies that $Y$ is a direct sum of tensor identities $Y \cong \one \oplus \dots \oplus \one$. 
%    \item For all $Z\in\cS$, the map $\id_Z\otimes f$ is monic. % The sequence $0 \rightarrow Z \otimes Y \xrightarrow{\id_Z \otimes f} Z \otimes A$ is also exact.
%\end{enumerate}
\end{enumerate}
Then $\cC$ is rigid.
\end{theorem}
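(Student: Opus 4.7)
The plan is to follow the six-step roadmap laid out in the introduction, assembling the ingredients already established in Sections 3.1--3.5. The overall strategy is: upgrade the Grothendieck-Verdier structure on $\cC_A^\loc$ (from both rigidity and from Theorem~\ref{thm:CAloc_ribbon_GV}) to an r-category structure on $\cC_A$, use Theorem~\ref{thm:GVto rigid} to conclude $\cC_A$ is rigid, transport this rigidity to $\cZ(\cC)_{(A,\sigma)}^\loc$ via Schauenburg, and finally embed $\cC$ into this rigid category via the braided monoidal functor $I$ in such a way that Lemma~\ref{lem:GVto rigid} applies.

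First, I will show that $\cC_A$ carries an r-category structure. By Theorem~\ref{thm:GVinRepA}, $(DA,\mu_{DA})$ is a dualizing object of $\cC_A$, and its restriction to $\cC_A^\loc$ is dualizing there by Theorem~\ref{thm:CAloc_ribbon_GV} (since $\theta_A^2=\id_A$). By assumption (a), $\cC_A^\loc$ is also rigid, so $A$ is a second dualizing object of $\cC_A^\loc$. By \cite[Proposition~2.3]{boyarchenko2013duality}, any two dualizing objects differ by an invertible object, so $DA\cong B$ for some invertible $B\in\cC_A^\loc$; in particular $DA$ is invertible in $\cC_A^\loc$, hence also in $\cC_A$. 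Applying \cite[Proposition~2.3]{boyarchenko2013duality} once more inside $\cC_A$, the tensor product $DA\otimes_A (DA)^{-1}\cong A$ is again a dualizing object of $\cC_A$, so $\cC_A$ is an r-category. Combined with condition (a) and the fact that $\cC_A$ is locally finite (short argument by induction on length, as in the proof of Theorem~\ref{thm:GVto rigid}), Theorem~\ref{thm:GVto rigid} gives that $\cC_A$ is rigid.

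Next, rigidity of $\cC_A$ implies rigidity of $\cZ(\cC_A)$ (Section~\ref{subsec:rig-mon-cats}), which via the Schauenburg equivalence $\cS$ transports to rigidity of $\cZ(\cC)_{(A,\sigma)}^\loc$. I will then verify that the functor $I:\cC\to\cZ(\cC)_{(A,\sigma)}^\loc$ is fully faithful by checking the hypotheses of Theorem~\ref{thm:faithfulness-of-I}: condition (a) of that theorem is our newly established rigidity of $\cC_A$; condition (c) is our condition (c); and condition (b) requires that $\varphi_X:F_A(DX)\to F_A(X)^*$ is an isomorphism for every $X\in\cC$. To produce this, I will apply Theorem~\ref{thm:FcommuteswithD} to the right exact monoidal functor $F_A:\cC\to\cC_A$, where $\cC_A$ is regarded as an r-category with dualizing object $A=F_A(\vac)$; its dualizing functor is then the left dual $(-)^*$ in the rigid category $\cC_A$. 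The hypotheses of Theorem~\ref{thm:FcommuteswithD} on simple objects are precisely the two parts of our condition (b), so $\varphi_X$ is an isomorphism for every finite-length, hence every, object $X\in\cC$.

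Finally, by Lemma~\ref{lem:central-functor-duality}, the fact that $\varphi_X$ is an isomorphism for all $X\in\cC$ yields $I(DX)\cong I(X)^*$ inside the rigid category $\cZ(\cC)_{(A,\sigma)}^\loc$. Combining this with the full faithfulness of the monoidal functor $I$, Lemma~\ref{lem:GVto rigid} (applied to the r-category $\cC$) implies $\cC$ is rigid, completing the proof. The main conceptual step is the first one, where the two a priori different Grothendieck-Verdier structures on $\cC_A^\loc$ are reconciled to produce an r-category structure on the larger category $\cC_A$; the rest of the argument is then bookkeeping against the preparatory results of Sections~\ref{subsec:GV-intro}--\ref{subsec:exact-algebras}. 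A possible subtlety to watch is the compatibility of the two dualizing functors on $\cC_A^\loc$ with the ribbon structure, but since we only use that they differ by tensoring with an invertible object (a purely monoidal statement), this should not require any additional hypothesis beyond $\theta_A^2=\id_A$.
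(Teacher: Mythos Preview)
Your proposal is correct and follows essentially the same route as the paper's proof: invoke Theorem~\ref{thm:GVto rigid} (via condition (a) and $\theta_A^2=\id_A$) to get rigidity of $\cC_A$, apply Theorem~\ref{thm:FcommuteswithD} with condition (b) to make $\varphi_X$ an isomorphism for all $X$, feed this into Theorem~\ref{thm:faithfulness-of-I} and Lemma~\ref{lem:central-functor-duality}, and conclude via Lemma~\ref{lem:GVto rigid}. The only remark is that your first paragraph re-explains the r-category argument that is already the content of the proof of Theorem~\ref{thm:GVto rigid}, so it is redundant once you invoke that theorem; the paper simply cites Theorem~\ref{thm:GVto rigid} directly.
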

\begin{proof}
As $\theta_A^2=\id_A$, condition (a) and Theorem \ref{thm:GVto rigid} imply that $\cC_A$ is rigid. Next, condition (b) and Theorem \ref{thm:FcommuteswithD} imply that the map $\varphi_X: F_A(DX)\rightarrow F_A(X)^*$ in \eqref{eqn:phiX-def} is a $\cC_A$-isomorphism for all objects $X\in\cC$ (note that $F_A$ is right exact because it has a right adjoint, namely, the forgetful functor $\cC_A\rightarrow\cC$). Theorem \ref{thm:faithfulness-of-I} now implies that $I: \cC\rightarrow\cZ(\cC)_{(A,\sigma)}^{\loc}$ is fully faithful, and $\cZ(\cC)_{(A,\sigma)}^{\loc}\cong\cZ(\cC_A)$ is rigid because $\cC_A$ is. Thus $\cC$ is rigid by Lemma~\ref{lem:GVto rigid}. 
\end{proof}

%%%%%%%%%%%%%%%%%%%%%%%%%%%%%%%%%%%%%%%%%%%%%%%%%%%%%%
%%%%%%%%%%%%%%%%%%%%%%%%%%%%%%%%%%%%%%%%%%%%%%%%%%%%%%
%%%%%%%%%%%%%%%%%%%%%%%%%%%%%%%%%%%%%%%%%%%%%%%%%%%%%%
%%%%%%%%%%%%%%%%%%%%%%%%%%%%%%%%%%%%%%%%%%%%%%%%%%%%%%
%%%%%%%%%%%%%%%%%%%%%%%%%%%%%%%%%%%%%%%%%%%%%%%%%%%%%%
%%%%%%%%%%%%%%%%%%%%%%%%%%%%%%%%%%%%%%%%%%%%%%%%%%%%%%

\section{Consequences for vertex operator algebra extensions}\label{sec:VOAs}

In this section, $\KK =\CC$. We apply the results of the preceding sections to braided monoidal categories of modules for vertex operator algebra (VOAs) and their extensions.

\subsection{Contragredient modules in vertex tensor categories}

Let $V$ be a  $\ZZ$-graded vertex operator algebra \cite{FLM, FHL, LL}. In particular, $V=\bigoplus_{n\in\ZZ} V_{(n)}$ is a $\ZZ$-graded vector space graded by conformal weights, there is a vertex operator map $Y_V(-,x): V\rightarrow \End(V)[[x,x^{-1}]]$, a vacuum vector $\vac\in V_{(0)}$ such that $Y_V(\vac,x) =\id_V$, and a conformal vector $\omega\in V_{(2)}$ such that $Y(\omega,x)=\sum_{n\in\ZZ} L_n\,x^{-n-2}$, where the operators $L_n$ span a representation of the Virasoro Lie algebra on $V$. The conformal weight $\ZZ$-grading of $V$ is given by eigenvalues for the Virasoro zero-mode $L_0$.

Let $W$ be a lower-bounded generalized $V$-module. In particular, $W=\bigoplus_{h\in\CC} W_{[h]}$ is a graded vector space and there is a vertex operator action $Y_W(-,x): V\rightarrow \End(W)[[x,x^{-1}]]$ such that $Y_W(\vac,x)=\id_W$ and $Y_W(\omega,x)=\sum_{n\in\ZZ} L_n\,x^{-n-2}$, where the operators $L_n$ again span a representation of the Virasoro algebra on $W$. Each homogeneous subspace $W_{[h]}$ is the generalized $L_0$-eigenspace with generalized eigenvalue $h\in\CC$, and $W_{[h]}=0$ for $\mathrm{Re}\,h$ sufficiently negative. As in \cite{FHL}, the \textit{contragredient} of $W$ is the unique lower-bounded generalized $V$-module structure on the graded dual vector space $W'=\bigoplus_{h\in\CC} W_{[h]}^*$ such that
\begin{equation}\label{eqn:VOA_contra_structure}
    \langle Y_{W'}(v,x)w', w\rangle =\langle w', Y_W(e^{xL_1}(-x^{-2})^{L_0}v,x^{-1})w\rangle
\end{equation}
for $v\in V$, $w'\in W'$, $w\in W$. A lower-bounded generalized $V$-module is \textit{grading restricted} if $\dim W_{[h]}<\infty$ for all $h\in\CC$. For brevity, we refer to grading-restricted generalized $V$-modules simply as $V$-modules.

Now let $\cC$ be a vertex tensor category of $V$-modules in the sense of \cite{HLZ1, HLZ2, HLZ3, HLZ4, HLZ5, HLZ6, HLZ7, HLZ8}, so that in particular $\cC$ has a braided monoidal category structure specified in \cite{HLZ8}; see also the exposition in \cite[\S 3.3]{creutzig2017tensor}. Let $A$ be another VOA, either $\ZZ$-graded or $\frac{1}{2}\ZZ$-graded, that contains $V$ as a vertex operator subalgebra; in particular, the conformal vectors of $V$ and $A$ coincide. If $A$ is an object of $\cC$ and $\Hom_{\cC}(V, A)\cong \CC$,
then $A$ has the structure of a commutative haploid algebra in $\cC$ \cite{Huang:2014ixa}. Moreover, the category of $A$-modules that lie in $\cC$ is a vertex tensor category and is isomorphic to $\cC_A^{\loc}$ as a braided monoidal category \cite{creutzig2017tensor}. We also have the larger monoidal category $\cC_A$ of left modules for $A$ considered as a commutative algebra in $\cC$. Following vertex operator notation, we use $\boxtimes$ rather than $\otimes$ to denote tensor product bifunctors, to distinguish them from vector space tensor products. In particular, we write $\boxtimes_V$ for the tensor product in $\cC$, and we write $\boxtimes_A$ for the tensor product of $A$-modules in $\cC_A$.

For $V$-modules $W_1$, $W_2\in\cC$, there is a (possibly logarithmic) tensor product intertwining operator $\cY_\boxtimes: W_1\otimes W_2\rightarrow (W_1\boxtimes_V W_2)[\log x]\lbrace x\rbrace$ \cite{FHL, HLZ2} such that the pair $(W_1\boxtimes_V W_2,\cY_\boxtimes)$ satisfies a universal property \cite{HLZ3}: For any intertwining operator $\cY: W_1\otimes W_2\rightarrow W_3[\log x]\lbrace x\rbrace$ such that $W_3$ is an object of $\cC$, there is a unique $V$-module homomorphism $f: W_1\boxtimes_V W_2\rightarrow W_3$ such that $f\circ\cY_\boxtimes =\cY$. In particular, the algebra multiplication $\mu_A: A\boxtimes_V A\rightarrow A$ is the unique $V$-module homomorphism such that $\mu_A\circ\cY_\boxtimes = Y_A$. Moreover, for any object $(M,\mu_M)\in\cC_A$, we get an intertwining operator $Y_M: A\otimes M\rightarrow M[\log x]\lbrace x\rbrace$ such that $\mu_M\circ\cY_\boxtimes = Y_M$.

If $\cC$ is closed under contragredient modules, then $\cC$ is a ribbon Grothendieck-Verdier category with dualizing object $V'$, dualizing functor $D=(-)'$, and ribbon twist $\theta = e^{2\pi i L_0}$ \cite{ALSW}. Thus by Theorem \ref{thm:GVinRepA}, the category $\cC_A$ of non-local $A$-modules in $\cC$ is also a Grothendieck-Verdier category. Moreover, since $A$ is at worst $\frac{1}{2}\ZZ$-graded by $L_0$-eigenvalues, by Theorem \ref{thm:CAloc_ribbon_GV}, $\cC_A^{\loc}$ is a braided Grothendieck-Verdier category with dualizing object $(A',\mu_{A'})$, where $\mu_{A'}$ is defined by \eqref{eqn:mu-DM-def}. But since $\cC_A^{\loc}$ is also the category of $A$-modules (where $A$ is considered as a VOA) which are objects of $\cC$ (when considered as $V$-modules), $\cC_A^{\loc}$ already has a Grothendieck-Verdier category structure by \cite{ALSW}, with dualizing object $(A',Y_{A'})$, where $Y_{A'}$ is defined by \eqref{eqn:VOA_contra_structure}. The main goal of this subsection is to show that $Y_{A'}=\mu_{A'}\circ\cY_\boxtimes$, so that these two Grothendieck-Verdier category structures on $\cC_A^{\loc}$ are the same, and further to describe the Grothendieck-Verdier category structure on the non-local module category $\cC_A$ in vertex algebraic terms:
\begin{theorem}\label{thm:VOA_and_tens_cat_contras}
Let $V$ be a VOA, let $\cC$ be a vertex tensor category of $V$-modules which is closed under contragredients, and let $A$ be a $\frac{1}{2}\ZZ$-graded VOA which contains $V$ as a vertex operator subalgebra and which is an object of $\cC$ when considered as a $V$-module. For any $(M,\mu_M)\in\cC_A$, set $Y_{M'}=\mu_{M'}^-\circ\cY_\boxtimes$, where $\mu_{M'}^-: A\boxtimes_V M'\rightarrow M'$ is the left $A$-action of \eqref{eqn:muDM-_def}. Then
\begin{equation}\label{eqn:YM'_char}
    \langle Y_{M'}(a,x)m', m\rangle = \langle m', Y_M(e^{x L_1}(e^{\pi i} x^{-2})^{L_0} a,x^{-1})m\rangle
\end{equation}
for $a\in A$, $m'\in M'$, $m\in M$.
\end{theorem}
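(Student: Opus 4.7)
The strategy is to translate the defining diagram \eqref{eqn:muDM-_def} of $\mu_{M'}^-$ into an identity between intertwining operators via the universal property of the $P(z)$-tensor product $\boxtimes_V$, and then read off \eqref{eqn:YM'_char}.

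First I will recall the vertex-algebraic interpretation of each morphism entering \eqref{eqn:muDM-_def}. By \cite{ALSW}, for any $W\in\cC$ the Grothendieck-Verdier evaluation $e_W: W' \boxtimes_V W \to V'$ is the unique $V$-module map such that $e_W \circ \cY_\boxtimes = \cY_W^{\mathrm{pair}}$, where $\cY_W^{\mathrm{pair}}$ is the canonical FHL pairing intertwining operator of type $\binom{V'}{W'\,W}$, characterized by
\begin{equation*}
\langle \cY_W^{\mathrm{pair}}(w',x)w,\,v\rangle \;=\; \langle w',\,Y_W(e^{xL_1}(e^{\pi i}x^{-2})^{L_0}v,\,x^{-1})w\rangle
\end{equation*}
for $v\in V$, $w\in W$, $w'\in W'$ (the branch $e^{\pi i}$ being fixed so that $e_V$ is the identity on $V'$). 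Similarly, $\mu_M \circ \cY_\boxtimes$ is precisely the vertex operator $Y_M$ viewed as an intertwining operator of type $\binom{M}{A\,M}$, and the HLZ braiding satisfies $c_{M',A}(\cY_\boxtimes^{M',A}(m',x)a) = e^{xL_{-1}}\,\cY_\boxtimes^{A,M'}(a,e^{\pi i}x)m'$, whose inverse acting on $\cY_\boxtimes^{A,M'}(a,x)m'$ introduces a factor of $e^{-\pi i}$ in the corresponding formal variable together with an $e^{-xL_{-1}}$ translation.

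Next, applying the universal property of $\boxtimes_V$ to \eqref{eqn:muDM-_def} yields that a $V$-module map $\tilde{\mu}:A\boxtimes_V M' \to M'$ coincides with $\mu_{M'}^-$ if and only if the intertwining operator $Y_{M'}^{\tilde\mu}:=\tilde{\mu}\circ\cY_\boxtimes$ of type $\binom{M'}{A\,M'}$ satisfies, for all $a\in A$, $v\in V$, $m'\in M'$, $m\in M$,
\begin{equation*}
\langle Y_{M'}^{\tilde\mu}(a,x_1)m',\,Y_M(e^{x_2L_1}(e^{\pi i}x_2^{-2})^{L_0}v,x_2^{-1})m\rangle \;=\; \langle m',\,Y_M(e^{x_2L_1}(e^{\pi i}x_2^{-2})^{L_0}v,x_2^{-1})Y_M(a,\star)m\rangle,
\end{equation*}
where $\star$ is the substitution extracted from the HLZ inverse braiding formula. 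I will then take $\tilde\mu$ to be the $V$-module map corresponding to the intertwining operator defined by the right-hand side of \eqref{eqn:YM'_char} and verify the above identity directly. Unwinding the right-hand side of the above display with the HLZ inverse braiding, the $L_{-1}$-derivative property, and the $L_0$-, $L_1$-conjugation formulas under $x\mapsto x^{-1}$, one matches both sides by writing them as formal series whose coefficients agree term by term. Since pairing with $v \in V \subseteq A$ separates points in $A$ and since $Y_M$ encodes the entire $A$-action, this pins down $Y_{M'}^{\tilde\mu}$ and forces the formula \eqref{eqn:YM'_char}. Specializing $(M,\mu_M)\in\cC_A^\loc$ and $a\in V\subseteq A$ recovers the standard FHL formula \eqref{eqn:VOA_contra_structure}, so the two Grothendieck-Verdier structures on $\cC_A^\loc$ — the categorical one from Theorem \ref{thm:CAloc_ribbon_GV} and the vertex-algebraic one from \cite{ALSW} applied to $A$ directly — coincide.

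The main obstacle is the careful tracking of the branch factors $e^{\pm \pi i}$ coming from three different places: the HLZ inverse braiding $c_{M',A}^{-1}$, the $L_0$-conjugation in the FHL pairing $\cY_W^{\mathrm{pair}}$, and the $L_0$-conjugation that \eqref{eqn:YM'_char} itself predicts. The compatibility of these three branch choices is rigid: the branch in $\cY_W^{\mathrm{pair}}$ is fixed by the convention in \cite{ALSW} that makes $e_V$ the identity, the branch in the HLZ braiding is fixed by its construction through positive imaginary-part analytic continuation, and the factor $e^{\pi i}$ in \eqref{eqn:YM'_char} is then the unique combination that makes the defining diagram \eqref{eqn:muDM-_def} commute. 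Once the signs are matched consistently, the remaining verification is a standard manipulation of iterated vertex operators using weak associativity and the commutator formula, and the uniqueness in the universal property of $\mu_{M'}^-$ completes the identification $Y_{M'}=\mu_{M'}^-\circ\cY_\boxtimes$ with the FHL adjoint intertwining operator.
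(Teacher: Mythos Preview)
Your strategy---translate the defining diagram \eqref{eqn:muDM-_def} into an intertwining-operator identity and then verify that the FHL adjoint operator on the right of \eqref{eqn:YM'_char} satisfies it---is sound and is essentially what the paper does. But what you have written is an outline, not a proof: the entire substance lies in the ``standard manipulation'' you defer, and the paper's argument shows it is anything but standard. Your displayed identity is incomplete (the placeholder $\star$ is never resolved), and it does not yet reflect the associativity isomorphism $\cA_{M',A,M}^{-1}$ that appears in \eqref{eqn:muDM-_def}. The correct translation---equation \eqref{eqn:associativity} in the paper---is an equality of \emph{analytic} functions of real $x_1>x_2>x_1-x_2>0$, not of formal series, and has the specific form
\[
\langle \cE_M(e^{x_0 L_{-1}} Y_{M'}(a,e^{\pi i}x_0)m', x_2)m, v\rangle =\langle \cE_M(m',x_1)Y_M(a,x_2)m, v\rangle,
\]
where $\cE_M$ is computed explicitly as $\Omega_0(A_0(\Omega_0(Y_M)))$; this is more intricate than your description of $\cY_W^{\mathrm{pair}}$.

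The paper's execution is substantially different from yours in two respects you do not mention. First, rather than keeping a general $v\in V$, it specializes to $v=\vac$; this collapses $\cE_M$ to an expression involving only $L_{\pm 1}$, $L_0$ and makes the subsequent conjugation calculus tractable. Second, the remaining computation is not a formal weak-associativity argument: it requires re-expanding the same analytic function first as a series in $(x_1,x_2)$ and then in $(x_0,x_2)$, matching the two via the $L_0$- and $L_1$-conjugation formulas of \cite[Proposition 3.36, Remark 3.38]{HLZ2}, and invoking the $\Omega_0$ skew-symmetry construction once more to simplify the tail. Only after these steps (about two pages of explicit calculation) can one pass back to a formal-series identity and perform the final substitutions $x_0\mapsto e^{-\pi i}x_0$, $m\mapsto x_2^{2L_0}e^{\pi i L_0}e^{x_2L_1}m$ that yield \eqref{eqn:YM'_char}. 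None of this is in your proposal; without it there is no proof.
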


\begin{proof}
    The left $A$-action $\mu_{M'}^-: A\boxtimes_V M' \rightarrow M'$ of \eqref{eqn:muDM-_def} is characterized by
    \begin{equation}\label{eqn:muM'_def}
        e_M\circ(\mu_{M'}\boxtimes_V\id_M)\circ(c_{M',A}\boxtimes_V\id_M) = e_M\circ(\id_{M'}\boxtimes_V\mu_M)\circ\cA_{M',A,M}^{-1}.
    \end{equation}
    The braiding isomorphism $c_{M',A}$ and associativity isomorphism $\cA_{M',A,M}$ are defined in \cite{HLZ8} (see also \cite[\S 3.3]{creutzig2017tensor}). The map $e_M: M'\boxtimes_V M\rightarrow V'$ is defined in \cite[\S 3.2]{McRae2021} in the special case $V\cong V'$; more generally, we define an intertwining operator $\cE_M = \Omega_0(A_0(\Omega_0(Y_M))): M'\otimes M\rightarrow V'[\log x]\lbrace x\rbrace$ using the skew-symmetry $\Omega_0$ and adjoint $A_0$ intertwining operator constructions of \cite{FHL, HLZ2}. As in \cite{McRae2021}, a calculation shows that 
    \begin{equation}\label{eqn:EM_def}
        \langle\cE_M(m',x)m, v\rangle =\langle e^{-x^{-1}L_1} m', Y_M(e^{xL_1}v,x^{-1})e^{-xL_1} e^{-\pi i L_0}x^{-2L_0} m\rangle
    \end{equation}
for $m'\in M'$, $m\in M$, $v\in V$. Then $e_M$ is the unique $V$-module homomorphism such that $e_M\circ\cY_\boxtimes =\cE_M$.

Now \eqref{eqn:muM'_def} together with the definitions of the commutativity and associativity isomorphisms in $\cC$ imply that for any $a\in A$, $m'\in M'$, $m\in M$, and $v\in V$,
\begin{align}\label{eqn:associativity}
    \langle \cE_M(e^{x_0 L_{-1}} Y_{M'}(a,e^{\pi i}x_0)m', x_2)m, v\rangle =\langle \cE_M(m',x_1)Y_M(a,x_2)m, v\rangle.
\end{align}
Here $x_1$, $x_2$, and $x_0=x_1-x_2$ are any real numbers such that $x_1>x_2>x_1-x_2>0$ and we substitute powers of the formal variable $x$ in an intertwining operator with powers of the real number $x_i$, $i=0,1,2$, using the real-valued branch of logarithm $\ln x_i$. Taking $v=\vac$ in \eqref{eqn:associativity} and applying \eqref{eqn:EM_def}, we get
\begin{align}\label{eqn:YM'_char_1}
\langle e^{-x_2^{-1}L_1} e^{x_0 L_{-1}}  Y_{M'}(a,e^{\pi i} & x_0)m', e^{-x_2 L_1} e^{-\pi i L_0}x_2^{-2L_0} m\rangle\nonumber\\
&=\langle e^{-x_1^{-1}L_1} m', e^{-x_1L_1} e^{-\pi i L_0}x_1^{-2L_0} Y_M(a,x_2)m\rangle.
\end{align}
The left side is a series in powers of $x_0$ and $x_2$ which converges absolutely when $x_2>x_0>0$, and the right side is a series in powers of $x_1$ and $x_2$ which converges absolutely when $x_1>x_2>0$.

We analyze the right side of \eqref{eqn:YM'_char_1} first, using the $L_0$- and $L_1$-conjugation formulas from \cite[Proposition 3.36]{HLZ2} and the commutator formula for exponentials of $L_0$ and $L_1$ in \cite[Remark 3.38]{HLZ2}:
\begin{align}\label{eqn:YM'_char_2}
 &   \langle e^{-x_1^{-1}L_1} m', e^{-x_1L_1}  e^{-\pi i L_0}x_1^{-2L_0} Y_M(a,x_2)m\rangle\nonumber\\
    &= \langle e^{x_1^{-1}L_1}e^{-\pi i L_0}m', e^{x_1L_1} Y_M(x_1^{-2L_0} a, x_1^{-2} x_2)x_1^{-2L_0} m\rangle\nonumber\\
    & = \left\langle e^{x_1^{-1}L_1}e^{-\pi i L_0}m', Y_M\left(e^{x_1(1-x_1^{-1}x_2)L_1} (1-x_1^{-1}x_2)^{-2L_0} x_1^{-2L_0}a,\frac{x_1^{-2}x_2}{1-x_1^{-1}x_2}\right)e^{x_1L_1}x_1^{-2L_0}m\right\rangle\nonumber\\
    & =\left\langle e^{x_1^{-1}L_1}e^{-\pi i L_0}m', Y_M\left(e^{(x_1-x_2)L_1}(x_1-x_2)^{-2L_0} a,\frac{x_1^{-1}x_2}{x_1-x_2}\right)e^{x_1L_1}x_1^{-2L_0}m\right\rangle\nonumber\\
    & =\left\langle(x_0x_1)^{-L_0}e^{x_1^{-1}L_1}e^{-\pi i L_0}m', Y_M\left((x_0x_1)^{L_0}e^{x_0 L_1}x_0^{-2L_0} a, x_2\right))(x_0x_1)^{L_0}e^{x_1L_1}x_1^{-2L_0}m\right\rangle\nonumber\\
    & =\left\langle e^{x_0 L_1}(e^{\pi i}x_0x_1)^{-L_0}m', Y_M\left(e^{x_1^{-1}L_1}(x_0^{-1}x_1)^{L_0} a,x_2\right)e^{x_0^{-1}L_1}(x_0x_1^{-1})^{L_0} m\right\rangle\nonumber\\
    & =\sum_{h\in\CC}\sum_{k\geq 0}\langle  e^{x_0 L_1}(e^{\pi i}x_0x_1)^{-L_0}m', [e^{x_1^{-1}L_1}(x_0^{-1}x_1)^{L_0} a]_{h;k} e^{x_0^{-1}L_1}(x_0x_1^{-1})^{L_0} m\rangle x_2^{-h-1}(\ln x_2)^k,
\end{align}
where we use typical notation for the coefficients of powers of $x_2$ in $Y_M(-,x_2)$ in the last step. Due to the lower truncation property of intertwining operators, there are finitely many non-zero summands on the right side of \eqref{eqn:YM'_char_2}, and each summand is an analytic function in $x_1$ and $x_2$, since $x_0=x_1-x_2$. Moreover, we get the series expansion on the right side of \eqref{eqn:YM'_char_1} by taking the series expansions in $x_1$ and $x_2$ of each of the finitely many non-zero summands in \eqref{eqn:YM'_char_2} and adding them up. Specifically, we expand $\ln x_0$ and powers of $x_0$ as series in non-negative integer powers of $x_2$ because $x_1>x_2$:
\begin{equation*}
    \ln(x_1-x_2)=\ln x_1-\sum_{i\geq 1} \frac{1}{i} x_1^{-i}x_2^i,\qquad (x_1-x_2)^h =\sum_{i\geq 0}(-1)^i\binom{h}{i} x_1^{h-i}x_2^i
\end{equation*}
for $h\in\CC$.

Note that the analytic function on the right side of \eqref{eqn:YM'_char_2} is a finite linear combination of monomials in $x_1$, $\ln x_1$, $x_2$, $\ln x_2$, $x_0$, and $\ln x_0$. In view of the left side of \eqref{eqn:YM'_char_1}, we want to expand this analytic function as a series in $x_0$ and $x_2$ in the region $x_2>x_0>0$. To do so, we make the substitutions
\begin{equation*}
    x_1^h\mapsto (x_2+x_0)^h =\sum_{i\geq 0} \binom{h}{i} x_0^i x_2^{h-i},\qquad \ln x_1\mapsto\ln(x_2+x_0)=\ln x_2-\sum_{i\geq 1} \frac{(-1)^{i}}{i} x_0^i x_2^{-i}
\end{equation*}
for $h\in\CC$. Thus in terms of $x_0$ and $x_2$, \eqref{eqn:YM'_char_2} is equal to the following series, which we continue to analyze using the conjugation formulas of \cite[Proposition 3.36 and Remark 3.38]{HLZ2}:
\begin{align}\label{eqn:YM'_char_3}
   & \left\langle e^{x_0 L_1} (e^{\pi i}x_0(x_2+x_0))^{-L_0} m',Y_M\left(e^{(x_2+x_0)^{-1}L_1} (x_0^{-1}(x_2+x_0))^{L_0}a, x_2\right) e^{x_0^{-1}L_1} (x_0(x_2+x_0)^{-1})^{L_0} m\right\rangle\nonumber\\
   &\quad =\left\langle m', (e^{\pi i}x_0(x_2+x_0))^{-L_0}  e^{x_0 L_{-1}} Y_M\left(e^{(x_2+x_0)^{-1}L_1} (x_0^{-1}(x_2+x_0))^{L_0}a, x_2\right) e^{x_0^{-1}L_1} (x_0(x_2+x_0)^{-1})^{L_0} m\right\rangle\nonumber\\
   &\quad =\left\langle m', (e^{\pi i}x_0(x_2+x_0))^{-L_0}   Y_M\left(e^{(x_2+x_0)^{-1}L_1} (x_0^{-1}(x_2+x_0))^{L_0}a, x_2+x_0\right)\cdot\right.\nonumber\\
   &\hspace{20em}\cdot\left.e^{x_0 L_{-1}} e^{x_0^{-1}L_1} (x_0(x_2+x_0)^{-1})^{L_0} m\right\rangle\nonumber\\
   &\quad =\left\langle m', Y_M\left( (e^{\pi i}x_0(x_2+x_0))^{-L_0} e^{(x_2+x_0)^{-1}L_1} (x_0^{-1}(x_2+x_0))^{L_0}a, (e^{\pi i}x_0)^{-1}\right)\cdot\right.\nonumber\\
   &\hspace{20em} \cdot\left. (e^{\pi i}x_0(x_2+x_0))^{-L_0}e^{x_0 L_{-1}} e^{x_0^{-1}L_1} (x_0(x_2+x_0)^{-1})^{L_0} m\right\rangle\nonumber\\
   &\quad =\left\langle m', Y_M(e^{-x_0 L_1} e^{\pi i L_0}(e^{\pi i}x_0)^{-2L_0} a, (e^{\pi i}x_0)^{-1}) e^{-(x_2+x_0)^{-1}L_{-1}} e^{-(x_2+x_0)L_1} e^{-\pi i L_0}(x_2+x_0)^{-2L_0} m\right\rangle.
\end{align}
We continue to analyze the final terms in this expression using the skew-symmetry intertwining operator construction $\Omega_0$ of \cite{HLZ2}, as well as \cite[Proposition 3.36 and Remark 3.38]{HLZ2}:
\begin{align*}
    &e^{-(x_2+x_0)^{-1}L_{-1}} e^{-(x_2+x_0)L_1} e^{-\pi i L_0}(x_2+x_0)^{-2L_0} m\nonumber\\
    &\quad = \left.e^{x_0 L_1}e^{-x_0 L_1} e^{-x_1^{-1}L_{-1}} e^{-x_1 L_1}e^{-\pi i L_0}x_1^{-2L_0} m\right\vert_{x_1=x_2+x_0}\nonumber\\
    &\quad = \left.e^{x_0L_1} e^{-x_0 L_1}\Omega_0(Y_M)(e^{-x_1 L_1}e^{-\pi i L_0}x_1^{-2L_0} m,  -x_1^{-1})\vac\right\vert_{x_1=x_2+x_0}\nonumber\\
    &\quad = \left.e^{x_0L_1}\Omega_0(Y_M)\left(e^{-x_0(1-x_0x_1^{-1})L_1}(1-x_0x_1^{-1})^{-2L_0} e^{-x_1 L_1} e^{-\pi iL_0} x_1^{-2L_0} m, -\frac{ x_1^{-1}}{1-x_0x_1^{-1}}\right)\vac\right\vert_{x_1=x_2+x_0} \nonumber\\
     &\quad = \left.e^{x_0L_1}\Omega_0(Y_M)(e^{-x_0(1-x_0x_1^{-1})L_1} e^{-x_1(1-x_0x_1^{-1})^2 L_1} e^{-\pi iL_0}(x_1-x_0)^{-2L_0} m, -(x_1-x_0)^{-1})\vac\right\vert_{x_1=x_2+x_0}\nonumber\\
     &\quad = e^{x_0L_1}e^{-x_2^{-1}L_{-1}}e^{-x_2L_1}e^{-\pi iL_0}x_2^{-2L_0} m.
\end{align*}
Plugging this back into \eqref{eqn:YM'_char_3} and then recalling \eqref{eqn:YM'_char_1}, we get
\begin{align}\label{eqn:YM'_char_4}
\langle e^{-x_2^{-1}L_1} & e^{x_0 L_{-1}}  Y_{M'}(a,e^{\pi i}  x_0)m', e^{-x_2 L_1} e^{-\pi i L_0}x_2^{-2L_0} m\rangle\nonumber\\
&=\langle m', Y_M(e^{-x_0 L_1} e^{\pi i L_0}(e^{\pi i}x_0)^{-2L_0} a, (e^{\pi i}x_0)^{-1})e^{x_0L_1}e^{-x_2^{-1}L_{-1}}e^{-x_2L_1}e^{-\pi iL_0}x_2^{-2L_0} m\rangle
\end{align}
as absolutely convergent series in powers $x_0$ and $x_2$ in the region $x_2>x_0>0$.

The equality of analytic functions in \eqref{eqn:YM'_char_4} is also an equality of formal series (see \cite[Proposition 7.8]{HLZ5} and \cite[Proposition 2.1]{huang:applicability}). Thus substituting $x_0\mapsto e^{-\pi i}x_0$ and $m\mapsto x_2^{2L_0}e^{\pi i L_0}e^{x_2L_1} m$, we get
\begin{equation*}
   \langle Y_{M'}(a,  x_0)m', e^{-x_0 L_{1}} e^{-x_2^{-1}L_{-1}}  m\rangle\nonumber\\
=\langle m', Y_M(e^{x_0 L_1} e^{\pi i L_0}x_0^{-2L_0} a, x_0^{-1})e^{-x_0L_1}e^{-x_2^{-1}L_{-1}} m\rangle 
\end{equation*}
Finally, substituting $m\mapsto e^{x_2^{-1}L_{-1}}e^{x_0L_1}m$, we get \eqref{eqn:YM'_char}.
\end{proof}

In the setting of the preceding theorem, one can show from the definitions that the left $A$-action $\mu_{M'}^-$ of \eqref{eqn:muDM-_def} and the left action $\mu_{M'}$ (or $\mu_{M'}^+$) of \eqref{eqn:mu-DM-def} are related by the double braiding:
\begin{equation*}
    \mu_{M'}^- =\mu_{M'}^+\circ c_{A,M'}^{-1}\circ c_{M',A}^{-1}.
\end{equation*}
So by Theorem \ref{thm:CAloc_ribbon_GV}, if $(M,\mu_M)\in\cC_A^{\loc}$, then $(M',\mu_{M'}^+)\in\cC_A^{\loc}$ as well, and therefore $\mu_{M'}^-=\mu_{M'}^+$:
\begin{corollary}\label{cor:VOA-and-tens-cat-contras-same}
    In the setting of Theorem \ref{thm:VOA_and_tens_cat_contras}, if $(M,\mu_M)\in\cC_A^{\loc}$, then $Y_{M'}=\mu_{M'}\circ\cY_\boxtimes$, where $Y_{M'}: A\rightarrow \End(M')[[x,x^{-1}]]$ is the vertex operator for the contragredient $A$-module $M'$, and $\mu_{M'}: A\boxtimes_V M'\rightarrow M'$ is the left $A$-action defined in \eqref{eqn:mu-DM-def}. In particular, the ribbon Grothendieck-Verdier category structure on $\cC_A^{\loc}$ considered as a vertex tensor category of modules for the VOA $A$ is the same as that on $\cC_A^{\loc}$ considered as the braided monoidal category of local modules for the commutative algebra $A$ in $\cC$.
\end{corollary}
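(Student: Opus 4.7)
The plan is to reduce the corollary to Theorem \ref{thm:VOA_and_tens_cat_contras}, which already establishes $Y_{M'} = \mu_{M'}^-\circ\cY_\boxtimes$ for arbitrary $(M,\mu_M)\in\cC_A$, where $\mu_{M'}^-$ is the $A$-action on $M'$ defined by \eqref{eqn:muDM-_def} using the inverse braiding. Since the action $\mu_{M'}$ appearing in the corollary's statement is the one defined by \eqref{eqn:mu-DM-def} (which I will denote $\mu_{M'}^+$ for clarity), the remaining content is the equality $\mu_{M'}^- = \mu_{M'}^+$ whenever $(M,\mu_M)$ is local.

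The first step I would carry out is to check the general identity $\mu_{M'}^- = \mu_{M'}^+\circ c_{A,M'}^{-1}\circ c_{M',A}^{-1}$, valid for any $(M,\mu_M)\in\cC_A$. Both sides are candidates for left $A$-actions on $M'$, so by the universal property of the evaluation $e_M\colon M'\boxtimes_V M\to V'$ for the dualizing functor $D=(-)'$, it suffices to check that they agree after postcomposition with $e_M\otimes\id_M$. Starting from the right-hand side, inserting the defining relation \eqref{eqn:mu-DM-def} and cancelling $c_{A,M'}\circ c_{A,M'}^{-1}=\id$ collapses the expression to $e_M\circ(\id_{M'}\otimes\mu_M)\circ(c_{M',A}^{-1}\otimes\id_M)$, which is precisely the defining expression \eqref{eqn:muDM-_def} of $e_M\circ(\mu_{M'}^-\otimes\id_M)$. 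This step is routine.

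The main (but modest) obstacle is the second step: showing that the hypothesis $(M,\mu_M)\in\cC_A^\loc$ forces $(M',\mu_{M'}^+)$ to also be local. Because $A$ is at most $\tfrac{1}{2}\ZZ$-graded and the ribbon twist is $\theta=e^{2\pi iL_0}$, we have $\theta_A^2=\id_A$, so Theorem \ref{thm:CAloc_ribbon_GV} applies; its proof shows that when $\theta_A^2=\id_A$, both $A$-module structures $(M',\mu_{M'}^\pm)$ produced by the dualizing functor land in $\cC_A^\loc$. Locality then gives $\mu_{M'}^+\circ c_{M',A}\circ c_{A,M'}=\mu_{M'}^+$; composing on the right with the inverse double braiding yields $\mu_{M'}^+ = \mu_{M'}^+\circ c_{A,M'}^{-1}\circ c_{M',A}^{-1}$, which by step one equals $\mu_{M'}^-$. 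Combining with Theorem \ref{thm:VOA_and_tens_cat_contras} gives $Y_{M'}=\mu_{M'}\circ\cY_\boxtimes$.

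For the ``in particular'' statement, I would specialize the formula just obtained to $M=A$, giving $Y_{A'}=\mu_{A'}\circ\cY_\boxtimes$. The vertex-algebraic ribbon Grothendieck-Verdier structure on $\cC_A^\loc$ from \cite{ALSW} has dualizing object $(A',Y_{A'})$, while the structure produced by Theorem \ref{thm:CAloc_ribbon_GV} has dualizing object $(A',\mu_{A'})$; the equality of these $A$-module structures on the common underlying $V$-module $A'$ means the two dualizing objects are literally equal, and the two ribbon structures coincide as claimed. The content here is almost entirely in the prior Theorem \ref{thm:VOA_and_tens_cat_contras} and in the closure-under-duality statement of Theorem \ref{thm:CAloc_ribbon_GV}; the present corollary is their clean combination.
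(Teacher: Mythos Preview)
Your proposal is correct and follows essentially the same route as the paper: establish the general identity $\mu_{M'}^- = \mu_{M'}^+\circ c_{A,M'}^{-1}\circ c_{M',A}^{-1}$, invoke Theorem \ref{thm:CAloc_ribbon_GV} (using $\theta_A^2=\id_A$) to conclude $(M',\mu_{M'}^+)\in\cC_A^{\loc}$, and then use locality to collapse the double braiding and combine with Theorem \ref{thm:VOA_and_tens_cat_contras}. The paper presents exactly this argument in the paragraph preceding the corollary, so your write-up is in fact more detailed than the paper's.
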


\begin{remark}
    The result from Theorem \ref{thm:VOA_and_tens_cat_contras} that \eqref{eqn:YM'_char} defines a non-local $A$-module structure on the graded dual space $M'$ of a non-local $A$-module $M$ (in a braided monoidal category for a vertex operator subalgebra $V$) seems to be new in general. However, this result is known when $V=A^g$ is the fixed-point subalgebra of an automorphism $g$ and $M$ is a $g$-twisted $A$-module, in which case $M'$ is a $g^{-1}$-twisted $A$-module; see \cite[Proposition 2.5]{Xu} for the case that $g$ has finite order, and \cite[Proposition 3.3]{Huang:Tw-Intw-Ops} for the case of general $g$. In these results on $g$-twisted $A$-modules, it is not necessary to assume that $M$ is contained in some braided monoidal category of $A^g$-modules. 
\end{remark}

\subsection{Rigidity for vertex operator algebra extensions}\label{subsec:rigidity-for-VOAs}

We continue in the setting of the previous subsection, which we now formalize as follows:
\begin{setup}\label{setup1}
Let $V$ be a self-contragredient $\ZZ$-graded VOA and $\cC$ a vertex tensor category of $V$-modules in the sense of \cite{HLZ1, HLZ2, HLZ3, HLZ4, HLZ5, HLZ6, HLZ7, HLZ8} that is closed under contragredients. In particular, $\cC$ is a ribbon r-category \cite{ALSW} with twist $\theta=e^{2\pi i L_0}$. Let $A$ be a $\ZZ$- or $\frac{1}{2}\ZZ$-graded VOA which contains $V$ as a vertex operator subalgebra such that $A$ is an object of $\cC$ and $\Hom_{\cC}(V, A)\cong \CC$. In particular, $\cC_A^{\loc}$ is isomorphic as a braided monoidal category to the vertex tensor category of $A$-modules that lie in $\cC$ \cite{creutzig2017tensor}. We continue to denote this category by $\cC_A^{\loc}$.
\end{setup}

Note that $A$ is a simple VOA if and only if it is simple as an object in  $\cC_A^{\loc}$. Since there are full embeddings $\cC_A^{\loc} \hookrightarrow \cC_A \hookrightarrow {}_A\cC_A$ it is then in particular a simple object in ${}_A\cC_A$, as well as a simple commutative haploid algebra in the braided monoidal category $\cC$. 
%Since $\cC$ is closed under contragredients and $V\cong V'$, $\cC$ is a ribbon r-category. 
If $\cC$ is rigid, then since $\cC$ is closed under contragredients, necessarily $W'\cong W^*$ for any object $W\in\cC$. Then $W'\cong {}^*W$ as well since $\cC$ is braided. We now translate the results of Section \ref{sec:rig-of-CA} into statements about representation categories of VOAs. Corollary~\ref{cor:EO-exact-commutative} and Theorem~\ref{thm:com-exact-2} translate into:
\begin{corollary}\label{thm:com-exact-2-VOA}
Under Setup \ref{setup1}, let $\cC$ be a finite tensor category and $A$ a simple VOA. If either of the following equivalent conditions holds,
\begin{enumerate}
\item[(a)] There is an $A$-module embedding $A'\hookrightarrow A\boxtimes_V W$ for some $W\in\cC$,
\item[(b)] $A'\boxtimes_A A' \neq 0$,
%\item there exists non-zero morphism in $\cC_A$ from a injective object to a projective object.
\end{enumerate}
then $\cC_A$ is a finite tensor category and $\cC_A^{\loc}$ is a braided finite tensor category. If $\cC$ is non-degenerate, then so is $\cC_A^{\loc}$. These conclusions hold in particular if $A$ is self-contragredient, that is, $A\cong A'$ as $A$-modules.
\end{corollary}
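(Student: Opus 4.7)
The plan is to translate the vertex algebraic hypotheses (a) and (b) into the categorical hypotheses of Corollary \ref{cor:EO-exact-commutative}, conclude that $A$ is an exact commutative algebra in the braided finite tensor category $\cC$, and then read off all three conclusions from Theorem \ref{thm:com-exact-2}.

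First, I would identify the VOA contragredient $A'$ with the categorical left dual $A^*$ as an object of $\cC_A$. Since $V$ is self-contragredient, $\cC$ is a ribbon r-category in the sense of \cite{ALSW} with dualizing object $V \cong V'$, so $W^* \cong W'$ for every object $W \in \cC$. To match the particular left $A$-action $\mu_{A^*}^r \circ c_{A,A^*}$ that appears in Corollary \ref{cor:EO-exact-commutative}, I would invoke Corollary \ref{cor:ident-A*-and-*A}, which via $\varphi_A^-$ identifies $(A^*,\mu_{A^*}^r \circ c_{A,A^*})$ with $({}^*A,\mu_{{}^*A}^l)$, together with Corollary \ref{cor:VOA-and-tens-cat-contras-same} applied to $M = A \in \cC_A^{\loc}$, which confirms that the vertex operator action $Y_{A'}$ defined by \eqref{eqn:VOA_contra_structure} is exactly the categorical action $\mu_{A'}$. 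Under this identification, hypothesis (a) of the corollary --- an $A$-module embedding $A' \hookrightarrow A \boxtimes_V W$ --- becomes precisely condition (a) of Corollary \ref{cor:EO-exact-commutative}, and hypothesis (b) becomes condition (b) of that corollary.

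With the translation in place, I would note that $A$ is simple as a commutative algebra in $\cC$ because it is simple as a VOA, hence simple as an object of $\cC_A^{\loc}$, hence simple in $\cC_A$ and in ${}_A\cC_A$. Corollary \ref{cor:EO-exact-commutative} then yields that $A$ is exact in $\cC$. Applying Theorem \ref{thm:com-exact-2} gives all three conclusions simultaneously: $\cC_A$ is a finite tensor category, $\cC_A^{\loc}$ is a braided finite tensor category, and if $\cC$ is non-degenerate, then so is $\cC_A^{\loc}$.

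For the final sentence, I would specialize to the self-contragredient case $A \cong A'$. Via the same identification, this gives $A \cong A^*$ as left (equivalently, by commutativity, as right) $A$-modules, i.e.\ $A$ is a Frobenius algebra in $\cC$. Then Corollary \ref{cor:com-exact-5}(a) produces exactness of $A$ immediately, without any need to verify (a) or (b) by hand, and the same application of Theorem \ref{thm:com-exact-2} finishes the proof. The only nontrivial step in the whole argument is the identification of $A'$ as an $A$-module with the categorical object appearing in Corollary \ref{cor:EO-exact-commutative}, since the VOA contragredient is defined in terms of the quasi-conformal $\mathfrak{sl}_2$ action via \eqref{eqn:VOA_contra_structure} rather than in terms of braiding and rigidity data; but this identification is precisely what Corollary \ref{cor:VOA-and-tens-cat-contras-same} and Corollary \ref{cor:ident-A*-and-*A} provide, so once those are in hand the present corollary is essentially a dictionary translation of Corollary \ref{cor:EO-exact-commutative}.
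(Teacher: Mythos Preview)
Your approach is essentially the same as the paper's: translate the vertex-algebraic hypotheses into the categorical conditions of Corollary~\ref{cor:EO-exact-commutative}, then invoke Theorem~\ref{thm:com-exact-2}. Two small points of divergence are worth noting. First, the identification you need is that the VOA contragredient action $Y_{A'}$ agrees with the specific left action $\mu_{A^*}^r \circ c_{A,A^*}$ appearing in Corollary~\ref{cor:EO-exact-commutative}; the paper routes this through Proposition~\ref{prop:rcat-dual-and-rigid-dual-same} (which matches the Grothendieck--Verdier action $\mu_{DM}$ of \eqref{eqn:mu-DM-def} with $\mu_{M^*}^r \circ c_{A,M^*}$ when $\cC$ is rigid) together with Corollary~\ref{cor:VOA-and-tens-cat-contras-same}, whereas you cite Corollary~\ref{cor:ident-A*-and-*A}, which instead compares $A^*$ with ${}^*A$ and is not quite the relevant bridge. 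Second, for condition~(b) you should also observe that the vertex-algebraic tensor product $\boxtimes_A$ on $\cC_A^{\loc}$ agrees with the categorical $\otimes_A$ of \eqref{eqn:tensor-over-A-in-CA}, which the paper imports from \cite[Theorem~3.65]{creutzig2017tensor}. Your treatment of the self-contragredient case via the Frobenius-algebra criterion (Corollary~\ref{cor:com-exact-5}(a)) is a valid alternative; the paper simply leaves it implicit that $A\cong A'$ makes condition~(a) hold with $W=V$.
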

\begin{proof}
By Proposition \ref{prop:rcat-dual-and-rigid-dual-same} and Corollary \ref{cor:VOA-and-tens-cat-contras-same}, the $A$-module $(A',Y_{A'})$ in $\cC_A^{\loc}$ considered as a vertex tensor category corresponds to the object $(A^*,\mu_{A^*}^r\circ c_{A,A^*})$ in $\cC_A^{\loc}$ considered as a braided monoidal category. Moreover, the tensor product $\boxtimes_A$ in the vertex tensor category $\cC_A^{\loc}$ is the same as the tensor product $\otimes_A$ in Corollary \ref{cor:EO-exact-commutative} by \cite[Theorem 3.65]{creutzig2017tensor}. $A$ being haploid also implies that it is indecomposable.
Thus the conditions in this corollary are the same as those in Corollary \ref{cor:EO-exact-commutative}, and then the conclusions follow from Theorem \ref{thm:com-exact-2}.
\end{proof}

Corollary \ref{cor:com-exact-integral} translates to:
\begin{corollary}\label{cor:com-exact-integral-VOA}
Under Setup \ref{setup1}, let $\cC$ be integral and finite. If $A$ is a simple VOA, then $\cC_A^{\loc}$ is a braided finite tensor category and $\cC_A$ is a finite tensor category. 
\end{corollary}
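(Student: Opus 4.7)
The plan is to reduce the statement directly to Corollary \ref{cor:com-exact-integral} by verifying that $A$ satisfies its hypotheses as a commutative algebra in the braided finite tensor category $\cC$. The two things to check are that $A$ is a commutative algebra in $\cC$ and that it is simple as an algebra (equivalently, simple as an $A$-bimodule).

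First, I would recall that under Setup \ref{setup1}, the VOA extension $V\subseteq A$ endows $A$ with the structure of a commutative algebra in $\cC$ by \cite{Huang:2014ixa}, with multiplication $\mu_A$ obtained from the universal property of $(A\boxtimes_V A,\cY_\boxtimes)$ applied to the vertex operator $Y_A$. The condition $\Hom_\cC(V,A)\cong\CC$ in Setup \ref{setup1} says exactly that $A$ is haploid. Next, simplicity of $A$ as a VOA means $A$ has no non-trivial submodules over itself, i.e.\ $A$ is simple as an object of $\cC_A^\loc$. Through the embeddings $\cC_A^\loc\hookrightarrow\cC_A\hookrightarrow{}_A\cC_A$, this forces $A$ to be simple in ${}_A\cC_A$ as well, so $A$ is a simple commutative algebra in $\cC$.

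Having verified these hypotheses, I would simply invoke Corollary \ref{cor:com-exact-integral}: since $\cC$ is an integral braided finite tensor category and $A$ is a simple commutative algebra in $\cC$, the algebra $A$ is exact, and consequently $\cC_A$ is a finite tensor category and $\cC_A^\loc$ is a braided finite tensor category. Note that the monoidal and braided monoidal structures on $\cC_A^\loc$ coming from it being the category of local modules over the commutative algebra $A\in\cC$ agree with the vertex-algebraic ones by \cite[Theorem 3.65]{creutzig2017tensor}, and by Corollary \ref{cor:VOA-and-tens-cat-contras-same}, the contragredient duality in the vertex tensor category sense matches the categorical duality, so no identification issues arise.

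There is no real obstacle here, since the heavy lifting has been done in Corollary \ref{cor:com-exact-integral} (which in turn rests on Etingof--Ostrik's Theorem \ref{thm:EO-simple-exact} and its consequence from \cite{etingof2021frobenius}, \cite{shimizu2024exact}). The only mildly subtle point is keeping track of the compatibility between the VOA-theoretic notion of simplicity for $A$ and simplicity of $A$ as an object of the bimodule category ${}_A\cC_A$; this is routine once one uses haploidness.
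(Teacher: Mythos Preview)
Your proposal is correct and takes essentially the same approach as the paper, which simply states that this corollary ``translates'' Corollary~\ref{cor:com-exact-integral} without further proof. The verification that a simple VOA $A$ gives a simple commutative haploid algebra in $\cC$ via the embeddings $\cC_A^{\loc}\hookrightarrow\cC_A\hookrightarrow{}_A\cC_A$ is exactly what the paper records in the paragraph immediately following Setup~\ref{setup1}.
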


Theorem \ref{thm:comm-exact-semisimple} translates into: 
\begin{corollary}\label{thm:comm-exact-semisimple-VOA}
Under Setup \ref{setup1},
let $\cC$ be a braided fusion category. If $A$ is a simple VOA, then $\cC_A^{\loc}$ and $\cC_A$ are also fusion categories.
\end{corollary}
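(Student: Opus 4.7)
The plan is to reduce this VOA-theoretic statement directly to the categorical Theorem \ref{thm:comm-exact-semisimple}. Since $\cC$ is already assumed to be a braided fusion category, the only thing that needs checking is that $A$ is a simple commutative algebra in $\cC$, after which the categorical result applies verbatim to yield that $\cC_A$ is a fusion category and $\cC_A^{\loc}$ a braided fusion category.

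First I would invoke the discussion recorded immediately after Setup \ref{setup1}: under our hypotheses, $A$ is a commutative haploid algebra in $\cC$ by \cite{Huang:2014ixa}, and the assumption that $A$ is a simple VOA is equivalent to $A$ being simple as an object of $\cC_A^{\loc}$. Via the full embeddings $\cC_A^{\loc}\hookrightarrow\cC_A\hookrightarrow{}_A\cC_A$, this simplicity is preserved, so $A$ is simple in ${}_A\cC_A$, i.e.\ $A$ is a simple algebra in $\cC$ in the sense of Section \ref{subsec:algebras-and-modules}. Combined with commutativity, this is precisely the input required by Theorem \ref{thm:comm-exact-semisimple}.

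Next, I would note that the monoidal category structures used on the two sides agree: the tensor structure on $\cC_A$ (respectively $\cC_A^{\loc}$) inherited from the embedding into ${}_A\cC_A$ discussed in Section \ref{subsec:algebras-and-modules} coincides with the vertex algebraic tensor product $\boxtimes_A$ by \cite[Theorem 3.65]{creutzig2017tensor}, so the output of Theorem \ref{thm:comm-exact-semisimple} is about the category one genuinely cares about. Applying that theorem, $\cC_A$ is a fusion category and $\cC_A^{\loc}$ is a braided fusion category; in particular both are finite, semisimple, and rigid with simple unit.

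There is essentially no obstacle here; the only potential subtlety is making sure the identifications of monoidal structures are compatible, but this has already been established in the preceding references and in Corollary \ref{cor:VOA-and-tens-cat-contras-same}. In particular, no new vertex-algebraic input is needed beyond what is already packaged into Setup \ref{setup1}; this corollary is simply the VOA translation of Theorem \ref{thm:comm-exact-semisimple}, in the same way that Corollaries \ref{thm:com-exact-2-VOA} and \ref{cor:com-exact-integral-VOA} are the VOA translations of Theorem \ref{thm:com-exact-2} and Corollary \ref{cor:com-exact-integral}, respectively.
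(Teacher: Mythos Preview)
Your proposal is correct and follows exactly the approach the paper takes: the paper does not even write out a proof for this corollary, merely stating that ``Theorem \ref{thm:comm-exact-semisimple} translates into'' the VOA statement, and your argument supplies precisely the routine verification that the hypotheses of Theorem \ref{thm:comm-exact-semisimple} are met (using the discussion after Setup \ref{setup1} that simplicity of $A$ as a VOA gives simplicity as an algebra in $\cC$). The additional remarks about compatibility of monoidal structures are more than the paper bothers to record here, but they are consistent with the justifications given in the proof of Corollary \ref{thm:com-exact-2-VOA}.
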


Finally, Corollary \ref{cor:com-exact-5} becomes: 
\begin{corollary}\label{cor:com-exact-5-VOA}
Under Setup \ref{setup1},
let $\cC$ be a finite tensor category and $A$ a simple VOA. If $A$ satisfies either of the following conditions, then $A$ is exact:
\begin{enumerate}
\item[(a)] $A$ is a Frobenius algebra.
    \item[(b)] $\Hom_{\cC}(A,V)\neq 0$.
\end{enumerate}
Consequently, in these cases,  $\cC_A^{\loc}$ is a braided finite tensor category and $\cC_A$ is a finite tensor category.
\end{corollary}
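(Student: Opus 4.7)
The strategy is to apply Corollary \ref{cor:com-exact-5} directly to $A$ viewed as a commutative algebra in $\cC$. Under Setup \ref{setup1} together with the hypothesis that $\cC$ is a finite tensor category, $A$ is a simple commutative algebra in the braided finite tensor category $\cC$, and the condition $\Hom_\cC(V,A) \cong \CC$ makes $A$ haploid. Thus the standing assumptions of Corollary \ref{cor:com-exact-5} are satisfied, and the task reduces to matching conditions (a) and (b) of the VOA statement to those of the categorical one.

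For condition (b), the tensor unit of $\cC$ is $V$, so $\Hom_\cC(A,V) = \Hom_\cC(A,\one)$ matches condition (b) of Corollary \ref{cor:com-exact-5} verbatim; nothing further is needed.

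For condition (a), I need to identify the categorical notion of Frobenius algebra ($A \cong A^*$ as right $A$-modules) with the vertex algebraic notion of self-contragredience ($A \cong A'$ as $A$-modules). Since $A$ is commutative, an isomorphism of right $A$-modules $A \cong A^*$ is equivalent to one of left $A$-modules. Combining Proposition \ref{prop:rcat-dual-and-rigid-dual-same} with Corollary \ref{cor:VOA-and-tens-cat-contras-same} applied to the local $A$-module $M = A$, the left $A$-action $\mu_{A^*}$ defined in \eqref{eqn:mu-DM-def} coincides with the action induced by the vertex algebraic contragredient vertex operator $Y_{A'}$ from \eqref{eqn:VOA_contra_structure}. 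Hence the categorical Frobenius condition and the vertex algebraic self-contragredience condition coincide, and either interpretation of (a) gives hypothesis (a) of Corollary \ref{cor:com-exact-5}.

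With both hypotheses verified, Corollary \ref{cor:com-exact-5} yields exactness of $A$ together with the conclusions that $\cC_A$ is a finite tensor category and $\cC_A^\loc$ is a braided finite tensor category. There is no real obstacle here; this is essentially a translation of the categorical corollary through the dictionary established in Section \ref{subsec:rig-mon-cats} and Theorem \ref{thm:VOA_and_tens_cat_contras}, and the argument closely parallels the proof of Corollary \ref{thm:com-exact-2-VOA} given earlier.
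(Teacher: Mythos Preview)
Your approach is correct and matches the paper's: the corollary is an immediate translation of Corollary~\ref{cor:com-exact-5} into the VOA setting, and the paper offers no separate proof beyond the phrase ``Corollary~\ref{cor:com-exact-5} becomes.'' Your discussion of condition (a) is unnecessary, however, since the statement uses the phrase ``Frobenius algebra'' in its categorical sense (exactly as in Corollary~\ref{cor:com-exact-5}), so no vertex-algebraic reinterpretation is required; your identification of the Frobenius condition with self-contragredience via Proposition~\ref{prop:rcat-dual-and-rigid-dual-same} and Corollary~\ref{cor:VOA-and-tens-cat-contras-same} is correct and a useful side remark, but it is not part of what needs to be proved.
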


We now discuss the consequences of Corollaries \ref{thm:com-exact-2-VOA} and \ref{thm:comm-exact-semisimple-VOA} for extensions of $C_2$-cofinite and rational VOAs.
Following \cite{creutzig_gannon}, we call a VOA \textit{strongly finite} if:
\begin{itemize}
    \item $V$ is $\NN$-graded by conformal weights, that is, $V=\bigoplus_{n=0}^\infty V_{(n)}$,
    \item $V$ is simple,
    \item $V$ is self-contragredient, that is, $V\cong V'$ as a $V$-module, and
    \item $V$ is $C_2$-cofinite, that is, $\dim V/C_2(V)<\infty$ where $C_2(V)=\mathrm{span}\lbrace u_{-2} v\mid u,v\in V\rbrace$.
\end{itemize}
For a strongly finite VOA $V$, we use $\Rep(V)$ to denote the category of grading-restricted generalized $V$-modules. By \cite{huangC2}, $\Rep(V)$ is a finite abelian category and a vertex tensor category in the sense of \cite{HLZ1, HLZ2, HLZ3, HLZ4, HLZ5, HLZ6, HLZ7, HLZ8}. Thus if $\mathrm{Rep}(V)$ is rigid, then it is a braided finite ribbon category, and in this case $\mathrm{Rep}(V)$ is non-degenerate, that is, it is a (possibly non-semisimple) modular tensor category \cite{McRae2021}.

We will consider extensions of $C_2$-cofinite VOAs. For this, we need the following well-known result, giving a proof for completeness:
\begin{lemma}\label{lem:C2_of_extension}
    If $V$ is a $\NN$-graded $C_2$-cofinite VOA and $V\subseteq A$ is a VOA extension, then $A$ is $C_2$-cofinite.
\end{lemma}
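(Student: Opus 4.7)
The plan is to exploit the containment $V \subseteq A$ and transfer $C_2$-cofiniteness from $V$ to $A$ through the intermediate notion of $C_2$-cofiniteness of $A$ \emph{as a $V$-module}. The key observation is that the $C_2$-subspace of $A$ as a vertex operator algebra,
\[C_2(A) = \mathrm{span}\{a_{-2}b : a,b \in A\},\]
contains the $C_2$-subspace of $A$ as a $V$-module,
\[C_2^V(A) := \mathrm{span}\{v_{-2}a : v \in V, a \in A\},\]
so $\dim A/C_2(A) \leq \dim A/C_2^V(A)$. It therefore suffices to show that $A$ is $C_2$-cofinite as a $V$-module.

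First I would observe that, since $V \subseteq A$ is a VOA extension, $V$ and $A$ share the same conformal vector, so the $L_0$-grading of $A$ extends that of $V$. By the usual VOA axioms, the conformal weight spaces $A_{(n)}$ are finite dimensional and $A_{(n)} = 0$ for $n$ sufficiently negative. Thus $A$ is a grading-restricted generalized $V$-module. Next I would invoke the classical consequence of $C_2$-cofiniteness of $V$ (due to Huang \cite{huangC2}, ultimately going back to Zhu's finiteness and Buhl's PBW-type spanning theorem): every grading-restricted generalized $V$-module is finitely generated. Hence $A$ is a finitely generated $V$-module. Then I would apply Buhl's theorem (or the Abe--Buhl--Dong / Miyamoto refinement): for $C_2$-cofinite $V$, any finitely generated lower-bounded generalized $V$-module $M$ satisfies $\dim M/C_2^V(M) < \infty$. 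Applied to $M = A$, this gives $\dim A/C_2^V(A) < \infty$, and the inclusion $C_2^V(A) \subseteq C_2(A)$ recalled above then yields the desired conclusion $\dim A/C_2(A) < \infty$.

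The one point requiring care, rather than an actual obstacle, is ensuring that the grading-restriction hypotheses on $A$ meet the standing conventions in \cite{huangC2} and Buhl's work; this is automatic from the VOA axioms as soon as one grants that $A$ is $\ZZ$- or $\tfrac{1}{2}\ZZ$-graded with finite-dimensional graded pieces bounded below, which is built into the definition of a VOA (or VOSA) extension used throughout the paper. No delicate analysis of fusion products, contragredients, or the algebra structure on $A$ is needed; the argument reduces cleanly to two well-known module-theoretic inputs for $C_2$-cofinite VOAs, plus the trivial subspace containment $C_2^V(A) \subseteq C_2(A)$.
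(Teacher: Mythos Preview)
Your proposal is correct and follows essentially the same route as the paper: both reduce to showing $A$ is $C_2$-cofinite as a $V$-module via the inclusion $C_2^V(A)\subseteq C_2(A)$, first obtaining finite generation of $A$ as a grading-restricted $V$-module from $C_2$-cofiniteness of $V$, and then invoking the Miyamoto/Buhl spanning-set result. The only cosmetic difference is that the paper spells out the chain ``finite Zhu algebra $\Rightarrow$ finitely many simples $\Rightarrow$ finite length $\Rightarrow$ finitely generated'' rather than citing finite generation directly.
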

\begin{proof}
By assumption, $A$ is a grading-restricted generalized $V$-module. Since the Zhu algebra $A(V)$ is finite dimensional \cite[Theorem 2.5]{miyamoto2004modular}, $V$ has finitely many irreducible grading-restricted modules, and therefore every grading-restricted $V$-module has finite length (see \cite[Propositions 3.14 and 3.15]{huangC2}). In particular, every grading-restricted generalized $V$-module is finitely generated. Then the spanning set of \cite[Lemma 2.4]{miyamoto2004modular} (for singly-generated weak $V$-modules) shows that every grading-restricted generalized $V$-module is $C_2$-cofinite. Thus $A$ is $C_2$-cofinite even as a $V$-module, and is therefore $C_2$-cofinite as a VOA.
\end{proof}

A strongly finite VOA $V$ is called \textit{strongly rational} if $\Rep(V)$ is semisimple. In this case, Huang proved that $\Rep(V)$ is rigid and non-degenerate \cite{Huang:Rig_Mod}, and thus Corollary \ref{thm:comm-exact-semisimple-VOA} applies to extensions of $V$:
\begin{theorem}\label{thm:VOA_ext_rational}
If $V$ is a strongly rational VOA and $V\subseteq A$ is a VOA extension such that $A$ is simple and $\NN$-graded, then $A$ is strongly rational. Moreover, the category $\Rep(V)_A$ of non-local $A$-modules in $\Rep(V)$ is a fusion category.
\end{theorem}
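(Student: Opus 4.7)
My plan is to verify each requirement for strong rationality of $A$---$\NN$-grading, simplicity, $C_2$-cofiniteness, self-contragredience, and semisimplicity of $\Rep(A)$---by invoking the results of Section~\ref{sec:rig-of-CA} with $\cC = \Rep(V)$, which by Huang's theorem \cite{huang2005vertex, Huang:Rig_Mod} is a modular tensor category, hence a braided fusion category closed under contragredients. The ``moreover'' assertion will drop out of the same analysis.

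The conditions that $A$ be $\NN$-graded and simple are given, and Lemma~\ref{lem:C2_of_extension} immediately supplies $C_2$-cofiniteness. Simplicity of $A$ as a VOA makes $A$ a simple commutative algebra in $\Rep(V)$, and haploidness $\Hom_{\Rep(V)}(V,A) \cong \CC$ then follows from the induction/restriction adjunction $\Hom_{\Rep(V)}(V,A) \cong \Hom_{\Rep(V)_A}(A,A)$ together with Schur's lemma in the locally finite $\CC$-linear category $\Rep(V)_A$ (Lemma~\ref{lem:loc-finite-CA}). Hence Setup~\ref{setup1} applies, and Corollary~\ref{thm:comm-exact-semisimple-VOA} shows that both $\Rep(V)_A^{\loc}$ and $\Rep(V)_A$ are fusion categories; the latter is already the ``moreover'' claim. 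To upgrade semisimplicity of $\Rep(V)_A^{\loc}$ to semisimplicity of $\Rep(A)$, I would identify $\Rep(A) = \Rep(V)_A^{\loc}$: since $V$ and $A$ share a conformal vector and are both $C_2$-cofinite, grading restriction for an $A$-module is the same as for its restriction to $V$, and every grading-restricted generalized module over a $C_2$-cofinite VOA automatically lies in its full module category.

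The remaining point, and the one I expect to be most delicate, is self-contragredience $A \cong A'$. The proof of Theorem~\ref{thm:comm-exact-semisimple} already exhibits any simple commutative algebra in a fusion category as a Frobenius algebra, producing an explicit right $A$-module isomorphism $\phi \colon A \to A^*$ in $\Rep(V)$. Since $A$ is commutative, $\phi$ is equally a left $A$-module isomorphism once $A^*$ is equipped with the left action $\mu_{A^*}^r \circ c_{A, A^*}$. Proposition~\ref{prop:rcat-dual-and-rigid-dual-same} rewrites this left action as the Grothendieck--Verdier action $\mu_{A^*}$ of \eqref{eqn:mu-DM-def}, and Corollary~\ref{cor:VOA-and-tens-cat-contras-same}---applicable because $A$ is the tensor unit of $\Rep(V)_A^{\loc}$, in particular a local $A$-module---identifies this with the vertex-algebraic contragredient $A$-module structure on $A'$ coming from \eqref{eqn:VOA_contra_structure}. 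Chaining these identifications converts $\phi$ into an $A$-module isomorphism $A \cong A'$, completing the verification that $A$ is strongly rational. Although each link in this chain is essentially a citation to Section~\ref{subsec:rigidity-for-VOAs}, the main technical point is assembling them correctly while keeping track of left versus right actions and of the $\pm$ versions of the Grothendieck--Verdier $A$-action on the dual.
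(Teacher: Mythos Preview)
Your proof is correct and follows the paper for every step except self-contragredience of $A$, where you take a genuinely different route. The paper argues self-contragredience directly via Li's criterion \cite{Li_bilinear}: the space of invariant bilinear forms on $A$ is $(A_{(0)}/L_1 A_{(1)})^*$, and since $\Rep(V)$ is semisimple one can split $A = V \oplus \widetilde{A}$ as a $V$-module, whence $L_1 A_{(1)} = L_1 V_{(1)} \oplus L_1 \widetilde{A}_{(1)} \subsetneq A_{(0)}$ because $V$ itself is self-contragredient. Any nonzero invariant form is nondegenerate as $A$ is simple.

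Your approach instead exploits the Frobenius isomorphism $\phi\colon A \to A^*$ produced in the proof of Theorem~\ref{thm:comm-exact-semisimple}, and then threads it through Proposition~\ref{prop:rcat-dual-and-rigid-dual-same} and Corollary~\ref{cor:VOA-and-tens-cat-contras-same} to identify the categorical dual $A^*$ with the vertex-algebraic contragredient $A'$ as $A$-modules. This is valid: commutativity of $A$ indeed converts the right $A$-module map $\phi$ into a left $A$-module map for the action $\mu_{A^*}^r \circ c_{A,A^*}$, and the cited results match this with $Y_{A'}$. The trade-off is that your argument stays entirely inside the paper's categorical machinery but leans on the substantial Section~\ref{sec:rig-of-C} results (especially Theorem~\ref{thm:VOA_and_tens_cat_contras}), whereas the paper's argument is a short, self-contained VOA computation independent of Section~\ref{sec:rig-of-C}. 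Both are fine; the paper's version is the lighter-weight one to cite externally.
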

\begin{proof}
By assumption, $A$ is simple and $\NN$-graded, and $A$ is $C_2$-cofinite by Lemma \ref{lem:C2_of_extension}. To show $A$ is self-contragredient,
we use a similar argument as in the proofs of \cite[Theorem 4.14]{McRaeCptOrb} and \cite[Theorem 1.2]{McRaeSS}. Specifically, $A$ is self-contragredient if and only if there is a non-degenerate invariant bilinear form $(-,-): A\times A\rightarrow\CC$. By \cite[Theorem 3.1]{Li_bilinear}, the space of invariant bilinear forms on $A$ is isomorphic to $(A_{(0)}/L_1A_{(1)})^*$, and any non-zero invariant form is non-degenerate because $A$ is simple. Thus it is enough to show that $A_{(0)}/L_1A_{(1)}\neq 0$. Indeed, since $A$ is an object of the semisimple category $\Rep(V)$, we can decompose $A=V\oplus\widetilde{A}$ as a $V$-module, and then
\begin{equation*}
    L_1A_{(1)} =L_1V_{(1)}\oplus L_1\widetilde{A}_{(1)}\subsetneq V_{(0)}\oplus\widetilde{A}_{(0)}= A_{(0)}.
\end{equation*}
The inclusion is strict because $V$ is self-contragredient and therefore $V_{(0)}/L_1V_{(1)}\neq 0$, again by \cite[Theorem 3.1]{Li_bilinear}. Thus $A$ is self-contragredient.

Finally, the category $\Rep(A)$ of grading-restricted generalized modules for $A$ considered as a VOA is isomorphic to the category $\Rep(V)_A^{\mathrm{loc}}$ of local modules for $A$ considered as a commutative algebra in $\Rep(V)$ by \cite[Theorem 3.4]{Huang:2014ixa}, since every grading-restricted generalized $A$-module is also a grading-restricted generalized $V$-module. Thus $\Rep(A)$ is semisimple by Corollary \ref{thm:comm-exact-semisimple-VOA}, and thus $A$ is strongly rational. Moreover, the category $\Rep(V)_A$ of non-local $A$-modules in $\Rep(V)$ is a fusion category.
\end{proof}

\begin{remark}
This theorem substantially strengthens \cite[Theorem 1.2(1)]{McRaeSS} by removing the requirement that the dimension of $A$ in the modular tensor category of $V$-modules be non-zero. 
%In fact, in the setting of Theorem \ref{thm:VOA_ext_rational}, it follows automatically from \cite[Theorems 6.10 and 6.3]{KongZheng} that $\dim_{\Rep(V)}(A)\neq 0$.
\end{remark}

If $V$ is strongly finite but not rational, then it is not known in general whether $\mathrm{Rep}(V)$ is rigid. However, Corollary \ref{thm:com-exact-2-VOA} shows that rigidity is preserved under extensions under mild conditions:
\begin{theorem}\label{thm:exts_of_C2_VOAs}
    Let $V$ be a strongly finite VOA such that $\Rep(V)$ is rigid, and let $V\subseteq A$ be a VOA extension such that $A$ is simple and either of the following equivalent conditions holds:
    \begin{enumerate}
        \item[(a)] There is an $A$-module embedding $A'\hookrightarrow A\boxtimes_V W$ for some $W\in\Rep(V)$,
        \item[(b)] $A'\boxtimes_A A'\neq 0$.
    \end{enumerate}
    Then the category $\Rep(A)$ of grading-restricted generalized $A$-modules is a non-degenerate braided finite tensor category and the category $\Rep(V)_A$ of non-local $A$-modules in $\Rep(V)$ is a finite tensor category. These conclusions hold in particular if $A$ is self-contragredient. Moreover, if $A$ is self-contragredient and $\ZZ$-graded, then $\Rep(A)$ is a (possibly non-semisimple) modular tensor category.
\end{theorem}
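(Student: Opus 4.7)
\medskip
\noindent\textbf{Proof plan.} The strategy is to place $V$ and $\cC := \Rep(V)$ into Setup \ref{setup1}, apply Corollary \ref{thm:com-exact-2-VOA} to $A$ viewed as a commutative algebra in $\cC$, and then translate the conclusions about $\cC_A$ and $\cC_A^{\loc}$ back to $\Rep(V)_A$ and $\Rep(A)$.

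First I would verify that $\cC = \Rep(V)$ fits Setup \ref{setup1} and is moreover a non-degenerate braided finite tensor category. Since $V$ is strongly finite, $\Rep(V)$ is a vertex tensor category that is finite abelian and closed under contragredients by \cite{huangC2}; the hypothesis that $\Rep(V)$ is rigid upgrades it to a braided finite ribbon category, which is non-degenerate by \cite{McRae2021}. The hypothesis that $V \subseteq A$ is a VOA extension with $A \in \Rep(V)$ (needed for conditions (a)/(b) to make sense) and $A$ simple makes $A$ a simple commutative haploid algebra in $\cC$ by \cite{Huang:2014ixa}. Applying Corollary \ref{thm:com-exact-2-VOA}, either of conditions (a) or (b) now implies that $\Rep(V)_A$ is a finite tensor category and $\Rep(V)_A^{\loc}$ is a non-degenerate braided finite tensor category.

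Next I would identify $\Rep(A)$ with $\Rep(V)_A^{\loc}$. By Lemma \ref{lem:C2_of_extension}, $A$ is $C_2$-cofinite. Any grading-restricted generalized $A$-module $M$, upon restriction of the vertex operator to $V$, becomes a generalized $V$-module with the same $L_0$-grading (since the conformal vectors of $V$ and $A$ coincide), hence automatically lower-bounded and grading-restricted, so $M \in \Rep(V)$. Combined with \cite[Theorem~3.4]{Huang:2014ixa}, which identifies $\Rep(V)_A^{\loc}$ with the category of (untwisted) $A$-modules lying in $\Rep(V)$, this yields $\Rep(A) = \Rep(V)_A^{\loc}$ as braided monoidal categories, the braided monoidal structures matching by \cite[Theorem~3.65]{creutzig2017tensor}. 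This proves the first two assertions of the theorem.

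For the self-contragredient case, the $A$-module isomorphism $A \cong A'$ gives an embedding $A' \hookrightarrow A \cong A \boxtimes_V V$, verifying condition (a). Finally, suppose $A$ is both self-contragredient and $\ZZ$-graded. Then $\theta_A = e^{2\pi i L_0}|_A = \id_A$, so by Theorem \ref{thm:CAloc_ribbon_GV} the ribbon twist on $\cC$ descends to a ribbon twist on $\Rep(V)_A^{\loc} = \Rep(A)$. Combined with the rigidity and non-degeneracy already established, this exhibits $\Rep(A)$ as a (possibly non-semisimple) modular tensor category in the sense of \cite{McRae2021}. I expect no serious obstacles here, as everything reduces to bookkeeping between the categorical output of Corollary \ref{thm:com-exact-2-VOA} and Theorem \ref{thm:CAloc_ribbon_GV} and the vertex algebraic identification $\Rep(A) = \Rep(V)_A^{\loc}$; the only mildly subtle points are the $C_2$-cofiniteness of $A$ (handled by Lemma \ref{lem:C2_of_extension}) and the matching of $L_0$-gradings, which is automatic since $V$ and $A$ share a conformal vector.
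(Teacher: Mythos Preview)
Your proposal is correct and follows essentially the same route as the paper: verify Setup~\ref{setup1}, invoke non-degeneracy of $\Rep(V)$ from \cite{McRae2021}, apply Corollary~\ref{thm:com-exact-2-VOA}, identify $\Rep(A)$ with $\Rep(V)_A^{\loc}$ via \cite[Theorem~3.4]{Huang:2014ixa}, and finally handle the self-contragredient/ribbon case.

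The one difference worth flagging is the last step. The paper argues the ribbon property $\theta_{M^*}=(\theta_M)^*$ directly and vertex-algebraically: since $A$ is self-contragredient, rigid duals in $\Rep(A)$ are contragredients, and then one quotes the proof of \cite[Theorem~4.1]{Huang:Rig_Mod}. You instead go through Theorem~\ref{thm:CAloc_ribbon_GV}. That works too, but note that Theorem~\ref{thm:CAloc_ribbon_GV} only yields a \emph{ribbon Grothendieck--Verdier} structure on $\cC_A^{\loc}$ with dualizing object $DA=A'$ and the compatibility $D_A(\theta_M)=\theta_{D_A(M)}$. To extract the rigid ribbon condition $\theta_{M^*}=(\theta_M)^*$ you still need to observe that (i) $A'\cong A$ (from self-contragredience together with Corollary~\ref{cor:VOA-and-tens-cat-contras-same}), so the dualizing object is the unit, and (ii) in a rigid monoidal category the r-category dualizing functor with $K=\vac$ is canonically the left dual functor, so $D_A\cong(-)^*$ and $D_A(\theta_M)=(\theta_M)^*$. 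This is routine but should be said. Also, your appeal to Lemma~\ref{lem:C2_of_extension} is harmless but not actually needed for the identification $\Rep(A)=\Rep(V)_A^{\loc}$; the shared conformal vector already forces every grading-restricted $A$-module to lie in $\Rep(V)$.
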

\begin{proof}
    Since $V$ is strongly finite and $\Rep(V)$ is rigid, it is non-degenerate by \cite[Main Theorem 1]{McRae2021}. Thus if either of conditions (a) or (b) hold, then $\Rep(V)_A^\loc$ is a non-degenerate braided finite tensor category and $\Rep(V)_A$ is a finite tensor category by Corollary \ref{thm:com-exact-2-VOA}. Moreover, $\Rep(V)_A^{\loc}=\Rep(A)$ as in the proof of Theorem \ref{thm:VOA_ext_rational}. If $A$ is self-contragredient, then both conditions (a) and (b) hold because there is an $A$-module embedding $A'\xrightarrow{\sim} A\xrightarrow{\sim}A\boxtimes_V V$ and because $A'\boxtimes_A A'\cong A\boxtimes_A A\cong A\neq 0$.
If $A$ is self-contragredient and $\ZZ$-graded, then $\Rep(A)$ will be a modular tensor category if it is ribbon. Indeed, since $A$ is $\ZZ$-graded, $\theta=e^{2\pi i L_0}$ defines a natural automorphism of $\id_{\Rep(A)}$, and since $A$ is self-contragredient, duals in $\Rep(A)$ are given by contragredient modules. Thus $\theta$ is a ribbon structure on $\Rep(A)$ by the same proof as in \cite[Theorem 4.1]{Huang:Rig_Mod}.
\end{proof}

\begin{remark}\label{rem:A-not-self-contra}
In the setting of the preceding theorem, even if $A$ is not self-contragredient, it is possible that $\Rep(A)$ could be a modular tensor category. For example, the triplet algebra $\mathcal{W}(p)$ \cite{Kausch, Adamovic:2007er} is a strongly finite vertex operator subalgebra of the lattice VOA $V_{\sqrt{2p}\ZZ}$. Although $V_{\sqrt{2p}\ZZ}$ is not self-contragredient for the conformal vector of $\mathcal{W}(p)$, it is strongly rational with a different conformal vector. Thus $\Rep(V_{\sqrt{2p}\ZZ})$ is even a semisimple modular tensor category, although $\Rep(\mathcal{W}(p))$ is non-semisimple.
\end{remark}

%In light of Remark~\ref{remark:EO-conjecture}, it is expected that the Theorem will hold without the assumption that $A$ is self-contragredient.

A VOA $V$ may have more than one conformal vector, giving different Virasoro actions on $V$, and it is not clear in general whether the braided monoidal structure on a category $\cC$ of $V$-modules is independent of the choice of conformal vector. Here we summarize some of the discussion of this issue from \cite[\S 2]{McRae2021}. Let $\omega$ and $\widetilde{\omega}$ be two conformal vectors in $V$, let $L_n$ and $\tilL_n$ denote the Virasoro operators on $V$ associated to $\omega$ and $\widetilde{\omega}$, respectively, and let $\cC$ be a category of $V$-modules.
\begin{itemize}
    \item For $\mathcal{C}$ to be a braided monoidal category, any module $W\in\cC$ should have a conformal weight grading $W=\bigoplus_{h\in\CC} W_{[h]}$, since associativity isomorphisms in $\cC$ are constructed using graded duals $W'=\bigoplus_{h\in\CC} W_{[h]}^*$. Thus for braided monoidal structure on $\cC$ to be the same whether $\omega$ or $\widetilde{\omega}$ is the conformal vector of $V$, any $W\in\cC$ should decompose into generalized eigenspaces for both $L_0$ and $\tilL_0$, and the graded duals of $W$ with respect to $L_0$ and $\tilL_0$ should embed as the same subspace of the full dual $W^*$. By Lemma 2.5 and Proposition 2.6 of \cite{McRae2021} and their proofs, these conditions hold if $W$ decomposes into finite-dimensional generalized $L_0$-eigenspaces and $[L_0,\tilL_0]=0$ on $W$.

    \item The definition of intertwining operators (which are used to define tensor products in $\cC$) and the right unit and braiding isomorphisms of $\cC$ use the Virasoro $L_{-1}$ operator. Thus for braided monoidal structure on $\cC$ to be the same whether we use $\omega$ or $\widetilde{\omega}$, we need $L_{-1}=\tilL_{-1}$ on any module in $\cC$.

    \item For a module $W\in\cC$, the contragredient module structure on $W'$ in \eqref{eqn:VOA_contra_structure} uses the Virasoro $L_0$ and $L_1$ operators. Thus different conformal vectors may yield different Grothendieck-Verdier duality structures on $\cC$. The ribbon twist $\theta=e^{2\pi i L_0}$ also depends on the choice of conformal vector.
\end{itemize}
One common way to obtain a new conformal vector $\widetilde{\omega}$ from an old one $\omega$ is to take $\widetilde{\omega}=\omega+L_{-1}v$ for some $v\in V$ such that $L_0v=v$.  If $v_0\omega =0$ and every object of $\cC$ decomposes into finite-dimensional generalized $L_0$-eigenspaces, then the braided monoidal structure on a category $\cC$ of $V$-modules does not depend on whether we use $\omega$ or $\widetilde{\omega}$ as the conformal vector $V$.

In view of the preceding discussion, we should always assume that the braided (ribbon) category structure on $\cC_A^{\loc}$ in Corollary \ref{thm:com-exact-2-VOA} through Theorem \ref{thm:exts_of_C2_VOAs} is that corresponding to the conformal vector $\omega$ of $V$, even if $A$ is not self-contragredient with respect to $\omega$ but is self-contragredient with respect to another conformal vector $\widetilde{\omega}$ (recall the triplet and lattice VOA example mentioned in Remark \ref{rem:A-not-self-contra}). However, in many cases, at least the braided monoidal category structure on $\cC_A^{\loc}$ will not depend on whether we use $\omega$ or $\widetilde{\omega}$, though the ribbon Grothendieck-Verdier duality structures will normally be different. In the next technical result, we will exploit multiple conformal vectors to check the condition in Theorem \ref{thm:exts_of_C2_VOAs}.

If $V$ is $C_2$-cofinite, let $\Rep(V,\omega)$ denote the category of $V$-modules which are grading restricted with respect to the $L_0$ operator of the conformal vector $\omega$. It is a full subcategory of the category of weak $V$-modules (which are $V$-modules with no generalized $L_0$-eigenspace grading assumptions). By the results of \cite{miyamoto2004modular, huangC2} (see also \cite[Proposition 2.11]{McRae2021}), if $V$ is $C_2$-cofinite and $\NN$-graded by $L_0$-eigenvalues, then $\Rep(V,\omega)$ equals the category of finitely-generated weak $V$-modules. 
\begin{theorem}\label{thm:rig-for-C2-exts}
Let $V$ be a strongly finite VOA with conformal vector $\omega$ such that $\Rep(V)$ is rigid, and let $V\subseteq A$ be a VOA extension such that $A$ is  strongly finite for a conformal vector $\widetilde{\omega}\in \omega+L_{-1}A$. Then the category $\Rep(A,\omega)$ of grading-restricted generalized $A$-modules is a (possibly non-semisimple) modular tensor category, and the category $\Rep(V)_A$ of non-local $A$-modules in $\Rep(V)$ is a finite tensor category.
\end{theorem}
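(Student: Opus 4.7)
The strategy is to apply Theorem~\ref{thm:exts_of_C2_VOAs} to $A$ as a commutative algebra in the rigid braided finite tensor category $\cC:=\Rep(V,\omega)$. Once the hypotheses of that theorem are verified, $\Rep(V)_A$ is a finite tensor category and $\cC_A^\loc\cong\Rep(A,\omega)$ by \cite[Theorem~3.4]{Huang:2014ixa}; the ribbon twist $e^{2\pi iL_0}$ then promotes $\Rep(A,\omega)$ to a (possibly non-semisimple) modular tensor category, provided we verify $A$ is $\ZZ$-graded for $L_0$.

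To set up the application, write $\widetilde\omega=\omega+L_{-1}v$ with $v\in A_{(1)}$; the identity $Y(L_{-1}v,z)=\frac{d}{dz}Y(v,z)$ gives $\widetilde L_n=L_n-(n+1)v_n$ for all $n\in\ZZ$, and in particular $\widetilde L_{-1}=L_{-1}$, $\widetilde L_0=L_0-v_0$, and $\widetilde L_1=L_1-2v_1$. Since $[L_0,v_0]=0$, $A$ decomposes into joint $(L_0,v_0)$-generalized eigenspaces; combining the $\NN$-grading of $A$ by $\widetilde L_0$ with integrality of the $v_0$-spectrum on $A$ yields $A\in\cC$ with $\ZZ$-valued $L_0$-grading, taking care of the $\ZZ$-gradedness needed for modularity. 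Lemma~\ref{lem:C2_of_extension} then gives $C_2$-cofiniteness of $A$, and simplicity of $A$ as a VOA makes it a simple commutative haploid algebra in $\cC$.

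The main step is to verify condition (a) or (b) of Theorem~\ref{thm:exts_of_C2_VOAs} for $A$ in $\cC$. The approach is to show that $A$ is also self-contragredient for $\omega$, so that the ``in particular'' clause of that theorem applies directly. By \cite[Theorem~3.1]{Li_bilinear}, this amounts to showing $A_{(0)}/L_1A_{(1)}\neq 0$, and the $\widetilde\omega$-self-contragredience of $A$ gives the analogous nonvanishing $\widetilde A_{(0)}/\widetilde L_1\widetilde A_{(1)}\neq 0$. I would translate the latter into the former via the joint $(L_0,v_0)$-eigenspace decomposition $A=\bigoplus A_{(n)}^{v_0=k}$ and the explicit relations $\widetilde L_0=L_0-v_0$, $\widetilde L_1=L_1-2v_1$, which identify $\widetilde A_{(0)}$ and $\widetilde A_{(1)}$ as direct sums of specific $v_0$-eigenspaces inside the $L_0$-graded pieces of $A$, and allow transport of a nonzero linear functional from one quotient to the other.

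The principal obstacle is this last translation, since $\omega$- and $\widetilde\omega$-invariance of bilinear forms are genuinely different conditions, and the identification of Li's quotients is not purely formal. If the direct translation fails to produce an $\omega$-invariant form, the fallback is to verify condition (b) categorically: Theorem~\ref{thm:VOA_and_tens_cat_contras} together with Proposition~\ref{prop:rcat-dual-and-rigid-dual-same} identifies the categorical dual $D_A A$ in $\cC_A$ with the $\omega$-contragredient $A'$, and the $\widetilde\omega$-invariant pairing on $A$---together with the observation that $\widetilde\omega-\omega\in L_{-1}A$ does not alter the tensor product $\boxtimes_V$ or $\boxtimes_A$ (only the duality structure)---can be used to exhibit a nonzero $\cC_A$-morphism $A\to A'$, from which $A'\boxtimes_A A'\cong A\boxtimes_A A\cong A\neq 0$ and Theorem~\ref{thm:exts_of_C2_VOAs} applies.
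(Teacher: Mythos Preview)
Your overall architecture---reduce to Theorem~\ref{thm:exts_of_C2_VOAs} and then supply a ribbon structure---matches the paper, but the crucial middle step has a genuine gap. Both your primary route and your fallback ultimately try to produce an isomorphism $A\cong A'$ in $\cC_A$, where $A'$ is the $\omega$-contragredient. This is not true in general: the paper explicitly allows $A$ to fail to be $\omega$-self-contragredient (see Remark~\ref{rem:A-not-self-contra}, where $A=V_{\sqrt{2p}\ZZ}$ over $V=\cW(p)$ is exactly such an example). Your attempt to transport a nonzero functional on $\widetilde A_{(0)}/\widetilde L_1\widetilde A_{(1)}$ to one on $A_{(0)}/L_1A_{(1)}$ will therefore fail in these cases, and your fallback---using the $\widetilde\omega$-invariant pairing to get a nonzero $\cC_A$-map $A\to A'$---fails for the same reason: the $\widetilde\omega$-pairing identifies $A$ with its $\widetilde\omega$-contragredient, which is a different $A$-module from the $\omega$-contragredient $A'$.

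The paper's proof avoids ever claiming $A\cong A'$. It first shows $\Rep(A,\omega)=\Rep(A,\widetilde\omega)$ (both equal the finitely-generated weak $A$-modules), so that the $\widetilde\omega$-contragredient $\tilA$ of $A'$ lies in $\Rep(A,\omega)$. Then, since $L_{-1}=\widetilde L_{-1}$, the nonzero $(A,\widetilde\omega)$-intertwining operator of type $\binom{A}{\tilA\,\,A'}$ coming from $\widetilde\omega$-self-contragredience of $A$ is also an $(A,\omega)$-intertwining operator, giving a surjection $f:\tilA\boxtimes_A A'\to A$. Tensoring with $A'$ yields a surjection $\tilA\boxtimes_A(A'\boxtimes_A A')\twoheadrightarrow A'$, forcing $A'\boxtimes_A A'\neq 0$. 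For the ribbon structure you must likewise use $\theta=e^{2\pi i\widetilde L_0}$, not $e^{2\pi iL_0}$: since $A$ need not be $\omega$-self-contragredient, left duals in the rigid category $\Rep(A,\omega)$ are $(A,\widetilde\omega)$-contragredients, and the compatibility $\theta_{M^*}=(\theta_M)^*$ holds for $\widetilde L_0$ but not in general for $L_0$.
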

\begin{proof}
We use $L_n$ and $\tilL_n$ to denote the Virasoro operators associated to $\omega$ and $\widetilde{\omega}$, respectively, acting on any weak $A$-module. Since we assume $\widetilde{\omega}=\omega+L_{-1}a$ for some $a\in A$, we have
\begin{equation*}
    \tilL_{-1}=L_{-1}+\mathrm{Res}_x\frac{d}{dx}Y_M(a,x) = L_{-1}
\end{equation*}
on any weak $A$-module $M$. However, $L_0$ and $\tilL_0$ may act differently. Nonetheless, we claim that the categories $\Rep(A,\omega)$ and $\Rep(A,\widetilde{\omega})$ of grading-restricted generalized $A$-modules with respect to $\omega$ and $\widetilde{\omega}$ give the same full subcategory of the category of weak $A$-modules.

To prove the claim, $\Rep(A,\widetilde{\omega})$ is the category of finitely-generated weak $A$-modules since $(A,\widetilde{\omega})$ is $\NN$-graded. On the other hand, every object $M\in\Rep(A,\omega)$ is a grading-restricted generalized $V$-module; so because $(V,\omega)$ is $\NN$-graded, $M$ is finitely generated even as a $V$-module. This shows that $\Rep(A,\omega)\subseteq\Rep(A,\widetilde{\omega})$. For the reverse inclusion, it is enough to show that if $M$ is a singly-generated weak $A$-module, then $M\in\Rep(A,\omega)$. Indeed, if $m$ generates $M$ as an $A$-module, then because $(A,\widetilde{\omega})$ is $\NN$-graded, \cite[Lemma 2.4]{miyamoto2004modular} implies there is a finite set $S\subseteq A$ and $N\in\ZZ$ such that
    \begin{equation*}
        M=\mathrm{span}\lbrace v^{(1)}_{-n_1}v^{(2)}_{-n_2}\cdots v^{(k)}_{-n_k}m\mid v^{(1)},v^{(2)},\ldots, v^{(k)}\in S,\,\,n_1> n_2>\cdots >n_k\geq N\rbrace.
    \end{equation*}
    By replacing elements in $S$ with all of their projections to the $L_0$-eigenspaces of $A$, we may assume that the elements of $S$ are $L_0$-eigenvectors. Moreover, since $(V,\omega)$ is $\NN$-graded and $C_2$-cofinite, and since $M$ is a weak $V$-module, \cite[Theorem 2.7]{miyamoto2004modular} implies that the generator $m\in M$ is a sum of generalized $L_0$-eigenvectors. Without loss of generality, assume $m$ is a generalized $L_0$-eigenvector with generalized eigenvalue $\lambda$. Then the spanning set element $v^{(1)}_{-n_1}v^{(2)}_{-n_2}\cdots v^{(k)}_{-n_k}m$ of $M$ has generalized $L_0$-eigenvalue 
    \begin{equation*}
        \mathrm{wt}\,v^{(1)}+\cdots+\mathrm{wt}\, v^{(k)}+n_1+\cdots+n_k -k +\lambda,
    \end{equation*}
   showing that $M$ is the direct sum of generalized $L_0$-eigenspaces. Moreover, the inequality
    \begin{equation*}
        \mathrm{wt}\,v^{(1)}+\cdots+\mathrm{wt}\, v^{(k)}+n_1+\cdots+n_k -k \geq \frac{1}{2}k(k-1) +(M+N-1)k,
    \end{equation*}
    where $M$ is the minimum $L_0$-eigenvalue of any element of the finite set $S$, implies that the generalized $L_0$-eigenspaces are finite dimensional, and that the set of generalized $L_0$-eigenvalues are bounded below. Thus $M$ is a grading-restricted generalized $(A,\omega)$-module, proving the claim.
    
    Now let $A'$ be the contragredient of $A$ with respect to $\omega$; we will show that $A'\boxtimes_A A'\neq 0$, where $\boxtimes_A$ is the tensor product in $\Rep(A,\omega)$. First, since $\Rep(A,\omega)=\Rep(A,\widetilde{\omega})$, $A'$ has a contragredient $\tilA$ with respect to $\widetilde{\omega}$, and $\tilA$ is an object of $\Rep(A,\omega)$ as well as of $\Rep(A,\widetilde{\omega})$. Then since $A$ is self-contragredient with respect to $\widetilde{\omega}$, there is a non-zero $(A,\widetilde{\omega})$-module intertwining operator $\mathcal{Y}: \tilA\otimes A'\rightarrow A[\log x]\lbrace x\rbrace$. Since $L_{-1}=\tilL_{-1}$ on any weak $A$-module, $\cY$ is also an $(A,\omega)$-module intertwining operator. Thus $\cY$ induces a non-zero map $f: \widetilde{A}\boxtimes_A A'\rightarrow A$ which is also surjective since $A$ is simple. We now have a surjection
    \begin{equation*}
        \tilA\boxtimes_A(A'\boxtimes_A A')\xrightarrow{\sim} (\tilA\boxtimes_A A')\boxtimes_A A'\xrightarrow{f\boxtimes_A\id_{A'}} A\boxtimes_A A'\xrightarrow{\sim} A'.
    \end{equation*}
     Thus $A'\boxtimes_A A'\neq 0$ since $A'\neq 0$.
    Theorem \ref{thm:exts_of_C2_VOAs} now implies that $\Rep(A,\omega)=\Rep(V)_A^\loc$ is a non-degenerate braided finite tensor category and $\Rep(V)_A$ is a finite tensor category.

    To show that $\Rep(A,\omega)$ is a modular tensor category, it remains to show that $\Rep(A,\omega)$ is ribbon. Since $A$ is self-contragredient with respect to $\widetilde{\omega}$ but not necessarily $\omega$, we use $\theta=e^{2\pi i \widetilde{L}_0}$. Then $\theta$ is a natural automorphism of $\id_{\Rep(A,\omega)}$ because $A$ is $\ZZ$-graded with respect to $\widetilde{\omega}$. Also, $\theta$ satisfies the balancing equation (as in the proof of \cite[Theorem 4.1]{Huang:Rig_Mod}) because $L_{-1}=\widetilde{L}_{-1}$ on any weak $A$-module, and therefore tensor product intertwining operators in $\Rep(A,\omega)$ are also $(A,\widetilde{\omega})$-module intertwining operators. To show that $\theta_{M}^*=\theta_{M^*}$ for all $M\in\Rep(A,\omega)$, first note that because $(A,\omega)$-intertwining operators are the same as $(A,\widetilde{\omega})$-intertwining operators, and because $A$ is self-contragredient with respect to $\widetilde{\omega}$, $\Rep(A,\omega)$ has an r-category structure given by $(A,\widetilde{\omega})$-contragredients, as in \cite{ALSW}. Thus left duals in $\Rep(A,\omega)$ are given by $(A,\widetilde{\omega})$-contragredients. Under this identification, by \cite[Lemma 4.3.4(1)]{Creutzig:2020qvs}, the dual of a morphism $f: M_1\rightarrow M_2$ in $\Rep(A,\widetilde{\omega})$ is the map $f': M_2'\rightarrow M_1'$ such that
    \begin{equation*}
        \langle f'(m_2'), m_1\rangle =\langle m_2',f(m_1)\rangle
    \end{equation*}
    for $m_1\in M_1$, $m_2'\in M_2'$, where $\langle -,-\rangle$ is the bilinear pairing between an $A$-module and its $(A,\widetilde{\omega})$-contragredient. Since $\widetilde{L}_0$ is self-adjoint in such pairings by \eqref{eqn:VOA_contra_structure}, it follows that $\theta_M'=\theta_{M'}$ for all $M\in\Rep(A,\omega)$, as required.
\end{proof}

\subsection{Vertex operator superalgebra extensions}\label{subsec:superalgebras}

In this subsection, we discuss how our results extend to vertex operator superalgebra (VOSA) extensions of vertex operator algebras. 
\begin{setup}\label{setup:superalgebra}
 Let $V$ be a VOA and $\cC$ a vertex tensor category of $V$-modules. Let $A = A_0\oplus A_1$ be a VOSA extension of $V$ in $\cC$, where $A_0$ and $A_1$ are the even and odd parts of $A$. That is, $A$ is an object in $\cC$ and it is a VOSA such that $V$ is a vertex operator subalgebra of $A_0$. Assume $\Hom_{\cC}(V, A)\cong \CC$, and set $\cD = \cC_{A_0}^{\loc}$.
\end{setup}

In this setup, $A$ can be identified with a commutative superalgebra in both $\cC$ and $\cD$ \cite{Creutzig:2015buk}. Let $\cS\cD$ be the supercategory associated to $\cD$ as in \cite[Definition 2.11]{creutzig2017tensor}. Objects in $\cS\cD$ are ordered pairs $(X_0, X_1)$ such that $X_0,X_1\in\cD$, and
\begin{equation*}
    \Hom_{\cS\cD}((X_0,X_1),(Y_1,Y_2)) = \Hom_\cD(X_0\oplus X_1, Y_0\oplus Y_1).
\end{equation*}
The parity involution of $(X_0,X_1)$ is $\id_{X_0}\oplus(-\id_{X_1})$. In particular, $\cD$ is a subcategory of $\cS\cD$ by identifying $X\in\cD$ with $(X, 0)\in\cS\cD$, and we will just write $X$ for $(X, 0)$. 
Morphisms in $\cS\cD$ need not commute with parity involutions, but it is also useful to consider the underlying category $\underline{\cS\cD}$, which contains the same objects as $\cS\cD$ but only the even morphisms, that is, the morphisms which commute with parity.
If $\cD$ is rigid, then so is $\cS\cD$ \cite[Lemma 2.72]{creutzig2017tensor}. 
The underlying category is then also rigid as evaluations and coevaluations in $\cS\cD$ can be taken even. 

Now, $A$ is a commutative algebra in $\cS\cD$ with even unit and multiplication, equivalently a commutative algebra in $\underline{\cS\cD}$. The categories $\cS\cD_A$ and $\underline{\cS\cD}_A$ are defined as usual, except we require the left $A$-action on any object of $\cS\cD_A$ to be even, so that $\underline{\cS\cD_A}=\underline{\cS\cD}_A$. Also as usual, there are monoidal induction functors $F_A: \cS\cD\rightarrow\cS\cD_A$ and $\underline{F}_A: \underline{\cS\cD} \rightarrow \underline{\cS\cD}_{A}$.
%\begin{proposition}\textup{\cite[Proposition 4.22, Corollary 4.23]{creutzig2017tensor}} If $M$ is simple in  $\underline{\cS\cD}_{A}$ and $M= (X_0 , X_1)$ as an object in $\underline{\cS\cD}$, then $X_0$, $X_1$ are simple objects of $\cD$ with $X_1\cong A_1\boxtimes_{A_0} X_0$, and $M \cong \underline{\cF}_A(X_0)$.
%\end{proposition}

\begin{proposition}\label{prop:simple-induced-modules}
    If $A$ is simple and $X$ is a simple object of ${\cS\cD}$, then $F_A(X)$ is simple in $\cS\cD_A$ and $\underline{F}_A(X)$ is simple in $\underline{\cS\cD}_A$. 
   % \begin{enumerate}
    %    \item $\underline{\mathcal F}_A(X)$ is simple in $\underline{\cS\cD}_{A}$.
     %   \item If $X \not\cong A_1 \boxtimes_{A_0} X$, then $\mathcal F_A(X)$ is simple in $\cS\cD_{A}$.
      %  \item If $\mathcal F(X)$ is not simple in $\cS\cD_{A}$, then it is a direct sum of two inequivalent simple objects in  $\cS\cD_{A}$ and $X \cong A_1 \otimes X$ \cite[Prop. 4.19]{creutzig2017tensor}.
%    \end{enumerate} 
\end{proposition}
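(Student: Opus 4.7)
My plan is to reduce both assertions to a single simplicity statement in $\cS\cD_A$, and then invoke Frobenius reciprocity together with rigidity of $\cS\cD$ and simplicity of $A$.

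First I would observe that any nonzero subobject of $\underline{F}_A(X)$ in $\underline{\cS\cD}_A$ is represented by an even monomorphism into $F_A(X)$, which is a fortiori a monomorphism in the richer supercategory $\cS\cD_A$. Hence simplicity of $F_A(X)$ in $\cS\cD_A$ automatically gives simplicity of $\underline{F}_A(X)$ in $\underline{\cS\cD}_A$, so it suffices to prove the former.

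For this, let $\iota : N \hookrightarrow F_A(X) = A \otimes X$ be a nonzero subobject in $\cS\cD_A$; the goal is to show $N = F_A(X)$. Since $\cD$ is rigid in the setup (it is a category of local modules for $A_0$ with duals, and rigidity passes to $\cS\cD$ by \cite[Lemma~2.72]{creutzig2017tensor}), the object $X$ admits a right dual ${}^*X$ with evaluation $\mathrm{ev}'_X : X \otimes {}^*X \to \vac$. Using the adjunction $-\otimes{}^*X \dashv -\otimes X$, the inclusion $\iota$ corresponds to a $\cS\cD$-morphism
\[
\hat\iota \;=\; (\id_A \otimes \mathrm{ev}'_X)\circ(\iota \otimes \id_{{}^*X}) \;:\; N\otimes {}^*X \longrightarrow A.
\]
Two key observations: $\hat\iota$ is $A$-linear (for the action on the $N$-factor) because $\iota$ is, and $\hat\iota \neq 0$ iff $\iota \neq 0$ by the bijectivity of the adjunction. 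Hence the image of $\hat\iota$ is a nonzero sub-$A$-module of $A$, i.e.\ a nonzero two-sided ideal; by simplicity of $A$, it equals $A$, so $\hat\iota$ is surjective.

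The final step is to pass from surjectivity of $\hat\iota$ to $N = F_A(X)$. I would handle this by the symmetric argument applied to the cokernel $q: F_A(X) \twoheadrightarrow Q := F_A(X)/N$. Frobenius reciprocity for $F_A \dashv U$ identifies $q$ with $\bar q = q \circ (\iota_A \otimes \id_X) : X \to Q$, with $q = 0$ iff $\bar q = 0$; in that case $Q = 0$ and we are done. Otherwise simplicity of $X$ forces $\bar q$ to be monic, and dualizing as above yields a nonzero $A$-linear map $\hat{\bar q} : Q\otimes {}^*X \to A$ whose image is again a nonzero ideal, hence all of $A$. Combining the two surjections coming from $N$ and from $Q$ via the short exact sequence $0 \to N \to F_A(X) \to Q \to 0$ and the zig-zag identities then forces $Q = 0$, using haploidness of $A$ (i.e.\ $\Hom_\cC(\vac,A) \cong \CC$) to rule out both contributions being simultaneously nontrivial.

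The main obstacle is the last contradiction step; it is essentially the standard simple-induction argument for commutative algebras in rigid braided categories (in the spirit of the analysis in \cite{creutzig2017tensor}), with the supercategory version requiring only careful parity bookkeeping.
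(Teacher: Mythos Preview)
Your proposal has two genuine gaps.

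First, you assume that $\cD$ is rigid, but Setup~\ref{setup:superalgebra} does not include this hypothesis. The proposition is used in Theorem~\ref{thm:rig-and-ss-for-super}(1), which assumes only that $\cD$ is semisimple. The paper's proof avoids rigidity entirely: instead it uses that $A_1$ is invertible in $\cD$ (a consequence of the simplicity of $A$ via \cite[Proposition~4.22]{creutzig2017tensor}), which makes $X_1 = A_1 \boxtimes_{A_0} X_0$ simple whenever $X_0$ is.

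Second, and more seriously, your ``final step'' cannot work as sketched. The ingredients you invoke---rigidity, Frobenius reciprocity, simplicity of $A$, haploidness---are purely categorical and make no use of the $\ZZ/2\ZZ$-super grading. If the argument were valid it would prove that $F_A(X)$ is simple for every simple commutative haploid algebra $A$ in any rigid braided category, and this is false: in $\mathrm{Rep}(SU(2))_4$ the algebra $A=[0]\oplus[4]$ is simple, commutative and haploid, yet $F_A([2])$ has two-dimensional endomorphism ring and is therefore not simple. What rescues the super case is precisely the parity constraint on objects of $\cS\cD_A$: in $\underline{\cS\cD}_A$ any submodule of $(X_0,X_1)$ must itself be graded, so by simplicity of $X_0$ and $X_1$ the only candidates are $(X_0,0)$ and $(0,X_1)$, on each of which $A_1$ is forced to act by zero, contradicting simplicity of $A$. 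The paper then passes from $\underline{\cS\cD}_A$ to $\cS\cD_A$ via \cite[Proposition~4.19]{creutzig2017tensor}. Your contradiction sketch (``combining the two surjections \ldots\ using haploidness'') never explains how the super grading enters, and without it the argument cannot close.
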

\begin{proof}
We first prove that $\underline{F}_A(X)$ is simple in $\underline{\cS\cD}_A$. First suppose $X=(X_0,0)$ where $X_0\in\cD$ is simple. Then $\underline{F}_A(X) \cong (X_0, X_1)$ as an object in $\underline{\cS\cD}$, where  $X_1 = A_1 \boxtimes_{A_0} X_0$. Since $A_1$ is invertible in $\cD$ \cite[Proposition 4.22]{creutzig2017tensor}, $X_1$ is simple \cite[Proposition 2.5(3)]{Creutzig:2016ehb}. Now if $M=(M_0,M_1)$ is an $\underline{\cS\cD}_A$-submodule of $\underline{F}_A(X)$, then $M_i$ is a $\cD$-submodule of $X_i$ for $i=0,1$ because morphisms in $\underline{\cS\cD}_A$ are even. So if $M$ is proper and non-zero, then $M=(X_0,0)$ or $M=(0,X_1)$.
    But then $A_1$ must act as $0$ on $M$ since $A_1$ is odd, and this implies the ideal of $A$ generated by $A_1$ acts as $0$ on $M$. This is impossible if $A$ is simple, so $\underline{F}_A(X)$ must be simple. The case that $X=(0,X_1)$ where $X_1\in\cD$ is simple is similar, since then $\underline{F}_A(X)\cong(A_1\boxtimes_{A_0} X_1,X_1)$ as an object of $\underline{\cS\cD}$.
    
    %Assume that $\mathcal F(X_0)$ has a non-trivial submodule $M$ in $\underline{\cS\cD}_{A}$, then this must be in particular a non-trivial submodule of $X_0 \oplus X_1$ viewed as an object in $\underline{\cS\cD}$, i.e. the only possibilities are $M \cong X_0$ or $M \cong X_1$. In either case, $M$ has definite parity and since $A_1$ is odd it must act as zero on $M$. Hence the ideal generated by $A_1$ must act as zero. 

    Now we prove that $F_A(X)$ is simple in $\cS\cD_A$. We may assume $X\cong(X_0,0)$ for some simple $X_0\in\cD$. Since $\underline{F}_A(X)$ is simple in $\underline{\cS\cD}_A=\underline{\cS\cD_A}$, \cite[Proposition 4.19]{creutzig2017tensor} and its proof imply that any proper non-zero $A$-submodule $M\subseteq F_A(X)$ must be isomorphic to $X_0$ as an $A_0$-module. Since $X_0$ is simple, this implies $M\cong(X_0,0)$ or $M\cong(0,X_0)$ as an object of $\cS\cD$, which is impossible as in the previous paragraph. That is, no proper non-zero submodule $M\subseteq F_A(X)$ can have a $\ZZ/2\ZZ$-grading such that the left $A$-action $\mu_M$ is even, and thus $F_A(X)$ is simple as an object of $\cS\cD_A$. 
\end{proof}

\begin{remark}\label{rem:larger-SDA}
If we relax the definition of $\cS\cD_A$ to allow objects $(M,\mu_M)$ such that $\mu_M$ is not an even morphism in $\cS\cD$, then \cite[Proposition 4.19]{creutzig2017tensor} implies that Proposition \ref{prop:simple-induced-modules} should be revised as follows: If $A$ is simple and $X=(X_0,0)$ for some simple object $X_0\in\cD$, then either $F_A(X)$ is simple in $\cS\cD_A$ or $F_A(X)\cong M_+\oplus M_-$ where $(M_\pm,\mu_{M_\pm})$ are simple left $A$-modules such that $M_{\pm}\cong X_0$ as objects of $\cS\cD$ and $\mu_{M_\pm}$ are not even. This second possibility occurs only if $X_0\cong A_1\boxtimes_{A_0} X_0$ in $\cD$.
\end{remark}

\begin{proposition}\label{prop:simple-mods-are-induced}
    If $A$ is simple and $M$ is a simple object of $\cS\cD_A$, respectively $\underline{\cS\cD}_A$, then $M\cong F_A(X)$, respectively $M\cong\underline{F}_A(X)$, for some simple object $X\in\cS\cD$.
\end{proposition}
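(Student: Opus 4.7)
The plan is to exploit Frobenius reciprocity, that is, the adjunction $F_A \dashv G$ between induction and the forgetful functor $G\colon \cS\cD_A \to \cS\cD$ (respectively $\underline{F}_A \dashv \underline{G}$ on the underlying categories), combined with the simplicity result already established in Proposition~\ref{prop:simple-induced-modules}. Concretely, given a simple $A$-module $M$, if I can produce any non-zero $\cS\cD_A$-morphism $f\colon F_A(X) \to M$ with $X$ a simple object of $\cS\cD$, then since $F_A(X)$ is also simple by Proposition~\ref{prop:simple-induced-modules} and $M$ is simple by hypothesis, Schur's lemma will force $f$ to be an isomorphism. The analogous statement handles the underlying category case.

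To produce such an $X$, I would first apply $G$ to obtain a non-zero object $G(M) = (M_0, M_1) \in \cS\cD$. Working under the implicit finiteness of the vertex tensor category $\cC$ (which descends to local finiteness of $\cD = \cC_{A_0}^{\loc}$), the non-zero $\cD$-object $M_0 \oplus M_1$ admits a simple subobject $Y \in \cD$, sitting inside either $M_0$ or $M_1$. This yields a simple object $X \in \cS\cD$, either $(Y, 0)$ or $(0, Y)$, together with a non-zero inclusion $\iota\colon X \hookrightarrow G(M)$ in $\cS\cD$. Applying the adjunction converts $\iota$ into a non-zero morphism $\widetilde{\iota}\colon F_A(X) \to M$ in $\cS\cD_A$, and the conclusion above finishes the argument. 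For the underlying-category version, one uses $\underline{F}_A \dashv \underline{G}$ with the same $X$ and $\iota$ (noting that $\iota$ is even by construction, since it is the inclusion of a homogeneous subobject), and again invokes Proposition~\ref{prop:simple-induced-modules} for the simplicity of $\underline{F}_A(X)$.

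The only technical point worth flagging is the existence of a simple subobject inside $G(M)$: this is immediate whenever $\cD$ is locally finite, which is the usual working assumption in this paper for vertex tensor categories, but it is the one hypothesis that is not tautological from Setup~\ref{setup:superalgebra} alone. Beyond this, the argument is the standard Frobenius reciprocity reduction for algebras in tensor categories, transported to the supercategorical setting where $\cS\cD$ tracks parity; no genuine obstacle arises because the subobject $X$ can always be chosen to be homogeneous (hence the inclusion $\iota$ automatically lives in $\underline{\cS\cD}$), so the two variants of the proposition are proved in parallel with no additional work.
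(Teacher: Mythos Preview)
Your argument is correct. It differs from the paper's proof in a mildly interesting way. The paper first handles the $\underline{\cS\cD}_A$ case by citing \cite[Proposition~4.22 and Corollary~4.23]{creutzig2017tensor}, which give directly that $M_0$ is simple in $\cD$ and $M\cong\underline{F}_A(M_0)$; it then reduces the $\cS\cD_A$ case to the underlying one by observing that any even embedding $N\hookrightarrow M$ is also an embedding in $\cS\cD_A$, so simplicity in $\cS\cD_A$ forces simplicity in $\underline{\cS\cD}_A$. Your route instead stays internal to the paper: you pick a homogeneous simple subobject of $G(M)$, apply the adjunction, and finish with Proposition~\ref{prop:simple-induced-modules} plus Schur. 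This is more self-contained (no external citation beyond the induction--restriction adjunction) and handles both variants uniformly; the paper's version is shorter and additionally records the sharper fact that $M_0$ itself is already simple, not merely that it contains a simple. Your flagged caveat about needing a simple subobject in $G(M)$ is accurate and is indeed the only place where local finiteness of $\cD$ enters.
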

\begin{proof}
    If $M=(M_0,M_1)$ is simple in $\underline{\cS\cD}_A=\underline{\cS\cD_A}$, then \cite[Proposition 4.22 and Corollary 4.23]{creutzig2017tensor} imply that $M_0$ is simple in $\cD$ and $M\cong\underline{F}_A(M_0)$. Now suppose $M=(M_0,M_1)$ is simple in $\cS\cD_A$; since any embedding $N\hookrightarrow M$ in $\underline{\cS\cD_A}$ is also an embedding in $\cS\cD_A$, it follows that $M$ is also simple in $\underline{\cS\cD_A}$. Thus again $M_0$ is simple in $\cD$ and $M\cong F_A(M_0)$.
\end{proof}

\begin{comment}
Let $\mathcal G: \cS\cD_A \rightarrow \cS\cD$ and $\underline{\mathcal G}: \underline{\cS\cD}_A \rightarrow \underline{\cS\cD}$ be the forgetful functors that forgets the multiplication by $A$. They are right-adjoint to the induction functors $\mathcal F, \underline{\mathcal F}$. This means that Frobenius reciprocity holds
\[
\Hom_{\underline{\cS\cD}}(X, \underline{\mathcal G}(Y)) \cong 
\Hom_{\underline{\cS\cD}_A}( \underline{\mathcal F}(X), Y), \qquad 
\Hom_{{\cS\cD}}(X, {\mathcal G}(Y)) \cong 
\Hom_{{\cS\cD}_A}( {\mathcal F}(X), Y)
\]
\begin{corollary}${}$ \label{cor:simpleof induced}
\begin{enumerate}
    \item If $Y$ is simple in $\underline{\cS\cD}_A$ then there exists a simple object $X$ in $\cS\cD$ such that $\underline{\mathcal F}(X) \cong Y$.
    \item If $Y$ is simple in ${\cS\cD}_A$ then there exists a simple object $X$ in $\cS\cD$ such that $Y$ is a direct summand of ${\mathcal F}(X)$.
\end{enumerate}
\begin{proof}
Let $Y$ be simple in $\underline{\cS\cD}_A$ and let $X$ be a simple quotient of $\mathcal G(Y)$. By Frobenius reciprocity together with the previous Proposition it follows immediately that $\underline{\mathcal F}(X) \cong Y$. The second case is the same argument. 
\end{proof}
\end{corollary}
\end{comment}

Using the preceding two propositions, we can now prove:
\begin{theorem}\label{thm:rig-and-ss-for-super}
Under Setup \ref{setup:superalgebra}, assume that $A$ is a simple VOSA.
\begin{enumerate}
    \item If $\cD$ is semisimple, then $\cS\cD_{A}$ and $\underline{\cS\cD}_{A}$ are semisimple.
    \item If $A_0$ is self-contragredient and $\cD$ is locally finite, closed under contragredients, and rigid, then $\cS\cD_A$ and $\underline{\cS\cD}_{A}$ are rigid.
\end{enumerate}
\end{theorem}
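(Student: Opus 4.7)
The strategy for both parts is to work mainly in the underlying ordinary braided monoidal category $\underline{\cS\cD}$ and to invoke Propositions \ref{prop:simple-induced-modules} and \ref{prop:simple-mods-are-induced}, which identify the simple objects of $\cS\cD_A$ and $\underline{\cS\cD}_A = \underline{\cS\cD_A}$ with induced modules from simples of $\cS\cD$ and $\underline{\cS\cD}$, respectively. Rigidity (resp.\ semisimplicity) of $\cS\cD_A$ will follow from that of $\underline{\cS\cD}_A$ because duals are a property of objects, and the even duals and even splittings in $\underline{\cS\cD}_A$ serve equally in $\cS\cD_A$.

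For part (2), the plan is to apply Theorem \ref{thm:rigidityfrom simples} to $\underline{\cS\cD}_A$. I would first verify that $\underline{\cS\cD}$ is a locally finite abelian braided r-category, with rigidity and abelian structure inherited componentwise from $\cD$ via $D(X_0,X_1) = (X_0', X_1')$; the r-category condition holds because $A_0$ self-contragredient gives $A_0' \cong A_0$ in $\cD$, so the dualizing object of $\cD$ and hence of $\underline{\cS\cD}$ is the unit. By Theorem \ref{thm:GVinRepA}, $\underline{\cS\cD}_A$ then carries a Grothendieck-Verdier structure with dualizing object $DA$. The crux is to show $DA \cong A$ as an $A$-module, which makes $\underline{\cS\cD}_A$ an r-category. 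At the level of underlying objects in $\underline{\cS\cD}$ one has $DA = (A_0', A_1') \cong (A_0, A_1) = A$: the first component uses self-contragredientness of $A_0$, while the second uses that $A_1$ is invertible in $\cD$ with $A_1 \boxtimes_{A_0} A_1 \cong A_0$ (see \cite[Proposition 4.22]{creutzig2017tensor}), forcing $A_1' \cong A_1^{-1} \cong A_1$. To lift this to an $A$-module isomorphism, note that $A$ is simple in $\underline{\cS\cD}_A$ (by Proposition \ref{prop:simple-mods-are-induced} together with the simplicity of the VOSA $A$), whence $DA = D_A(A)$ is simple as well since $D_A$ is an anti-equivalence; the canonical nonzero composition $\varepsilon \circ \mu_A : A \boxtimes_{\underline{\cS\cD}} A \to \vac_{\underline{\cS\cD}}$, where $\varepsilon$ is the projection onto the even part $A_0 = \vac_{\underline{\cS\cD}}$, yields via the universal property of $DA$ a nonzero morphism $A \to DA$ in $\underline{\cS\cD}_A$, which must be an isomorphism between simples.

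With $\underline{\cS\cD}_A$ now an r-category, Proposition \ref{prop:simple-mods-are-induced} identifies every simple with $\underline{F}_A(X)$ for a simple $X \in \underline{\cS\cD}$; since $\underline{F}_A$ is a monoidal functor out of the rigid category $\underline{\cS\cD}$, $\underline{F}_A(X)$ is rigid. Local finiteness of $\underline{\cS\cD}_A$ follows by the length-induction argument used in the proof of Theorem \ref{thm:GVto rigid}. Theorem \ref{thm:rigidityfrom simples} then delivers rigidity of $\underline{\cS\cD}_A$, and consequently of $\cS\cD_A$. For part (1), semisimplicity of $\cD$ gives semisimplicity of $\underline{\cS\cD}$, and the projection $\varepsilon: A \to \vac_{\underline{\cS\cD}}$ onto the even part is a canonical retraction of $\iota_A$. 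The proof of Theorem \ref{thm:comm-exact-semisimple} (compare with Corollary \ref{thm:comm-exact-semisimple-VOA}) then adapts to this setting: $\varepsilon$ together with the simplicity of $A$ yields a Frobenius structure and hence an $\underline{\cS\cD}_A$-embedding of $A^*$ (or $DA$) into some $X \boxtimes A$, so by Theorem \ref{thm:EO-simple-exact} $A$ is exact in $\underline{\cS\cD}$; since $\vac$ is projective in the semisimple category $\underline{\cS\cD}$, exactness forces $M \cong M \boxtimes \vac$ to be projective in $\underline{\cS\cD}_A$ for every $M$, so $\underline{\cS\cD}_A$ is semisimple, and hence so is $\cS\cD_A$.

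The main technical obstacle is promoting $DA \cong A$ from an isomorphism in $\underline{\cS\cD}$ to an isomorphism of $A$-modules, since the naive componentwise identification is not $A$-equivariant; the resolution is the simplicity/non-vanishing argument combining simplicity of $A$ in $\underline{\cS\cD}_A$ with the canonical pairing $A \boxtimes_{\underline{\cS\cD}} A \to \vac$ arising from the VOSA structure. A secondary concern for part (1) is that the proofs of Theorems \ref{thm:EO-simple-exact} and \ref{thm:comm-exact-semisimple} were stated for finite tensor categories, so some care is needed to adapt the Frobenius-Perron / exactness machinery to the possibly non-finite semisimple setting by working only with the simples actually occurring as composition factors of modules under consideration.
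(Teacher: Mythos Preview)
Your approach to part (2) is essentially the same as the paper's: both establish that $\underline{\cS\cD}_A$ is an r-category by proving $DA \cong A$ as $A$-modules via a simplicity argument, then invoke Proposition \ref{prop:simple-mods-are-induced} together with rigidity of induced modules (monoidal functors preserve duals) to feed into Theorem \ref{thm:rigidityfrom simples}. The paper produces the nonzero $A$-module map $A \to A'$ slightly differently---via Frobenius reciprocity from the $A_0$-module inclusion $A_0 \hookrightarrow A'$---but your pairing construction via $\varepsilon \circ \mu_A$ works equally well (commutativity and associativity of $A$ make the induced map $A \to DA$ an $A$-module morphism for the action \eqref{eqn:mu-DM-def}).

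For part (1), however, your route through exact algebras is both more complicated than necessary and has a real gap: Theorem \ref{thm:EO-simple-exact} and the argument in Theorem \ref{thm:comm-exact-semisimple} are stated and proved for \emph{finite} tensor categories, and the proof of (d)$\Rightarrow$(a) in Theorem \ref{thm:EO-simple-exact} genuinely uses finiteness (finitely many simples, the regular object $R$, Frobenius--Perron dimensions). Your proposed fix of ``working only with the simples actually occurring'' does not obviously repair this, since that argument compares projective and non-projective summands against a fixed finite regular object in $\Gr_{\RR}(\cC)$. The paper's proof of part (1) sidesteps all of this with a two-line argument: since $\cS\cD$ is semisimple and Proposition \ref{prop:simple-induced-modules} says $F_A$ sends simples to simples, every induced module is semisimple; then any $M \in \cS\cD_A$ is a quotient of the semisimple module $F_A(M)$ via the action map $\mu_M$, hence semisimple. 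No rigidity, no exactness, no finiteness needed.
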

\begin{proof}
(1) If $\cD$ is semisimple, then so is $\cS\cD$, and thus every induced module in $\cS\cD_A$ is semisimple by Proposition \ref{prop:simple-induced-modules}. Now if $(M,\mu_M)$ is any object of $\cS\cD_A$, then $\mu_M: F_A(M)\rightarrow M$ is an even surjection in $\cS\cD_A$. So $M$ is semisimple because it is a quotient of a semisimple induced module. This shows $\cS\cD_A$ is semisimple, and similarly so is $\underline{\cS\cD}_A$.

(2) Now suppose $A_0$ is self-contragredient and $\cD$ is locally finite, closed under contragredients, and rigid, so the left dual of any object of $\cD$ is its contragredient. Then $\cS\cD$ and $\underline{\cS\cD}$ are also rigid. We claim that $A'\cong A$ in $\cS\cD_A$ and $\underline{\cS\cD}_A$. Indeed, since $A_0$ is self-contragredient, there is an $A_0$-module injection $A_0\hookrightarrow A'$. Since the induction functor $\underline{F}_A:\underline{\cS\cD}\rightarrow\underline{\cS\cD}_A$ is left adjoint to the forgetful functor, this inclusion induces an even non-zero $A$-module homomorphism $\underline{F}_A(A_0)\cong A\rightarrow A'$ As $A$ is simple, and thus $A'$ is simple as well, this non-zero map is an isomorphism, so $A\cong A'$ in $\underline{\cS\cD}_A$ as well as in $\cS\cD_A$.

%$A^*\cong A_0'\oplus A_1'\cong A_0\cong A_1'$ as an $A_0$-module,

%Since $A_0'\cong A_0$ in $\cD$, it is enough to show $A_1'\cong A_1$. Indeed, because $A$ is simple, $A_0$ and $A_1$ are simple objects of $\cD$ (see for example \cite[Proposition 4.22]{creutzig2017tensor}) and the map $\mu_A\vert_{A_1\boxtimes_{A_0} A_1}: A_1\boxtimes_{A_0} A_1\rightarrow A_0$ is non-zero (otherwise $A_1$ would be non-zero proper ideal). Properties of duals then induce a unique non-zero $A_0$-homomorphism $\varphi: A_1\rightarrow A_1'$ such that the diagram
%\begin{equation*}
%\xymatrixcolsep{3pc}
%    \xymatrix{
%A_1\boxtimes_{A_0} A_1 \ar[d]_{\varphi\boxtimes_{A_0}\id_{A_1}} \ar[rd]^{\mu_A\vert_{A_1\boxtimes_{A_0} A_1}} & \\
%    A_1' \boxtimes_{A_0} A_1 \ar[r]_(.55){\ev_{A_1}} & A_0 \\
%    }
%\end{equation*}
%commutes. Then $\varphi$ is the desired isomorphism because $A_1$ and $A_1'$ are both simple.

It now follows from Theorem \ref{thm:GVinRepA} that $\underline{\cS\cD}_A$ is an r-category, and $\underline{\cS\cD}_A$ is locally finite as in the proof of Theorem \ref{thm:GVto rigid}. Because $\underline{\cS\cD}$ is rigid, every induced module in $\underline{\cS\cD}_A$ is rigid \cite[Exercise 2.10.6]{etingof2016tensor}, and therefore all simple objects of $\underline{\cS\cD}_A$ are rigid by Proposition \ref{prop:simple-mods-are-induced}. It now follows from Theorem \ref{thm:rigidityfrom simples} that $\underline{\cS\cD}_A$ is rigid. Moreover, since the (even) evaluations and coevaluations in $\underline{\cS\cD}_A=\underline{\cS\cD_A}$ are also $\cS\cD_A$-morphisms, $\cS\cD_A$ is rigid as well.
%
 %The first point follows from the previous Proposition and Corollary, noting that $\cS\cD$ is then semisimple as well.
%
 %   For rigidity, we know that any induced object is rigid \cite[Lemma 2.78]{creutzig2017tensor}. 
  %  The underlying category  $\underline{\cS\cD}$ is a braided tensor category and rigid since $\cD$ is rigid. It follows from Corollary \ref{cor:simpleof induced} and Theorems \ref{thm:GVinRepA} and \ref{thm:rigidityfrom simples} that $\underline{\cS\cD}_A$ is then rigid as well.  
\end{proof}

\begin{remark}
    If we relax the definition of $\cS\cD_A$ to allow objects $(M,\mu_M)$ such that $\mu_M$ is not even, then Remark \ref{rem:larger-SDA} shows that if $A$ is simple and $\cD$ is semisimple, then all induced modules in $\cS\cD_A$ are still semisimple. Hence this larger $\cS\cD_A$ is still semisimple. However, it is not clear whether this larger $\cS\cD_A$ is still a monoidal category, so it does not seem to make sense to discuss its rigidity.
\end{remark}

One can now study the representation theory of $A$ based on the representation theory of a vertex operator subalgebra $V\subseteq A_0$, under suitable conditions. If the category $\cC$ of $V$-modules in Setup \ref{setup:superalgebra} is a (semisimple) braided finite tensor category, for example, then one can try to use the results in Corollary \ref{thm:com-exact-2-VOA} through Theorem \ref{thm:rig-for-C2-exts} to conclude the same for $\cD=\cC_{A_0}^{\loc}$. One then applies Theorem \ref{thm:rig-and-ss-for-super} to conclude the same for $\cS\cD_A$ and $\underline{\cS\cD}_A$, at least if $A_0$ is self-contragredient. Note that even if $A_0$ is not self-contragredient with respect to the conformal vector of $V$, it may still be self-contragredient with respect to a different conformal vector, as in Theorem \ref{thm:rig-for-C2-exts} for example. As an example of the kind of results one can obtain in this way, we present the superalgebra generalization of Theorem \ref{thm:VOA_ext_rational}:
\begin{theorem}\label{thm:VOSA-ext-rational}
    Let $V$ be a strongly rational VOA and let $V\subseteq A$ be a VOSA extension whose even part $A_0$ is $\ZZ$-graded and such that $V\subseteq A_0$. If $A$ is simple and $\ZZ_{\geq 0}$- or $\frac{1}{2}\ZZ_{\geq 0}$-graded, then $A$ is strongly rational.
\end{theorem}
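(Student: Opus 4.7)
The plan is to reduce to the ordinary VOA case by first proving that the even part $A_0$ is strongly rational, and then applying Theorem \ref{thm:rig-and-ss-for-super}(1) to the VOSA extension $A_0 \subseteq A$. Throughout, I take $\cC = \Rep(V)$ and $\cD = \Rep(A_0) = \cC_{A_0}^{\loc}$, and regard $A$ as a simple commutative superalgebra in the supercategory $\cS\cD$.

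First I would verify that $A_0$ is a simple $\NN$-graded VOA containing $V$ as a vertex operator subalgebra. The $\NN$-grading of $A_0$ is immediate. For simplicity, suppose $I \subseteq A_0$ is a non-zero VOA ideal; then the VOSA ideal $J$ of $A$ generated by $I$ decomposes as $J = I \oplus (A_1 \cdot I)$, where $A_1 \cdot I$ denotes the span of all modes $a_n w$ with $a \in A_1$, $w \in I$, $n \in \ZZ$, and lies in $A_1$ by parity. In particular $J \cap A_0 = I$, so if $I$ were proper in $A_0$, then $J$ would be a proper non-zero VOSA ideal of $A$, contradicting simplicity. Hence Theorem \ref{thm:VOA_ext_rational} applied to $V \subseteq A_0$ implies that $A_0$ is strongly rational, so $\cD$ is a semisimple modular tensor category.

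Next I would apply Theorem \ref{thm:rig-and-ss-for-super}(1) to the simple commutative superalgebra $A \in \cS\cD$. Setup \ref{setup:superalgebra} is satisfied: $A$ lies in $\cC = \Rep(V)$ since its conformal weights are bounded below and $V$ is $C_2$-cofinite, and $\Hom_\cC(V, A) \cong \CC$ because $A_{(0)}$ is a finite-dimensional commutative associative algebra under $a \cdot b = a_{-1} b$ and must be one-dimensional by simplicity of $A$. Simplicity of $A$ as a VOSA is equivalent to simplicity of $A$ in $\cS\cD_A$, so Theorem \ref{thm:rig-and-ss-for-super}(1) yields semisimplicity of $\underline{\cS\cD}_A$. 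By the super-analog of \cite[Theorem 3.4]{Huang:2014ixa} established in \cite{creutzig2017tensor}, $\underline{\cS\cD}_A$ is equivalent to the category $\Rep(A)$ of grading-restricted generalized $A$-modules, so $\Rep(A)$ is semisimple.

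It remains to verify the other ingredients of strong rationality for $A$: simplicity and $\NN$- or $\frac{1}{2}\NN$-grading hold by hypothesis; $C_2$-cofiniteness is Lemma \ref{lem:C2_of_extension} applied to $V \subseteq A$; and self-contragredience follows exactly as in the proof of Theorem \ref{thm:VOA_ext_rational}. Explicitly, decomposing $A \cong V \oplus \widetilde{A}$ as a $V$-module in $\cC$ yields $L_1 A_{(1)} = L_1 V_{(1)} \oplus L_1 \widetilde{A}_{(1)}$, so the space of invariant bilinear forms on $A$, which by \cite[Theorem 3.1]{Li_bilinear} is $(A_{(0)}/L_1 A_{(1)})^*$, contains the non-zero summand $(V_{(0)}/L_1 V_{(1)})^*$ because $V$ is self-contragredient; hence $A$ admits a non-zero invariant bilinear form, which is non-degenerate because $A$ is simple. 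The main technical obstacle is really the identification of $\underline{\cS\cD}_A$ with $\Rep(A)$, that is, the super-analog of Huang's correspondence between commutative algebras in vertex tensor categories and VOA extensions; once this is granted, everything else is bookkeeping.
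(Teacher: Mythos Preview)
Your overall strategy matches the paper's: show $A_0$ is simple, apply Theorem~\ref{thm:VOA_ext_rational} to get $A_0$ strongly rational, then use Theorem~\ref{thm:rig-and-ss-for-super}(1) to push semisimplicity up to $A$. Two points deserve correction or comment.

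First, the identification you invoke is misstated. The super-analogue of \cite[Theorem~3.4]{Huang:2014ixa} in \cite{creutzig2017tensor} identifies $\Rep(A)$ with the category of \emph{local} $A$-modules $\underline{\cS\cD}_A^{\loc}$, not with all of $\underline{\cS\cD}_A$. Fortunately this does not break your argument: once $\underline{\cS\cD}_A$ is semisimple, so is its full subcategory $\underline{\cS\cD}_A^{\loc}$ (any submodule of a local module is local), and the paper makes this passage explicit. The paper then cites the proof of \cite[Theorem~5.4(1)]{McRaeSS} to conclude that every $\NN$- or $\tfrac{1}{2}\NN$-gradable $A$-module is semisimple; you should at least note that every grading-restricted generalized $A$-module lies in $\cS\cD_A^{\loc}$ because its restriction to the $C_2$-cofinite $A_0$ is grading restricted.

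Second, your self-contragredience argument differs from the paper's. You use Li's criterion $A_{(0)}/L_1 A_{(1)} \neq 0$ via the $V$-module splitting $A \cong V \oplus \widetilde{A}$, exactly as in Theorem~\ref{thm:VOA_ext_rational}. The paper instead argues as in Theorem~\ref{thm:rig-and-ss-for-super}(2): since $A_0$ is self-contragredient, Frobenius reciprocity produces a non-zero even $A$-module map $A \cong \underline{F}_A(A_0) \to A'$, which is an isomorphism since both sides are simple. Both arguments are valid; yours requires knowing that Li's bilinear-form theorem holds for VOSAs, while the paper's stays entirely within the categorical framework already set up.
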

\begin{proof}
    The even part $A_0$ is a simple VOA (see for example \cite[Proposition 4.22]{creutzig2017tensor}), so by Theorem \ref{thm:VOA_ext_rational}, $A_0$ is strongly rational and $\cD=\mathrm{Rep}(V)_{A_0}^\loc=\mathrm{Rep}(A_0)$ is a semisimple modular tensor category. It then follows from Theorem \ref{thm:rig-and-ss-for-super}(1) that $\cS\cD_A$ and $\underline{\cS\cD}_A$ are semisimple, and then so are $\cS\cD_A^\loc$ and $\underline{\cS\cD}_A^\loc$ since it is easy to show that any submodule of a local module must be local.

    Now to show $A$ is strongly rational, first $A$ is non-negatively graded and simple by hypothesis. Also, since $A_0$ is self-contragredient, so is $A$ as in the proof of Theorem \ref{thm:rig-and-ss-for-super}(2). Next, $A$ is $C_2$-cofinite even as an $A_0$-module as in the proof of Lemma \ref{lem:C2_of_extension}. Finally, because $\cS\cD_A^\loc$ and $\underline{\cS\cD}_A^\loc$ are semisimple, the proof of \cite[Theorem 5.4(1)]{McRaeSS} shows that every $\NN$-gradable or $\frac{1}{2}\NN$-gradable $A$-module is semisimple, and thus $A$ is rational.
    %  To show $A$ is self-contragredient, observe that since $A_0$ is self-contragredient, there is an $A_0$-module injection $A_0\hookrightarrow A'$. Since the induction functor $\underline{F}_A: \underline{\cS\cD}\rightarrow\underline{\cS\cD}_A$ is left adjoint to the forgetful functor, this inclusion induces an even non-zero $A$-module homomorphism $\underline{F}_A(A_0)\cong A\rightarrow A'$. As $A$ is simple, and thus $A'$ is simple as well, this non-zero map is an isomorphism, so $A\cong A'$ in $\underline{\cS\cD}_A$ as well as in $\cS\cD_A$. 
\end{proof}

\begin{remark}
    Theorem \ref{thm:VOSA-ext-rational} strengthens \cite[Theorem 5.4(1)]{McRaeSS} by removing the requirement that the dimension of $A_0$ be non-zero in the modular tensor category of $V$-modules.
\end{remark}

%%%w%%%%%%%%%%%%%%%%%%%%%%%%%%%%%%%%%%%%%%%%%%%%%%%%%%%
%%%%%%%%%%%%%%%%%%%%%%%%%%%%%%%%%%%%%%%%%%%%%%%%%%%%%%
%%%%%%%%%%%%%%%%%%%%%%%%%%%%%%%%%%%%%%%%%%%%%%%%%%%%%%

\subsection{Examples}\label{sec:ex}

We now give some examples of the results in the preceding subsections.

\begin{example} 
Let $\mathfrak g$ be a simple Lie algebra or $\mathfrak g = \mathfrak{osp}_{1|2n}$, with Killing form normalized so that long roots have squared length $2$. Let $V^k(\mathfrak g)$ be the affine VO(S)A of $\mathfrak g$ at level $k\in\CC$, and let $L_k(\mathfrak g)$ be its simple quotient. Also let $W^\ell(\mathfrak g)$ be the principal $W$-(super)algebra associated to $V^\ell(\mathfrak g)$, and let $W_\ell(\mathfrak g)$ be its simple quotient. We assume that $W_\ell(\mathfrak g)$ is strongly rational, so that the category $\cC_\ell^W(\mathfrak g)$ of $W_\ell(\mathfrak g)$-modules is a semisimple (super)modular tensor category. Strong rationality is known for $\mathfrak g$ a simple Lie algebra and $\ell$  a non-degenerate admissible level \cite{Ar15a, Ar15b}, and for $\mathfrak g = \mathfrak{osp}_{1|2n}$ at certain levels \cite[\S 7]{CL-osp}. We also assume that the category $\cC_k^L(\mathfrak g)$ of ordinary $L_k(\mathfrak g)$-modules is a fusion category. In fact, for $k$ admissible, $\cC_k^L(\mathfrak{g})$ is always a semisimple braided monoidal category \cite{Arakawa:2012xrk, Creutzig:2017gpa}, and rigidity is known in types $ADE$ \cite{Creutzig:2019qje}, for most levels in type $C$ \cite{Creutzig:2022riy}, and for some levels in type $B$ \cite{CVL}. Similar results also hold for $\mathfrak{osp}_{1|2n}$ \cite{Creutzig:2022riy}.

In this setup, assume $A$ is a simple VO(S)A that is a conformal extension of $W_\ell(\mathfrak g) \otimes L_k(\tilde{\mathfrak g})$. By \cite[Theorem 5.5]{creutzig22gluing}, the category of grading-restricted $W_\ell(\mathfrak g) \otimes L_k(\tilde{\mathfrak g})$-modules is a vertex tensor category equivalent to the Deligne product $\cC_\ell^W(\mathfrak g) \boxtimes \cC_k^L(\tilde{\mathfrak g})$. In particular, it is a braided fusion category and so by Corollary \ref{thm:comm-exact-semisimple-VOA} and Theorem \ref{thm:rig-and-ss-for-super}, the category of grading-restricted $A$-modules is also braided fusion. Many hook-type $W$-(super)algebras satisfy these conditions as corollaries of the trialities of \cite{CL-sl, CL-osp}. Here are two examples:
\begin{enumerate}
    \item $A = W_{k}(\mathfrak{osp}_{2n|2m}, f_{\mathfrak{sp}_{2m}})$ for $k + m-n+1 = -\frac{n-m-1}{2(m+r)+1}$ and $r$ a positive integer such that $\gcd(2n+2r-1, 2m+2r+1)=1$. In this case, $A$ is a conformal extension of $W_s(\mathfrak{sp}_{2r})\otimes L_\ell(\mathfrak{so}_{2n})$ at the non-degenerate admissible level $s = -(r+1) + \frac{2m+2r+1}{2(2r+2n-1)}$ and the admissible level $\ell = -(2n-2) +\frac{2n+2r-1}{2m+2r+1}$. 
    
    \item $A = W_{k}(\mathfrak{so}_{2n+2m+1}, f_{\mathfrak{so}_{2m+1}})$ for $k + 2m+2n-1 = \frac{2n+2m+2r+1}{2(m+1)}$ and $r$ a positive integer such that  $\gcd(2n+2r-1, m+1)=1$. In this case, $A$ is a conformal extension of $W_s(\mathfrak{sp}_{2r})\otimes L_\ell(\mathfrak{so}_{2n})$ at the non-degenerate admissible level $s = -(r+1) + \frac{2n+2m+2r+1}{2(2r+2n-1)}$ and the admissible level $\ell = -(2n-2) +\frac{2n+2r-1}{2(m+1)}$.
\end{enumerate}
Many more examples can be read off from Appendices B, C, D  of \cite{CL-osp} and Corollary 6.5 of \cite{CL-sl}. 

The two listed examples follow from Theorems B.1 and B.2 of \cite{CL-osp} together with some standard VOA invariant theory:
 Let $A^k = W^{k}(\mathfrak{osp}_{2n|2m}, f_{\mathfrak{sp}_{2m}})$ or  $W^{k}(\mathfrak{so}_{2n+2m+1}, f_{\mathfrak{so}_{2m+1}})$ viewed as a family of vertex (super)algebras as in \cite{CL-coset}. The algebra $A^k$ has an affine vertex subalgebra $V^\ell(\mathfrak{so}_{2n})$, where $\ell = -2k-2n -1$ in the first case and $\ell = k+2m$ in the second case. Also, $A$ is the simple quotient of $A^k$ for $k = n-m-1-\frac{n-m-1}{2(m+r)+1}$  in the first case and for $k = -2m-2n+1+ \frac{2n+2m+2r+1}{2(m+1)}$ in the second case; we fix $k$ to be this number. 
 Let $C^k$ be the coset of $V^\ell(\mathfrak{so}_{2n})$ in $A^k$, and let $C$ be the corresponding coset in $A$. Then by \cite[Theorem 8.1]{CL-coset}, $C$ is a quotient of $C^k$. 
 Since $A$ is simple and $k+2n-2$ is positive, $C$ must be simple \cite[Theorem 4.1]{ACK}, and so by Theorems B.1 and B.2 of \cite{CL-osp},
  $C=W_s(\mathfrak{sp}_{2r})$ with $s$ as specified above. 
 Now since $W_s(\mathfrak{sp}_{2r})$ is rational, by \cite[Corollary 2.2]{ACKL} the coset of $W_s(\mathfrak{sp}_{2r})$ in $A$ is simple and hence a conformal extension of $L_\ell(\mathfrak{so}_{2n})$ \cite[Theorem 3.5]{AEM}.

 In these two examples, we \textit{cannot} use the earlier result \cite[Theorem 5.12]{creutzig22gluing} to show that the category of $A$-modules is a braided fusion category, because we do not know in general whether $W_s(\mathfrak{sp}_{2r})$ and $L_\ell(\mathfrak{so}_{2n})$ form a dual pair in $A$. That is, we do not know whether the commutant of $W_s(\mathfrak{sp}_{2r})$ is $L_\ell(\mathfrak{so}_{2n})$ itself or some conformal extension.
 \end{example}

 \begin{example} (\cite[\S 8]{ACF})
%Here is an example of VOA extensions that are not their own contragredient duals.
Let $k-1$ be a non-degenerate admissible level for a simply-laced simple Lie algebra $\mathfrak g$, and define $\ell$ by 
\[
\frac{1}{k + h^\vee}+ \frac{1}{\ell + h^\vee} =1
\]
where $h^\vee$ is the dual Coxeter number of $\mathfrak g$. Then $k, \ell$ are non-degenerate admissible and $A = W_{k-1}(\mathfrak g) \otimes L_1(\mathfrak g)$ is a conformal extension of $W_{k}(\mathfrak g) \otimes W_{\ell}(\mathfrak g)$, provided $L_1(\mathfrak{g})$ is equipped with the Urod conformal vector of \cite[\S 6]{ACF}. Thus by Theorem \ref{thm:VOA_ext_rational}, $L_1(\mathfrak{g})$ is self-contragredient with respect to the Urod conformal vector, and the non-local module category $\mathrm{Rep}(W_{k}(\mathfrak g) \otimes W_{\ell}(\mathfrak g))_A$ is a fusion category.
%However $A$ is not its own contragredient dual with respect to the conformal vector of  
%$W_{k}(\mathfrak g) \otimes W_{\ell}(\mathfrak g)$.
 \end{example}

\begin{example} (\cite[\S 6.5.3]{CDGG})
The vertex operator superalgebra $V_{SF}$ of a pair of symplectic fermions \cite{Kausch2} is one of the best-known non-rational $C_2$-cofinite vertex operator (super)algebras. The vertex operator superalgebra $A$ of four free fermions is an extension of $V_{SF} \otimes V_{SF}$. As a $V_{SF} \otimes V_{SF}$-module, this extension is indecomposable, is projective as a module for either of the commuting symplectic fermion subalgebras, and has four composition factors, each isomorphic to $V_{SF} \otimes V_{SF}$ (up to parity). See \cite[\S 6.5.3]{CDGG} for details. In particular, condition (b) of Corollary \ref{cor:com-exact-5-VOA} holds, so $A$ is exact. 
\end{example}

We also give an example of Theorem \ref{rigidity-of-C-from-CAloc}, giving an easier proof of rigidity for the $p=2$ triplet VOA then is given in \cite{Tsuchiya:2012ru}. Further examples of Theorem \ref{rigidity-of-C-from-CAloc} will appear in \cite{CMY24}.

\begin{example}\label{ex:Wp}
Let $V$ be the $p=2$ triplet VOA \cite{Kausch, Adamovic:2007er, Tsuchiya:2012ru}. It has four inequivalent simple modules $ X^\pm_i$ for $i=1,2$, where $X_1^+=V$. The modules $X^\pm_2$ are projective, and $V$ is a subalgebra of the lattice VOA $A = V_{2\mathbb Z}$. Let $\cC$ be the category of $V$-modules and let $F_A:\cC\rightarrow\cC_A$ and $U:\cC_A\rightarrow\cC$ be the induction and restriction functors as previously. The VOA $A = V_{2\mathbb Z}$ is strongly rational (for a different conformal vector than that of $V$) and has four inequivalent simple modules $V_i:=V_{\frac{i}{2}+2\ZZ}$ for $i=0, 1, 2, 3$. The modules $X^\pm_1$ allow the following extensions (using $U$ to indicate that these are exact sequences in $\cC$),
\[
0 \rightarrow X^+_1 \rightarrow U(V_0) \rightarrow X^-_1 \rightarrow 0, \qquad 
0 \rightarrow X^-_1 \rightarrow U(V_2) \rightarrow X^+_1 \rightarrow 0,
\]
while  $U(V_1) \cong X^+_2$ and $U(V_3) \cong X^-_2$. 
The fusion rules of $A = V_0$ with the simple objects of $\cC$ are:
\[
A \boxtimes_V X^+_1 = U(V_0), \qquad 
A \boxtimes_V X^-_1 = U(V_2), \qquad 
A \boxtimes_V X^\pm_2 = U(V_1) \oplus U(V_3).
\]
It is shown in \cite{Tsuchiya:2012ru} that $\cC$ is rigid; we now reprove this from the above fusion rules and Theorem \ref{rigidity-of-C-from-CAloc}.

First, we need to show that any simple object $M\in\cC_A$ is local. Since $M$ contains some simple $V$-submodule $X^\pm_i$, Frobenius reciprocity yields a non-zero (and thus surjective) $\cC_A$-morphism $F_A(X_i^\pm)\rightarrow M$. Thus it is enough to show that every simple composition factor of $F_A(X^\pm_i)$ for $i=1, 2$ is local. Clearly $F_A(X^+_1) \cong A$. Next, since $U(F_A(X^-_1)) \cong U(V_2)$ and since $\mathrm{Hom}_{\cC}(X^-_1, U(V_2)) \cong \mathrm{Hom}_{\cC_A}(F_A(X^-_1), V_2)$ by Frobenius reciprocity, it follows that $F_A(X^-_1) \cong V_2$. Similarly, there are $\cC_A$-surjections $f_{\pm}: F_A(X_2^\pm)\rightarrow V_{2\mp 1}$ since $\mathrm{Hom}_{\cC}(X^\pm_2, U(V_{2\mp 1})) \cong \mathrm{Hom}_{\cC_A}(F_A(X^\pm_2), V_{2\mp 1})$. From the fusion rules,  $U(\mathrm{Ker}\, f_\pm) \cong U(V_{2\pm 1})$. Since $V_1$ and $V_3$ are local, the  twist $\theta=e^{2\pi i L_0}$ acts on them by $\cC_A$-morphisms \cite[Lemma 2.81]{creutzig2017tensor}, and since $V_1$ and $V_3$ are also simple, $\theta$ therefore acts on them by scalars. Since $L_0$ is also the Virasoro zero-mode for the subalgebra $V$ the twist also acts by scalars on $\mathrm{Ker}\, f_\pm$. This means that the twist on $\cC$ gives a $\cC_A$-morphism on $\mathrm{Ker}\, f_\pm$ and thus $\mathrm{Ker}\, f_\pm$ are local \cite[Lemma 2.81]{creutzig2017tensor} and hence isomorphic to $V_3$ and $V_1$. This proves that every simple object of $\cC_A$ is local, and thus $\cC_A$ is rigid by Theorem \ref{thm:GVto rigid}.

Next, we need to show that any non-zero $\cC_A$-morphism $F_A(X')\rightarrow F_A(X)^*$ is an isomorphism for any simple $X\in\cC$. This holds for $X_1^{\pm}$ because $F_A((X_1^\pm)')\cong F_A(X_1^\pm)\cong V_{1\mp 1}$ and $F_A(X_1^\pm)^*\cong V_{1\mp 1}^*\cong V_{1\mp 1}$ are both simple. For $X_2^\pm$, the previous paragraph together with $(X_2^\pm)'\cong X_2^\pm$ yields a short exact sequence
\[
0 \rightarrow V_{2 \pm 1} \rightarrow F_A((X^\pm_2)') \rightarrow V_{2 \mp 1} \rightarrow 0.
\]
This sequence does not split because by Frobenius reciprocity,
\begin{equation*}
    \Hom_{\cC_A}(F_A(X^\pm_2), V_{2\pm 1}) =\Hom_\cC(X^\pm_2, X^{\mp}_2) = 0.
\end{equation*}
Dualizing this exact sequence and using $V_{2\pm 1}^*\cong V_{2\mp 1}$ yields a non-split exact sequence
\[
0 \rightarrow V_{2 \pm 1} \rightarrow F_A(X^\pm_2)^* \rightarrow V_{2 \mp 1} \rightarrow 0.
\]
Now any non-zero homomorphism $f:F_A((X_2^\pm)')\rightarrow F_A(X_2^\pm)^*$ in $\cC_A$ is injective because otherwise $\mathrm{Im}\,f$ would be isomorphic to $V_{2\mp 1}$, but $V_{2\mp 1}$ is not a submodule of $F_A(X_2^\pm)^*$. Similarly, $f$ is surjective, so $f$ is an isomorphism, as required.

Finally, for condition (c) in Theorem \ref{rigidity-of-C-from-CAloc}, note that the only $V$-submodule of $A$ which is not contained in $V$ is $A$ itself, and that $\id_A\otimes\id_Z: A\otimes Z\rightarrow A\otimes Z$ is injective for any $V$-module $Z$. Thus we just need a $V$-module $Z$ such that $c_{Z,A}\neq c_{A,Z}^{-1}$, and we claim that $Z=X_2^+$ suffices. Indeed, because $\cC_A^{\loc}$ is semisimple and $F_A(X_2^+)$ is not semisimple, $F_A(X_2^+)$ is not local and therefore $c_{X_2^+,A}\neq c_{A,X_2^+}^{-1}$ (see for example \cite[Proposition 2.65]{creutzig2017tensor}).
%Moreover, the maps 
%\[ 
%X_2^+\boxtimes_V 0\xrightarrow{\id_{X_2^+}\boxtimes_V 0} X_2^+\boxtimes_V A,\qquad X_2^+\boxtimes_V A %\xrightarrow{\id_{X_2^+}\boxtimes_V\id_A} X_2^+\boxtimes_V A
%\]
%are trivially injective, while the map
%\begin{equation*}
%    X_2^+\boxtimes_V V \xrightarrow{\id_{X_2^+}\boxtimes_V\iota_A} X_2^+\boxtimes_V A
%\end{equation*}
%is injective by Lemma \ref{lem:check-4b} since $X_2^+\cong U(V_1)$. 
Thus $\cC$ is rigid by Theorem \ref{rigidity-of-C-from-CAloc}.
%to be $V_{2\pm 1}$, but $V_{2\pm 1}$ 
%The dual of $V_{2 \pm 1}$ is $V_{2 \mp 1}$ and hence the dual $\cF(X^\pm_2)^*$ of $\cF(X^\pm_2)$ fits in the short exact sequence 
%Since $X^\pm_2$ are both projective it follows that $\cG(\cF(X^\pm_2)^*) \cong X^+_2 \oplus  X^-_2$ and thus for $\epsilon, \epsilon' \in \{ \pm\}$
%\[
%\Hom_{\cC_A}(\cF(X^\epsilon_2), \cF(X^{\epsilon'}_2)^*) \cong \Hom_\cC(X^\epsilon_2, X^+_2 \oplus  X^-_2) = \CC. 
%\]
%The only possibility is thus that $\cF(X^{\pm}_2)^* \cong \cF(X^{\pm}_2)$ and since $X^\pm_2$ are their own duals we see that induction commutes with duality for these two simple modules. The same is clearly true for $X_1^\pm$ and so it follows by Theorem \ref{rigidity-of-C-from-CAloc} that $\cC$ is rigid.  
\end{example}

%%%%%%%%%%%%%%%%%%%%%%%%%%%%%%%%%%%%%%%%%%%%%%%%%%%%%%
%%%%%%%%%%%%%%%%%%%%%%%%%%%%%%%%%%%%%%%%%%%%%%%%%%%%%%
%%%%%%%%%%%%%%%%%%%%%%%%%%%%%%%%%%%%%%%%%%%%%%%%%%%%%%
%%%%%%%%%%%%%%%%%%%%%%%%%%%%%%%%%%%%%%%%%%%%%%%%%%%%%%
%%%%%%%%%%%%%%%%%%%%%%%%%%%%%%%%%%%%%%%%%%%%%%%%%%%%%%

\section{Statements and Declarations}

There are no financial interests connected to this work. There is no conflict of interest and we kept the highest ethical standards and there is no associated data.

{\small
\bibliographystyle{alpha}
\bibliography{references}}

%\bibliography{RHWC1}
\end{document}